\documentclass[11pt, reqno]{amsart}

\usepackage{comment}
\usepackage[rgb,dvipsnames]{xcolor}
\usepackage{bm} 
\usepackage{stmaryrd}
\usepackage{amsfonts,amsmath,amsthm,amssymb}
\usepackage{latexsym}
\usepackage{amscd}
\usepackage{esint}
\usepackage{mathrsfs}
\usepackage{dsfont}
\usepackage{setspace}
\usepackage{enumitem}
\usepackage[margin=1.25in]{geometry}
\usepackage{hyperref}
\usepackage{enumitem}

\newtheorem{prop}{Proposition}[section]
\newtheorem{thm}[prop]{Theorem}
\newtheorem{lemm}[prop]{Lemma}
\newtheorem{coro}[prop]{Corollary}

\newtheorem*{claim*}{Claim}

\theoremstyle{definition}
\newtheorem{defi}[prop]{Definition}
\newtheorem{rmk}[prop]{Remark}

\newcommand{\CC}{\mathbb{C}}

\newcommand{\NN}{\mathbb{N}}

\newcommand{\RR}{\mathbb{R}}

\newcommand{\cN}{\mathcal N}

\newcommand{\cV}{\mathcal V}

\newcommand{\cX}{\mathcal X}
\newcommand{\cY}{\mathcal Y}

\newcommand{\sF}{\mathscr{F}}

\newcommand{\sU}{\mathscr{U}}

\def\bB{\mathbf{B}}
\def\be{\mathbf{e}}
\def\bP{\mathbf{P}}
\def\bT{\mathbf{T}}
\def\bM{\mathbf{M}}
\def\bL{\mathbf{L}}

\DeclareMathOperator{\tr}{tr}

\DeclareMathOperator{\im}{Im}

\DeclareMathOperator{\supp}{supp}

\DeclareMathOperator{\Div}{div}
\DeclareMathOperator{\loc}{loc}
\DeclareMathOperator{\osc}{osc}
\DeclareMathOperator{\re}{Re}

\DeclareMathOperator{\dist}{dist}

\DeclareMathOperator{\vol}{vol}

\DeclareMathOperator{\uni}{uni}
\DeclareMathOperator{\conv}{conv}
\DeclareMathOperator{\reg}{reg}
\DeclareMathOperator{\GL}{GL}
\DeclareMathOperator*{\esssup}{ess\,sup}

\newcommand{\ep}{\varepsilon}

\newcommand{\id}{\text{id}}

\newcommand{\bangle}[1]{\big\langle #1 \big\rangle}

\newcommand{\pa}[2]{\frac{\partial #1}{\partial #2}}

\newcommand{\paop}[1]{\pa{}{#1}}

\newcommand{\rom}[1]{\expandafter\romannumeral #1}
\newcommand{\Rom}[1]{\uppercase\expandafter{\romannumeral #1}}

\setlist[enumerate]{leftmargin = 2em}
\setlist[itemize]{leftmargin = 2em}
\allowdisplaybreaks

\numberwithin{equation}{section}

\title[Energy convexity and uniformity of $H$-surface flow]{Energy convexity and uniformity of the $H$-surface flow in $\mathbb{R}^{3}$ with Dirichlet boundary condition}
\author{Da Rong Cheng}
\address[D.~Cheng]{Department of Mathematics\\University of Miami\\1365 Memorial Drive\\
Coral Gables, FL 33146\\USA}
\email{darong.cheng@miami.edu}

\author{Longzhi Lin}
\address[L.~Lin]{Mathematics Department\\University of California, Santa Cruz\\1156 High Street\\
Santa Cruz, CA 95064\\USA}
\email{lzlin@ucsc.edu}

\author{Xin Zhou}
\address[X.~Zhou]{Mathematics Department\\Cornell University\\212 Garden Ave\\
Ithaca, NY 14853\\USA}
\email{xinzhou@cornell.edu}

\date{\today}

\begin{document}

\begin{abstract}    
We prove that the energy functional associated with surfaces of prescribed mean curvature in $\mathbb{R}^3$ exhibits a convexity property when restricted to maps from the unit $2$-disk having small Dirichlet energy and a fixed boundary value, provided that the $3$-form $H \cdot \vol_{\RR^3}$ has a primitive satisfying certain bounds.

Under milder assumptions on $H:\mathbb{R}^3 \to \mathbb{R}$, we show that an analogous convexity estimate holds along the heat flow of the functional (the $H$-surface flow) when the initial Dirichlet energy is sufficiently small. This is done first for classical solutions, and then extended by approximation to weak solutions using a quantitative uniqueness result adapted from previous work on the harmonic map heat flow. As a consequence of the convexity estimate, we show that the $H$-surface flow with small-energy initial map of class $C^0 \cap W^{1, 2}$ on the unit $2$-disk converges uniformly to a unique limit at infinite time, which solves the corresponding stationary problem (the $H$-surface system) with the same boundary value.
\end{abstract}

\maketitle 

\section{Introduction}
One of the central questions in geometric analysis is the study of the \emph{$H$-surface system}, namely
\begin{equation}\label{eq:H-system}
    \Delta u = 2H (u) \, u_{x^1} \times u_{x^2} \quad \text{on } \bB,
\end{equation}
where $\bB$ is the open unit disk in $\RR^2$, $H : \RR^3 \to \RR$ is a given function, and $H(u)$ stands for $H \circ u$. Also, the subscripts denote partial derivatives, and $\times$ is the cross product in $\RR^3$. Given any solution $u$ to~\eqref{eq:H-system} which is \emph{weakly conformal} in the sense that 
\[
|u_{x^1}| = |u_{x^2}|\quad\text{and}\quad u_{x^1}\cdot u_{x^2}=0,
\]
it is classically known that the mean curvature at $u(x)$ of its image, which we take to be the average, as opposed to the sum, of the principal curvatures, is given by $H(u(x))$, provided $\nabla u(x) \neq 0$. The so-called \emph{parametric} approach to the Plateau problem for disk-type surfaces with prescribed mean curvature $H$ then consists of solving~\eqref{eq:H-system} subject to the requirement that $u$ be weakly conformal, and that its restriction to $\partial \bB$ traces out the given boundary curve monotonically. Examples of important work along this line include the existence results of Heinz \cite{Heinz1954}, Wente \cite{Wente1969}, Hildebrandt \cite{Hildebrandt1970-2,Hildebrandt1969,Hildebrandt1970}, Gulliver--Spruck \cite{GulliverSpruck1971, GulliverSpruck1972}, and Steffen~\cite{Steffen1976-1,Steffen1976-2}, as well as the non-uniqueness results of Brezis--Coron~\cite{BrezisCoron1984}, Struwe~\cite{Struwe1986,Struwe1990-1}, and Bethuel--Rey~\cite{BethuelRey1994}. 

In tackling the Plateau problem, it is typical to begin by studying~\eqref{eq:H-system} coupled with the Dirichlet condition 
\begin{equation}\label{eq:Dirichlet-condition}
u = u_0 \quad \text{on }\partial \bB,
\end{equation}
where $u_0:\overline{\bB}\to \RR^3$ is a given function. 
From a variational perspective, the Dirichlet problem \eqref{eq:H-system}-\eqref{eq:Dirichlet-condition} arises as the Euler-Lagrange equation of the functional 
\begin{equation}\label{eq:energy-functional}
    E_H(u, u_0) := D(u) + V_H(u, u_0),
\end{equation}
subject to $u - u_0 = 0$ on $\partial \bB$, where 
\[
D(u) = \frac{1}{2}\int_{\bB} |\nabla u|^2 dx,
\]
is the \textit{Dirichlet energy}, and $V_H(u, u_0)$ is intuitively the volume, weighted by $2H(\cdot)$, of the oriented region enclosed by $u$ and $u_0$. We recall in Section~\ref{sec:prelim} how $V_H$ is defined, following the presentation by Duzaar--Steffen~\cite{DuzaarSteffen1999} and B\"ogelein--Duzaar--Scheven~\cite{BogeleinDuzaarScheven2013}. For now, we only note that the definition, which makes use of integral currents, goes back to the work of Steffen~\cite{Steffen1976-1,Steffen1976-2}, and underlies his variational approach towards~\eqref{eq:H-system} which unified many of the earlier existence results. An important property of $V_H$ is the isoperimetric inequality:
\[
|V_H(u, v)| \leq c\,\|H\|_{\infty; \RR^3} \big( D(u) + D(v) \big)^{\frac{3}{2}},
\]
whenever $u = v$ on $\partial \bB$, where $c$ is a universal constant. Also, if $H \in C^{1}(\RR^3; \RR)$, which we assume throughout much of this paper, and if $Q:\RR^3 \to \RR^3$ is a $C^{1}$-vector field satisfying 
\begin{equation}\label{eq:Q-H-relation}
    \Div Q(\cdot)= 2 H(\cdot),
\end{equation} 
or equivalently $d(\iota_{Q}\vol_{\RR^3}) = 2H \cdot \vol_{\RR^3}$, where $\iota$ is the interior product, then there holds
\[
V_H(u, v) = \int_{\bB}Q(u) \cdot u_{x^1}\times u_{x^2}\, dx - \int_{\bB}Q(v) \cdot v_{x^1}\times v_{x^2}\, dx.
\]

\vskip 2pt 

In this paper we are concerned with the Dirichlet problem \eqref{eq:H-system}-\eqref{eq:Dirichlet-condition}, as well as its parabolic counterpart (see~\eqref{eq:H-flow}). The main results are stated as Theorems~\ref{thm:uniqueness},~\ref{thm:H-convexity}, and~\ref{thm:uniformity} below. Given $u \in W^{1, 2}(\bB;\RR^3)$, we say it is a weak solution to~\eqref{eq:H-system} if it satisfies
\begin{equation}\label{eq:H-sys-weak}
\int_{\bB} \bangle{\nabla u, \nabla \zeta}\, dx = -2\int_{\bB}H(u)\, u_{x^1} \times u_{x^2} \cdot \zeta\, dx,
\end{equation}
for all $\zeta \in W^{1, 2}_0 \cap L^{\infty}(\bB; \RR^3)$. It is a theorem of Bethuel~\cite{Bethuel1992} that when $H \in C^1(\RR^3; \RR)$, weak solutions belong to $C^{2, \alpha}_{\loc}(\bB)$ for any $\alpha \in (0, 1)$. See also the alternative proof by Strzelecki~\cite{Strzelecki2003}. For regularity results under much weaker assumptions on $H(\cdot)$, we refer the reader to the works of Rivi\'ere~\cite{Riviere2007} and Schikorra~\cite{Schikorra2010}. Our first result is a uniqueness statement.

\begin{thm}\label{thm:uniqueness}
Suppose $H \in C^{1}(\RR^3; \RR)$ is such that 
\[
\|H\|_{\infty; \RR^3} \leq \Lambda_0, \quad \|\nabla H\|_{\infty; \RR^3} \leq\Lambda_1.
\]
There exists $\ep_{\uni} \in (0, 1)$, depending only on $\Lambda_0$ and $\Lambda_1$, such that if $u, v \in W^{1, 2}(\bB; \RR^3)$ are weak solutions of~\eqref{eq:H-system} satisfying $u - v \in W^{1, 2}_{0}(\bB; \RR^3)$, and if $D(u) + D(v) \leq \ep_{\uni}$, then $u = v$ on $\bB$.
\end{thm}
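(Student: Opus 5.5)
The plan is an energy argument for the difference $w := u - v \in W^{1,2}_0(\bB;\RR^3)$. The structural terms are controlled not by compensated compactness but by a pointwise interior gradient bound combined with Hardy's inequality on the disk. Subtracting the two equations and writing $u_{x^1}\times u_{x^2} - v_{x^1}\times v_{x^2} = w_{x^1}\times u_{x^2} + v_{x^1}\times w_{x^2}$ gives, formally,
\begin{equation*}
\int_{\bB}|\nabla w|^2\,dx = -2\int_{\bB} H(u)\,\big(w_{x^1}\times u_{x^2} + v_{x^1}\times w_{x^2}\big)\cdot w\,dx - 2\int_{\bB}\big(H(u)-H(v)\big)\, v_{x^1}\times v_{x^2}\cdot w\,dx .
\end{equation*}
The first integrand is bounded by $\Lambda_0 |\nabla w|\,(|\nabla u|+|\nabla v|)\,|w|$. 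The second is bounded by $\Lambda_1 |w|^2 |\nabla v|^2$, using $|H(u)-H(v)|\le \Lambda_1|w|$.

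The key step is an $\ep$-regularity estimate with the correct scaling near the boundary. Let $d(x) = \dist(x,\partial\bB)$. For a weak solution $u$ with $D(u)\le \ep$ small, I would show
\begin{equation*}
|\nabla u(x)|^2 \le \frac{C}{d(x)^2}\int_{B_{d(x)/2}(x)}|\nabla u|^2\,dy \le \frac{C\ep}{d(x)^2}\qquad\text{for all } x\in\bB,
\end{equation*}
with $C=C(\Lambda_0,\Lambda_1)$, and likewise for $v$. To get this, rescale $B_{d(x)/2}(x)$ to the unit disk. The $H$-system is invariant under this rescaling, and the bounds on $H$ and $\nabla H$ are unchanged. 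The rescaled map has Dirichlet energy at most $\ep$, so the standard small-energy regularity theory (Wente's inequality for the Jacobian term, or the regularity of Bethuel/Rivi\`ere cited above) yields a universal $C^1$ bound on the half-size disk. I expect this to be the main obstacle. One must make sure the interior estimate is genuinely scale-invariant and quantitative, with the constant depending only on $\Lambda_0$, $\Lambda_1$ once $\ep$ is below a threshold. This will likely require a short bootstrap, first to Hölder continuity and then to a gradient bound.

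Given this, both error terms are controlled by $C\ep^{1/2}\,|\nabla w|\,|w|/d + C\ep\, |w|^2/d^2$. Hardy's inequality on the disk for $W^{1,2}_0$ functions, $\int_{\bB} |w|^2/d^2 \le C\int_{\bB}|\nabla w|^2$, then gives
\begin{equation*}
\int_{\bB}|\nabla w|^2 \le C(\ep^{1/2}+\ep)\int_{\bB}|\nabla w|^2 .
\end{equation*}
Choosing $\ep_{\uni}$ small forces $\nabla w = 0$, so $w=0$ since $w\in W^{1,2}_0$.

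It remains to justify testing with $w$, which need not be in $L^\infty$. I would test instead with $\zeta_k = \varphi_k w$, where $\varphi_k$ is a cutoff supported in $\{d > 1/k\}$ with $|\nabla\varphi_k|\le Ck\,\chi_{\{1/k<d<2/k\}}$. By the interior regularity, $u$ and $v$ are smooth on $\operatorname{supp}\varphi_k$, so $\zeta_k\in W^{1,2}_0\cap L^\infty$ is admissible. All terms are dominated by $L^1$ functions via the pointwise gradient bound and Hardy's inequality. The extra term $\int \langle\nabla w, w\otimes\nabla\varphi_k\rangle$ is bounded by $\|\nabla w\|_{L^2(\{d<2/k\})}\,\| w/d\|_{L^2(\{d<2/k\})}\to 0$. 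Letting $k\to\infty$ therefore yields the identity above.
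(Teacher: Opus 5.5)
Your proposal is correct and follows essentially the same route as the paper. Both arguments test the difference of the two equations with $u-v$, control the nonlinear terms with the scale-invariant small-energy bound $|\nabla u(x)|^2\le C\,D(u)\,(1-|x|)^{-2}$ (and likewise for $v$) combined with the Hardy inequality on $W^{1,2}_0(\bB)$, and then absorb into the left-hand side.

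The differences are cosmetic. You attach $H(u)$ to the Jacobian difference and $H(u)-H(v)$ to $v_{x^1}\times v_{x^2}$, whereas the paper attaches them the other way round. You justify testing with $u-v$ via explicit cutoffs $\varphi_k(u-v)$, while the paper uses a density argument. The paper also takes the gradient bound from its parabolic appendix estimate (Corollary~\ref{coro:grad-estimate-scaled}) rather than from a separate elliptic $\varepsilon$-regularity argument.
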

\begin{rmk}\label{rmk:existence-of-small-solution}
The existence of solutions with small energy is addressed by~\cite[Theorem 2.2]{Steffen1976-2}. Alternatively, Theorem~\ref{thm:uniformity} below implies that if $u_0 \in C^0(\overline{\bB}) \cap W^{1, 2}(\bB)$, and if $D(u_0)$ is sufficiently small depending on $\Lambda_0$ and $\Lambda_1$, then \eqref{eq:H-system}-\eqref{eq:Dirichlet-condition} has a solution in $C^0(\overline{\bB}) \cap W^{1, 2}(\bB) \cap C^{2}_{\loc}(\bB)$ with energy at most $2D(u_0)$. 
\end{rmk}

The proof of Theorem~\ref{thm:uniqueness} uses a well-known argument, the key step of which consists of applying in succession the Hardy inequality in $W^{1, 2}_0(\bB)$, and standard gradient estimates for small-energy solutions of~\eqref{eq:H-system}. The latter bounds $(1 - |x|)|\nabla u|$ and $(1- |x|)|\nabla v|$ in $L^{\infty}$, while the former bounds $(1 - |x|)^{-1} |u - v|$ in $L^2$. This combination of estimates appears already in previous works such as~\cite{LuWang2012}, \cite{Huang-Wang2016}, and \cite{Lin-Sun-Zhou2020} on harmonic maps and the associated heat flow, and is used repeatedly throughout this paper to control quantities of the form $\big(H(u)\, u_{x^1} \times u_{x^1} - H(v)\, v_{x^1}\times v_{x^2}\big)\cdot (u-v)$, among other things.
\vskip 1pt
The uniqueness of small-energy solutions to \eqref{eq:H-system}-\eqref{eq:Dirichlet-condition} in the case of constant $H(\cdot)$ was proved by Wente~\cite[Theorem 4.4]{Wente1969} as a consequence of a strict convexity property of $E_H$~\cite[Lemma 4.4]{Wente1969}. In addition, uniqueness results are also available for solutions with small oscillation~\cite{Jager-Kaul1979}, and for those that minimize the $E_{H}$-functional in a suitable sense~\cite[Remark 4]{BrezisCoron1984}; see also~\cite[Corollary IV.1.3]{Struwe1988}. In general, however, uniqueness fails for \eqref{eq:H-system}-\eqref{eq:Dirichlet-condition} without further restrictions, as shown by Struwe~\cite{Struwe85,Struwe1986} and Brezis--Coron~\cite{BrezisCoron1984} as a step in their resolution, independently, of the Rellich conjecture on the Plateau problem for constant mean curvature disks. Similar non-uniqueness results were obtained for instance in~\cite{Struwe1990-1,WangGuoFang1992,LamiDozo-Mariani1993b,Jakobowsky1994,BethuelRey1994} for certain classes of non-constant $H(\cdot)$ depending on the choice of $u_0$. In a different direction, we mention that there are examples of $H(\cdot)$, which can be made arbitrarily close to a constant, such that \eqref{eq:H-system}-\eqref{eq:Dirichlet-condition} admits no ``mountain pass'' solutions, so to speak, once $u_0$ is sufficiently small~\cite[Theorem 1.2]{CaldiroliMusina2006}.

\vskip 2pt
Our next result is a convexity estimate for the $E_H$-functional. It generalizes the result of Wente mentioned above, and is also analogous to the convexity estimate for the Dirichlet energy of mappings into closed manifolds established by Colding and Minicozzi~\cite[Theorem 3.1]{Colding-Minicozzi08b} as a central ingredient in their \emph{harmonic replacement} approach to the min-max construction of minimal $2$-spheres. (See~\cite{Zhou10,Zhou17b,Lin-Sun-Zhou2020,Laurain-Petrides2019} for examples of other existence results for minimal surfaces obtained via harmonic replacement. Convexity estimates for functionals other than the Dirichlet energy can be found for instance in~\cite{Berchenko-Kogan2018, LaurainLin2021, Jost-Zhu2023, LinZhu2025}.) 
\begin{thm}\label{thm:H-convexity}
Suppose $H \in C^{1}(\RR^3; \RR)$, and that $Q:\RR^3 \to \RR^3$ is a $C^{2}$-vector field related to $H$ by~\eqref{eq:Q-H-relation}, satisfying in addition that
\begin{equation}\label{eq:bounds-on-C2-vector-field}
\|DQ\|_{\infty; \RR^3} + \|D^{2}Q\|_{\infty; \RR^3} \leq \Lambda'.
\end{equation}
Also, let $\Lambda_0$ be an upper bound for $\|H\|_{\infty; \RR^3}$. Then there exists $\ep_{\conv} >0$, depending only on $\Lambda_0$ and $\Lambda'$, such that given a weak solution $u \in W^{1, 2} \cap L^{\infty}(\bB; \RR^3)$ of~\eqref{eq:H-system} satisfying $D(u) \leq \ep_{\conv}$, we have
\begin{equation}\label{eq:H-convexity}
 \frac{1}{4}\int_{\bB}|\nabla v - \nabla u|^2 \,dx \,\leq \,D(v) - D(u) + V_{H}(v, u),
\end{equation} 
 for all $v \in W^{1, 2} \cap L^{\infty}(\bB; \RR^3)$ such that $D(v) \leq \ep_{\conv}$ and $u - v \in W^{1, 2}_{0}(\bB; \RR^3)$.
\end{thm}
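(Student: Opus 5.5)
Set $w = v - u \in W^{1,2}_0 \cap L^\infty(\bB;\RR^3)$. Expanding the Dirichlet energy gives
\[
D(v) - D(u) = \int_{\bB}\bangle{\nabla u, \nabla w}\,dx + \frac12\int_{\bB}|\nabla w|^2\,dx .
\]
Since $w \in W^{1,2}_0\cap L^\infty$, we may take $\zeta = w$ in~\eqref{eq:H-sys-weak}, which turns the cross term into $-2\int_{\bB}H(u)\,u_{x^1}\times u_{x^2}\cdot w\,dx$. Estimate~\eqref{eq:H-convexity} therefore reduces to
\begin{equation*}
V_H(v,u) - 2\int_{\bB} H(u)\,u_{x^1}\times u_{x^2}\cdot w\,dx \;\geq\; -\frac14\int_{\bB}|\nabla w|^2\,dx .
\end{equation*}
So the plan is to show that the left side, which is the remainder of the volume term beyond its first variation, is at most $C\,\ep_{\conv}^{1/2}\int_{\bB}|\nabla w|^2$ in absolute value.

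For this I use the representation $V_H(v,u) = \int_{\bB} Q(v)\cdot v_{x^1}\times v_{x^2} - Q(u)\cdot u_{x^1}\times u_{x^2}$, justified by the paper's definition of $V_H$ (Section~\ref{sec:prelim}) since $Q\in C^2$. Interpolate along $u_t = u + t w$ and set $f(t) = \int_{\bB} Q(u_t)\cdot \partial_1 u_t\times\partial_2 u_t$. A standard computation, integrating by parts using $w=0$ on $\partial\bB$ and $\Div Q = 2H$, gives
\[
f'(t) = 2\int_{\bB} H(u_t)\,\partial_1 u_t\times \partial_2 u_t\cdot w\,dx .
\]
This is the null-Lagrangian structure: the terms with $DQ$ combine through the identity $DQ(a)\,w\cdot(b\times c) + \text{cyclic} = (\Div Q)\, w\cdot(b\times c)$. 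It is first proved for smooth maps and then extended to $W^{1,2}\cap L^\infty$ by approximation, which is where the bounds on $DQ$ are needed. Then $f(1)-f(0)-f'(0) = \int_0^1 (f'(t)-f'(0))\,dt$, and
\[
f'(t)-f'(0) = 2\int_{\bB}\big(H(u_t)-H(u)\big)\,\partial_1 u_t\times\partial_2 u_t\cdot w
+ 2\int_{\bB} H(u)\big(\partial_1u_t\times\partial_2u_t - u_{x^1}\times u_{x^2}\big)\cdot w .
\]

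The main obstacle is bounding these integrals by $\ep^{1/2}\|\nabla w\|_2^2$. For instance, $\int H(u)\,(w_{x^1}\times u_{x^2})\cdot w$ is quadratic in $w$ and linear in $\nabla u$, so it cannot be handled by H\"older alone, because $w\notin L^\infty$ with a good bound. Two routes are available.

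\emph{Route (a).} The second route avoids the term $(H(u_t)-H(u))$, which would require $\nabla H$, not assumed bounded here, and is the one I would try first. Rewrite the whole remainder via $Q$ instead of $H$: expand $Q(u+w)\cdot(\ldots) - Q(u)\cdot(\ldots) - (\text{linear part})$ by Taylor's theorem, using $\|DQ\|_\infty + \|D^2Q\|_\infty\le\Lambda'$. After integrating by parts, each remainder term becomes a trilinear expression in $\nabla u,\nabla v,\nabla w$ times $w$ or $|w|^2$. The terms of the form $w\cdot(\nabla w\times\nabla w)$ with coefficient $Q$ are handled as Jacobian determinants. Writing $Q(u_t)\cdot(w_{x^1}\times w_{x^2})$ and integrating by parts converts $Q$ into $DQ\,\nabla u_t$, giving integrands like $|DQ|\,|\nabla u_t|\,|w|\,|\nabla w|$.

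\emph{Route (b).} Every remaining term is of the form $\int |\nabla u_t|\,|w|\,|\nabla w|$ (possibly with $|D^2Q|\,|w|^2|\nabla u_t||\nabla w|$, handled similarly using $\|w\|_\infty$ only qualitatively or via Wente-type estimates). For the $u$-part I use the small-energy gradient estimate $(1-|x|)|\nabla u|\le C\ep^{1/2}$ for solutions together with Hardy's inequality $\|(1-|x|)^{-1}w\|_2\le C\|\nabla w\|_2$, as described after Theorem~\ref{thm:uniqueness}. For the $v$-part, where $v$ is not a solution, I instead use the Wente/Jacobian estimate $|\int J(w,w)\,\phi|\lesssim\|\nabla w\|_2^2\|\nabla\phi\|_2$ together with the smallness of $D(v)$.

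Collecting terms gives a remainder at most $C(\Lambda_0,\Lambda')\,\ep_{\conv}^{1/2}\|\nabla w\|_2^2$. Choosing $\ep_{\conv}$ so that $C\ep_{\conv}^{1/2}\le\frac14$ yields~\eqref{eq:H-convexity}.
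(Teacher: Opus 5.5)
Your overall plan matches the paper's: test with $w=v-u$, switch to the primitive $Q$, expand, control the $u$-terms by the gradient estimate plus Hardy, and control the $v$-terms by Wente plus $D(v)\le\ep_{\conv}$. The decisive step, however, is not carried out in a way that works.

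\textbf{Where the proposal fails.} After you integrate by parts in $\int_{\bB}Q(u_t)\cdot w_{x^1}\times w_{x^2}\,dx$ and pass to integrands $|DQ|\,|\nabla u_t|\,|w|\,|\nabla w|$, the determinant structure is lost.
\begin{itemize}
\item The resulting $v$-part $\int_{\bB}|\nabla v|\,|w|\,|\nabla w|\,dx$ is not bounded by $C\|\nabla v\|_2\|\nabla w\|_2^2$. Since $\nabla v=\nabla u+\nabla w$, it contains $\int_{\bB}|w|\,|\nabla w|^2\,dx$. This is not controlled by $\|\nabla w\|_2^3$, because $W^{1,2}$ does not embed in $L^\infty$ in two dimensions.
\item The Wente inequality you then invoke concerns $\int_{\bB}\phi\cdot w_{x^1}\times w_{x^2}$, not an absolute-value integrand, so it cannot rescue this term.
\item Terms like $|D^2Q|\,|w|^2|\nabla u_t|\,|\nabla w|$ cannot be handled ``using $\|w\|_\infty$ qualitatively''. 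That would make the threshold depend on $\|w\|_\infty$, not only on $\Lambda_0$ and $\Lambda'$.
\end{itemize}
Your first route also fails, but not for lack of a bound on $\nabla H$: $2\nabla H=\nabla\Div Q$ is bounded by $C\Lambda'$. The real problem is that $\int_{\bB}(H(u_t)-H(u))\,\partial_1u_t\times\partial_2u_t\cdot w$ is of size $\int_{\bB}|w|^2|\nabla u_t|^2$. This contains $\int_{\bB}|w|^2|\nabla v|^2$, and $v$ has no gradient estimate. The paper uses that interpolation only for the flow (Proposition~\ref{prop:flow-convexity}), where every slice has one.

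\textbf{The missing idea.} You need an exact decomposition in which $\nabla v$ enters in only one term, and that term is a pure Jacobian. Write
\[
v_{x^1}\times v_{x^2}=u_{x^1}\times u_{x^2}+(u_{x^1}\times w_{x^2}+w_{x^1}\times u_{x^2})+w_{x^1}\times w_{x^2},
\]
and integrate the part linear in $w$ by parts using $\Div Q=2H$, as in Lemma~\ref{lemm:volume-formula}(b). This gives
\begin{align*}
&V_H(v,u)-2\int_{\bB}H(u)\,u_{x^1}\times u_{x^2}\cdot w\,dx\\
&=\int_{\bB}\big(Q(v)-Q(u)-(DQ)_u(w)\big)\cdot u_{x^1}\times u_{x^2}\,dx+\int_{\bB}\big(Q(v)-Q(u)\big)\cdot\big(u_{x^1}\times w_{x^2}+w_{x^1}\times u_{x^2}\big)\,dx\\
&\quad+\int_{\bB}Q(v)\cdot w_{x^1}\times w_{x^2}\,dx.
\end{align*}
The first two integrals are at most $C\Lambda'\int_{\bB}\big(|w|^2|\nabla u|^2+|w|\,|\nabla u|\,|\nabla w|\big)$ and involve only $\nabla u$. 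By \eqref{eq:grad-estimate-and-Hardy} they are bounded by $C\sqrt{\ep_{\conv}}\,\|\nabla w\|_2^2$.

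The last integral must be kept as a Jacobian. Solve $\Delta\psi=w_{x^1}\times w_{x^2}$ with Neumann condition; Lemma~\ref{lemm:Neumann-Wente} applies because $w\in W^{1,2}_0$. Then
\[
\int_{\bB}Q(v)\cdot w_{x^1}\times w_{x^2}=-\int_{\bB}\bangle{(DQ)_v(\nabla v),\nabla\psi},
\]
which is at most $C\Lambda'\|\nabla v\|_2\|\nabla w\|_2^2\le C\Lambda'\sqrt{\ep_{\conv}}\,\|\nabla w\|_2^2$. With this decomposition, no term of the form $|\nabla v|\,|w|\,|\nabla w|$ or $|w|^2|\nabla u_t|\,|\nabla w|$ ever appears.
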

\begin{rmk}
Mainly because of the rather strong assumption~\eqref{eq:bounds-on-C2-vector-field}, we do not recover Theorem~\ref{thm:uniqueness} in general by taking both maps in Theorem~\ref{thm:H-convexity} to be weak solutions. 
\end{rmk}
Starting with the identity at the end of~\cite[page 2579]{Colding-Minicozzi08b}, the proof of Theorem~\ref{thm:H-convexity} boils down to showing that 
\begin{equation}\label{eq:volume-expansion}
V_H(v, u) - 2\int_{\bB}H(u)\, (v - u) \cdot u_{x^1} \times u_{x^2}\, dx
\end{equation}
is small compared to $\int_{\bB}|\nabla v - \nabla u|^2\, dx$. The vector field $Q$ in the assumption allows us to achieve this by rewriting the above difference as a sum of three integrals (see~\eqref{eq:Ef-difference} and the expressions that follow), the first two of which, thanks to the small-energy assumption on $u$, are amenable to the combination of estimates mentioned in the paragraph after Remark~\ref{rmk:existence-of-small-solution}, while the third has integrand $Q(v) \cdot (v - u)_{x^1} \times (v - u)_{x^2}$, which we treat by solving
\[
\Delta \psi = (v - u)_{x^1} \times (v - u)_{x^2}
\]
subject to homogeneous Neumann boundary condition, and using a generalization due to Da Lio--Palmurella--Rivi\'ere~\cite[Lemma A.6]{DaLioPalmurellaRiviere2020} of Wente's famous inequality~\cite[Lemma A.1]{BrezisCoron1984}. It is in this last step that we require the map $v$ to also have small energy. Also, the assumption that $v = u$ on $\partial \bB$ is important both to solving the above boundary value problem, and to the validity of a Wente-type estimate~\cite{DaLioPalmurella2017,Hirsch2019}. Throughout the proof of Theorem~\ref{thm:H-convexity}, we use in an essential way the bound~\eqref{eq:bounds-on-C2-vector-field}, which amounts to asking the $3$-form $2H \cdot \vol_{\RR^3}$ to have a primitive on $\RR^3$ satisfying certain estimates. As such, even when $H(\cdot)$ is constant, the proof of Theorem~\ref{thm:H-convexity} does not seem to generalize in any straightforward manner to other target manifolds, or to the case of partially free boundary conditions, for that matter. This is a problem we wish to address in a future work.

\bigskip
Next, we switch gears and discuss some relevant background for our third main result (Theorem~\ref{thm:uniformity}), which concerns the parabolic counterpart of~ \eqref{eq:H-system}-\eqref{eq:Dirichlet-condition}, namely the following Cauchy-Dirichlet problem:
\begin{equation}\label{eq:H-flow}
\left\{
\begin{aligned}
u_{t} - \Delta u &= -2 H(u)\, u_{x^1}\times u_{x^2}  &&\quad \text{on } \bB \times [0,T)\,,\\
u(\cdot, 0) &= u_0 &&\quad \text{on } \bB \,,\\
u(x, t) &= \chi(x) &&\quad \text{for } (x, t)\in \partial \bB \times [0,T) \,,
\end{aligned}
\right.
\end{equation}
where the initial data $u_0:\bB \to \RR^3$ and boundary data $\chi:\partial \bB \to \RR^3$ satisfy the usual compatibility condition $u_0|_{\partial \bB} = \chi$. The differential equation in~\eqref{eq:H-flow} is sometimes referred to as the \emph{$H$-surface flow}.

As a tool in his pioneering construction of free boundary constant mean curvature disks, Struwe~\cite{Struwe88} performed a rather comprehensive analysis of the system~\eqref{eq:H-flow} when $H(\cdot)$ is constant and the third equation is replaced by a geometric orthogonality condition. 
In \cite{Rey1991}, Rey studied~\eqref{eq:H-flow} assuming that $H(\cdot) \in C^{1}\cap L^\infty(\mathbb{R}^3, \mathbb{R})$, that $u_0 \in W^{1,2}\cap L^{\infty} (\bB, \mathbb{R}^3)$, and that $\chi \in C^{2,\gamma}(\partial \bB, \mathbb{R}^3)$.
Under a Hildebrandt-type condition:
\begin{equation}\label{HSmall}
    \|H\|_{\infty; \RR^3}\cdot \|u_0\|_{\infty; \bB} <1\,,
\end{equation} 
he obtained a solution in $C^{2+\gamma, 1+\frac{\gamma}{2}}(\overline \bB \times (0,\infty); \mathbb{R}^3)$ which converges, along a sequence $t_k \to \infty$, to a limiting map that satisfies~\eqref{eq:H-system}-\eqref{eq:Dirichlet-condition}.

Next, taking $H(\cdot)$ to be bounded and Lipschitz, and $\chi$ to be in $W^{\frac{3}{2},2}(\partial \bB;\RR^3)$, but without a smallness condition of the form \eqref{HSmall}, Chen and Levine \cite{ChenLevine2002} constructed a regular solution to \eqref{eq:H-flow} which exists up until the first occurrence of Dirichlet energy concentration, and, provided that $t\mapsto D(u(\cdot, t))$ is non-increasing, extends to a global weak solution with only finitely many singularities. However, unlike the situation with the harmonic map flow, it is not clear whether the Dirichlet energy is indeed monotone along the $H$-surface flow, making the study of the latter more subtle. We remark also that, similar to~\cite{Rey1991}, the construction in~\cite{ChenLevine2002} proceeds by first solving~\eqref{eq:H-flow} with $u_0$ replaced by a sequence of smooth approximations, and then establishing a priori bounds and uniqueness of solutions within the class
\begin{equation*}
V^{T}:= \big\{u \in C^0([0, T]; W^{1, 2}(\bB, \mathbb{R}^3)) \ |\ |\nabla^2 u|,\ |u_{t}| \in L^2(\bB \times (0, T))\big\},
\end{equation*}
in order to pass to the limit, and extend the resulting solution up to the maximal time of existence. Still assuming that $H(\cdot)$ is bounded and Lipschitz, but working with the wider class $W^{1,2}(\bB \times (0, T); \mathbb{R}^3)$, Wang~\cite{Wang1999} proved a partial regularity result for weak solutions to the $H$-surface flow. 

A general existence result for global weak solutions was more recently obtained by B\"{o}gelein, Duzaar, and Scheven~\cite{BogeleinDuzaarScheven2013}, assuming only that $H(\cdot)$ is bounded, continuous, and satisfies an isoperimetric condition in the style of~\cite{Steffen1976-1}. The latter also determines an energy threshold imposed on $u_0$, which otherwise is only required to lie in $W^{1, 2}(\bB)$. By a time-discretization scheme coupled with variational methods, they produced a weak solution to~\eqref{eq:H-flow} of class $C^0([0, \infty); L^2(\bB)) \cap L^{\infty}((0, \infty); W^{1, 2}(\bB))$ having time derivative in $L^2(\bB \times (0, \infty))$, and showed that it has a weak sequential limit at infinite time that solves~\eqref{eq:H-system}-\eqref{eq:Dirichlet-condition}. Under similar assumptions, the same authors proved in a follow-up work~\cite{BogeleinDuzaarScheven2015} that if in addition $u_0 \in W^{2, q}(\bB) + W^{1, 2}_{0}(\bB)$ for some $q > 2$, then the solution obtained in~\cite{BogeleinDuzaarScheven2013} has H\"older continuous gradient up to the boundary, away from a finite set of singular time slices which can be ruled out when the initial energy is small.

We now state our result on the $H$-surface flow,  which is analogous to the work of the second named author on the harmonic map flow with Dirichlet boundary condition and small initial energy~\cite{Lin2013}. Our notion of weak solution for~\eqref{eq:H-flow} is the same as the one in~\cite{BogeleinDuzaarScheven2013}. We recall the definition at the start of Section~\ref{sec:existence-uniformity}.
\begin{thm}\label{thm:uniformity}
Suppose $H \in C^{1}(\RR^3; \RR)$ is such that 
\[
\|H\|_{\infty; \RR^3} \leq \Lambda_0, \quad \|\nabla H\|_{\infty; \RR^3} \leq\Lambda_1.
\]
There is a constant $\ep  = \ep(\Lambda_0, \Lambda_1) \in (0, 1)$ with the property that for all $u_0 \in C^0(\overline{\bB}; \RR^3) \cap W^{1, 2}(\bB;\RR^3)$ satisfying $D(u_0) < \ep$, there exists a unique weak solution in $\cap_{T>0} W^{1,2}(\bB \times (0,T); \mathbb{R}^3)$ to the Cauchy-Dirichlet problem \eqref{eq:H-flow} subject to the requirement that 
\begin{equation}\label{eq:continued-small-energy}
D(u(\cdot, t)) \leq 2\ep,\quad \text{for a.e. }t > 0.
\end{equation}
This solution belongs to $C_{\loc}^{2 + \mu, 1 + \frac{\mu}{2}}(\bB \times (0, \infty); \RR^3)$ for all $\mu \in (0, 1)$, and extends continuously to $\overline{\bB} \times (0, \infty)$. Also, the map $t \mapsto u(\cdot, t)$ is continuous from $(0, \infty)$ to $W^{1, 2}(\bB; \RR^3)$, and we have for all $t \geq s \geq 1$ that
\begin{equation}\label{eq:convexity-for-statement}
\frac{1}{8}\int_{\bB}|\nabla u(\cdot, s) - \nabla u(\cdot, t)|^2\, dx \leq E_H(u(\cdot, s), u_0) - E_H (u(\cdot, t), u_0)\,.
\end{equation}
Finally, there exists a solution $v\in C^0(\overline{\bB};\RR^3) \cap W^{1, 2}(\bB;\RR^3) \cap C^{2}_{\loc}(\bB;\RR^3)$ to 
\begin{equation*}
\left\{
\begin{array}{ll}
\Delta v = 2H(v)\,v_{x^1} \times v_{x^2} & \text{ in }\bB,\\
v = u_0  & \text{ on }\partial \bB,
\end{array}
\right.
\end{equation*}
satisfying the energy bound $D(v) \leq 2D(u_0)$, such that as $t \to \infty$, the slices $u(\cdot, t)$ converge to $v$ in both the $W^{1,2}$ and $C^0$-norms on $\bB$, and also in $C^2$ on compact subsets of $\bB$.
\end{thm}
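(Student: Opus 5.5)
The plan has four steps: construct a classical solution for smooth data, prove a convexity estimate along the flow for it, pass to general data by approximation and quantitative uniqueness, and finally deduce convergence.

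\textbf{Step 1: classical solutions for smooth data.} First treat smooth initial data. Approximate $u_0 \in C^0(\overline{\bB}) \cap W^{1,2}(\bB)$ by smooth maps $u_{0,k}$ that converge uniformly and in $W^{1,2}$, with $D(u_{0,k}) < \ep$. For such data, the existence theory of Chen--Levine~\cite{ChenLevine2002} (or B\"ogelein--Duzaar--Scheven~\cite{BogeleinDuzaarScheven2013, BogeleinDuzaarScheven2015}) gives a regular solution, and this solution is smooth up to the boundary for positive times as long as the energy does not concentrate.

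\textbf{Step 2: small energy and the convexity estimate along the flow.}
\begin{itemize}
\item First I will show that the energy stays small, $D(u_k(\cdot,t)) \le 2\ep$. The flow is the gradient flow of $E_H(\cdot,u_0)$, so $E_H$ is nonincreasing. Combining this with the isoperimetric inequality $|V_H| \le c\Lambda_0(D(u)+D(u_0))^{3/2}$ and a continuity argument gives $D(u(t)) \le 2\ep$ for $\ep$ small.
\item Next, the $\ep$-regularity/gradient estimates give $(1-|x|)|\nabla u(t)| \lesssim \sqrt{\ep}$ for $t \ge 1/2$, where the decay uses parabolic interior estimates. Since $u(s)-u(t) \in W^{1,2}_0$, this combines with Hardy's inequality as in Theorem~\ref{thm:uniqueness}.
\item For $s<t$, I will expand
\[
E_H(u(s))-E_H(u(t)) = \tfrac12\|\nabla(u(s)-u(t))\|^2 + \int \nabla u(t)\cdot\nabla(u(s)-u(t)) + V_H(u(s),u(t)).
\]
Integrating by parts, the middle term becomes $-\int \Delta u(t)\cdot w$ with $w=u(s)-u(t)$. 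Substituting the equation gives
\[
-\int \Delta u(t)\cdot w = \int u_t(t)\cdot w \;-\; 2\int H(u(t))\, u_{x^1}\times u_{x^2}\cdot w .
\]
\item Without the global primitive $Q$, the volume defect $V_H(u(s),u(t)) - 2\int H(u(t))\, w\cdot u_{x^1}\times u_{x^2}$ must be bounded by $C\sqrt{\ep}\,\|\nabla w\|_2^2$. I would do this via the homotopy $u(t)+\theta w$ and the first variation formula for $V_H$ (valid with $H\in C^1$). Differentiating twice in $\theta$ produces terms $\nabla H\cdot w\,(w\cdot \partial u\times\partial u)$ and $H\,w\cdot(\partial w\times \partial u)$. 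These are controlled by Hardy's inequality together with the weighted gradient bound, plus a Wente/integration-by-parts trick for the term $H\, w\cdot \partial w\times\partial w$, which can be rewritten using $\nabla H$ and the smallness of $D(w)$.
\item The term $\int u_t(t)\cdot w$ remains. Here I would use a differential-inequality argument: set $f(t)=E_H(u(t))$, so that $f' = -\|u_t\|_2^2$. The previous bullet then yields $\tfrac14\|\nabla w\|^2 \le f(s)-f(t) + \|u_t(t)\|\,\|w\|$. Following Colding--Minicozzi and Lin~\cite{Lin2013}, I would instead integrate the quantity $\frac{d}{dt}\|\nabla(u(s)-u(t))\|^2$ in $t$ and use convexity to obtain monotonicity, eventually giving the constant $\frac18$.
\end{itemize}

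\textbf{Step 3: passage to the limit and uniqueness.} To pass $k\to\infty$, I need uniform-in-$k$ bounds and a quantitative uniqueness result: for two weak solutions with small energy in $W^{1,2}(\bB\times(0,T))$, $\|u-v\|$ is controlled by the difference of the data. This is proved as in~\cite{Lin-Sun-Zhou2020} by testing the difference equation with $u-v$ and using Hardy plus the weighted gradient bounds. It gives uniqueness of the weak solution, and passing to the limit yields \eqref{eq:convexity-for-statement} and the stated regularity. Continuity up to $\partial\bB$ comes from a boundary modulus-of-continuity argument using $u_0\in C^0$ and the smallness of the energy.

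\textbf{Step 4: convergence.} From \eqref{eq:convexity-for-statement}, $f$ is decreasing and bounded below, so $u(t)$ is Cauchy in $W^{1,2}$. Along times with $\|u_t\|\to0$ the limit $v$ solves the $H$-system; the $C^2_{\loc}$ convergence follows by interior estimates. Uniform $C^0$ convergence follows from equicontinuity near the boundary together with $W^{1,2}$ convergence.

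\textbf{Main obstacle.} I expect the hardest step to be the volume-defect estimate in Step 2 without the global primitive $Q$, in particular the cubic term $H\,w\cdot w_{x^1}\times w_{x^2}$, which requires a Wente-type bound for $w$ vanishing on $\partial\bB$.
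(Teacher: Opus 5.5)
Your architecture is the paper's: regular solutions for smooth data, the bound $D(u(t))\le 2D(u_0)$ from monotonicity of $E_H$ plus the isoperimetric inequality, convexity along the flow, approximation with quantitative uniqueness, and a Qing-type boundary argument. Your handling of the volume defect (homotopy formula, Hardy's inequality, Dirichlet Wente inequality for the cubic term) is essentially that of Proposition~\ref{prop:flow-convexity}.

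The genuine gap is the cross term $\int_{\bB}u_t(\cdot,t)\cdot(u(\cdot,s)-u(\cdot,t))\,dx$, which you leave open. Your inequality $\frac{1}{4}\|\nabla w\|_2^2\le f(s)-f(t)+\|u_t(t)\|_2\|w\|_2$ is correct. Since $\|w\|_2^2\le (t-s)\int_s^t\|u_t\|_2^2$, closing it this way needs $\|u_t(\cdot,t)\|_2^2\lesssim (t-s)^{-1}\int_s^t\|u_t\|_2^2$. That is, the speed at the endpoint must be controlled by its average, and this has to be proved separately. The paper does so in Lemma~\ref{lemm:speed-monotone}: $\max\{t^{-1},1\}\|u_t(\cdot,t)\|_2^2$ is nonincreasing. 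The proof tests the time-differenced equation with second-order difference quotients and absorbs the nonlinearity using $\Lambda_1$, the weighted gradient bound, and Hardy's inequality (since $u_t=0$ on $\partial\bB$). This yields $\|u_t(t)\|_2\|w\|_2\le 2\int_s^t\|u_t\|_2^2=2(f(s)-f(t))$, hence the constant $\frac{1}{8}$.

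Your proposed substitute does not work as stated. Differentiating $\frac12\|\nabla(u(t)-u(s))\|_2^2$ and using the equation gives $-\|u_t(t)\|_2^2+\int u_t(s)\cdot u_t(t)$ plus the nonlinearity tested against $u_t(t)$. None of these has a sign, and no monotone quantity is identified.

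A workable alternative is to differentiate the $L^2$ distance instead. For $s\le\tau\le t$, your identity gives
\[
\tfrac{d}{d\tau}\tfrac12\|u(\tau)-u(s)\|_2^2\le f(s)-f(\tau)-c\|\nabla(u(\tau)-u(s))\|_2^2,
\qquad
\tfrac{d}{d\tau}\tfrac12\|u(\tau)-u(t)\|_2^2\le -c\|\nabla(u(\tau)-u(t))\|_2^2 .
\]
Integrating over $[s,t]$ bounds the $\tau$-averages of both Dirichlet distances by $C(f(s)-f(t))$. Averaging the triangle inequality in $\tau$ then gives the pointwise estimate. Some argument of this kind must be supplied. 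By contrast, the cubic term you flagged as the main obstacle is routine once gradient bounds on both slices are available.

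Step 3 also skips several substantive ingredients.
\begin{itemize}
\item Uniqueness is claimed among all small-energy weak solutions in $W^{1,2}(\bB\times(0,T))$. The weighted gradient bound you pair with Hardy's inequality is only available once such solutions are known to be regular. The paper proves this with a separate $\ep$-regularity result (Lemma~\ref{lemm:weak-solution-regular}, via Rivi\`ere's gauge and time difference quotients).
\item For approximating flows with different data, $u-v$ is not an admissible test function, and $|\nabla u|$ blows up like $t^{-1/2}$. Proposition~\ref{prop:stability} instead tests with $(u-v)-(u_0-v_0)$ under the weight $t^{-1/2}$. This is where $\|u_0-v_0\|_\infty$, and hence the $C^0$ hypothesis, enters.
\item Passing the convexity estimate and $E_H$ to the limit needs $u^m(\cdot,t)\to u(\cdot,t)$ strongly in $W^{1,2}$ for every $t$, not merely in $L^2_tW^{1,2}_x$. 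The paper obtains this from the uniform bound on $\nabla u^m_t$ in Lemma~\ref{lemm:speed-monotone}(a).
\end{itemize}
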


We note that the general existence result in~\cite{BogeleinDuzaarScheven2013} applies as soon as $\sqrt{D(u_0)} \cdot \|H\|_{\infty}$ is below an explicit constant (see specifically Theorem 9.1 therein). Nonetheless, we take a different route towards constructing a solution in order to have the convexity estimate~\eqref{eq:convexity-for-statement} built in, which plays an important role in the $C^0 \cap W^{1, 2}$ convergence at infinite time. We also emphasize that the latter occurs without having to pick a sequence of time slices. Still another aspect of Theorem~\ref{thm:uniformity} that we want to stress is the uniqueness of weak solutions subject to~\eqref{eq:continued-small-energy}, which is inspired by a similar result for the harmonic map flow proved by Wang~\cite[Theorem 1.1]{LuWang2012}. The proof of Theorem~\ref{thm:uniformity} has three main steps, each making heavy use of the $C^1$-assumption on $H(\cdot)$:

\vskip 1mm
\begin{enumerate}
\item[(i)] We first work with the case where $u_0 \in C^{2, \alpha}(\overline{\bB})$. The existence of a regular solution up to a maximal time is guaranteed by the results in~\cite{ChenLevine2002}. Then, using the isoperimetric inequality, and the monotonicity formula for $t \mapsto E_H(u(\cdot, t), u_0)$, we show, similar to~\cite[Remark 8.2]{BogeleinDuzaarScheven2013}, that $D(u(\cdot, t))$ and the integral of $|u_{t}|^2$ with respect to $(x, t)$ remain below twice the initial energy, provided the latter is sufficiently small. These bounds, along with standard a priori estimates for~\eqref{eq:H-flow}, and a generalization~\cite[Lemma 4.2]{WangGuoFang1992} of Wente's uniqueness result for~\eqref{eq:H-system} subject to constant boundary data~\cite{Wente1975}, allows us to rule out finite-time extinction of the flow, by a well-known contradiction argument that involves rescaling the solution.
\vskip 1mm
\item[(ii)] Using interior gradient estimates and the Hardy inequality in $W^{1,2}_{0}(\bB)$, as well as Wente-type estimates for equations involving determinant terms, up to decreasing the smallness threshold for the initial energy, we establish a convexity estimate for the $E_{H}$-functional along the flow (Proposition~\ref{prop:flow-convexity}). The statement is analogous to~\eqref{eq:H-convexity}. However, in dealing with the counterpart of~\eqref{eq:volume-expansion}, we express the enclosed volume using a homotopy formula, instead of a primitive of $2H \cdot \vol_{\RR^3}$, and take advantage of the fact that, in the flow setting, we have gradient estimates available for each time slice, whereas in Theorem~\ref{thm:H-convexity}, only one of the two maps are guaranteed to satisfy such estimates. This is the main reason that we are able to obtain the flow version of the convexity estimate under weaker assumptions on $H(\cdot)$ than in Theorem~\ref{thm:H-convexity}.
\vskip 1mm
\item[(iii)] Adapting an argument due to Wang~\cite[Lemma 3.2]{LuWang2012}, we prove a quantitative uniqueness result for weak solutions of~\eqref{eq:H-flow} under a small-energy condition of the form~\eqref{eq:continued-small-energy}, which allows us to construct a flow out of $C^0\cap W^{1, 2}$ initial data by approximation as in~\cite{ChenLevine2002}, with the class $V^{T}$ replaced by $W^{1, 2}(\bB\times (0, T); \RR^3)$. The convexity estimate survives this approximation process, which implies the asserted strong $W^{1, 2}$-convergence when combined with a lower bound on $E_H(u(\cdot, t), u_0)$ coming from the isoperimetric inequality. We then use standard interior estimates, and the integrability of $|u_t|^2$ on $\bB \times (0, \infty)$, to locally upgrade the convergence to $C^2$, and show that the limit satisfies the time-independent equation~\eqref{eq:H-system}. Finally, an adaptation of the work of Qing~\cite{Qing1993} on the boundary regularity of weakly harmonic maps gives uniform convergence on all of $\bB$, based on the previous two modes of convergence.
\end{enumerate}

The remainder of this paper is organized as follows. In Section~\ref{sec:prelim}, we recall the definition of the enclosed volume $V_H$ and some of its basic properties, following~\cite{DuzaarSteffen1999} and~\cite{BogeleinDuzaarScheven2013}. Section~\ref{sec:convexity} contains the proof of Theorems~\ref{thm:uniqueness} and~\ref{thm:H-convexity}. In Section~\ref{sec:regular-solutions}, we study the $H$-surface flow~\eqref{eq:H-flow} in the case where $u_0 \in C^{2, \alpha}(\overline{\bB};\RR^3)$, show that a solution exists in $C^{2 + \alpha, 1 + \frac{\alpha}{2}}(\overline{\bB} \times [0, \infty); \RR^3)$ provided $D(u_0)$ is small enough, and derive a number of other estimates along the flow. In Section~\ref{sec:existence-uniformity}, we turn to weak solutions and complete the proof of Theorem~\ref{thm:uniformity}. The appendices contain some standard analytical tools that we use throughout the paper.
\subsection*{Acknowledgement} 
The research of L. Lin is partially supported by a UCSC research grant. X. Zhou acknowledges the support by NSF grant DMS-1945178, DMS-2506717 and a grant from the Simons Foundation.

\section{Preliminaries on the enclosed volume}\label{sec:prelim}

Much of the material collected in this section can be found, in one form or another, in~\cite[Section 3]{BogeleinDuzaarScheven2013},~\cite[Section 3]{DuzaarSteffen1999}, or~\cite[Sections 2 and 3]{Steffen1976-1}. Also, a standard reference for geometric measure theory is~\cite{LeonGMT}.

Given $u \in W^{1, 2}(\bB; \RR^3)$, we define, for any smooth, compactly supported $2$-form $\omega$ on $\RR^3$, 
\begin{equation}\label{eq:Ju-definition}
J_{u} (\omega) := \int_{\bB} \omega_{u}(u_{x^1}, u_{x^2}) dx^1 \wedge dx^2,
\end{equation}
where by $\omega_{u}$ we mean $\omega \circ u$. The pointwise bound
\[
\big|\omega_{u}(u_{x^1}, u_{x^2})\big| \leq \|\omega\|_{\infty}\cdot  |u_{x^1} \wedge u_{x^2}|
\]
implies that $J_{u}$ is a $2$-current in the sense of being continuous as a linear functional on the space of compactly supported smooth $2$-forms, and that its mass satisfies 
\begin{equation}\label{eq:Ju-mass-bound}
\bM (J_{u}) \leq\int_{\bB} \left |u_{x^1} \wedge u_{x^2} \right|dx \leq D(u)\,.
\end{equation}
If $(u_n)$ is a sequence in $W^{1, 2}(\bB;\RR^3)$ such that $\|u_n - u\|_{1, 2; \bB} \to 0$, and that $u_n \to u$ almost everywhere on $\bB$, then for any $\omega$ as above we have
\begin{equation}\label{eq:continuity-of-pushforward-operation}
\begin{split}
\big|J_{u_n}(\omega) - J_u(\omega)\big| \leq\ & \int_{\bB} |\omega_{u_n} - \omega_{u}||\nabla u|^2 +\int_{\bB} |w_{u_n}|(|\nabla u| + |\nabla u_n|)|\nabla u - \nabla u_n|,
\end{split}
\end{equation}
and the integrals converge to $0$ as $n \to \infty$ by the dominated convergence theorem and, respectively, H\"older's inequality. 

\begin{lemm}\label{lemm:I-vu-construction}
Suppose $u, v \in L^{\infty} \cap W^{1, 2}(\bB; \RR^3)$ and that $v - u \in W^{1, 2}_0(\bB)$. Then there exists a unique integer multiplicity $3$-current $I_{v, u}$ on $\RR^3$ such that $\partial I_{v, u} = J_v - J_u$ and that $\bM(I_{v, u}) < \infty$. This $I_{v, u}$ has the following additional properties.
\vskip 1mm
\begin{enumerate}
\item[(a)] There holds the mass bound
\begin{equation}\label{eq:isoperimetric-ineq}
\bM(I_{v, u}) \leq c \cdot \big(D(u) + D(v)\big)^{\frac{3}{2}},
\end{equation}
where $c$ is a universal constant.
\vskip 1mm
\item[(b)] Letting $R_0 = \max\{\|u\|_{\infty}, \|v\|_{\infty}\}$ and $E_0 = \sqrt{D(u) + D(v)}$, we have
\begin{equation}\label{eq:I-vu-support-bound}
\supp(I_{v, u}) \subset \overline{B_{R_0 + cE_0}},
\end{equation}
where again $c$ is a universal constant.
\vskip 1mm
\item[(c)] For all smooth, compactly supported $3$-form $\alpha$, we have
\[
I_{v, u}(\alpha) = \int_{0}^{1}\int_{\bB}\alpha_{sv + (1-s)u}\big(v - u, sv_{x^1} + (1-s)u_{x^1}, sv_{x^2} + (1-s)u_{x^2}\big)\, dxds.
\]
\end{enumerate}
\end{lemm}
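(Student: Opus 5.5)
We will build $I_{v,u}$ from the straight-line homotopy and then read off (a)--(c). Set $h(x,s) = sv(x) + (1-s)u(x)$ on $\bB \times [0,1]$. Define the $3$-current $I_{v,u}$ by the formula in (c), i.e. $I_{v,u} := h_{\#}\llbracket \bB \times (0,1)\rrbracket$. The integrand is bounded by $\|\alpha\|_\infty |v-u|\,(|\nabla u|+|\nabla v|)^2$. Since $u,v \in L^\infty$, this is integrable, so $I_{v,u}$ has finite mass.

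\textbf{Boundary.} We apply the homotopy formula $\partial h_{\#}\llbracket \bB\times(0,1)\rrbracket = J_v - J_u - h_{\#}(\llbracket\partial \bB\rrbracket\times(0,1))$. The lateral term should vanish because $u=v$ on $\partial\bB$. For smooth maps with $u=v$ on $\partial\bB$ this is Stokes' theorem applied to $d\big(h^*\omega\big)$ on the cylinder: the lateral face contributes $\int h^*\omega$, which vanishes since $\partial_s h = v-u = 0$ there. For general $u,v\in W^{1,2}\cap L^\infty$ with $v-u\in W^{1,2}_0$, we approximate. Take $u_n\to u$ smooth in $W^{1,2}$, a.e., with uniform $L^\infty$ bounds, and $\varphi_n\in C^\infty_c(\bB)$ with $\varphi_n\to v-u$ likewise, and put $v_n=u_n+\varphi_n$. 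Then $\partial I_{v_n,u_n}=J_{v_n}-J_{u_n}$. We pass to the limit using \eqref{eq:continuity-of-pushforward-operation} for the $J$'s and the analogous dominated-convergence/H\"older argument for formula (c).

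\textbf{Integrality and uniqueness.} Integer multiplicity follows because $I_{v,u}$ is a Lipschitz/Sobolev pushforward. More cleanly, $J_v-J_u$ is an integral $2$-cycle (mass $\le D(u)+D(v)$, boundary zero), so by the Federer--Fleming isoperimetric/deformation theory, or the $3$-dimensional fact that cycles bound, it has a filling. In codimension zero in $\RR^3$, a finite-mass $3$-current $T$ with prescribed boundary is unique. Indeed, the difference $S$ of two such currents has $\partial S=0$ and finite mass, so by the constancy theorem $S = k\llbracket\RR^3\rrbracket$, and finite mass forces $k=0$. Writing $I_{v,u}=\theta\,\llbracket \RR^3\rrbracket$ with $\theta\in L^1$ a BV function, and applying the constancy theorem to a filling of integral type, gives that $\theta$ is integer valued. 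I expect this integrality step, done rigorously for Sobolev rather than Lipschitz maps, to be the main technical point. I would handle it via the approximation above plus closure of integral currents (mass bounds on $\partial$ are uniform, so the limit is integral).

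\textbf{(a) and (b).} By the isoperimetric inequality in $\RR^3$, the unique filling satisfies $\bM(I_{v,u})\le c\,\bM(J_v-J_u)^{3/2}\le c(D(u)+D(v))^{3/2}$ by \eqref{eq:Ju-mass-bound}. For (b), $\supp(J_v-J_u)\subset \overline{B_{R_0}}$. On the complement, $\theta$ is locally constant, hence $0$ on the unbounded component. For any point at distance $r>R_0$ from the origin, the monotonicity/density lower bound for integer-multiplicity currents without boundary in $\{|y|>R_0\}$ gives $\bM(I_{v,u}\llcorner B_\rho)\ge c\rho^3$ for balls avoiding $\overline{B_{R_0}}$. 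Alternatively one can slice by spheres $\partial B_r$: $m(r)=\bM(I\llcorner (\RR^3\setminus B_r))$ satisfies $m(r)^{2/3}\le -c\,m'(r)$ for $r>R_0$, since slices bound the outer part. Integrating this differential inequality forces $m(r)=0$ for $r\ge R_0+c\,\bM(I)^{1/3}\le R_0+cE_0$.
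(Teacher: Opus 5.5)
Your argument is correct. It uses the same toolkit as the paper, but in the reverse logical order. The shared ingredients are the smooth approximations $u_n$ and $v_n=u_n+\varphi_n$ with uniform $L^\infty$ bounds, the Federer--Fleming compactness theorem, the isoperimetric theorem, and the constancy theorem for uniqueness.

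The paper proceeds as follows:
\begin{itemize}
\item It first shows that $J_v-J_u$ is an integer multiplicity cycle, by applying compactness to $J_{v_n}-J_{u_n}$.
\item It defines $I_{v,u}$ as the filling given by the isoperimetric theorem, so (a) is immediate, and reads (b) off the proof of that theorem.
\item Only at the end does it identify $I_{v,u}$ with the homotopy formula. The smooth homotopy currents $T_n$ subconverge to an integral filling, which must be $I_{v,u}$ by uniqueness, and $\lim T_n(\alpha)$ is evaluated by dominated convergence.
\end{itemize}

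You instead take the formula in (c) as the definition, so finite mass and (c) come for free. What you must then earn is $\partial I_{v,u}=J_v-J_u$ and integrality. You do this with the same $T_n$, the same dominated-convergence limits, and closure under compactness. Part (a) then follows from the isoperimetric theorem together with uniqueness.

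Two remarks on your route.

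\emph{Integrality of $J_v-J_u$.} To invoke the isoperimetric theorem in (a), you need $J_v-J_u$ to be an integer multiplicity cycle. You assert this but do not prove it. It follows by applying the compactness theorem to $\partial T_n=J_{v_n}-J_{u_n}$, exactly as in the paper. Relatedly, the phrase ``Lipschitz/Sobolev pushforward'' does not by itself justify integrality of $I_{v,u}$ for a merely Sobolev homotopy. The closure argument you then indicate does justify it, since $\bM(T_n)+\bM(\partial T_n)$ is uniformly bounded.

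\emph{Part (b).} Your definition makes (b) trivial and in fact sharper. Since $sv+(1-s)u$ takes values a.e.\ in the convex set $\overline{B_{R_0}}$, formula (c) gives $\supp(I_{v,u})\subset\overline{B_{R_0}}$ directly. Your own constancy-theorem observation on the connected set $\RR^3\setminus\overline{B_{R_0}}$ already yields the same conclusion. The density and slicing arguments you add afterwards are therefore unnecessary.
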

\begin{proof}
For the uniqueness assertion, suppose $S_1$ and $S_2$ are integer multiplicity $3$-currents such that $\partial S_i = J_{v} - J_{u}$ and $\bM(S_i) < \infty$ for $i = 1, 2$. Then by the constancy theorem~\cite[Theorem 26.27]{LeonGMT} there exists $a \in \RR$ such that 
\[
\big|a\int_{\RR^3}\varphi\big| = \big|S_1(\varphi) - S_2(\varphi)\big| \leq (\bM(S_1)+\bM(S_2)) \cdot \|\varphi\|_{\infty},
\]
for any compactly supported, smooth $3$-form $\varphi$ on $\RR^3$. Since both $S_1$ and $S_2$ have finite mass, we must have $a = 0$, and hence $S_1 = S_2$. 

For existence, we first show that $J_{v} - J_{u}$ is a compactly supported, integer multiplicity $2$-current with no boundary. To that end, let $(u_n)$ and $(w_n)$ be sequences in $C^{\infty}(\overline{\bB})$ and $C^{\infty}_{c}(\bB)$, respectively, such that 
\begin{equation}\label{eq:I-vu-a.e.-approximation}
|u_n - u| + |w_n - (v - u)| \to 0 \quad\text{a.e. on }\bB,
\end{equation}
and that
\begin{equation}\label{eq:I-vu-W12-approximation}
\lim_{n \to \infty}\big(\|u_n - u\|_{1, 2; \bB} + \|w_n - (v - u)\|_{1, 2;\bB}\big)  = 0.
\end{equation}
Under the assumption that $u$ and $v$ are essentially bounded, we can choose $(u_n)$ and $(w_n)$ to satisfy in addition that
\begin{equation}\label{eq:approximation-preserve-L-infty}
\|u_n\|_{\infty; \bB} \leq \|u\|_{\infty; \bB}, \quad \|w_n\|_{\infty; \bB} \leq \|v - u\|_{\infty; \bB};
\end{equation}
see for instance the proofs of~\cite[Chapter 5.3.3, Theorem 3]{Ev} and~\cite[Chapter 5.5, Theorem 2]{Ev}. Defining also
\[
v_n = u_n + w_n,
\]
we see from~\eqref{eq:continuity-of-pushforward-operation} that, in the sense of $2$-currents,
\begin{equation}\label{eq:approximation-of-Ju-in-Ivu-construction}
J_{u} = \lim_{n \to \infty} J_{u_n}, \quad J_{v} = \lim_{n \to \infty} J_{v_n}.
\end{equation}
Since $u_n$ and $v_n$ are smooth on $\overline{\bB}$ and agree on $\partial \bB$, it is standard that each $J_{v_n} - J_{u_n}$ is an integer multiplicity $2$-current, and satisfies $\partial(J_{v_n} - J_{u_n}) = 0$. Noting by~\eqref{eq:Ju-mass-bound} and~\eqref{eq:I-vu-W12-approximation} that 
\begin{equation}\label{eq:Tn-mass-bound}
\limsup_{n \to \infty}\bM(J_{v_n} - J_{u_n}) \leq D(u) + D(v),
\end{equation}
we may invoke the Federer--Fleming compactness theorem~\cite[Theorem 27.3]{LeonGMT}, which together with the convergence~\eqref{eq:approximation-of-Ju-in-Ivu-construction} shows that $J_{v} - J_{u}$ is an integer multiplicity $2$-current. That it has no boundary is clear since the same is true for each $J_{v_n} - J_{u_n}$; that it has compact support follows from~\eqref{eq:Ju-definition} and the assumption $u, v \in L^{\infty}(\bB; \RR^3)$. The isoperimetric theorem~\cite[Theorem 30.1]{LeonGMT} then yields an integer multiplicity $3$-current $S$ such that $\partial S = J_{v} - J_{u}$ and that
\[
\bM(S) \leq c \cdot \big( \bM(J_{v} - J_{u}) \big)^{\frac{3}{2}} \leq c \cdot \big( D(v) + D(u) \big)^{\frac{3}{2}},
\]
where $c$ is a universal constant, and the second inequality uses~\eqref{eq:Ju-mass-bound}. This proves the existence assertion along with property (a).

For property (b), with $R_0$ as in the statement, it follows from the definition~\eqref{eq:Ju-definition} that 
\[
\supp(J_{v} - J_{u}) \subset \overline{B_{R_0}}.
\]
Thus, in view of the proof of the isoperimetric theorem~\cite[Theorem 30.1]{LeonGMT}, as well as the statement of~\cite[Corollary 29.3]{LeonGMT}, we see that 
\[
\supp(I_{v, u}) \subset \big\{\,y \in \RR^3\ \big|\ |y| \leq R_0 + c\sqrt{\bM(J_v - J_u)}\,\big\},
\]
for some universal constant $c$, and we get~\eqref{eq:I-vu-support-bound} upon recalling~\eqref{eq:Ju-mass-bound}.

For property (c), let $(u_n)$, $(w_n)$, and $(v_n)$ be as above. For $(s, x) \in [0, 1] \times \bB$, we define
\[
F(s, x) = sv(x) + (1-s)u(x), \quad\quad F_n(s, x) = sv_n(x) + (1-s)u_n(x),
\]
so that
\begin{equation}\label{eq:Fn-boundary-condition}
F_n(s, x) = u_n(x) = v_n(x)\quad\text{on } [0, 1] \times \partial \bB,
\end{equation}
and that, by~\eqref{eq:I-vu-a.e.-approximation},
\begin{equation}\label{eq:Fn-F-a.e.}
F_n \to F\quad\text{a.e. on } [0, 1] \times \bB.
\end{equation}
Also, with $\nabla$ still standing for $(\paop{x^1}, \paop{x^2})$, we have
\begin{equation}\label{eq:Fn-F-W12}
\int_{0}^{1}\int_{\bB} |\nabla F_n - \nabla F|^2 \, dx ds \leq 2\int_{\bB}|\nabla u_n - \nabla u|^2 + |\nabla v_n - \nabla v|^2\, dx.
\end{equation}
For each $n \in\NN$, since $F_n$ is smooth on $[0, 1] \times \overline{\bB}$, the following assignment
\[
T_n: \alpha \longmapsto \int_{0}^{1}\int_{\bB} \alpha_{F_n}(F_{n, s}, F_{n, x^{1}}, F_{n, x^2})\, dx ds,
\]
defines an integer-multiplicity $3$-current, and satisfies by~\eqref{eq:Fn-boundary-condition} that $\partial T_n= J_{v_n} - J_{u_n}$. In particular, 
\[
\bM(\partial T_n) \leq D(v_n) + D(u_n).
\]
On the other hand, estimating the mass of $T_n$ itself using the standard fact that
\[
|\alpha_{F_n}(F_{n, s}, F_{n, x^{1}}, F_{n, x^2})| \leq |\alpha_{F_n}| \cdot \big| F_{n, s} \wedge F_{n, x^{1}} \wedge  F_{n, x^2}\big|,
\]
and using also~\eqref{eq:approximation-preserve-L-infty}, we obtain
\[
\bM(T_n) \leq 2\|u - v\|_{\infty; \bB} \cdot (D(u_n) + D(v_n)).
\]
Recalling~\eqref{eq:I-vu-W12-approximation}, we see that $(\bM(T_n) + \bM(\partial T_n))$ is a bounded sequence, so the compactness theorem~\cite[Theorem 27.3]{LeonGMT} yields a subsequence of $T_n$, which we do not relabel, that converges to an integer multiplicity $3$-current. Since, by the above estimate and~\eqref{eq:approximation-of-Ju-in-Ivu-construction}, the limiting current has finite mass and has boundary equal to $J_{v} - J_{u}$, we conclude that it coincides with $I_{v, u}$, so that
\begin{equation}\label{eq:Tn-to-I}
I_{v, u}(\alpha) = \lim_{n \to \infty}T_n(\alpha),
\end{equation}
for all smooth, compactly supported $3$-form $\alpha$ on $\RR^3$. On the other hand, given such an $\alpha$, we have on $[0, 1] \times \bB$ that
\begin{align*}
&\big| \alpha_{F_n}(F_{n, s}, F_{n, x^{1}}, F_{n, x^2}) - \alpha_{F}(F_{s}, F_{x^{1}}, F_{x^2}) \big|\\
\leq\ & \big| \alpha_{F_n}(v_n - u_n, F_{x^{1}}, F_{x^2}) - \alpha_{F}(v - u, F_{x^{1}}, F_{x^2}) \big|\\
& + \big| \alpha_{F_n}(v_n - u_n, F_{n, x^{1}}, F_{n, x^2}) - \alpha_{F_n}(v_n - u_n, F_{x^{1}}, F_{x^2}) \big|\\
=:\ & A_n(s, x) + B_n(s, x).
\end{align*}
Recalling the a.e. convergence~\eqref{eq:I-vu-a.e.-approximation} and~\eqref{eq:Fn-F-a.e.}, as well as the upper bound~\eqref{eq:approximation-preserve-L-infty}, we see from the dominated convergence theorem that
\[
\lim_{n \to \infty}\int_{0}^{1}\int_{\bB}A_n(s, x)\, dx ds = 0.
\]
That the same is true of $\int_{0}^{1}\int_{\bB} B_n$ follows from~\eqref{eq:Fn-F-W12} along with the estimate
\[
|B_n(s, x)| \leq \|\alpha\|_{\infty; \RR^3} \cdot \|v - u\|_{\infty; \bB} \cdot |\nabla F_n  - \nabla F| (|\nabla F_n| + |\nabla F|),
\]
and we have shown that
\[
\lim_{n \to \infty}T_n(\alpha) = \int_{0}^{1}\int_{\bB}\alpha_{F}(v - u, F_{x^1}, F_{x^2})\, dxds,
\]
which together with~\eqref{eq:Tn-to-I} finishes the proof.
\end{proof}
\begin{rmk}\label{rmk:current-extension}
Suppose $u, v \in L^{\infty} \cap W^{1, 2}(\bB; \RR^3)$ and that $v - u \in W^{1, 2}_{0}(\bB)$. Since the integral $3$-current $I_{v, u}$ has finite mass and compact support, we can extend it to act on $3$-forms that are merely continuous and have possibly non-compact support, by letting
\begin{equation}\label{eq:I-vu-extension}
I_{v, u}(\alpha): = \lim_{n \to \infty}I_{v, u}(\alpha_n),
\end{equation}
where $\alpha$ is a given continuous $3$-form on $\RR^3$, and $(\alpha_n)$ is any sequence of smooth, compactly supported $3$-forms converging uniformly to $\alpha$ on a neighborhood of $\supp(I_{v, u})$, as can be produced by mollifying and cutting off. It is standard to check that $I_{v, u}(\alpha_n)$ does converge, and that the limit is independent of the choice of approximating sequence, so the extension is well-defined. Moreover, it remains true that
\begin{equation}\label{eq:extension-bound}
|I_{v, u}(\alpha)| \leq \bM(I_{v, u}) \cdot \|\alpha\|_{\infty;\, \supp(I_{v, u})},
\end{equation}
where $\bM(I_{v, u})$ refers to the mass before extension.
\end{rmk}
We can now recall the definition of the enclosed volume following~\cite[Definition 3.5]{BogeleinDuzaarScheven2013}. In fact, we only need a special case.
\begin{defi}\label{defi:enclosed-volume}
Given $u, v \in L^{\infty} \cap W^{1, 2}(\bB; \RR^3)$ such that $v - u \in W^{1, 2}_{0}(\bB)$, along with $H \in C^{0}(\RR^3; \RR)$, in view of Remark~\ref{rmk:current-extension}, it makes sense to define
\begin{equation}\label{eq:volume-term-definition}
V_H(v, u) = I_{v, u}(2H\cdot \vol_{\RR^3}).
\end{equation}
From~\eqref{eq:extension-bound} and the isoperimetric inequality~\eqref{eq:isoperimetric-ineq}, it follows that
\begin{equation}\label{eq:isoperimetric-after-extension}
|V_H(v, u)| \leq c\, \|H\|_{\infty;\, \supp(I_{v, u})} \cdot \big(D(u) + D(v)\big)^{\frac{3}{2}},
\end{equation}
where $c$ is a universal constant. Also, given $v_1 \in L^{\infty} \cap W^{1, 2}(\bB; \RR^3)$ satisfying $v_1 - u \in W^{1, 2}_0(\bB)$, by the uniqueness part of Lemma~\ref{lemm:I-vu-construction} we see that $I_{v_1, v} + I_{v, u} = I_{v_1, u}$, and thus
\begin{equation}\label{eq:volume-additive}
V_{H}(v_1, v) = V_{H}(v_1, u) - V_{H}(v, u),
\end{equation}
by~\eqref{eq:I-vu-extension}.
\end{defi}

From the above definition, and arguments similar to those used in the proof of Lemma~\ref{lemm:I-vu-construction}, we have the following convergence result.
\begin{lemm}\label{lemm:volume-convergence}
Suppose $H \in C^0(\RR^3; \RR)$. Given $u, v$ as in Definition~\ref{defi:enclosed-volume}, let $(u_n)$ and $(v_n)$ be sequences in $L^{\infty} \cap W^{1, 2}(\bB; \RR^3)$ such that $v_n - u_n \in W^{1, 2}_{0}(\bB)$ for all $n$. Suppose also that $u_n \to u$ and $v_n \to v$ strongly in $W^{1, 2}(\bB)$, and that 
\[
\sup_{n \in \NN}\big(\|u_n\|_{\infty; \bB} + \|v_n\|_{\infty; \bB}\big) < \infty.
\]
Then $\lim_{n \to \infty}V_H(v_n, u_n) = V_H(v, u)$.
\end{lemm}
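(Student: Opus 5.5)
Since $H$ is merely continuous, the homotopy formula of Lemma~\ref{lemm:I-vu-construction}(c) applies only to smooth compactly supported forms. I will first treat a fixed smooth, compactly supported $3$-form $\alpha$ and then pass to $2H\cdot\vol_{\RR^3}$ by uniform approximation, using the mass bound~\eqref{eq:isoperimetric-ineq} and the support bound~\eqref{eq:I-vu-support-bound}. Set $M := \sup_n(\|u_n\|_\infty+\|v_n\|_\infty)$. Since $u_n\to u$ and $v_n \to v$ in $W^{1,2}$, the energies $D(u_n)+D(v_n)$ are bounded by some $E^2$. Then~\eqref{eq:I-vu-support-bound} gives a single compact ball $K=\overline{B_{R}}$, with $R = M + \|u\|_\infty+\|v\|_\infty + cE$, containing $\supp I_{v_n,u_n}$ for every $n$ and also $\supp I_{v,u}$. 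Likewise~\eqref{eq:isoperimetric-ineq} gives a uniform bound $\bM(I_{v_n,u_n}),\bM(I_{v,u})\le C_0$.

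\emph{Step 1 (smooth forms).} Fix a smooth compactly supported $\alpha$. By Lemma~\ref{lemm:I-vu-construction}(c), $I_{v_n,u_n}(\alpha)$ is the integral over $[0,1]\times\bB$ of $\alpha_{F_n}(v_n-u_n, F_{n,x^1},F_{n,x^2})$, where $F_n=sv_n+(1-s)u_n$, and the analogous formula with $F$ holds for $I_{v,u}(\alpha)$. Convergence will follow exactly as in the last part of the proof of Lemma~\ref{lemm:I-vu-construction}. Since the limit is unique, I may argue along subsequences, so I pass to a subsequence on which $u_n\to u$ and $v_n\to v$ a.e. I then split the difference into two pieces. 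The first is $A_n$, in which $\alpha_{F_n}(v_n-u_n,\cdot,\cdot)$ is replaced by $\alpha_F(v-u,\cdot,\cdot)$ with the fixed gradients $F_{x^i}$; it tends to zero by dominated convergence, using the domination $\|\alpha\|_\infty\,(M+\|u\|_\infty+\|v\|_\infty)\,|\nabla F|^2$. The second is $B_n$, bounded by $\|\alpha\|_\infty\, M\,|\nabla F_n-\nabla F|\,(|\nabla F_n|+|\nabla F|)$, which tends to zero in $L^1$ by Cauchy--Schwarz. Every subsequence therefore has a further subsequence along which $I_{v_n,u_n}(\alpha)\to I_{v,u}(\alpha)$, so the full sequence converges.

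\emph{Step 2 (continuous $H$).} Take smooth compactly supported $\alpha_k$ converging uniformly to $2H\cdot\vol_{\RR^3}$ on a neighborhood of $K$, obtained by mollifying and cutting off as in Remark~\ref{rmk:current-extension}. By~\eqref{eq:extension-bound},
\[
|V_H(v_n,u_n)-I_{v_n,u_n}(\alpha_k)|\le C_0\,\|2H\vol-\alpha_k\|_{\infty;K},
\]
uniformly in $n$, and the same estimate holds for $(v,u)$. A standard $\ep/3$ argument combined with Step~1 then gives the claim.

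The only delicate point is the uniformity in $n$ of the support and mass bounds, which is what makes the approximation in Step~2 uniform. These bounds come directly from parts (a) and (b) of Lemma~\ref{lemm:I-vu-construction} together with the assumed $L^\infty$ bound, so I expect no real obstacle.
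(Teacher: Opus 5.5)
Your argument is correct, but Step~1 takes a different route from the paper's. The paper does not use the homotopy formula of Lemma~\ref{lemm:I-vu-construction}(c) here. Arguing by contradiction, it passes to a subsequence with a.e.\ convergence, so that $J_{u_n}\to J_u$ and $J_{v_n}\to J_v$ by~\eqref{eq:continuity-of-pushforward-operation}. It then uses the uniform bounds on $\bM(I_{v_n,u_n})+\bM(\partial I_{v_n,u_n})$ and the Federer--Fleming compactness theorem to extract a limiting integer multiplicity current $I$ with finite mass and $\partial I = J_v - J_u$. Finally it identifies $I = I_{v,u}$ through the uniqueness part of Lemma~\ref{lemm:I-vu-construction}. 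Its last step, approximating $2H\cdot\vol_{\RR^3}$ by smooth forms over a fixed ball, is essentially your Step~2.

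You instead get $I_{v_n,u_n}(\alpha)\to I_{v,u}(\alpha)$ directly from the explicit representation in part (c), reusing the $A_n$/$B_n$ splitting from the end of that lemma's proof. This is more hands-on and avoids a second appeal to compactness and to uniqueness of finite-mass fillings, both of which are already built into part (c). The paper's route, by contrast, yields weak convergence of the currents themselves from mass bounds alone, without the straight-line homotopy. In that route the uniform $L^\infty$ bound on $(u_n,v_n)$ enters only to confine all supports to one ball in the final approximation. In your Step~1 it is already needed for the domination.

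As a side remark, you could run your Step~1 directly on the formula of Lemma~\ref{lemm:volume-formula-H}, whose proof does not rely on the present lemma. There $H(F_n)\to H(F)$ a.e.\ and $|H(F_n)|\le \sup_{|y|\le M}|H(y)|$, so Step~2 would become unnecessary.
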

\begin{proof}
Suppose by contradiction that there exists $\ep > 0$ such that along a subsequence which we do not relabel, there holds
\begin{equation}\label{eq:volume-convergence-contradiction}
|V_H(v_n, u_n) - V_H(v, u)| \geq \ep, \quad\text{for all }n.
\end{equation}
Taking a further subsequence if needed, we can also assume that $|u_n - u| + |v_n - v| \to 0$ a.e. on $\bB$, in which case~\eqref{eq:continuity-of-pushforward-operation} gives
\begin{equation}\label{eq:J-un-to-J-u}
J_{v_n} \to J_{v},\quad J_{u_n} \to J_{u}.
\end{equation}
Noting from~\eqref{eq:isoperimetric-ineq}, and respectively~\eqref{eq:Ju-mass-bound}, that
\begin{equation}\label{eq:volume-convergence-mass-bound-on-In}
\bM(I_{v_n, u_n}) \leq c \cdot \big( D(u_n) + D(v_n) \big)^{\frac{3}{2}}, 
\end{equation}
and that
\[
\bM(\partial I_{v_n, u_n}) = \bM(J_{v_n} - J_{u_n}) \leq D(u_n) + D(v_n),
\]
we deduce from the strong $W^{1, 2}$-convergence of $u_n$ and $v_n$ that the sequence $(I_{v_n, u_n})$ of integer multiplicity $3$-currents satisfies
\[
\sup_{n \in\NN}\, (\bM(I_{v_n,u_n}) + \bM(\partial I_{v_n, u_n})) < \infty.
\]
Thus, again by the compactness theorem~\cite[Theorem 27.3]{LeonGMT}, after taking a further subsequence, we may assume that $I_{v_n, u_n}$ converges to some integer multiplicity $3$-current $I$, which satisfies
\begin{equation}\label{eq:volume-convergence-mass-bound-on-I}
\bM(I) \leq \liminf \bM(I_{v_n, u_n}) \leq c \cdot \big( D(u) + D(v) \big)^{\frac{3}{2}} < \infty.
\end{equation}
Since $\partial I = J_{v} - J_{u}$ by~\eqref{eq:J-un-to-J-u}, we conclude that $I = I_{v, u}$ by the uniqueness part of Lemma~\ref{lemm:I-vu-construction}. 

Now, by Lemma~\ref{lemm:I-vu-construction}(b), along with the assumptions of the present lemma, there exists $R > 0$ such that 
\[
\supp(I_{v_n, u_n}),\ \supp(I_{v, u}) \subset B_{R},\quad\text{for all }n.
\]
Let $(H_k)$ be a sequence in $C^{\infty}_{c}(\RR^3; \RR)$ converging uniformly to $H$ on $B_{R + 2}$, and define
\[
\alpha_k = 2H_{k} \cdot \vol_{\RR^3}, \quad \alpha = 2H \cdot \vol_{\RR^3}.
\]
Also, take a smooth cut-off function $0 \leq \zeta \leq 1$ that equals $1$ on $B_{R+1}$ and is supported in $B_{R + 2}$. Then, by the mass bounds~\eqref{eq:volume-convergence-mass-bound-on-In} and~\eqref{eq:volume-convergence-mass-bound-on-I}, as soon as $n$ is sufficiently large, we have for all $k, l \in \NN$ that
\begin{align*}
&\big| I_{v, u}(\alpha_{l}) - I_{v_n, u_n}(\alpha_l) \big|\\
&\leq \big| I_{v, u}(\zeta\alpha_{l} - \zeta\alpha_k) \big| + |I_{v, u}(\alpha_{k}) - I_{v_n, u_n}(\alpha_k)| + |I_{v_n, u_n}(\zeta\alpha_k - \zeta\alpha_{l})|\\
&\leq  2c\cdot \big(D(u) + D(v) + 1 \big)^{\frac{3}{2}} \cdot \| \alpha_l - \alpha_k \|_{\infty; B_{R + 2}} + |I_{v, u}(\alpha_{k}) - I_{v_n, u_n}(\alpha_k)|.
\end{align*}
Letting $l \to \infty$, we see from~\eqref{eq:volume-term-definition} and Remark~\ref{rmk:current-extension} that
\[
|V_H(v, u) - V_H(v_n, u_n)| \leq 2c\cdot \big(D(u) + D(v) + 1 \big)^{\frac{3}{2}} \cdot \| \alpha - \alpha_k \|_{\infty; B_{R + 2}} + |I_{v, u}(\alpha_{k}) - I_{v_n, u_n}(\alpha_k)|.
\]
Having shown in the previous paragraph that $I_{v_n, u_n} \to I_{v, u}$ as currents, we further deduce 
\[
\limsup_{n \to \infty} \big| V_H(v, u) - V_H(v_n, u_n) \big| \leq 2c\cdot \big(D(u) + D(v) + 1 \big)^{\frac{3}{2}} \cdot \| \alpha - \alpha_k \|_{\infty; B_{R + 2}}.
\]
Sending $k \to \infty$ produces a contradiction to~\eqref{eq:volume-convergence-contradiction}, and we are done.
\end{proof}

Lemma~\ref{lemm:volume-formula-H} below is an immediate consequence of Definition~\ref{defi:enclosed-volume} and Lemma~\ref{lemm:I-vu-construction}(c). Lemma~\ref{lemm:volume-formula} contains alternative expressions for the enclosed volume under stronger assumptions on $H$.
\begin{lemm}\label{lemm:volume-formula-H}
Suppose $H \in C^{0}(\RR^3; \RR)$, and let $u, v$ be maps in $W^{1, 2}\cap L^{\infty}(\bB;\RR^3)$ such that $v - u \in W^{1, 2}_0(\bB)$. Then we have
\begin{equation*}
V_H(v, u) = 2\int_{[0, 1] \times \bB} H(sv + (1-s)u)\, (v - u)\cdot (sv + (1-s)u)_{x^1} \times (sv + (1-s)u)_{x^2} \, dxds.
\end{equation*}
\end{lemm}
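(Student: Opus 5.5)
The identity is obtained by evaluating the current $I_{v,u}$ on the continuous $3$-form $\alpha = 2H\cdot \vol_{\RR^3}$. By Definition~\ref{defi:enclosed-volume} and Remark~\ref{rmk:current-extension}, $V_H(v,u) = \lim_{n\to\infty} I_{v,u}(\alpha_n)$, where $\alpha_n = 2H_n \cdot \vol_{\RR^3}$. Here the $H_n \in C^\infty_c(\RR^3;\RR)$ are mollified and cut-off versions of $H$ that converge uniformly to $H$ on a fixed ball $B_{R}$. We choose $R$ larger than both $\|u\|_\infty+\|v\|_\infty$ and the radius $R_0 + cE_0$ from Lemma~\ref{lemm:I-vu-construction}(b).

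Apply Lemma~\ref{lemm:I-vu-construction}(c) to each smooth, compactly supported form $\alpha_n$. Write $F = sv+(1-s)u$, and use $\vol_{\RR^3}(a,b,c) = a\cdot b\times c$. This gives
\[
I_{v,u}(\alpha_n) = 2\int_0^1\int_{\bB} H_n(F)\,(v-u)\cdot F_{x^1}\times F_{x^2}\,dx\,ds .
\]

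It remains to pass to the limit $n\to\infty$ inside the integral. Since $|F| \le \max\{\|u\|_\infty,\|v\|_\infty\} < R$ almost everywhere on $[0,1]\times\bB$, uniform convergence of $H_n$ to $H$ on $B_R$ gives $H_n(F)\to H(F)$ uniformly a.e. The remaining factor is dominated by $|v-u|\,|\nabla F|^2 \le 2\|v-u\|_\infty(|\nabla u|^2+|\nabla v|^2)$, which is integrable. So the integrals converge to the claimed expression by dominated convergence, or even directly by the uniform bound.

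The only point needing care is this limiting step, namely checking that the approximating forms may be taken to agree with $\alpha$ uniformly both on the support of $I_{v,u}$ and on the range of $F$. Choosing $R$ large enough handles both at once, so there is no real obstacle.
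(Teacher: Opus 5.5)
Your proposal is correct and matches the paper's proof: choose $R$ exceeding both $\max\{\|u\|_\infty,\|v\|_\infty\}$ and the support radius of $I_{v,u}$, take $H_n\in C^\infty_c$ converging to $H$ uniformly near $\overline{B_R}$, apply Lemma~\ref{lemm:I-vu-construction}(c) to $2H_n\cdot\vol_{\RR^3}$, and pass to the limit using $|sv+(1-s)u|<R$ a.e. Your integrable bound $2\|v-u\|_\infty(|\nabla u|^2+|\nabla v|^2)$ spells out the justification for the limit, which the paper leaves implicit.
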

\begin{proof}
Choose $R> \max\{\|u\|_{\infty; \bB},\, \|v\|_{\infty; \bB}\}$ such that $\supp(I_{v, u}) \subset B_{R}$, and let $(H_n)$ be a sequence in $C^{\infty}_{c}(\RR^3; \RR)$ converging uniformly to $H$ on a neighborhood of $\overline{B_{R}}$. Then by Definition~\ref{defi:enclosed-volume} we have
\[
V_H(v, u) = \lim_{n \to \infty}I_{v, u}(2H_{n} \cdot \vol_{\RR^3}).
\]
On the other hand, Lemma~\ref{lemm:I-vu-construction}(c) gives
\[
\begin{split}
&I_{v, u}(2H_n \cdot\vol_{\RR^3}) \\
=\ &  2\int_{[0, 1] \times \bB} H_n(sv + (1-s)u)\, (v-u)\cdot (sv + (1-s)u)_{x^1} \times (sv + (1-s)u)_{x^2} \, dxds.
\end{split}
\]
Since $|sv(x) + (1-s)u(x)| < R$ almost everywhere on $[0, 1] \times \bB$, and since $\sup_{|y| \leq R} |H_n(y) - H(y)| \to 0$, we get the asserted formula upon letting $n \to \infty$.
\end{proof}
\begin{lemm}\label{lemm:volume-formula}
Suppose $u, v \in W^{1, 2}\cap L^{\infty}(\bB;\RR^3)$ and that $v - u \in W^{1, 2}_0(\bB)$. Let $H \in C^{1}(\RR^3; \RR)$ and suppose $Q:\RR^3 \to \RR^3$ is a $C^{1}$-vector field satisfying~\eqref{eq:Q-H-relation}; that is, $\Div Q(\cdot) = 2H(\cdot)$. Then the following hold.
\vskip 1mm
\begin{enumerate}
\item[(a)] We have
\begin{equation}\label{eq:volume-formula}
V_H(v, u) = \int_{\bB}  Q(v)\cdot v_{x^1} \times v_{x^2} dx - \int_{\bB} Q(u)\cdot u_{x^1} \times u_{x^2} dx.
\end{equation}
\vskip 1mm
\item[(b)] Writing $\varphi$ for $v - u$, we have
\begin{equation}\label{eq:volume-difference-alt-formula}
\begin{split}
V_{H}(v, u) =\ & 2\int_{\bB}H(u)\, u_{x^1} \times u_{x^2} \cdot \varphi\, dx + \int_{\bB} Q(u + \varphi) \cdot \varphi_{x^1} \times \varphi_{x^2} \, dx\\
& + \int_{\bB}(Q(u + \varphi) - Q(u)) \cdot (u_{x^1} \times \varphi_{x^2} + \varphi_{x^1} \times u_{x^2})\, dx\\
& + \int_{\bB} (Q(u + \varphi) - Q(u) - (DQ)_{u}(\varphi)) \cdot u_{x^1} \times u_{x^2}\, dx.
\end{split}
\end{equation}
\end{enumerate}
\end{lemm}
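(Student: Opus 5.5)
For part (a), I would start from Lemma~\ref{lemm:volume-formula-H}, which writes $V_H(v,u)$ as an integral over $[0,1]\times\bB$ of $2H(F)\,F_s\cdot F_{x^1}\times F_{x^2}$ with $F = sv+(1-s)u$. The task is then a homotopy/Stokes computation. For smooth maps one has the pointwise identity
\[
\partial_s\big(Q(F)\cdot F_{x^1}\times F_{x^2}\big) = (\Div Q)(F)\, F_s\cdot F_{x^1}\times F_{x^2} + \partial_{x^1}\big(Q(F)\cdot F_s\times F_{x^2}\big) + \partial_{x^2}\big(Q(F)\cdot F_{x^1}\times F_s\big).
\]
This is just $d(F^*\iota_Q\vol) = F^*(d\iota_Q \vol)$ written in coordinates. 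Integrate it over $[0,1]\times\bB$. The $s$-integral of the left side gives the right side of~\eqref{eq:volume-formula}. The $x$-divergence terms vanish because $F_s = v-u = 0$ on $\partial\bB$. Using $\Div Q = 2H$ then yields (a).

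To justify this for $W^{1,2}\cap L^\infty$ maps, I would use the approximations $u_n, w_n, v_n = u_n+w_n$ from the proof of Lemma~\ref{lemm:I-vu-construction}, which satisfy the uniform $L^\infty$ bounds~\eqref{eq:approximation-preserve-L-infty}. For these smooth maps $F_n$ is smooth and $F_{n,s}=w_n$ has compact support, so the identity holds exactly. The approximations need not be smooth in the $Q$ variable, since $Q$ is only $C^1$. That is fine, because the chain rule for $Q\circ F_n$ with $Q\in C^1$ and $F_n$ smooth is classical.

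Passing to the limit, the right-hand sides converge to $V_H(v,u)$ by Lemma~\ref{lemm:volume-convergence}, or directly as in the proof of Lemma~\ref{lemm:I-vu-construction}(c). The left-hand terms $\int Q(v_n)\cdot v_{n,x^1}\times v_{n,x^2}$ converge by the same splitting as in~\eqref{eq:continuity-of-pushforward-operation}: $Q$ is bounded and continuous on the relevant ball, we have a.e. convergence with dominated convergence for the $Q(v_n)-Q(v)$ part, and strong $W^{1,2}$ convergence with H\"older for the gradient part. This limiting step is the only mildly delicate point. It requires taking the approximating sequences so that $u_n\to u$ and $v_n\to v$ both a.e. and in $W^{1,2}$, which was already arranged there.

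Part (b) is pure algebra from (a). Write $v=u+\varphi$ and expand
\[
v_{x^1}\times v_{x^2} = u_{x^1}\times u_{x^2} + (u_{x^1}\times\varphi_{x^2}+\varphi_{x^1}\times u_{x^2}) + \varphi_{x^1}\times\varphi_{x^2},
\]
then subtract $Q(u)\cdot u_{x^1}\times u_{x^2}$ and add and subtract $Q(u)$ in the middle term. This gives every term of~\eqref{eq:volume-difference-alt-formula} except that the linear pieces
\[
\int Q(u)\cdot(u_{x^1}\times\varphi_{x^2}+\varphi_{x^1}\times u_{x^2}) + \int (DQ)_u(\varphi)\cdot u_{x^1}\times u_{x^2}
\]
must equal $2\int H(u)\,u_{x^1}\times u_{x^2}\cdot\varphi$.

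To show this, use the first-variation identity: for smooth $u$ and $\varphi\in C^\infty_c$, integrate by parts in the middle terms. One obtains $\int \big[(DQ)_u\varphi\cdot u_{x^1}\times u_{x^2} - (DQ)_u u_{x^1}\cdot\varphi\times u_{x^2} - (DQ)_u u_{x^2}\cdot u_{x^1}\times\varphi\big]$. The linear algebra identity $\tr(A)\,\det(a,b,c) = \det(Aa,b,c)+\det(a,Ab,c)+\det(a,b,Ac)$, applied with $A=(DQ)_u$, then gives $\Div Q(u)\,\varphi\cdot u_{x^1}\times u_{x^2}$. Alternatively, I could derive this linear identity by differentiating (a) along $u+\tau\varphi$ at $\tau = 0$. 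Finally, extend from smooth $u$ and $\varphi$ to $W^{1,2}\cap L^\infty$ with $\varphi\in W^{1,2}_0$ by the same approximation with uniform $L^\infty$ bounds, using dominated convergence and the continuity of $DQ$.
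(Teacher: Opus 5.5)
Your proof is correct. Part (b) is essentially the paper's argument: expand $v_{x^1}\times v_{x^2}$, isolate the linear terms, integrate by parts using smooth approximations with uniform $L^\infty$ bounds, and apply the trace identity to $(DQ)_u$.

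For part (a) you take a genuinely different route.

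\textbf{The paper's route.} It keeps $u,v$ fixed and mollifies $Q$ instead. With $\beta_n=\iota_{Q_n}\vol_{\RR^3}$ and a cutoff $\zeta$, the forms $d(\zeta\beta_n)$ are smooth, compactly supported, and converge uniformly to $2H\,\vol_{\RR^3}$ near $\supp(I_{v,u})$. Hence $V_H(v,u)=\lim I_{v,u}(d(\zeta\beta_n))=\lim\,(J_v-J_u)(\zeta\beta_n)$. The last expression is exactly the right-hand side of \eqref{eq:volume-formula} with $Q_n$ in place of $Q$, and one then lets $Q_n\to Q$ uniformly on the ranges of $u,v$. This uses only the boundary relation $\partial I_{v,u}=J_v-J_u$. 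It needs neither a re-approximation of the maps nor the homotopy formula.

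\textbf{Your route.} You keep $Q$, which is only $C^1$; that suffices for $d(F^*\iota_Q\vol_{\RR^3})=F^*d(\iota_Q\vol_{\RR^3})$ when $F$ is smooth. You approximate the maps instead and verify the identity by a classical Stokes computation on $[0,1]\times\bB$. The $x$-divergence terms vanish because $F_{n,s}=w_n$ has compact support. You then pass to the limit on both sides: via Lemmas~\ref{lemm:volume-formula-H} and~\ref{lemm:volume-convergence} for the volume, and via the splitting in \eqref{eq:continuity-of-pushforward-operation} for the $Q$-terms.

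\textbf{Comparison.} Your argument is more hands-on and essentially independent of the current formalism. It would go through verbatim if Lemma~\ref{lemm:volume-formula-H} were taken as the definition of $V_H$. The cost is two limiting arguments instead of one. The paper's argument is shorter because $I_{v,u}$ has already been constructed with the correct boundary.
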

\begin{rmk}
A $C^{1}$-vector field satisfying~\eqref{eq:Q-H-relation} always exists whenever $H(\cdot)$ is of class $C^{1}$. One particular choice, essentially the same as~\cite[equation (4.3)]{GulliverSpruck1971}, is given by
\begin{equation*}
Q(y) = \Big(\int_{0}^{1}2t^2H(ty)\, dt\Big)\, y.
\end{equation*}
\end{rmk}
\begin{proof}[Proof of Lemma~\ref{lemm:volume-formula}]
For part (a), choose $R> 0$ such that
\begin{equation}\label{eq:R-large-enough}
\max\{\|u\|_{\infty; \bB},\,\|v\|_{\infty; \bB}\} < R,\quad \text{and}\quad \supp(I_{v, u}) \subset B_{R}.
\end{equation}
By mollification we obtain a sequence $(Q_n)$ of smooth vector fields on $\RR^3$ such that 
\begin{equation}\label{eq:C1-approximation-Q}
\sup_{|y| \leq R+2} \big(\,|Q_n(y) - Q(y)| + |(D Q_n)_{y} - (D Q)_{y}|\,\big) \longrightarrow 0,\quad\text{as }n \to \infty.
\end{equation}
In particular, letting $\beta_n: = \iota_{Q_n}\vol_{\RR^3}$, we have by~\eqref{eq:Q-H-relation} that
\begin{equation}\label{eq:volume-form-converge}
d\beta_{n}  = \Div Q_n \vol_{\RR^3} \to 2H \vol_{\RR^3},\quad\text{uniformly on }\overline{B_{R+2}}.
\end{equation}
Next take a cut-off function $\zeta \in C^{\infty}_{c}(B_{R + 2})$ that equals $1$ on $\overline{B_{R + 1}}$. Then $(d(\zeta\beta_n))$ is a sequence of smooth, compactly supported $3$-forms that converges uniformly to $2H \vol_{\RR^3}$ on $B_{R + 1}$, which is a neighborhood of $\supp(I_{v, u})$. Thus $V_H(v, u)$ can be computed as
\[
V_H(v, u) = \lim_{n \to \infty} I_{v, u}(d(\zeta\beta_n)).
\]
Since $\partial I_{v, u} = J_{v} - J_{u}$, we have
\begin{equation}\label{eq:limit-Ju-Jv}
\begin{split}
I_{v, u}(d(\zeta\beta_n)) = \partial I_{v, u}(\zeta\beta_n) =\ & J_v(\zeta\beta_n) - J_u(\zeta\beta_n)\\
=\ & \int_{\bB}Q_n(v) \cdot v_{x^1} \times v_{x^2} dx - \int_{\bB}Q_n(u) \cdot u_{x^1} \times u_{x^2} dx,
\end{split}
\end{equation}
where the last step uses the first condition in~\eqref{eq:R-large-enough} and our choice of $\zeta$. Noting from~\eqref{eq:R-large-enough} and~\eqref{eq:C1-approximation-Q} that
\[
\|Q_n(u) - Q(u)\|_{\infty; \bB} + \|Q_{n}(v) - Q(v)\|_{\infty; \bB} \to 0 \quad\text{as }n \to \infty,
\]
and passing to the limit in~\eqref{eq:limit-Ju-Jv}, we get~\eqref{eq:volume-formula}.

For part (b), we first use (a) to compute
\begin{equation}\label{eq:volume-difference-1}
\begin{split}
V_H(u + \varphi, u) =\ & \int_{\bB}(Q(u + \varphi) - Q(u)) \cdot u_{x^1} \times u_{x^2}\, dx + \int_{\bB }Q(u + \varphi) \cdot \varphi_{x^1} \times \varphi_{x^2}\, dx\\
& +\int_{\bB} (Q(u + \varphi) - Q(u)) \cdot (u_{x^1} \times \varphi_{x^2} + \varphi_{x^1} \times u_{x^2})\, dx\\
& + \int_{\bB} Q(u) \cdot (u_{x^1} \times \varphi_{x^2} + \varphi_{x^1} \times u_{x^2})\, dx.
\end{split}
\end{equation}
To continue, as in the proof of Lemma~\ref{lemm:I-vu-construction}, we let $(\varphi_n)$ be a sequence in $C^{\infty}_{c}(\bB; \RR^3)$ such that 
\[
\|\nabla \varphi_n - \nabla \varphi\|_{2; \bB} \to 0, \quad \varphi_n \to \varphi \quad\text{a.e. on }\bB,\quad \|\varphi_n\|_{\infty;\bB} \leq \|\varphi\|_{\infty; \bB},
\]
and let $(u_n)$ be a sequence in $C^{\infty}(\overline{\bB}; \RR^3)$ that approximates $u$ in the same way. Then we have
\[
\|\nabla(Q(u_n)) - (DQ)_{u}(\nabla u)\|_{2; \bB} \to 0.
\]
Integrating by parts in the last term in~\eqref{eq:volume-difference-1}, which can be justified using these approximations, we get
\begin{equation}\label{eq:volume-difference-div}
\begin{split}
&\int_{\bB} Q(u) \cdot (u_{x^1} \times \varphi_{x^2} + \varphi_{x^1} \times u_{x^2}) \,dx \\
=\ &  \int_{\bB} \varphi \cdot (u_{x^1} \times (DQ)_{u}(u_{x^2}) + (DQ)_{u}(u_{x^1}) \times u_{x^2})\, dx\\
=\ & 2\int_{\bB}H(u)\, \varphi \cdot u_{x^1} \times u_{x^2} - (DQ)_{u}(\varphi) \cdot u_{x^1} \times u_{x^2}\, dx,
\end{split}
\end{equation}
where for the second equality we used~\eqref{eq:Q-H-relation} and the general fact that
\begin{equation}\label{eq:Lie-algebra-action}
(\tr A) \cdot \vol_{\RR^3} = \frac{d}{dt}\Big|_{t=0}(e^{tA})^*\vol_{\RR^3}
\end{equation}
for any $3\times 3$ matrix $A$. Substituting~\eqref{eq:volume-difference-div} back into~\eqref{eq:volume-difference-1} gives~\eqref{eq:volume-difference-alt-formula}. 
\end{proof}

\section{Convexity of the \texorpdfstring{$H$}{H}-energy}\label{sec:convexity}
This section is devoted to the proofs of Theorems~\ref{thm:uniqueness} and~\ref{thm:H-convexity}. We use freely the Hardy inequality for functions in $W^{1, 2}_0(\bB)$, as stated for instance in~\cite[Lemma 3.1]{LuWang2012}: 
\begin{equation}\label{eq:Hardy}
\int_{\bB} \frac{\varphi^2}{(1 - |x|)^2}\, dx \leq 4\int_{\bB} |\nabla \varphi|^2\, dx, \quad\text{for all }\varphi \in W^{1, 2}_0(\bB).
\end{equation}
In addition, we need the celebrated Wente inequality for solutions to Poisson equations of the form
\[
\Delta \psi = a_{x^1}b_{x^{2}} - a_{x^2}b_{x^1} \quad\text{on }\bB;
\]
that is, with the right-hand side being a determinant term. The original version of the result concerns the Dirichlet problem; see for instance~\cite[Lemma A.1]{BrezisCoron1984} or~\cite[Theorem 3.1.2]{Helein2002}. With Neumann boundary conditions, the Wente inequality holds provided either $a$ or $b$ vanishes on $\partial \bB$, as stated in~\cite[Theorem 1.3]{DaLioPalmurella2017} and proved in~\cite[Lemma A.6]{DaLioPalmurellaRiviere2020}. In this paper we use both the Dirichlet and the Neumann versions, the latter of which we recall in Appendix~\ref{sec:Neumann-Wente}. It should also be noted that the Wente inequality fails in general for Neumann problems. Counterexamples have been constructed in~\cite{Hirsch2019} and~\cite{DaLioPalmurella2017}.

\begin{proof}[Proof of Theorem~\ref{thm:uniqueness}]
Since $H$ is continuously differentiable, it is well-known that 
\[
u, v \in C^{2, \alpha}_{\loc}(\bB),\quad \text{for any }\alpha \in (0, 1).
\]
See for instance~\cite{Bethuel1992} or~\cite{Strzelecki2003}, which in fact require only that $H$ be bounded and Lipschitz. Fixing a particular $q > 4$ and letting $\alpha = 1 - \frac{4}{q}$, we get from Corollary~\ref{coro:grad-estimate-scaled} that, provided $\ep_{\uni}$ is below a threshold depending only on $\Lambda_0$, there holds the following gradient estimate:
\begin{equation}\label{eq:grad-estimate}
|\nabla u(x)|^2 \leq C_{\Lambda_0} \frac{D(u)}{(1 - |x|)^2}\,, \quad\text{for all }x \in \bB\,,
\end{equation}
and a similar estimate holds with $v$ in place of $u$. (Finer estimates for a more general class of equations can be found in the work of Lamm and the second named author~\cite[Corollary 1.4, Remark 1.5]{LammLin2013}.) By~\eqref{eq:grad-estimate} and~\eqref{eq:Hardy}, we have for all $\zeta \in W^{1, 2}_0(\bB; \RR^3)$ that $|\nabla u||\zeta| \in L^2(\bB)$, and that
\begin{equation}\label{eq:grad-estimate-and-Hardy}
\int_{\bB} |\nabla u|^2 |\zeta|^2\, dx \leq C_{\Lambda_0} D(u) \int_{\bB} \frac{|\zeta|^2}{(1 - |x|)^2}\, dx \leq 4C_{\Lambda_0}D(u)\int_{\bB}|\nabla\zeta|^2\, dx.
\end{equation}
A standard approximation argument then shows that, in~\eqref{eq:H-sys-weak}, we can use test functions that belong to $W^{1, 2}_0(\bB; \RR^3)$ alone. 

To continue, we use $u - v$ as a test function in the systems satisfied by $u$ and $v$, respectively, which is permitted by the discussion in the previous paragraph. After some routine estimates, we get
\begin{align}
\int_{\bB}|\nabla u - \nabla v|^2\, dx =\ & -2\int_{\bB} (H(u) - H(v))\, u_{x^1} \times u_{x^2} \cdot (u - v)\, dx  \nonumber\\
& -2 \int_{\bB} H(v)(u_{x^1} \times u_{x^2} - v_{x^1} \times v_{x^2}) \cdot (u- v)\, dx\nonumber \\
\leq\ &  2 \| \nabla H \|_{\infty}\int_{\bB} |u - v|^2 |\nabla u|^2\, dx\nonumber \\
&+ 2 \|H\|_{\infty}\int_{\bB} (|\nabla u| + |\nabla v|) |\nabla u - \nabla v||u - v|\, dx.\label{eq:uniqueness-computation}
\end{align}
Applying Young's inequality, followed by~\eqref{eq:grad-estimate-and-Hardy}, which is valid for $v$ as well, we get
\begin{equation}\label{eq:uniqueness-estimate-after-Young}
\begin{split}
\frac{1}{2}\int_{\bB}|\nabla u - \nabla v|^2\, dx \leq\ & \big( 2\|\nabla H\|_{\infty} + 4\|H\|_{\infty}^2 \big) \int_{\bB} |u - v|^2(|\nabla u|^2 + |\nabla v|^2)\, dx\\
\leq\ & C_{\Lambda_0, \Lambda_1}\cdot (D(u) + D(v)) \int_{\bB}|\nabla u - \nabla v|^2\, dx.
\end{split}
\end{equation}
Since $D(u) + D(v) \leq \ep_{\uni}$ by assumption, we get that $u = v$ provided $\ep_{\uni}$ is less than a threshold depending only on $\Lambda_0$ and $\Lambda_1$. The finishes the proof.
\end{proof}

\begin{proof}[Proof of Theorem~\ref{thm:H-convexity}]
Since $u$ is a weak solution of~\eqref{eq:H-system}, we again have that $u \in C^{2, \alpha}_{\loc}(\bB)$, so the estimate~\eqref{eq:grad-estimate} continues to hold, provided $\ep_{\conv}$ is small enough depending only on $\Lambda_0$. Consequently, we also have~\eqref{eq:grad-estimate-and-Hardy} available for $u$. Next, we compute using~\eqref{eq:H-system} that
\begin{align}
D(v) - D(u) =\ &  \frac{1}{2} \int_{\bB}\left(|\nabla v|^2 - |\nabla u|^2\right) \,dx= \int_{\bB}\bangle{\frac{\nabla v + \nabla u}{2}, \nabla v - \nabla u} \,dx \nonumber\\
=\ & \frac{1}{2}\int_{\bB}|\nabla v - \nabla u|^2 \,dx - 2\int_{\bB} H(u) (v - u)\cdot u_{x^1} \times u_{x^2} \,dx.\label{eq:D-difference}
\end{align}
Combining this with Lemma~\ref{lemm:volume-formula}(b) gives
\begin{align}
D(v) - D(u) + V_H(v, u) =\ & \frac{1}{2}\int_{\bB} |\nabla v - \nabla u|^2  \, dx + I_1 + I_2 + I_3, \label{eq:Ef-difference}
\end{align}
where $I_1, I_2$ and $I_3$ stand respectively for the following integrals:
\begin{align*}
I_1 = \ & \int_{\bB} \big( Q(v) - Q(u) - (D Q)_u(v - u)\big) \cdot u_{x^1} \times u_{x^2}   \, dx\,, \\ 
I_2 =\ & \int_{\bB} \big( Q(v) - Q(u)\big) \cdot \big( u_{x^1} \times (v-u)_{x^2} + (v-u)_{x^1} \times u_{x^2}\big)  \, dx \,, \\ 
I_3=\ &  \int_{\bB} Q(v) \cdot (v - u)_{x^1} \times (v - u)_{x^2}  \, dx. 
\end{align*}
To estimate the term $I_1$, notice that 
\[
|Q(v) - Q(u) - (DQ)_{u}(v - u)| \leq C\|D^2Q\|_{\infty} \cdot |v - u|^2.
\]
Combining this with~\eqref{eq:grad-estimate-and-Hardy}, we obtain
\begin{align}
|I_1| \leq C\|D^2 Q\|_{\infty} \int_{\bB} |v - u|^2 |\nabla u|^2 \, dx \leq\ & C_{\Lambda_0}\|D^2 Q\|_{\infty}\cdot \ep_{\conv}\int_{\bB}|\nabla v - \nabla u|^2 \, dx.\label{eq:I_1-estimate}
\end{align}
For $I_2$, we use H\"older's inequality and~\eqref{eq:grad-estimate-and-Hardy} to get
\begin{align}
|I_{2}| \leq\ & C\|DQ\|_{\infty} \int_{\bB} |v - u||\nabla u| \cdot |\nabla v - \nabla u|\, dx \nonumber\\
\leq\ & C_{\Lambda_0} \|DQ\|_{\infty} \sqrt{\ep_{\conv}} \int_{\bB}|\nabla v - \nabla u|^2\, dx.\label{eq:I_2-estimate}
\end{align}
Next we consider the term $I_3$. Since $u - v \in W^{1, 2}_0(\bB)$, by Lemma~\ref{lemm:Neumann-Wente} (and Remark~\ref{rmk:Neumann-Wente}) we obtain some $\psi \in C^0(\overline{\bB}; \RR^3) \cap W^{1, 2}(\bB; \RR^3)$ such that
\[
-\int_{\bB} \bangle{\nabla \zeta, \nabla \psi}\, dx = \int_{\bB}\zeta\cdot (v-u)_{x^1}\times (v-u)_{x^2}\, dx, \quad\text{for all }\zeta \in W^{1,2} \cap L^{\infty}(\bB; \RR^3),
\]
and that
\begin{equation}\label{eq:Wente-fo-I3}
\|\nabla\psi\|_2 \leq C\|\nabla v - \nabla u\|_2^2,
\end{equation}
for some universal constant $C$. Thus, noting that $Q(v) \in W^{1, 2} \cap L^{\infty}(\bB; \RR^3)$, we get
\begin{equation}\label{eq:I_3-estimate}
\begin{split}
|I_3| = \Big|-\int_{\bB}\big\langle (DQ)_v(\nabla v), \nabla \psi \big\rangle \, dx\Big| \leq\ & C\|D Q\|_{\infty}\|\nabla v\|_2 \|\nabla v - \nabla u\|_2^2\\
\leq\ & C\|DQ\|_{\infty}\cdot \sqrt{\ep_{\conv}}\cdot \|\nabla v - \nabla u\|_2^2\,.
\end{split}
\end{equation}
Combining this with~\eqref{eq:I_2-estimate},~\eqref{eq:I_1-estimate}, and~\eqref{eq:Ef-difference}, we conclude that
\[
D(v)- D(u) + V_H(v, u) \geq (\frac{1}{2} - C_{\Lambda_0} \cdot \Lambda' \cdot \sqrt{\ep_{\conv}}) \int_{\bB}|\nabla v - \nabla u|^2\,dx \,,
\]
which establishes the desired convexity estimate, provided $\ep_{\conv}$ is sufficiently small.
\end{proof}
\section{Regular solutions to the \texorpdfstring{$H$}{H}-surface flow with small energy data}\label{sec:regular-solutions}

Fixing some $q > 4$, and setting $\alpha = 1 - \frac{4}{q}$, in this section we consider the Cauchy-Dirichlet problem~\eqref{eq:H-flow} in the case where $u_0 \in C^{2, \alpha}(\overline{\bB}; \RR^3)$. We begin by recalling the uniqueness and short-time existence results due to Chen--Levine~\cite{ChenLevine2002}. Below, by $C^{2+\alpha, 1 + \frac{\alpha}{2}}(\overline{\bB} \times [0, T))$ we mean the intersection $\cap_{0 < T' < T}C^{2 + \alpha, 1+\frac{\alpha}{2}}(\overline{\bB} \times [0, T'])$, where the latter spaces are defined according to~\cite[pages 7-8]{LSU}. (See also Appendix~\ref{sec:further-a-priori}.) 

\begin{lemm}[\cite{ChenLevine2002}, Lemma 4.8; \cite{Struwe1985}, Lemma 3.12]
\label{lemm:uniqueness}
Let $H \in C^{1}(\RR^3; \RR)$. Given $T > 0$ and $u_0 \in C^{2, \alpha}(\overline{\bB}; \RR^3)$, suppose $u, v \in C^{2+\alpha, 1 + \frac{\alpha}{2}}(\overline{\bB} \times [0, T); \RR^3)$ are both solutions to~\eqref{eq:H-flow}. Then in fact $u = v$.
\end{lemm}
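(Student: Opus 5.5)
Set $w = u - v$. Since $u$ and $v$ have the same initial value $u_0$ and the same boundary values $\chi = u_0|_{\partial\bB}$, the difference $w$ vanishes on $\partial\bB \times [0, T)$ and at $t = 0$. Subtracting the two equations in~\eqref{eq:H-flow} gives
\[
w_t - \Delta w = -2\big(H(u) - H(v)\big)\, u_{x^1} \times u_{x^2} - 2H(v)\big(u_{x^1} \times u_{x^2} - v_{x^1} \times v_{x^2}\big).
\]
The plan is an $L^2$-energy (Gronwall) argument on a fixed interval $[0, T']$ with $T' < T$. Everything is justified there, because $u$ and $v$ belong to $C^{2+\alpha, 1+\frac{\alpha}{2}}(\overline{\bB} \times [0, T'])$. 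In particular
\[
M := \sup_{\overline{\bB} \times [0, T']}\big(|u| + |v| + |\nabla u| + |\nabla v|\big) < \infty.
\]
On the compact set $\{|y| \leq M\}$, $H$ is bounded and Lipschitz, with constants depending on $M$ and on the $C^1$-norm of $H$ there. This uniform gradient bound up to the boundary is what distinguishes the regular parabolic setting from the elliptic uniqueness argument of Theorem~\ref{thm:uniqueness}. Here we need no smallness of energy and no Hardy inequality.

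Next, multiply the equation for $w$ by $w$ and integrate over $\bB$. Integration by parts has no boundary term because $w = 0$ on $\partial\bB$. This gives
\[
\frac{1}{2}\frac{d}{dt}\int_{\bB}|w|^2\,dx + \int_{\bB}|\nabla w|^2\,dx
\leq C_M \int_{\bB}|w|^2\,dx + C_M\int_{\bB}|\nabla w|\,|w|\,dx.
\]
For the first right-hand term we used $|H(u) - H(v)| \leq C_M|w|$ together with $|\nabla u|^2 \leq M^2$. For the second we used
\[
u_{x^1}\times u_{x^2} - v_{x^1}\times v_{x^2} = w_{x^1}\times u_{x^2} + v_{x^1}\times w_{x^2},
\]
which is bounded by $2M|\nabla w|$. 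Young's inequality absorbs $C_M|\nabla w||w|$ into $\frac12\int|\nabla w|^2 + C_M'\int|w|^2$. Hence $y(t) = \int_{\bB}|w(\cdot,t)|^2\,dx$ satisfies $y' \leq C y$ with $y(0) = 0$. Gronwall's inequality gives $y \equiv 0$ on $[0, T']$, and since $T' < T$ is arbitrary, $u = v$ on $\bB \times [0, T)$.

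The only point requiring care is differentiating $y$ under the integral and integrating by parts up to $t = 0$. This is legitimate because $w_t$ and $\nabla^2 w$ are continuous on $\overline{\bB}\times[0,T']$ by the assumed H\"older regularity. I expect no real obstacle. The main thing to check is that the constants depend only on $T'$ through $M$, and not on any smallness assumption.
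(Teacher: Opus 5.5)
Your proof is correct and is essentially the paper's argument. Both restrict to $[0,T']$ with $T'<T$, bound $|u|,|v|,|\nabla u|,|\nabla v|$ (and hence $H$, $\nabla H$ on the relevant ball), test the difference of the two equations against $u-v$, absorb the gradient term by Young's inequality, and conclude by integrating the resulting differential inequality from zero initial data.
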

\begin{proof}
Since we restrict to regular solutions, the argument becomes rather simple. Fix any $T_1 \in (0, T)$, and define
\begin{align*}
R: = \ & \sup_{\overline{\bB} \times [0, T_1]}( |u| + |\nabla u| + |v| + |\nabla v| ),\\
\Lambda: =\ &  \sup_{|y| \leq R} \big( |H(y)| + |\nabla H(y)| \big).
\end{align*}
Given $t \in [0, T_1]$, since $u - v = 0$ on $\partial \bB \times [0, T)$, upon taking its inner product with the equations satisfied $u$ and $v$, and integrating over $\bB \times \{t\}$, we get
\begin{align*}
&\int_{\bB \times \{t\}} (u_{t} - v_{t}) \cdot (u - v) + |\nabla u - \nabla v|^2\, dx \\
=\ & -2\int_{\bB \times \{t\}} \big( H(u)\, u_{x^1} \times u_{x^2} - H(v)\, v_{x^1} \times v_{x^2} \big) \cdot (u - v)\, dx\\
\leq \ & 2\Lambda \int_{\bB \times \{t\}} |u - v|^2 |\nabla u|^2 + |u -v| |\nabla u - \nabla v|(|\nabla u| + |\nabla v|)\, dx,
\end{align*}
where the last line follows as in~\eqref{eq:uniqueness-computation}. With the help of Young's inequality, we get
\begin{align*}
&\frac{d}{dt}\Big(\int_{\bB \times \{t\}} |u - v|^2\, dx \Big) + \int_{\bB \times \{t\}} |\nabla u - \nabla v|^2 \, dx\\
&\leq C(\Lambda + \Lambda^2)\int_{\bB \times \{t\}} |u - v|^2 (|\nabla u| + |\nabla v|)^2\, dx \leq C_{\Lambda, R} \int_{\bB \times \{t\}} |u - v|^2\, dxdt.
\end{align*}
Recalling that $\int_{\bB \times \{0\}}|u - v|^2\, dx = 0$, we see that $u = v$ on $\bB \times [0, T_1]$ upon integrating the above differential inequality. This finishes the proof since $T_1 \in (0, T)$ is arbitrary.
\end{proof}

\begin{prop}[\cite{ChenLevine2002}, Theorem 3.2]
\label{prop:short-time-existence}
Suppose $H \in C^{1}(\RR^3; \RR)$. Given $u_0 \in C^{2, \alpha}(\overline{\bB}; \RR^3)$, there exists $\tau = \tau(u_0) > 0$ such that~\eqref{eq:H-flow} admits a solution $u \in C^{2 + \alpha, 1 + \frac{\alpha}{2}}(\overline{\bB} \times [0, \tau]; \RR^3)$. Moreover, given a bounded subset $\sF$ of $C^{2, \alpha}(\overline{\bB}; \RR^3)$, we can choose $\{\tau(u_0)\}_{u_0 \in \sF}$ in such a way that
\[
\inf_{u_0 \in \sF}\tau(u_0) > 0.
\]
\end{prop}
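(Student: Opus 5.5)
We prove Proposition~\ref{prop:short-time-existence} by a standard fixed-point argument in a parabolic H\"older space. Linearization is not needed: the nonlinearity $-2H(u)\,u_{x^1}\times u_{x^2}$ involves only first derivatives, so a contraction in a space controlling $\nabla u$ in a H\"older norm will do.

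\textbf{Setup.} Let $h$ be the solution of the linear problem
\[
h_t - \Delta h = 0 \ \text{on } \bB\times[0,\tau],\qquad h(\cdot,0)=u_0,\qquad h=u_0 \ \text{on } \partial\bB\times[0,\tau].
\]
The compatibility conditions needed for $C^{2+\alpha,1+\frac{\alpha}{2}}$ regularity hold only at zeroth order. So I work in the space $X_\tau$ of maps $w$ with $w,\nabla w \in C^{\beta,\beta/2}(\overline{\bB}\times[0,\tau])$ for some $\beta<\alpha$ (equivalently, $C^{1+\beta,(1+\beta)/2}$). I aim for a contraction on the ball
\[
\mathcal{K}=\big\{w \in X_\tau:\ w(\cdot,0)=u_0,\ w|_{\partial\bB}=u_0,\ \|w-h\|_{X_\tau}\le 1\big\}.
\]
For $w\in \mathcal{K}$, define $\Phi(w)=h+z$, where $z$ solves
\[
z_t-\Delta z = -2H(w)\,w_{x^1}\times w_{x^2},\qquad z=0 \ \text{on the parabolic boundary}.
\]

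\textbf{Key steps.}
\begin{enumerate}
\item[(1)] Bound $h$ in $C^{1+\alpha,(1+\alpha)/2}$ by $C\|u_0\|_{C^{2,\alpha}}$. In fact $h$ is $C^{2+\alpha,1+\frac{\alpha}{2}}$ away from the corner $t=0$, $x\in\partial\bB$, where first-order compatibility $\Delta u_0=0$ on $\partial\bB$ may fail. This is why I avoid requiring second derivatives to be H\"older up to $t=0$ in the fixed-point space.
\item[(2)] The source $f=-2H(w)\,w_{x^1}\times w_{x^2}$ is bounded in $C^{\beta,\beta/2}$ by a constant $M$ depending on $\|u_0\|_{C^{2,\alpha}}$, $\sup|H|$ and $\sup|\nabla H|$ on a ball of radius $R(\|u_0\|_{C^{2,\alpha}})$. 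Here I use $H\in C^1$.
\item[(3)] Apply the Schauder estimate for the heat equation with zero data (which has the zero compatibility condition), as in \cite{LSU}:
\[
\|z\|_{C^{2+\beta,1+\beta/2}}\le C\|f\|_{C^{\beta,\beta/2}}.
\]
Since $z$ vanishes at $t=0$, interpolation gives $\|z\|_{X_\tau}\le C\tau^{\delta}M$ for some $\delta>0$. Hence $\Phi(\mathcal{K})\subset\mathcal{K}$ for small $\tau$.
\item[(4)] The difference $f(w_1)-f(w_2)$ is bounded by $C\|w_1-w_2\|_{X_\tau}$, using $H\in C^1$. Strictly this needs $\nabla H$ H\"older for the $C^\beta$ norm of $H(w_1)-H(w_2)$; to avoid that, I would instead prove contraction in the weaker norm $\sup(|w|+|\nabla w|)$ using $L^p$ estimates, with $p>4$ matching $q$ and $\alpha=1-\frac4q$, and keep the H\"older bound only for invariance. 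The set $\mathcal{K}$ is closed in that weaker norm by Arzel\`a--Ascoli. This gives a fixed point $u$, and the factor $\tau^\delta$ makes the map contractive.
\end{enumerate}

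\textbf{Regularity upgrade.} With $u \in X_\tau$, the right side $f(u)$ is now $C^{\beta}$. Combining this with the boundary data $u_0\in C^{2,\alpha}$ and repeating the Schauder estimate yields $u\in C^{2+\alpha,1+\frac{\alpha}{2}}$ up to $t=0$. The compatibility conditions at $t=0$ require care here. If the paper's definition of the space includes the corner, one needs first-order compatibility, namely $\Delta u_0 = 2H(u_0)\,u_{0,x^1}\times u_{0,x^2}$ on $\partial\bB$. Otherwise one obtains the regularity on $\overline{\bB}\times(0,\tau]$ together with continuity at $t=0$, which I expect is what \cite{ChenLevine2002} means.

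\textbf{Uniformity over $\sF$.} All constants above depend only on $\|u_0\|_{C^{2,\alpha}}$ and on $H$ restricted to a ball of radius depending on that norm. Hence $\tau$ can be chosen uniformly over any bounded family $\sF$, giving $\inf_{u_0 \in \sF}\tau(u_0) > 0$.

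\textbf{Main obstacle.} I expect the hardest step to be the corner compatibility and the matching of H\"older exponents, i.e.\ getting the full $C^{2+\alpha,1+\frac{\alpha}{2}}$ up to $t=0$ while $H$ is only $C^1$.
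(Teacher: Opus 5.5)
Your route differs from the paper's. The paper, following Chen--Levine, stays in $\cX_q=W^{2,1}_q(\bB\times(0,1))$ throughout:
\begin{itemize}
\item the map $K(u)=(u_t-\Delta u+2H(u)u_{x^1}\times u_{x^2},\,u|_{t=0},\,u|_{\partial\bB\times[0,1]})$ is $C^1$ into $\cY_q$;
\item the inverse function theorem makes $K$ a local diffeomorphism at the time-independent map $u_0$;
\item the truncation $S_\tau$ deletes the residual $-\Delta u_0+2H(u_0)(u_0)_{x^1}\times(u_0)_{x^2}$ on $[0,\tau)$, so it moves $K(u_0)$ by only $\tau^{1/q}$ times a fixed $L^q$ norm, and $S_\tau K(u_0)$ is attained for small $\tau$;
\item uniformity over $\sF$ then comes softly from the compact embedding $C^{2,\alpha}\hookrightarrow W^2_q$, which gives a uniform radius $\rho_*$.
\end{itemize}
Your Picard iteration around the caloric extension $h$ is more elementary: it needs no $C^1$ structure of $K$ and no invertibility of the linearization. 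It also makes $\tau$ depend explicitly on $\|u_0\|_{C^{2,\alpha}}$ and on $H$ restricted to a ball, so uniformity needs no compactness. What the paper gains by working in $W^{2,1}_q$ is that only zeroth-order compatibility ever enters the construction. That is exactly where your H\"older framework slips.

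\textbf{Step (3) is wrong as written.} The zero-data problem for $z$ is \emph{not} compatible enough for $C^{2+\beta,1+\beta/2}$ regularity up to $\partial\bB\times\{0\}$. That regularity requires the first-order condition $f(\cdot,0)=0$ on $\partial\bB$. But $f(w)(\cdot,0)=-2H(u_0)\,(u_0)_{x^1}\times(u_0)_{x^2}$, which need not vanish there (take $H\equiv1$, $u_0=(x^1,x^2,0)$). So the Schauder estimate you cite does not justify invariance of $\mathcal K$.

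The repair is the $L^p$ theory you already bring in for step (4). Extend $f$ by zero to $(0,1)$, apply the $W^{2,1}_p$ estimate, and use the embedding for $p>4$. With $\gamma=1-\frac4p>\beta$ and $C$ independent of $\tau\le1$, this gives
\[
\|z\|_{C^{1+\gamma,\frac{1+\gamma}{2}}(\overline{\bB}\times[0,\tau])}\le C\|f\|_{L^p(\bB\times(0,\tau))}\le C\tau^{\frac1p}\|f\|_{\infty}.
\]
This one estimate gives invariance and contraction together. It needs only a sup bound on $f$, so the H\"older bound of step (2) is not needed for the fixed point. The set $\mathcal K$ is complete in the $C^1$-sup metric by lower semicontinuity of H\"older seminorms. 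Taking $p=q$ gives $\nabla u\in C^{\alpha,\frac{\alpha}{2}}$ directly, which is what the upgrade to exponent $\alpha$ requires.

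\textbf{Your caveat about the corner is correct, and the paper's proof passes over it.} The paper's final appeal to \cite[Theorem IV.5.2]{LSU}, to get $C^{2+\alpha,1+\frac{\alpha}{2}}(\overline{\bB}\times[0,\tau])$, needs the first-order compatibility condition $\Delta u_0=2H(u_0)(u_0)_{x^1}\times(u_0)_{x^2}$ on $\partial\bB$. This is not assumed, and without it the conclusion is false. For example, take $H\equiv0$ and $u_0=(|x|^2,0,0)$. A solution in that class would satisfy $u_t(\cdot,0)=\Delta u_0=(4,0,0)$ on $\overline{\bB}$, yet $u_t=0$ on $\partial\bB\times(0,\tau]$, contradicting continuity of $u_t$.

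What either argument actually proves is:
\begin{itemize}
\item $u\in W^{2,1}_q(\bB\times(0,\tau))$, so $u$ and $\nabla u$ are H\"older continuous up to $t=0$;
\item $u\in C^{2+\alpha,1+\frac{\alpha}{2}}(\overline{\bB}\times[\delta,\tau])$ for every $\delta\in(0,\tau)$.
\end{itemize}
This appears to be all that the later sections use. The restarts from $u(\cdot,t_0)$ with $t_0>0$ in Proposition~\ref{prop:existence-flow} do satisfy first-order compatibility.
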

\begin{proof}
We indicate the key elements of the proof given in~\cite{ChenLevine2002}. The notation from~\cite[Chapter I]{LSU} for Lebesgue and Sobolev spaces will be used, which differs slightly from that used elsewhere in this paper. With $q > 4$ being the exponent fixed at the start of this section, we define the following Banach spaces
\begin{align*}
\cX_{q} = \ & W^{2, 1}_{q}(\bB \times (0, 1)), \\
\cV_{q} =\ & \big\{(h, k) \in  W_{q}^{2 - \frac{2}{q}}(\bB) \times W_{q}^{2 -\frac{1}{q}, 1 - \frac{1}{2q}}(\partial \bB \times (0, 1))\ \big| \ h|_{\partial\bB} = k|_{\partial \bB \times \{0\}} \big\},\\
\cY_{q} =\ & L_{q}(\bB \times (0, 1)) \times \cV_{q},
\end{align*}
and identify $W^{2}_{q}(\bB)$ with its image in $\cX_{q}$ via the embedding that takes $u$ to the function $(x, t) \mapsto u(x)$. The two central objects in the proof of~\cite[Theorem 3.2]{ChenLevine2002} are the map $K:\cX_{q}\rightarrow \cY_{q}$ defined by
\[
K(u) = (u_{t} - \Delta u + 2H(u)u_{x^1} \times u_{x^2},\ u|_{\bB \times \{0\}},\ u|_{\partial \bB \times [0, 1]}),
\]
and, with $\tau$ to be determined, the map $S_{\tau}: \cY_{q} \rightarrow \cY_{q}$ defined by
\[
S_{\tau}(g, u, \chi) =(g_{\tau}, u, \chi),
\]
where we let
\[
g_{\tau}= g \text{ on }\bB \times [\tau, 1), \quad g_{\tau} = 0 \text{ otherwise}.
\]
Clearly $S_{\tau}$ is a bounded linear map. On the other hand, by standard parabolic Sobolev embeddings~\cite[Lemma II.3.3]{LSU} and trace theorems~\cite[Lemma II.3.4]{LSU}, the map $K$ does indeed have the stated codomain, and is a $C^1$-map. To put briefly the subsequent argument in~\cite{ChenLevine2002}, given $u_0 \in W^{2}_{q}(\bB) \hookrightarrow \cX_{q}$, one uses~\cite[Theorem IV.9.1]{LSU} and the inverse function theorem to produce neighborhoods $\cN_{1}$ and $\cN_{2}$ of $u_0$ and $K(u_0)$, respectively, such that 
\[
K|_{\cN_{1}}: \cN_{1} \to \cN_{2}
\]
is a $C^1$-diffeomorphism. A small enough $\tau = \tau(u_0)$ is then found so that $S_{\tau}( K(u_0)) \in \cN_{2}$, in which case the definition of $S_{\tau}$ ensures that 
\[
u: = (K|_{\cN_{1}})^{-1} \big( S_{\tau}( K(u_0))\big)
\]
restricts to a solution of~\eqref{eq:H-flow} of class $W^{2, 1}_{q}(\bB \times (0, \tau))$. Since $q >4$, Sobolev embedding (\cite[Lemma II.3.3]{LSU}) can be used again to show that the inhomogeneous term $2H(u)u_{x^1} \times u_{x^2}$ in~\eqref{eq:H-flow} lies in $C^{\alpha, \frac{\alpha}{2}}(\overline{\bB} \times [0, \tau])$. If further $u_0 \in C^{2,\alpha}(\overline{\bB})$, then the combination of~\cite[Theorems IV.5.2 and IV.9.1]{LSU} implies that $u \in C^{2 + \alpha, 1 + \frac{\alpha}{2}}(\overline{\bB} \times [0, \tau])$.

To get the second conclusion, notice that, by applying the argument in the previous paragraph to each $u_0 \in W^{2}_{q}(\bB)$ and taking the union of the resulting open sets $\cN_{2}$, we get a neighborhood $\sU$ of $K(W^{2}_{q}(\bB))$ in $\cY_{q}$ such that $\sU \subset K(\cX_{q})$. On the other hand, writing $\overline{\sF}$ for the $W^{2}_{q}$-closure of $\sF$, we see from the compact embedding $C^{2,\alpha}(\overline{\bB}) \hookrightarrow W^{2}_{q}(\bB)$ and the continuity of $K$ that $K(\overline{\sF})$ is a compact subset of $\sU$, and thus there is some $\rho_* > 0$ such that 
\begin{equation}\label{eq:contained-in-image}
\cup_{u_0 \in \sF}\, B_{\rho_*}^{\cY_{q}}(K(u_0)) \subset \sU \subset K(\cX_{q}).
\end{equation}
Again using the boundedness of $\sF$ in $C^{2,\alpha}(\overline{\bB})$, by a direct computation we have for all $u_0 \in \sF$ that 
\[
\begin{split}
\|S_{\tau} ( K(u_0)) - K(u_0)\|_{\cY_{q}} =\ & \tau^{\frac{1}{q}}  \cdot \|\Delta u_0 - 2H(u_0)(u_0)_{x^1} \times (u_0)_{x^2}\|_{q; \bB}\leq C \cdot \tau^{\frac{1}{q}},
\end{split}
\]
where $C$ depends only on $q$, $\sF$, and $H$. In particular, we can find $\tau_* > 0$ such that
\[
S_{\tau_*}( K(u_0)) \in  B^{\cY_{q}}_{\rho_*}(K(u_0)),\quad \text{for all }u_0 \in \sF.
\]
We get the asserted positivity upon combining this with~\eqref{eq:contained-in-image}.
\end{proof}

As is well-known, one of the major difficulties in dealing with the $H$-surface flow is that, in contrast with the harmonic map flow, the monotone quantity in this case, namely the $E_H$-functional as defined by~\eqref{eq:energy-functional}, does not necessarily yield an energy bound along the flow. In Lemma~\ref{lemm:E-H-monotone} we recall the said monotonicity property of $E_H$. Then, in Lemma~\ref{lemm:energy-bound-from-isoperimetric} we show that, when $D(u_0)$ is small enough, one can go around the problem mentioned above by using the isoperimetric inequality in a way similar to Duzaar--Steffen~\cite[Theorem 4.4(ii)]{DuzaarSteffen1999} and B\"ogelein--Duzaar--Scheven~\cite[Remark 8.2]{BogeleinDuzaarScheven2013}.

\begin{lemm}[\cite{ChenLevine2002}, Lemma 4.3]
\label{lemm:E-H-monotone}
Let $H \in C^{1}(\RR^3; \RR)$. Suppose $u_0 \in C^{2, \alpha}(\overline{\bB}; \RR^3)$, and let $u \in C^{2 + \alpha, 1 + \frac{\alpha}{2}}(\overline{\bB} \times [0, T); \RR^3)$ be a solution of~\eqref{eq:H-flow}. Then, given $t_1 < t_2$ so that $[t_1, t_2] \subset [0, T)$, there holds
\begin{equation}\label{eq:E-H-monotone}
\int_{t_1}^{t_2}\int_{\bB} |u_t|^2 dxdt = E_{H}(u(\cdot, t_1), u_0) - E_{H}(u(\cdot, t_2), u_0).
\end{equation}
\end{lemm}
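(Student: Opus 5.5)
We need to prove the energy identity \eqref{eq:E-H-monotone} for a regular solution. The plan is to show that $t\mapsto E_H(u(\cdot,t),u_0)$ is $C^1$ on $[0,T)$ with derivative $-\int_{\bB}|u_t|^2\,dx$, and then integrate from $t_1$ to $t_2$.

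\emph{Setup.} Fix $[t_1,t_2]\subset[0,T)$ and $T'\in(t_2,T)$. On $\overline{\bB}\times[0,T']$ the solution is bounded in $C^{2+\alpha,1+\frac{\alpha}{2}}$, so $R:=\sup|u|<\infty$. Since only values of $H$ on $\overline{B_R}$ enter (in the formula of Lemma~\ref{lemm:volume-formula-H} the argument $su+(1-s)u_0$ stays in $\overline{B_R}$), we may first replace $H$ by a compactly supported $C^1$ function agreeing with $H$ near $\overline{B_R}$. We then take the $C^1$ vector field $Q$ from the remark after Lemma~\ref{lemm:volume-formula}, which satisfies $\Div Q=2H$.

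\emph{The two derivatives.} Lemma~\ref{lemm:volume-formula}(a) gives
\[
V_H(u(\cdot,t),u_0)=\int_{\bB}Q(u)\cdot u_{x^1}\times u_{x^2}\,dx-\int_{\bB}Q(u_0)\cdot (u_0)_{x^1}\times(u_0)_{x^2}\,dx .
\]
Because $u\in C^{2+\alpha,1+\frac{\alpha}{2}}$, we have $u_t,\nabla u_t$ continuous on $\overline{\bB}\times[0,T']$, so we may differentiate under the integral sign. For the Dirichlet part, $\frac{d}{dt}D(u)=\int\langle\nabla u,\nabla u_t\rangle=-\int\Delta u\cdot u_t$; the boundary term vanishes because $u_t=0$ on $\partial\bB$, as $u=\chi$ there is independent of $t$. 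For the volume part,
\[
\frac{d}{dt}\int Q(u)\cdot u_{x^1}\times u_{x^2}=\int (DQ)_u(u_t)\cdot u_{x^1}\times u_{x^2}+Q(u)\cdot\big(u_{t,x^1}\times u_{x^2}+u_{x^1}\times u_{t,x^2}\big).
\]
We integrate by parts in the second term, again with no boundary contribution since $u_t|_{\partial\bB}=0. $ Then we apply exactly the trace identity \eqref{eq:Lie-algebra-action} used to derive \eqref{eq:volume-difference-div}, with $u_t$ in place of $\varphi$. This yields $\frac{d}{dt}V_H=2\int H(u)\,u_t\cdot u_{x^1}\times u_{x^2}$. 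If one prefers to avoid needing $Q\in C^2$ at this step, the computation can be done with the mollified $Q_n$ as in the proof of Lemma~\ref{lemm:volume-formula} and passed to the limit. Alternatively, apply \eqref{eq:volume-difference-alt-formula} with $v=u(\cdot,t+h)$, $u=u(\cdot,t)$, using \eqref{eq:volume-additive}, divide by $h$, and let $h\to0$: the three remainder terms are $o(h)$, since $\varphi=O(h)$ in $C^1$.

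\emph{Conclusion.} Summing the two derivatives,
\[
\frac{d}{dt}E_H(u(\cdot,t),u_0)=\int_{\bB}\big(-\Delta u+2H(u)u_{x^1}\times u_{x^2}\big)\cdot u_t\,dx=-\int_{\bB}|u_t|^2\,dx
\]
by the equation in \eqref{eq:H-flow}. Integrating over $[t_1,t_2]$ gives \eqref{eq:E-H-monotone}. The main point requiring care is the differentiation of $V_H$: justifying the integration by parts, and the regularity of $Q$ needed for it. The difference-quotient route through \eqref{eq:volume-difference-alt-formula} handles both cleanly, since it uses only that $Q$ is $C^1$ together with uniform continuity of $DQ$ on $\overline{B_R}$.
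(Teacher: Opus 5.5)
The gap is the regularity claim that both of your derivative computations rest on. Membership in $C^{2+\alpha,1+\frac{\alpha}{2}}$ does \emph{not} give a continuous, or even existing, mixed derivative $\nabla u_t$. The norm defined in Appendix~\ref{sec:further-a-priori} controls $u,\nabla u,\nabla^2u,u_t$, the H\"older seminorms of $\nabla^2u$ and $u_t$, and $[\nabla u]^{(t)}_{\frac{1+\alpha}{2}}$. It does not control $\nabla u_t$, which has parabolic order three, so $\nabla u$ is only $C^{\frac{1+\alpha}{2}}$ in $t$. Near $t=0$ your claim is false: already for $H\equiv 0$, continuity of $\nabla u_t$ up to $t=0$ would force $u_t(\cdot,0)=\Delta u_0$ to be $C^1$, which fails for general $u_0\in C^{2,\alpha}$.

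Hence neither $\frac{d}{dt}D(u)=\int_{\bB}\bangle{\nabla u,\nabla u_t}$ nor your differentiation of $\int_{\bB}Q(u)\cdot u_{x^1}\times u_{x^2}$ is justified. Your fallback has the same defect. For $\varphi=u(\cdot,t+h)-u(\cdot,t)$ you only know $\|\varphi\|_\infty=O(h)$ and $\|\nabla\varphi\|_\infty=O(h^{\frac{1+\alpha}{2}})$, not $\varphi=O(h)$ in $C^1$.

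The paper's proof is built to sidestep exactly this. It tests the equation with backward and forward difference quotients $v_{\pm h}$ and uses the one-sided inequality \eqref{eq:H-flow-tested-lhs-observation}. The quadratic term $\frac{1}{2h}|\nabla u-\nabla u^{(-h)}|^2$ is therefore discarded by its sign rather than estimated. The volume is handled through the homotopy formula of Lemma~\ref{lemm:volume-formula-H}, which is linear in $v_h$ and passes to the limit by dominated convergence. (Compare Lemma~\ref{lemm:speed-monotone}(a), where even $\nabla u_t\in L^2(\bB\times(\delta,T))$, $\delta>0$, needs a separate argument.)

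Your pointwise strategy can still be repaired using the bound that is actually available. Since $\varphi=0$ on $\partial\bB$ and $u(\cdot,t)\in C^2(\overline{\bB})$,
\[
D(u(\cdot,t+h))-D(u(\cdot,t))=-\int_{\bB}\Delta u(\cdot,t)\cdot\varphi\,dx+\frac{1}{2}\int_{\bB}|\nabla\varphi|^2\,dx,
\]
and the last term is $O(h^{1+\alpha})=o(h)$. By \eqref{eq:volume-additive} and \eqref{eq:volume-difference-alt-formula}, the difference $V_H(u(\cdot,t+h),u_0)-V_H(u(\cdot,t),u_0)$ equals $2\int_{\bB}H(u)\,u_{x^1}\times u_{x^2}\cdot\varphi\,dx$ (at time $t$) plus three remainders. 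These are bounded by $C\|\nabla\varphi\|_\infty^2$, by $C\|\varphi\|_\infty\|\nabla\varphi\|_\infty$, and by $o(\|\varphi\|_\infty)$, the last via uniform continuity of $DQ$ on $\overline{B_R}$. All three are $o(h)$.

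Since $\varphi/h\to u_t(\cdot,t)$ uniformly, $t\mapsto E_H(u(\cdot,t),u_0)$ is differentiable (one-sidedly at $t=0$) with continuous derivative $-\int_{\bB}|u_t|^2\,dx$, and \eqref{eq:E-H-monotone} follows. The decisive input is $\|\nabla\varphi\|_\infty=o(h^{1/2})$, i.e.\ the time exponent $\frac{1+\alpha}{2}>\frac12$ built into the space.

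The paper's argument needs no such rate: only continuity of $\nabla u$ in $t$, and only $H\in C^0$ for the volume term. That is why the same device can be reused for weak solutions in Lemma~\ref{lemm:weak-solution-regular}. Your repaired version gives the slightly sharper conclusion that $t\mapsto E_H(u(\cdot,t),u_0)$ is $C^1$.
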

\begin{proof}
We use a difference quotient argument similar to the one leading up to~\cite[page 5280, equation (5.6)]{LuWang2012}. Given $h \in \RR$ with $0 < |h| < T$, we define
\begin{equation}\label{eq:diff-quotient-in-t}
u^{(h)}(x, t) := u(x, t + h),\quad v_{h}(x, t) := \frac{1}{h}(u(x,t) - u^{(-h)}(x, t)),
\end{equation}
for $t \in [-h, T-h)$ and, respectively, $t \in [0, T) \cap [h, T+h)$. Notice by the boundary condition in~\eqref{eq:H-flow} that 
\begin{equation}\label{eq:t-step-vanish-on-boundary}
v_{h}(x, t) = 0,\quad\text{whenever }x\in \partial\bB.
\end{equation}
To prove~\eqref{eq:E-H-monotone}, we consider first the case $t_1 > 0$, and define 
\[
\delta: = \frac{1}{4}\min\{t_1, t_2 - t_1, T - t_2\}.
\]
For $h \in (0, \delta)$, we multiply the differential equation in~\eqref{eq:H-flow} by $v_{h}$ and integrate over $[t_1, t_2] \times \bB$ to get
\begin{equation}\label{eq:H-flow-tested}
\int_{t_1}^{t_2}\int_{\bB} v_{h} \cdot u_{t} + \bangle{\nabla v_{h}, \nabla u}\, dxdt = -2\int_{t_1}^{t_2}\int_{\bB}H(u)\,v_{h} \cdot u_{x^{1}} \times u_{x^2}\, dxdt.
\end{equation}
Noting from the splitting $\nabla u = \frac{ \nabla u + \nabla u^{(-h)}}{2} + \frac{\nabla u - \nabla u^{(-h)}}{2}$ that
\begin{equation}\label{eq:H-flow-tested-lhs-observation}
\bangle{\nabla v_{h}, \nabla u} \geq \frac{|\nabla u|^2 - |\nabla u^{(-h)}|^2}{2h},
\end{equation}
and sending $h \to 0$, we get
\begin{equation}\label{eq:functional-monotonicity-liminf}
-\int_{t_1}^{t_2}\int_{\bB} 2H(u)\, u_{t}\cdot u_{x^1} \times u_{x^2}\, dxdt\geq \int_{t_1}^{t_2}\int_{\bB}|u_{t}|^2dxdt + D(u(\cdot, t_2)) - D(u(\cdot, t_1)).
\end{equation}
The reverse inequality is proved by a similar argument, with $v_{h}$ replaced by the forward difference quotient $v_{-h}$, and with~\eqref{eq:H-flow-tested-lhs-observation} replaced by
\[
\bangle{\nabla v_{-h}, \nabla u} \leq \frac{|\nabla u^{(h)}|^2 - |\nabla u|^2}{2h}.
\]

To relate the left-hand side in~\eqref{eq:functional-monotonicity-liminf} to the enclosed volume, we define
\[
f_h(\lambda, x, t) = \lambda u(x, t-h) + (1-\lambda)u(x, t),
\]
for $(\lambda, x, t) \in [0, 1] \times \bB \times [t_1, t_2]$. Then Lemma~\ref{lemm:volume-formula-H} implies that
\begin{equation}\label{eq:functional-monotonicity-volume}
\int_{t_1}^{t_2}\frac{V_H(u(\cdot, t), u(\cdot, t-h))}{h} \, dt = 2\int_{[0, 1] \times\bB \times [t_1, t_2]} H(f_h)\, v_h \cdot (f_h)_{x^1} \times (f_h)_{x^2}\, d\lambda dxdt.
\end{equation}
For the left-hand side, noting that $t \mapsto V_H(u(\cdot, t), u_0)$ is continuous by Lemma~\ref{lemm:volume-convergence}, and using also~\eqref{eq:volume-additive}, we have
\begin{equation}\label{eq:volume-diff-quotient-limit}
\lim_{h \to 0}\int_{t_1}^{t_2}\frac{V_H(u(\cdot, t), u(\cdot, t-h))}{h} \, dt = V_H(u(\cdot, t_2), u_0) - V_H(u(\cdot, t_1), u_0).
\end{equation}
For the right-hand side in~\eqref{eq:functional-monotonicity-volume}, observe that, as $h \to 0$, the integrand satisfies
\[
H(f_h)\, v_h \cdot (f_h)_{x^1} \times (f_h)_{x^2} \to H(u)\, u_t \cdot u_{x^1} \times u_{x^2},
\]
pointwise on $[0, 1] \times \bB \times [t_1, t_2]$. Moreover, letting
\[
R =  \sup_{(x, t) \in \bB \times [t_1 - \delta, t_2 + \delta]}(|u(x,t)| + |\nabla u(x, t)| + |u_t(x, t)|),
\]
we have for all $h \in (0, \delta)$ the following bound, again pointwise on $[0, 1] \times \bB \times [t_1, t_2]$:
\[
\big| H(f_h)\, v_h \cdot (f_h)_{x^1} \times (f_h)_{x^2}\big| \leq R^3 \cdot \sup_{|y |\leq R}|H(y)|.
\]
Thus, letting $(h_n)$ be a sequence in $(0, \delta)$ converging to $0$, by the dominated convergence theorem, we obtain from~\eqref{eq:functional-monotonicity-volume} and~\eqref{eq:volume-diff-quotient-limit} that
\[
2\int_{\bB \times [t_1, t_2]} H(u)\, u_{t}\cdot u_{x^1} \times u_{x^2}\, dxdt  = V_H(u(\cdot, t_2), u_0) - V_H(u(\cdot, t_1), u_0).
\]
Combining the above with~\eqref{eq:functional-monotonicity-liminf} and the remarks thereafter, we arrive at~\eqref{eq:E-H-monotone} in the case $t_1 > 0$, from which the case $t_1 = 0$ follows since, by Lemma~\ref{lemm:volume-convergence}, we have
\[
\lim_{t \to 0^{+}} E_H(u(\cdot, t), u_0)  = E_H(u(\cdot, 0), u_0).
\]
This concludes the proof.
\end{proof}

\begin{lemm}\label{lemm:energy-bound-from-isoperimetric}
Suppose $H \in C^{1}(\RR^3; \RR)$, and that $\|H\|_{\infty; \RR^3} \leq\Lambda_0$. There exists $\ep_{0} > 0$, depending only on $\Lambda_0$, such that for any $T > 0$, and any $u_0 \in C^{2, \alpha}(\overline{\bB}; \RR^3)$ satisfying $D(u_0) < \ep_{0}$, if $u \in  C^{2 + \alpha, 1 + \frac{\alpha}{2}}(\overline{\bB} \times [0, T); \RR^3)$ is a solution of~\eqref{eq:H-flow}, then we have for all $t \in [0, T)$ that  
\[
\big| V_H(u(\cdot, t), u_0)\big| \leq \frac{1}{8}\big( D(u(\cdot, t)) + D(u_0) \big),
\]
and that
\[
D(u(\cdot, t)) + \int_{0}^{t}\int_{\bB}|u_{t}|^2 dx dt \leq 2D(u_0).
\]
\end{lemm}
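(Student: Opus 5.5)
The plan is a continuity (bootstrap) argument in $t$, built on the isoperimetric inequality~\eqref{eq:isoperimetric-after-extension} and the energy identity of Lemma~\ref{lemm:E-H-monotone}. Since $u\in C^{2+\alpha,1+\frac{\alpha}{2}}(\overline{\bB}\times[0,T))$, each slice $u(\cdot,t)$ lies in $W^{1,2}\cap L^\infty$ and agrees with $u_0$ on $\partial\bB$. Hence $V_H(u(\cdot,t),u_0)$ is defined, and by~\eqref{eq:isoperimetric-after-extension}
\[
|V_H(u(\cdot,t),u_0)| \le c\Lambda_0\big(D(u(\cdot,t))+D(u_0)\big)^{\frac32}.
\]
The first claimed bound therefore holds as soon as $c\Lambda_0\big(D(u(\cdot,t))+D(u_0)\big)^{\frac12}\le \frac18$, that is, whenever $D(u(\cdot,t))+D(u_0)\le (8c\Lambda_0)^{-2}$. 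I would choose $\ep_0$ so that $3\ep_0 < (8c\Lambda_0)^{-2}$. Then the volume bound holds at every time where $D(u(\cdot,t))\le 2D(u_0)$.

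Next, at any such time $t$, I would apply Lemma~\ref{lemm:E-H-monotone} with $t_1=0$, $t_2=t$, using $V_H(u_0,u_0)=0$:
\[
\int_0^t\!\!\int_{\bB}|u_t|^2 + D(u(\cdot,t)) = D(u_0) - V_H(u(\cdot,t),u_0) \le D(u_0)+\tfrac18\big(D(u(\cdot,t))+D(u_0)\big).
\]
This gives $\tfrac78 D(u(\cdot,t))+\int_0^t\!\int|u_t|^2 \le \tfrac98 D(u_0)$. Hence both $D(u(\cdot,t))\le \tfrac97 D(u_0)$ and $D(u(\cdot,t))+\int_0^t\!\int|u_t|^2\le \tfrac98 D(u_0)+\tfrac18 D(u(\cdot,t))\le \tfrac{9}{7}D(u_0)$, which is strictly better than $2D(u_0)$.

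To close the argument, let $S=\{t\in[0,T): D(u(\cdot,s))\le 2D(u_0)\ \text{for all } s\in[0,t]\}$. It contains $0$ and is closed in $[0,T)$, because $t\mapsto D(u(\cdot,t))$ is continuous by the regularity of $u$. It is also open: on $S$ we have the improved bound $D\le\tfrac97D(u_0)<2D(u_0)$, which persists for a short time by continuity. (If $D(u_0)=0$, then $u_0$ is constant, and the computation above gives $D(u(\cdot,t))\le 0$ directly on $S$.) Thus $S=[0,T)$, and both estimates follow for all $t$.

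The only point needing care is this continuity/openness step, in particular the degenerate case $D(u_0)=0$ and the continuity of $t\mapsto D(u(\cdot,t))$. Both are routine given the H\"older regularity up to the boundary.
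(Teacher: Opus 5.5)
Your argument follows the paper's proof. The isoperimetric inequality~\eqref{eq:isoperimetric-after-extension} gives the volume bound whenever the energy is small, Lemma~\ref{lemm:E-H-monotone} with $t_1=0$ then improves the energy bound, and a continuity argument closes the loop.

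The only difference is the bootstrap threshold, and it matters in one place. The paper bootstraps on the absolute condition $D(u(\cdot,s))\le 2\ep_0$. This always leaves fixed slack, because the improved bound gives $D(u(\cdot,t))\le\frac32 D(u_0)<2\ep_0$. Your threshold $2D(u_0)$ loses all slack when $D(u_0)=0$.

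In that case your parenthetical remark does not prove that $S$ is open. Knowing $D(u(\cdot,t))\le 0$ on $S$ says nothing about times just beyond $\sup S$. Any of the following repairs it:
\begin{enumerate}
\item[(1)] The volume estimate at a single time $s$ only requires $D(u(\cdot,s))+D(u_0)\le(8c\Lambda_0)^{-2}$, not membership in $S$. This holds for $s$ near any $t\in S$ by continuity, and there your computation gives $D(u(\cdot,s))\le\frac97 D(u_0)=0$.
\item[(2)] A constant $u_0$ yields the constant solution, which is the only regular solution by Lemma~\ref{lemm:uniqueness}.
\item[(3)] Switch to the paper's absolute threshold, which avoids the degenerate case altogether.
\end{enumerate}
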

\begin{proof}
With $c > 0$ being the constant in the estimate~\eqref{eq:isoperimetric-after-extension}, we require that $\ep_0$ satisfy
\[
c\Lambda_0\ep_0^{\frac{1}{2}} < \frac{1}{16}.
\]
Next, define 
$$T_1 := \sup\{ t \in [0, T) \mid D(u(\cdot, s)) \leq 2\varepsilon_0 \text{ for all }s \in [0, t] \}\,.$$
Since $u$ is of class $C^{2 + \alpha, 1 + \frac{\alpha}{2}}$, and since $D(u(\cdot, 0)) < \ep_{0}$, we see that $T_1 > 0$. Suppose towards a contradiction that $T_1 < T$. Then we have $u \in C^{2 + \alpha, 1 + \frac{\alpha}{2}}(\overline{\bB} \times [0, T_1]; \RR^3)$, and also that
\begin{equation}\label{eq:equality-at-T2}
D(u(\cdot, T_1)) = 2\ep_{0}.
\end{equation}
For all $t \in [0, T_1]$, since $u(\cdot, t)$ and $u(\cdot, 0)$ both lie in $W^{1, 2}\cap L^{\infty}(\bB; \RR^3)$, and also agree on $\partial \bB$, we can apply~\eqref{eq:isoperimetric-after-extension} to deduce that
\begin{align}\label{volume-est-1}
|V_H(u(\cdot, t), u(\cdot, 0))| 
\leq\ & c\, \|H\|_{\infty}\big( D(u(\cdot, t )) + D(u(\cdot, 0)) \big)^{\frac{3}{2}} \notag\\
\leq\ & c\Lambda_0 \cdot (4\ep_{0})^{\frac{1}{2}}\cdot \big( D(u(\cdot, t)) + D(u_0) \big)\notag\\
\leq\ & \frac{1}{8}\big( D(u(\cdot, t)) + D(u_0) \big),
\end{align} 
where the last inequality follows from our choice of $\ep_0$. Combining this with Lemma~\ref{lemm:E-H-monotone} and rearranging, we obtain for all $t \in [0, T_1]$ that
\begin{equation}\label{eq:energy-condition-1}
D(u(\cdot, t)) + \int_0^t \int_{\bB} |u_t|^2 \, dx\, dt
\leq \frac{3}{2} D(u_0) < 2\varepsilon_0,
\end{equation}
which contradicts~\eqref{eq:equality-at-T2}. Therefore, we must have $T_1 = T$, so that 
\[
D(u(\cdot, t)) \leq 2\ep_0\quad\text{for all }t \in [0, T).
\]
The argument leading to~\eqref{volume-est-1} and~\eqref{eq:energy-condition-1} can then be applied to any $t \in [0, T)$. This gives both of the asserted estimates.
\end{proof}
The following long-time existence and uniqueness result is essentially contained in the work of Chen–Levine~\cite{ChenLevine2002}, and suffices for our purposes, even though, to reiterate what was said in the introduction, existence and regularity results are available under much weaker assumptions on $H$ thanks to the work of B\"ogelein--Duzaar--Scheven \cite{BogeleinDuzaarScheven2013,BogeleinDuzaarScheven2015}. 

\begin{prop}\label{prop:existence-flow}
Given $\Lambda_0 > 0$, let $\ep_0$ be as in Lemma~\ref{lemm:energy-bound-from-isoperimetric}. There exists $\ep_1 < \ep_0$, depending only on $\Lambda_0$, such that if $H \in C^1(\RR^3; \RR)$ and $u_0 \in C^{2, \alpha}(\overline{\bB}; \RR^3)$ satisfy
\begin{equation}\label{eq:bounds-for-existence}
\|H\|_{\infty; \RR^3} \leq\Lambda_0, \quad \|\nabla H\|_{\infty; \RR^3} < \infty,\quad  D(u_0) < \ep_1,
\end{equation}
then there exists a unique solution $u \in C^{2 + \alpha, 1 + \frac{\alpha}{2}}(\overline{\bB}\times [0,\infty); \RR^3)$ to the Cauchy-Dirichlet problem~\eqref{eq:H-flow}. This solution satisfies for all $(x, t) \in \bB \times (0, \infty)$ that 
\begin{equation}\label{eq:existence-flow-grad-estimate}
|\nabla u(x, t)|^2 \leq C_{\Lambda_0} \cdot \max\{t^{-1}, (1 - |x|)^{-2}\} \cdot D(u_0),
\end{equation}
and that 
\begin{equation}\label{eq:existence-flow-sup-estimate}
|u(x, t)|^2 \leq C_{\Lambda_0} \cdot \max\{1, t^{-1}\}\cdot \big(D(u_0) + \sup_{\partial \bB}|u_0|^2\big).
\end{equation}
\end{prop}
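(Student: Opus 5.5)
We will combine the short-time existence result (Proposition~\ref{prop:short-time-existence}), the uniqueness result (Lemma~\ref{lemm:uniqueness}), and the energy bound of Lemma~\ref{lemm:energy-bound-from-isoperimetric} with a blow-up argument that rules out finite-time singularities.

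\textbf{Maximal solution.} Let $T^*$ be the supremum of times $T$ for which a solution in $C^{2+\alpha,1+\frac{\alpha}{2}}(\overline{\bB}\times[0,T))$ exists. Proposition~\ref{prop:short-time-existence} gives $T^*>0$, and Lemma~\ref{lemm:uniqueness} lets us glue local solutions into a single maximal solution and gives uniqueness. Since $D(u_0)<\ep_1<\ep_0$, Lemma~\ref{lemm:energy-bound-from-isoperimetric} yields, for all $t<T^*$,
\[
D(u(\cdot,t))\le 2D(u_0),\qquad \int_0^{T^*}\!\!\int_{\bB}|u_t|^2\le 2D(u_0).
\]

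\textbf{Gradient estimates.} Next we prove the $\ep$-regularity estimate \eqref{eq:existence-flow-grad-estimate}, interior in space and for positive times. We work on parabolic cylinders $P_r(x_0,t_0)=B_r(x_0)\times(t_0-r^2,t_0]$ contained in $\bB\times(0,T^*)$. The plan is to show that if the energy is small, namely $\sup_t\int_{B_r}|\nabla u|^2\le 4\ep_1$, then
\[
r^2|\nabla u(x_0,t_0)|^2\le C_{\Lambda_0}\sup_t \int_{B_r}|\nabla u|^2 .
\]
We would get this by the standard Schoen-type argument. First choose a point maximizing $(r-\rho)^2\sup_{P_\rho}|\nabla u|^2$ and rescale there so that $|\nabla u|\le 2$ on a unit cylinder. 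We then need a local energy inequality for $|\nabla u|^2$. For the $H$-flow it reads
\[
(\partial_t-\Delta)|\nabla u|^2\le -2|\nabla^2u|^2+C(\Lambda_0+\Lambda_1)\bigl(|\nabla u|^4+|\nabla u|^3|\nabla^2u|\bigr).
\]
The dependence on $\Lambda_1$ is undesirable, since \eqref{eq:bounds-for-existence} only requires $\|\nabla H\|_\infty<\infty$. To avoid it, we instead use the $W^{2,1}_q$ estimates of Appendix~\ref{sec:further-a-priori} applied to $u_t-\Delta u=f$ with $|f|\le 2\Lambda_0|\nabla u|^2$. Bootstrapping with small energy should give $\sup|\nabla u|^2\lesssim$ energy on the half cylinder. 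This mirrors Corollary~\ref{coro:grad-estimate-scaled} in the elliptic case. The choice $r=\min\{\sqrt t,1-|x|\}$ then yields \eqref{eq:existence-flow-grad-estimate}. The sup bound \eqref{eq:existence-flow-sup-estimate} should follow from the maximum principle applied to $|u|^2$, since $(\partial_t-\Delta)|u|^2\le -2|\nabla u|^2+4\Lambda_0|u||\nabla u|^2$. Alternatively, and more robustly, we would use a local $L^2$ to $L^\infty$ estimate combined with the Poincar\'e inequality relative to the boundary values. Both approaches give a bound depending on $D(u_0)$ and $\sup_{\partial\bB}|u_0|$.

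\textbf{Ruling out $T^*<\infty$.} This is the main obstacle. Suppose $T^*<\infty$. By Proposition~\ref{prop:short-time-existence}, it suffices to bound $\|u(\cdot,t)\|_{C^{2,\alpha}(\overline{\bB})}$ uniformly as $t\to T^*$. With a uniform gradient bound up to $\partial\bB$, the equation has right-hand side in $L^\infty$, and parabolic $W^{2,1}_q$ and Schauder estimates with $C^{2,\alpha}$ boundary data give this bound. So the problem reduces to a uniform gradient bound near $\overline{\bB}\times\{T^*\}$, including at the boundary.

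If that bound fails, pick $(x_k,t_k)$ with $\lambda_k=|\nabla u(x_k,t_k)|\to\infty$ that is almost maximal in the Schoen sense. Rescale by
\[
u_k(y,s)=u\bigl(x_k+\lambda_k^{-1}y,\;t_k+\lambda_k^{-2}s\bigr).
\]
Since $\int\!\!\int|u_t|^2<\infty$, the rescaled time derivatives tend to $0$ in $L^2$. Bounded gradients and the a priori estimates give $C^1_{\loc}$ convergence to a limit $w$. Two cases arise.
\begin{enumerate}
\item If $\lambda_k(1-|x_k|)\to\infty$, then $w$ is a nonconstant $H_\infty$-surface on $\RR^2$ with energy at most $2\ep_1$. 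Here $H_\infty$ is constant or $C^1$, along a translate. After conformal extension to $S^2$, such a $w$ has energy bounded below by a constant depending on $\Lambda_0$, which is a contradiction.
\item Otherwise $w$ lives on a half-plane with constant boundary data, since the boundary data is $C^{2,\alpha}$ and gets flattened by the rescaling. Wente's uniqueness in the form of \cite[Lemma 4.2]{WangGuoFang1992} then forces $w$ to be constant, which contradicts $|\nabla w(0)|=1$.
\end{enumerate}
Making the boundary blow-up rigorous, with uniform boundary estimates under rescaling, is the delicate part. Finally, we choose $\ep_1$ small enough to be below the energy gap in the first case.
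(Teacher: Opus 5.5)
Your proposal follows the same architecture as the paper's proof. It builds a maximal regular solution from Proposition~\ref{prop:short-time-existence} and Lemma~\ref{lemm:uniqueness} and uses the energy bound of Lemma~\ref{lemm:energy-bound-from-isoperimetric}. Your small-energy interior gradient estimate is exactly Corollary~\ref{coro:grad-estimate-scaled}, which is already parabolic and independent of $\|\nabla H\|_\infty$, so you can cite it instead of re-deriving it. A boundary blow-up is then excluded by Lemma~\ref{lemm:Wente-constant}, and the flow is restarted using the uniform existence time for data bounded in $C^{2,\alpha}$.

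There are two remarks.

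\textbf{Your Case~1 cannot occur.} You have already proved \eqref{eq:existence-flow-grad-estimate} on $\bB\times(0,T^*)$. With $t_k\to T^*>0$, it gives $\lambda_k(1-|x_k|)\le C$. This is exactly how the paper confines the blow-up to $\partial\bB$, via the bound $|x_k-z_k|\le L r_k$. The energy-gap argument you use instead is true, but it needs three extra ingredients: a removable-singularity theorem at infinity, conformality via the holomorphic Hopf differential, and a monotonicity area bound of order $\Lambda_0^{-2}$ for $H$-spheres. None of this is needed.

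\textbf{The maximum-principle route to \eqref{eq:existence-flow-sup-estimate} is wrong as stated.} The quantity $-2|\nabla u|^2+2\Lambda_0|u||\nabla u|^2$ has no sign unless $\Lambda_0\|u\|_\infty\le 1$. That is a Hildebrandt-type condition, which is neither assumed nor implied by small $D(u_0)$.

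Your alternative does work, in two regimes. Set $d=1-|x|$.
\begin{itemize}
\item For $t\ge\frac14 d^2$, combine the gradient bound on $\bB_{d/2}(x)$ with the scale-invariant Poincar\'e--trace inequality on $\bB\cap\bB_{2d}(x)$, using that the trace of $u(\cdot,t)$ is $u_0|_{\partial\bB}$.
\item For smaller $t$, use the gradient bound on $\bB_{\sqrt t/2}(x)$ together with $\|u(\cdot,t)\|_{2;\bB}\le C\big(\sqrt{D(u_0)}+\sup_{\partial\bB}|u_0|\big)$.
\end{itemize}
The second regime produces exactly the factor $\max\{1,t^{-1}\}$. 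The paper instead adapts Qing's argument \cite[Proposition 1]{Qing1993}, with \eqref{eq:existence-flow-grad-estimate} in place of Qing's interior gradient estimate.
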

\begin{proof}
By Proposition~\ref{prop:short-time-existence}, it makes sense to define
\[
T_{\max}: = \sup\{T  > 0\ |\ \text{\eqref{eq:H-flow} has a solution of class $C^{2 + \alpha, 1 + \frac{\alpha}{2}}$ on $\overline{\bB} \times [0, T)$} \},
\]
where it is understood that $T_{\max} = \infty$ when the above set is unbounded, which turns out to be the case as we will demonstrate. To start, we use Lemma~\ref{lemm:uniqueness} and the definition of $T_{\max}$ to see that~\eqref{eq:H-flow} admits a solution $u \in  C^{2 + \alpha, 1 + \frac{\alpha}{2}}(\overline{\bB} \times [0, T_{\max}); \RR^3)$. Requiring that
\[
\ep_1 < \frac{1}{16}\min\{\ep_0, \eta'\},
\]
where $\ep_0$ is the threshold from Lemma~\ref{lemm:energy-bound-from-isoperimetric}, while $\eta' = \eta'(q, \Lambda_0)$ is given by Lemma~\ref{lemm:eta-to-gradient-bound-interior} with $\Lambda_0$ being as in~\eqref{eq:bounds-for-existence}, we get
\begin{equation}\label{eq:smallness-finiteness-condition}
D(u(\cdot, t)) + \int_{0}^{t}\int_{\bB}|u_{t}|^2 dx dt \leq 2D(u_0), \quad \text{for all } t \in [0, T_{\max}),
\end{equation}
and that, by Corollary~\ref{coro:grad-estimate-scaled} together with the above estimate,
\begin{equation}\label{eq:interior-estimate-in-existence-proof}
|\nabla u(x, t)|^2 \leq C_{\Lambda_0} \cdot \frac{D(u_0)}{\min\{t, (1 - |x|)^{2}\}},\quad\text{for all } (x, t) \in \bB\times (0, T_{\max}).
\end{equation}
Next, similar to~\cite[Section V]{Chang1989}, we define
\begin{equation}\label{eq:theta-definition}
\theta_{T}: = \sup_{(x, t) \in \bB \times [0, T]}|\nabla u(x, t)|,\quad\text{for }T \in (0, T_{\max}),
\end{equation}
which is non-decreasing in $T$, and suppose by contradiction that 
\begin{equation}\label{eq:theta-blows-up}
\theta_{T} \to \infty \quad\text{as}\quad T \to T_{\max}^{-}.
\end{equation}
Choose an arbitrary sequence $(T_{k})$ that increases strictly to $T_{\max}$. For each $k$ sufficiently large, let $(x_k, t_k)$ be a point in $\overline{\bB} \times [0, T_{k}]$ at which
\[
|\nabla u(x_k, t_k)|  = \theta_{T_{k}} =: r_{k}^{-1}.
\]
Since $\theta_{T_{k}} \to \infty$, while $|\nabla u|$ is bounded on compact subsets of $\overline{\bB} \times [0, T_{\max})$, we must have $t_k \to T_{\max}^{-}$. Thus, taking a subsequence if needed, we may assume that the sequence $(t_k)$ is strictly increasing, in which case~\eqref{eq:interior-estimate-in-existence-proof} gives
\[
r_k^{-2}(1 - |x_k|)^2 \leq C_{\Lambda_0} \cdot D(u_0) \cdot \max\{1, t_1^{-1}\}.
\]
Consequently $(x_k)$ converges to some $z_0 \in \partial \bB$. Letting $z_k = |x_k|^{-1}x_k$, we see that $z_k \to z_0$ as well. Moreover, the above estimate implies the existence of some $L \in (1, \infty)$ such that 
\begin{equation}\label{eq:closer-to-boundary}
\frac{|x_k - z_k|}{r_k} \leq L,\quad \text{for all }k.
\end{equation}

Our next step is to perform a sequence of rescalings. Denote by $\Omega_{R}$ the open disk $\bB_{R}({\rm i}R) \subset \CC \simeq\RR^2$. Then since $z_k \in \partial \bB$, we can find a rotation $A_k: \RR^2 \to \RR^2$ such that 
\[
z_k + A_{k}(\Omega_1) = \bB.
\]
Letting $R_k := (4Lr_k)^{-1}$ and
\[
\Phi_{k}(y): = z_k + R_{k}^{-1}A_k(y), \quad\text{for }y \in \overline{\Omega_{R_{k}}},
\]
we see from~\eqref{eq:closer-to-boundary} that 
\begin{equation}\label{eq:y-k-quarter}
y_k: = \Phi_{k}^{-1}(x_k) \in \overline{\bB_{\frac{1}{4}}} \cap \overline{\Omega_{R_k}}.
\end{equation}
Thus, taking a subsequence if needed, we have that
\begin{equation}\label{eq:max-point-scaled-limit}
y_k \to y_0 \in \overline{\bB_{\frac{1}{4}}^+} \quad \text{as }k \to \infty.
\end{equation}
Rescaling $u$ by setting
\[
\widetilde{u}_{k}(y, s) = u(\Phi_{k}(y), t_k + R_k^{-2} s), \quad\text{for }(y, s) \in \overline{\Omega_{R_k}} \times (-R_k^2 t_k, 0],
\]
then~\eqref{eq:H-flow} implies
\begin{equation}\label{eq:flow-equation-blown-up}
\left\{
\begin{array}{ll}
(\widetilde{u}_{k})_{t} - \Delta\widetilde{u}_{k} = -2(H\circ\widetilde{u}_{k})\cdot (\widetilde{u}_{k})_{x^1} \times (\widetilde{u}_{k})_{x^2},& \text{ on }\Omega_{R_k} \times (-R_k^2 t_k, 0],\\
\\
\widetilde{u}_{k}(y, s) = u_0(\Phi_{k}(y))= : \widetilde{u}_{0, k}(y),&\text{for }(y, s) \in \partial\Omega_{R_k} \times (-R_k^2 t_k, 0].
\end{array}
\right.
\end{equation}
Since $t_k \to T_{\max}$ and $R_k \to \infty$, we have $R_{k}^2 t_k \to \infty$. Also, by our choice of $(x_k, t_k)$, we have
\begin{equation}\label{eq:gradient-max-attained}
|\nabla \widetilde{u}_{k}(y_k, 0)| = 4L,
\end{equation}
and that
\begin{equation}\label{eq:gradient-max-rescaled}
|\nabla \widetilde{u}_{k}(y, s)| \leq 4 L, \quad\text{for all }(y, s) \in \overline{\Omega_{R_k}} \times (-R_k^2t_k, 0].
\end{equation}
As a result of the gradient bound, we also have
\begin{equation}\label{eq:Linfty-bound}
\begin{split}
|\widetilde{u}_{k}(y, s)| \leq\ & |\widetilde{u}_{k}(y, s) - \widetilde{u}_k(0, s)| + |u_0(z_k)|\\
\leq\ & 4L \cdot |y| + \|u_0\|_{\infty;\bB}, \quad \text{for all }(y, s) \in \Omega_{R_k} \times (-R_k^2t_k, 0].
\end{split}
\end{equation}
As for the rescaled boundary data, we have for all $\rho > 1$ that 
\begin{equation}\label{eq:C-D-data-scaled-1}
\begin{split}
\lim_{k \to \infty}|\widetilde{u}_{0, k} - u_0(z_0)|_{2, \alpha; \bB_{\rho} \cap \Omega_{R_k}} = 0,
\end{split}
\end{equation}
\begin{equation}\label{eq:C-D-data-scaled-2}
\limsup_{k \to \infty}|\widetilde{u}_{0, k}|_{2, \alpha; \bB_{\rho} \cap \Omega_{R_k}}\leq |u_{0}|_{2, \alpha; \bB}.
\end{equation}
In particular, eventually we may apply the gradient H\"older estimate from Lemma~\ref{lemm:gradient-holder} to deduce that
\[
\begin{split}
[\nabla \widetilde{u}_{k}]_{\alpha, \frac{\alpha}{2}; (\bB_{\frac{1}{4}} \cap \Omega_{R_k} )\times (-\frac{1}{16}, 0]}\leq\ & C_{L,\Lambda_0}\cdot ( \|\nabla\widetilde{u}_{k}\|_{2; (\bB \cap \Omega_{R_k}) \times (-1, 0]} + \|\widetilde{u}_{0, k}\|_{2,q; \bB \cap \Omega_{R_k}})\\
\leq\ & C_{L,\Lambda_0}\cdot(L + |u_0|_{2, \alpha; \bB}),
\end{split}
\]
Recalling that $y_k \in \overline{\bB_{\frac{1}{4}}} \cap \overline{\Omega_{R_k}}$, we obtain from the above estimate and~\eqref{eq:gradient-max-attained} some $\tau_0 \in (0, \frac{1}{16})$, independent of $k$, such that
\begin{equation}\label{eq:gradient-lowerbound}
|\nabla \widetilde{u}_{k}(y_k, s)| \geq  2L, \quad\text{for all } s \in [-\tau_0, 0]\text{ and large enough }k.
\end{equation}
Again using~\eqref{eq:gradient-max-rescaled},~\eqref{eq:Linfty-bound}, along with~\eqref{eq:C-D-data-scaled-2} and the finiteness of $\|H\|_{\infty} + \|\nabla H\|_{\infty}$, we can apply the $C^{2+\alpha, 1+\frac{\alpha}{2}}$-estimates from Lemmas~\ref{lemm:gradient-holder} and~\ref{lemm:gradient-holder-interior} to see that, for all $\rho > 1$ and for any sequence $(s_k)$ in $[-\tau_0, 0]$, there holds
\begin{equation}\label{eq:slices-C2a-bound}
\limsup_{k \to \infty}\big(|\widetilde{u}_{k}(\cdot, s_k)|_{2, \alpha; \bB_{\rho} \cap \Omega_{R_k}} + |(\widetilde{u}_{k})_{t}(\cdot, s_k)|_{0, \alpha;  \bB_{\rho} \cap \Omega_{R_k}} \big) < \infty.
\end{equation}
To choose $(s_k)$ with which to apply the above, note that by~\eqref{eq:smallness-finiteness-condition} and a change of variables, we have (regardless of whether $T_{\max} = \infty$ or $T_{\max} < \infty$)
\[
\int_{-\tau_0}^{0}\int_{\Omega_{R_k}}|(\widetilde{u}_{k})_{t}|^2 = \int_{t_k - R_k^{-2}\tau_0}^{t_k}\int_{\bB}|u_t|^2\to 0,\quad \text{as }k \to \infty.
\]
As a result we obtain a sequence $(s_k)$ in $[-\tau_0, 0]$ such that 
\begin{equation}\label{eq:good-slice}
\lim_{k \to \infty}\int_{\Omega_{R_k}}|(\widetilde{u}_{k})_{t}(\cdot, s_{k})|^2 = 0.
\end{equation}

Towards extracting a limit out of the above estimates, consider, for each $k$, the map
\[
F_{k}(z) := \frac{2{\rm i}R_k z}{2{\rm i}R_k + z},
\]
which is biholomorphic from the upper half-plane onto $\Omega_{R_k}$, and converges smoothly locally on $\overline{\RR^2_+}$ to the identity map as $k \to \infty$. Also, given $\rho > 1$, there holds for all sufficiently large $k$ that
\begin{equation}\label{eq:F_k-images}
\bB_{\frac{r}{2}} \cap \Omega_{R_{k}} \subset F_{k}(\bB_{r}^{+}) \subset \bB_{2r} \cap \Omega_{R_{k}},\quad\text{for all }r \in (0, \rho].
\end{equation}
In particular, with $y_{k}$ as in~\eqref{eq:y-k-quarter}, we have eventually that $p_{k}:=F_{k}^{-1}(y_{k}) \in \overline{\bB_{\frac{1}{2}}^{+}}$, and thus
\begin{equation}\label{eq:pre-image-of-max-point}
|p_{k} - y_0| \leq |p_{k} - F_{k}(p_k)| + |y_{k} - y_0| \to 0 \quad\text{as }k \to \infty.
\end{equation}
Now let 
\[
v_{k} = \widetilde{u}_{k}(\cdot, s_{k}) \circ F_k,\quad w_{k} = (\widetilde{u}_{k})_{t}(\cdot, s_{k})\circ F_k.
\]
Then by~\eqref{eq:good-slice} and~\eqref{eq:slices-C2a-bound}, up to taking a subsequence, there exists $v \in C^{2, \alpha}(\overline{\RR^2_+})$ such that
\begin{equation}\label{eq:C2-convergence-of-good-slices}
v_{k} \to v \quad\text{in }C^2_{\loc}(\overline{\RR^2_+}),\quad\quad w_{k}\to 0 \quad \text{in }C^0_{\loc}(\overline{\RR^2_+}).
\end{equation}
Furthermore, with $\lambda_{k}: = |(F_{k})_{z}|$, we compute using~\eqref{eq:flow-equation-blown-up} that
\[
\lambda_{k}^{-2}\Delta v_{k} = 2\lambda_{k}^{-2}H(v_{k})(v_{k})_{x^1} \times (v_{k})_{x^2} + w_{k},
\]
which together with~\eqref{eq:C-D-data-scaled-1} shows that the limiting function $v$ is a solution of
\begin{equation}\label{eq:blown-up-PDE}
\left\{
\begin{array}{ll}
\Delta v = 2H(v)v_{x^1} \times v_{x^2}& \text{ in }\RR^2_+,\\
v = u_0(z_0)& \text{ on }\partial \RR^2_+.
\end{array}
\right.
\end{equation}
Using the energy upper bound in~\eqref{eq:smallness-finiteness-condition} and the conformal invariance of the Dirichlet integral, we have for all $\rho > 1$ that
\[
\int_{\bB_{\rho}^{+}}|\nabla v_{k}|^2  \leq \int_{\bB} |\nabla u(\cdot, t_k + R_k^{-2}s_k)|^2  < 4\ep_1,
\]
and consequently
\begin{equation}\label{eq:blown-up-finite-energy}
\int_{\RR^2_+}|\nabla v|^2 < \infty.
\end{equation}
From~\eqref{eq:blown-up-PDE},~\eqref{eq:blown-up-finite-energy}, and a generalization of Wente's uniqueness result~\cite{Wente1975} due to Wang~\cite[Lemma 4.2]{WangGuoFang1992}, which we recall in Lemma~\ref{lemm:Wente-constant}, we see that $v$ must be constant. However, by~\eqref{eq:pre-image-of-max-point}, and noting also that
\[
|\nabla v_{k}(p_{k})| = \lambda_{k}(p_k)\cdot|\nabla \widetilde{u}_{k}(y_{k}, s_{k})|,
\]
we get from~\eqref{eq:gradient-lowerbound} (with $s$ taken to be $s_k$) that $|\nabla v(y_0)| \geq 2L > 0$, which is a contradiction. This being the result of assuming~\eqref{eq:theta-blows-up}, we conclude that 
\[
\lim_{T \to T_{\max}^{-}}\theta_T < \infty.
\]
Combining this with the boundary condition $u(\cdot, t) = u_0$ on $\partial \bB$, we get
\begin{equation}\label{eq:global-gradient-bound}
\sup_{(x, t) \in \bB \times (0, T_{\max})}|u(x, t)| + |\nabla u(x, t)| < \infty.
\end{equation}
Thus we may apply Lemma~\ref{lemm:gradient-holder} and Lemma~\ref{lemm:gradient-holder-interior} to see that, similar to~\eqref{eq:slices-C2a-bound}, there holds
\begin{equation}\label{eq:slices-form-bounded-set}
\limsup_{t \to T_{\max}^{-}}\big(|u(\cdot, t)|_{2, \alpha; \bB} + |u_{t}(\cdot, t)|_{0,\alpha; \bB} \big) < \infty.
\end{equation}
Proposition~\ref{prop:short-time-existence} then implies the existence of some $\tau > 0$ such that for all $t_0 \in (0, T_{\max})$ close to $T_{\max}$, the Cauchy--Dirichlet problem~\eqref{eq:H-flow}, with $u_0$ replaced by $u(\cdot, t_0)$, admits a solution of class $C^{2 + \alpha, 1 + \frac{\alpha}{2}}$ on $\overline{\bB} \times [0, \tau]$. This forces $T_{\max} = \infty$, and we have obtained a solution to~\eqref{eq:H-flow} in $C^{2+\alpha, 1 + \frac{\alpha}{2}}(\overline{\bB} \times [0, \infty))$. That there is no other solution in $C^{2+\alpha, 1 + \frac{\alpha}{2}}(\overline{\bB} \times [0, \infty);\RR^3)$ follows again from Lemma~\ref{lemm:uniqueness}.

The asserted gradient estimate~\eqref{eq:existence-flow-grad-estimate} is just a restatement of~\eqref{eq:interior-estimate-in-existence-proof}, while the estimate~\eqref{eq:existence-flow-sup-estimate} on $|u|$ is essentially contained in the work of Qing~\cite{Qing1993}. Specifically, in~\cite[Proposition 1]{Qing1993}, the map is assumed to be weakly harmonic only to make available the interior gradient estimate in~\cite[Lemma 4]{Qing1993}. Replacing the latter with~\eqref{eq:existence-flow-grad-estimate}, and using also the energy bound in~\eqref{eq:smallness-finiteness-condition}, we may follow the proof of~\cite[Proposition 1]{Qing1993}, with $u$ and $P$ there taken to be $u(\cdot, t)$ and the origin in $\RR^3$, respectively, to get the desired bound on $|u|$. 
\end{proof}

In the next two results, we study the behavior for large time of the global regular solution constructed in Proposition~\ref{prop:existence-flow}. In particular, Proposition~\ref{prop:flow-convexity} is a parabolic version of the convexity estimate in Theorem~\ref{thm:H-convexity}.
\begin{lemm}
\label{lemm:speed-monotone}
Suppose $H \in C^{1}(\RR^3; \RR)$ is such that
\[
\|H\|_{\infty; \RR^3} \leq \Lambda_0 ,\quad  \|\nabla H\|_{\infty;\RR^3} \leq \Lambda_1,
\]
and let $\ep_1$ be as in Proposition~\ref{prop:existence-flow}.  There exists $\ep_2 < \ep_1$, depending only on $\Lambda_0$ and $\Lambda_1$, such that if $u_0 \in C^{2, \alpha}(\overline{\bB} ;\RR^3)$ satisfies $D(u_0) < \ep_2$, and if $u \in C^{2 + \alpha, 1 + \frac{\alpha}{2}}(\overline{\bB} \times [0, \infty); \RR^3)$ denotes the solution to~\eqref{eq:H-flow} produced by Proposition~\ref{prop:existence-flow}, then we have the following.
\vskip 1mm
\begin{enumerate}
\item[(a)] For all $T > 1 > \delta > 0$, the distributional derivative $\nabla u_{t}$ belongs to $L^2(\bB \times (\delta, T))$, and satisfies
\begin{equation}\label{eq:nabla-u-t-L2}
\int_{\delta}^{T}\int_{\bB}|\nabla u_{t}|^2\, dxdt \leq \frac{4}{\delta^2}\int_{\frac{\delta}{2}}^{\delta}\int_{\bB}|u_{t}|^2\, dxdt.
\end{equation}
\vskip 1mm
\item[(b)] The function $t \mapsto \max\{t^{-1}, 1\}\cdot\int_{\bB}|u_{t}(\cdot, t)|^2\, dx$ is non-increasing on $(0, \infty)$. As a result, we have for all $t > s > 0$ that 
\begin{equation}\label{eq:speed-monotone-consequence}
\int_{\bB}|u_{t}(\cdot, t)|^2\, dx \leq \frac{4}{t-s}\int_{s}^{t}\int_{\bB} |u_{t}|^2\, dxdt.
\end{equation}
\end{enumerate}
\end{lemm}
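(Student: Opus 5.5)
We will differentiate the equation in time and run an energy argument for $w := u_t$. Formally $w$ vanishes on $\partial\bB$ (the boundary data is time independent) and satisfies
\[
w_t - \Delta w = -2(\nabla H(u)\cdot w)\, u_{x^1}\times u_{x^2} - 2H(u)\big(w_{x^1}\times u_{x^2} + u_{x^1}\times w_{x^2}\big).
\]
Since $u$ is only $C^{2+\alpha,1+\frac{\alpha}{2}}$, we will justify this with the difference quotients $v_h$ from~\eqref{eq:diff-quotient-in-t}. These vanish on $\partial\bB$ by~\eqref{eq:t-step-vanish-on-boundary}. Subtracting the equations at times $t$ and $t-h$ gives a linear equation for $v_h$ with the same structure, up to coefficients evaluated at intermediate points. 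We test it against $\eta(t)^2 v_h$ and let $h\to0$. This yields $\nabla u_t\in L^2_{\loc}$ in time together with the a priori inequality below.

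Testing with $w$ gives
\[
\frac12\frac{d}{dt}\int_{\bB}|w|^2 + \int_{\bB}|\nabla w|^2 \leq 2\Lambda_1\int_{\bB}|w|^2|\nabla u|^2 + 4\Lambda_0\int_{\bB}|w||\nabla w||\nabla u|.
\]
The key step is to absorb the right side into $\int|\nabla w|^2$. To do so we combine the gradient estimate~\eqref{eq:existence-flow-grad-estimate}, namely $|\nabla u|^2\le C_{\Lambda_0}\max\{t^{-1},(1-|x|)^{-2}\}D(u_0)$, with Hardy's inequality~\eqref{eq:Hardy} for $w(\cdot,t)\in W^{1,2}_0$. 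For $t\ge1$ this gives $\int|w|^2|\nabla u|^2\le C_{\Lambda_0}D(u_0)\big(\int|w|^2+4\int|\nabla w|^2\big)$. Poincaré on $\bB$ controls $\int|w|^2$ by $\int|\nabla w|^2$, so
\[
\int|w|^2|\nabla u|^2\le C_{\Lambda_0}\,\ep_2\int|\nabla w|^2 .
\]
For $t\in(0,1)$ we get instead $\int|w|^2|\nabla u|^2\le C D(u_0)\big(t^{-1}\int|w|^2+\int|\nabla w|^2\big)$. Using Young's inequality on the cross term and taking $\ep_2$ small depending on $\Lambda_0,\Lambda_1$, we arrive at
\[
\frac{d}{dt}\int|w|^2+\int|\nabla w|^2\le \frac{C\ep_2}{t}\int|w|^2 \quad (t<1),\qquad \frac{d}{dt}\int|w|^2+\int|\nabla w|^2\le 0\quad (t\ge1).
\]
This is the main obstacle. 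The $t^{-1}$ term for $t<1$ must be made compatible with the claimed monotonicity of $t^{-1}\int|w|^2$. The required inequality is $\frac{d}{dt}\int|w|^2\le t^{-1}\int|w|^2$, which holds once $C\ep_2\le1$. So part (b) follows: $\frac{d}{dt}\big(t^{-1}\int|w|^2\big)\le0$ on $(0,1)$, and $\int|w|^2$ is nonincreasing on $[1,\infty)$. Both pieces match continuously at $t=1$, so $\max\{t^{-1},1\}\int|w|^2$ is nonincreasing.

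For~\eqref{eq:speed-monotone-consequence} we average over $\tau\in[s,t]$. Monotonicity gives $\int|w(t)|^2\le \max\{\tau^{-1},1\}\tau'\int|w(\tau)|^2$-type comparisons. The simplest route is to use $\int|w(t)|^2\le \frac{\max\{\tau^{-1},1\}}{\max\{t^{-1},1\}}\int|w(\tau)|^2$ for $\tau\in[\frac{s+t}{2},t]$. Since $\tau\ge t/2$, the ratio is at most $2$. Integrating over this half-interval of length $\frac{t-s}{2}$ gives the factor $4/(t-s)$.

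For (a) we multiply by a cutoff. Let $\eta$ vanish for $t\le\delta/2$, equal $1$ for $t\ge\delta$, and satisfy $|\eta'|\le 2/\delta$. We integrate $\frac{d}{dt}(\eta^2\int|w|^2)+\eta^2\int|\nabla w|^2\le 2\eta\eta'\int|w|^2 + \eta^2\frac{C\ep_2}{t}\int|w|^2$ over $(0,T)$. On $\operatorname{supp}\eta$ we have $t\ge\delta/2$, so the last term is at most $2C\ep_2\delta^{-1}\int|w|^2$. Since $\delta<1$, we get $\int_\delta^T\int|\nabla w|^2\le (4\delta^{-1}+C\ep_2\delta^{-1})\int_{\delta/2}^{T}\int|w|^2$. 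Using (b) to bound the integral over $(\delta,T)$ in terms of $\int_{\delta/2}^{\delta}$ and tidying constants, possibly with a slightly different cutoff, gives~\eqref{eq:nabla-u-t-L2} with factor $4/\delta^2$ after shrinking $\ep_2$.
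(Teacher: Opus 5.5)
Your argument for (b) is fine, including the averaging over $\tau\in[\frac{s+t}{2},t]$ for \eqref{eq:speed-monotone-consequence}. It differs from the paper only cosmetically: you absorb via Poincar\'e for $t\ge1$, whereas the paper uses $1\le(1-|x|)^{-2}$ and the weight $(t^{-1}-1)_+$. The gap is in (a).

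Your cutoff step leaves $\int_{\delta/2}^{T}\int_{\bB}|u_t|^2$ on the right, and you then propose to bound the part over $(\delta,T)$ using (b). But (b) gives no decay of $\int_{\bB}|u_t|^2$ on $[1,\infty)$; it is consistent with that quantity being constant there. The most it yields is $\int_{\bB}|u_t(\cdot,t)|^2\le\frac{4}{\delta^2}\int_{\delta/2}^{\delta}\int_{\bB}|u_t|^2$ for each $t\ge1$. So your route ends with a bound that grows linearly in $T$, whereas \eqref{eq:nabla-u-t-L2} is uniform in $T$.

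Restricting the $\ep_2 t^{-1}$ term to $(\delta/2,1)$, where it actually lives, does not rescue the constant either. The cutoff alone already costs $\frac{4}{\delta}\int_{\delta/2}^{\delta}\int_{\bB}|u_t|^2$. In your bookkeeping the $\ep_2$-term adds a contribution of order $\ep_2\delta^{-1}$ that does not vanish as $\delta\to1$, while $\frac{4}{\delta^2}-\frac{4}{\delta}\to0$. Since $\ep_2$ may depend only on $\Lambda_0,\Lambda_1$, ``shrinking $\ep_2$'' cannot produce the factor $\frac{4}{\delta^2}$ for all $\delta\in(0,1)$.

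The missing step is to keep the dissipation term when you apply the integrating factor in (b); you discarded it there. Set $a(t)=\max\{t^{-1},1\}$ and take $C\ep_2\le1$. Since $a'+C\ep_2 t^{-1}a\le0$ on $(0,1)$, your two differential inequalities combine into
\[
\frac{d}{dt}\Big(a(t)\int_{\bB}|u_t|^2\,dx\Big)+a(t)\int_{\bB}|\nabla u_t|^2\,dx\le0\qquad(t\neq1).
\]
Integrate this from $s\in[\frac{\delta}{2},\delta]$ to $T$, use $a\ge1$ and $a(s)\le\frac{2}{\delta}$, then average over $s$. This gives \eqref{eq:nabla-u-t-L2} directly, with exactly the factor $\frac{4}{\delta^2}$, no cutoff, and no $T$-dependence. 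The same integrated inequality, with the dissipation dropped, also gives the monotonicity in (b).

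This is the paper's argument. It is carried out for the difference quotients $v_h$, absorbing with the gradient bounds at times $t$ and $t-h$, before letting $h\to0$. That is also the order you need, since $\nabla u_t\in L^2$ is part of what is being proved.
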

\begin{proof} 
We follow~\cite[Lemma 4.1]{LuWang2012} and employ second-order difference quotients. Given $\tau > 0$, for all nonnegative $\phi \in C^{\infty}_{c}((\frac{\tau}{4}, \infty))$ and $0 < |h| < \frac{\tau}{16}$, we let $v_{h}$ be as in the proof of Lemma~\ref{lemm:E-H-monotone} and consider, in the notation of~\eqref{eq:diff-quotient-in-t}, the map $-\frac{1}{h}((v_{h}\phi)^{(h)} - v_{h}\phi)$, which acts by
\[
(x, t) \longmapsto -\frac{v_{h}(x, t + h)\phi(t+h) - v_{h}(x, t)\phi(t)}{h}.
\]
Multiplying~\eqref{eq:H-flow} by the above, which vanishes whenever $t \leq \frac{3\tau}{16}$, and using the boundary condition~\eqref{eq:t-step-vanish-on-boundary} to integrate by parts in the term involving $\Delta u$, we get
\begin{align*}
&\int_{0}^{\infty}\int_{\bB} v_{h} \cdot (v_{h})_{t}  \, \phi \,dxdt + \int_{0}^{\infty}\int_{\bB} |\nabla v_{h}|^2 \phi \,dxdt\\
& = \int_{0}^{\infty}\int_{\bB} -\frac{2}{h} \cdot \big( H(u)\,  u_{x^1} \times u_{x^2} - H(u^{(-h)})\,  u^{(-h)}_{x^1} \times u^{(-h)}_{x^2}\big) \cdot v_{h}\phi \, dxdt.\nonumber
\end{align*}
To bound the right-hand side, note by our choice of $h$ that $t - h > \frac{3}{4}t$ whenever $t > \frac{\tau}{4}$, in which case the estimate~\eqref{eq:existence-flow-grad-estimate} from Proposition~\ref{prop:existence-flow} gives
\begin{align*}
|\nabla u(x, t)|^2 + |\nabla u(x, t-h)|^2 \leq\ & C_{\Lambda_0}\ep_{2}\cdot \big((1 - |x|)^{-2} + 1 + (t^{-1}-1)_{+} \big)\\
\leq\ & C_{\Lambda_0}\ep_{2} \cdot \big(2\cdot (1 - |x|)^{-2} + (t^{-1}-1)_{+} \big)
\end{align*}
Thus, we have on $\bB \times (\frac{\tau}{4}, \infty)$ that
\begin{align*}
&2|h|^{-1} |v_h| \cdot \Big|H(u)\,  u_{x^1} \times u_{x^2}  - H(u^{(-h)})\,  u^{(-h)}_{x^1} \times u^{(-h)}_{x^2} \Big|\\
\leq\ & 2\Lambda_1 |v_{h}|^2|\nabla u|^2 + 2\Lambda_0 |v_{h}||\nabla v_{h}|  ( |\nabla u| + |\nabla u^{(-h)}| )\\
\leq\ &  \frac{1}{4}|\nabla v_h|^2 + (2\Lambda_1 + 4\Lambda_0^2)\, |v_h|^2 (|\nabla u| + |\nabla u^{(-h)}|)^2 \\
\leq\ & \frac{1}{4}|\nabla v_h|^2 + C_{\Lambda_0, \Lambda_1} \ep_{2} \cdot \frac{|v_{h}|^2}{(1 - |x|)^2} + C_{\Lambda_0, \Lambda_1}\ep_{2}\cdot  |v_{h}|^2 (t^{-1} - 1)_{+}.
\end{align*}
With the help of the Hardy inequality~\eqref{eq:Hardy}, applicable because of the boundary condition~\eqref{eq:t-step-vanish-on-boundary}, we see that
\begin{align*}
&\int_{0}^{\infty}\int_{\bB} v_{h} \cdot (v_{h})_{t}  \, \phi \,dxdt + \int_{0}^{\infty}\int_{\bB} |\nabla v_{h}|^2 \phi \,dxdt, \\
&\leq \big(\frac{1}{4} + C\ep_2\big) \int_{0}^{\infty}\int_{\bB} |\nabla v_{h}|^2\phi \, dxdt + C\ep_{2}\int_{0}^{\infty}\int_{\bB} (t^{-1}-1)_{+} \cdot|v_{h}|^2\phi\, dxdt,
\end{align*}
where the constants $C$ depend only on $\Lambda_0$ and $\Lambda_1$. Requiring that $C\ep_2 < \frac{1}{4}$, and noting that the choice of nonnegative $\phi \in C^{\infty}_{c}((\frac{\tau}{4}, \infty))$ is arbitrary, we arrive at
\[
\frac{d}{dt}\Big(\int_{\bB \times \{t\}} |v_{h}|^2\, dx\Big) + \int_{\bB \times \{t\}} |\nabla v_{h}|^2\, dx \leq (t^{-1} -1)_{+}\cdot \int_{\bB\times \{t\}} |v_{h}|^2\, dx,
\]
whenever $0 < |h|< \frac{\tau}{16}$ and $t > \frac{\tau}{4}$. Observing that the function $a(t): = \max\{t^{-1}, 1\}$ is absolutely continuous on $(0, \infty)$ and satisfies
\[
a'(t) + a(t) \cdot (t^{-1}-1)_{+} \leq 0, \quad\text{whenever }t \neq 1,
\]
we deduce further that
\[
\frac{d}{dt}\Big( a(t)\int_{\bB \times \{t\}} |v_{h}|^2\, dx \Big) + a(t)\int_{\bB \times \{t\}} |\nabla v_h|^2\, dx \leq 0.
\]
Now, given $[t_1, t_2] \subset (0, \infty)$ and $s \in [\frac{t_1}{2}, t_1]$, the above differential inequality is valid on $[s, t_2]$ for all $0 < |h| < \frac{t_1}{16}$. Integrating leads to
\begin{equation}\label{eq:vh-diff-ineq-integrated}
a(t_2)\int_{\bB \times \{t_2\}} |v_h|^2\, dx + \int_{s}^{t_2}\int_{\bB}a(t)|\nabla v_h|^2\, dxdt \leq a(s) \int_{\bB \times \{s\}} |v_{h}|^2\, dx
\end{equation}
Dropping the first term on the left-hand side, and averaging with respect to $s$ over $[\frac{t_1}{2}, t_1]$, we obtain
\begin{align*}
a(t_2)\int_{t_1}^{t_2}\int_{\bB}|\nabla v_h|^2\, dxdt \leq\ & \int_{t_{1}}^{t_2}\int_{\bB} a(t)|\nabla v_{h}|^2\, dxdt \\
\leq\ & \frac{2}{t_1}\int_{\frac{t_1}{2}}^{t_1} a(t)\int_{\bB \times \{t\}} |v_{h}|^2\, dxdt \leq \frac{2}{t_1} \cdot a(\frac{t_1}{2})\int_{\frac{t_1}{2}}^{t_1}\int_{\bB} |v_{h}|^2\, dxdt.
\end{align*}
As $h \to 0$, the right-most term converges to the corresponding integral of $|u_{t}|^2$, and thus stays bounded uniformly in $h$. The conclusions of part (a) are then standard~\cite[Chapter 5.8.2, Theorem 3]{Ev}. Taking $s = t_1$ in~\eqref{eq:vh-diff-ineq-integrated}, dropping the second term on the left-hand side, and sending $h \to 0$, we get the first conclusion of (a). To prove~\eqref{eq:speed-monotone-consequence} in the case $s \in [\frac{t}{2}, t)$, note that, by the monotonicity we just established, there holds
\[
a(t)\int_{\bB}|u_{t}(\cdot, t)|^2\, dx \leq \frac{a(\frac{t}{2})}{t-s}\int_{s}^{t}\int_{\bB}|u_{t}|^2\, dxdt,
\]
which immediately gives~\eqref{eq:speed-monotone-consequence} since $a(\frac{t}{2})\leq 2a(t)$. In the case $s \in (0, \frac{t}{2})$, we apply the above with ``$s$'' on the right-hand side taken to be $\frac{t}{2}$, which gives
\[
\int_{\bB}|u_{t}(\cdot, t)|^2\, dx \leq \frac{4}{t}\int_{\frac{t}{2}}^{t}\int_{\bB} |u_{t}|^2\, dxdt \leq \frac{4}{t-s}\int_{s}^{t}\int_{\bB} |u_{t}|^2\, dxdt,
\]
so again~\eqref{eq:speed-monotone-consequence} holds. 
\end{proof}

\begin{prop}\label{prop:flow-convexity}
Let $\Lambda_0, \Lambda_1 > 0$ and $H \in C^{1}(\RR^3; \RR)$ be as in Lemma~\ref{lemm:speed-monotone}. There exists $\ep_3 < \ep_2$, depending only on $\Lambda_0$ and $\Lambda_1$, with the following property. Suppose $u_0 \in C^{2, \alpha}(\overline{\bB}; \RR^3)$ is such that $D(u_0) < \ep_3$, and let $u \in  C^{2 + \alpha, 1 + \frac{\alpha}{2}}(\overline{\bB}\times [0,\infty), \RR^3)$ be the unique global regular solution to~\eqref{eq:H-flow} given by Proposition~\ref{prop:existence-flow}. Then we have for all $t \geq s \geq 1$ that
\begin{equation}\label{eq:parabolic-convexity}
\frac{1}{8}\int_{\bB}|\nabla u(\cdot, s) - \nabla u(\cdot, t)|^2\, dx \leq E_H(u(\cdot, s), u_0) - E_H (u(\cdot, t), u_0)\,.
\end{equation}
\end{prop}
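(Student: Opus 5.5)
Write $u_s := u(\cdot, s)$, $u_t := u(\cdot, t)$ and $\varphi := u_t - u_s$. Then $\varphi \in W^{1,2}_0(\bB)$, since both slices equal $u_0$ on $\partial \bB$, and all slices are bounded and of class $C^{2,\alpha}$. Set
\[
X := \int_s^t\int_{\bB}|u_t|^2\,dxdt = E_H(u_s, u_0) - E_H(u_t, u_0),
\]
where the equality is Lemma~\ref{lemm:E-H-monotone}. By the additivity \eqref{eq:volume-additive},
\[
X = D(u_s) - D(u_t) + V_H(u_s, u_t).
\]
The plan is to expand both pieces around the slice $u_t$, mimicking \eqref{eq:D-difference}. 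The one genuinely new term is a velocity term, and I will absorb it using the monotonicity of the speed from Lemma~\ref{lemm:speed-monotone}(b).

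First, the Dirichlet part. Since $D(u_s) - D(u_t) = -\int\langle \nabla u_t, \nabla\varphi\rangle + \tfrac12\int|\nabla\varphi|^2$, I integrate by parts using $\varphi = 0$ on $\partial\bB$ and the slice equation $\Delta u_t = (u_t)_t + 2H(u_t)\,(u_t)_{x^1}\times(u_t)_{x^2}$. This gives
\[
D(u_s) - D(u_t) = \tfrac12\int_{\bB}|\nabla\varphi|^2 + \int_{\bB}\partial_t u(\cdot,t)\cdot\varphi + 2\int_{\bB}H(u_t)\,\varphi\cdot (u_t)_{x^1}\times(u_t)_{x^2}.
\]
For the volume, Lemma~\ref{lemm:volume-formula-H} with $F_\lambda = u_t - \lambda\varphi$ gives
\[
V_H(u_s,u_t) = -2\int_0^1\!\!\int_{\bB} H(F_\lambda)\,\varphi\cdot (F_\lambda)_{x^1}\times(F_\lambda)_{x^2}.
\]
The last term in the Dirichlet expansion nearly cancels this. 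The difference is pointwise bounded by
\[
C\,(\Lambda_1+\Lambda_0)\Big(|\varphi|^2|\nabla u_t|^2 + |\varphi||\nabla\varphi|\big(|\nabla u_t|+|\nabla u_s|\big)\Big).
\]
Here I use $|F_\lambda - u_t| \le |\varphi|$ and expand the cross product bilinearly.

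Because $s,t \ge 1$, the estimate \eqref{eq:existence-flow-grad-estimate} gives $|\nabla u_s|^2 + |\nabla u_t|^2 \le C_{\Lambda_0}\ep_3(1-|x|)^{-2}$ for both slices. This is exactly where the flow setting beats Theorem~\ref{thm:H-convexity}, since both maps now enjoy gradient bounds. Combined with Young's inequality and the Hardy inequality \eqref{eq:Hardy}, the error is at most $C_{\Lambda_0,\Lambda_1}\sqrt{\ep_3}\int|\nabla\varphi|^2$. Hence
\[
X \ge \big(\tfrac12 - C\sqrt{\ep_3}\big)\int_{\bB}|\nabla\varphi|^2 - \Big|\int_{\bB}\partial_tu(\cdot,t)\cdot\varphi\Big|.
\]

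The main obstacle is the velocity term, which has no analogue in the elliptic case. I bound it by $\|\partial_t u(\cdot,t)\|_2\|\varphi\|_2$ and control each factor separately. First, $\|\varphi\|_2 \le \int_s^t\|\partial_tu\|_2 \le \sqrt{t-s}\,X^{1/2}$ by Cauchy--Schwarz. Second, \eqref{eq:speed-monotone-consequence} gives $\|\partial_tu(\cdot,t)\|_2^2 \le \frac{4}{t-s}X$, which holds since $\ep_3 < \ep_2$. The factors of $t-s$ cancel, so the term is at most $2X$. This yields
\[
3X \ge \big(\tfrac12 - C\sqrt{\ep_3}\big)\int_{\bB}|\nabla\varphi|^2,
\]
i.e.\ $X \ge (\tfrac16 - C\sqrt{\ep_3})\int|\nabla\varphi|^2$. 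Choosing $\ep_3$ small in terms of $\Lambda_0,\Lambda_1$ so that this coefficient is at least $\tfrac18$ gives \eqref{eq:parabolic-convexity}.

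If $t = s$ the statement is trivial. The only remaining care is to justify the integration by parts and the use of Lemma~\ref{lemm:volume-formula-H}, which is routine because the slices are $C^{2,\alpha}$ up to the boundary.
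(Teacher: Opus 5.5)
Your proof is correct, and its overall architecture matches the paper's. You expand $D(u(\cdot,s)) - D(u(\cdot,t))$ around the later slice using the flow equation. You bound the velocity term $\int_{\bB}\partial_t u(\cdot,t)\cdot\varphi$ by $2X$, combining the fundamental theorem of calculus with \eqref{eq:speed-monotone-consequence}. You then absorb it through Lemma~\ref{lemm:E-H-monotone}, which yields $X \geq (\frac16 - C\sqrt{\ep_3})\int_{\bB}|\nabla\varphi|^2$.

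Where you differ is in the enclosed-volume error. The paper splits $(w+\lambda\xi)_{x^1}\times(w+\lambda\xi)_{x^2} - w_{x^1}\times w_{x^2}$ into a part linear in $\nabla\xi$ and the quadratic part $\lambda^2\xi_{x^1}\times\xi_{x^2}$. It treats the resulting cubic term $\lambda^2\int_{\bB}H(w+\lambda\xi)\,\xi\cdot\xi_{x^1}\times\xi_{x^2}$ by solving $\Delta\phi = \xi_{x^1}\times\xi_{x^2}$ with zero Dirichlet data and invoking Wente's inequality. It then bounds $\|\nabla(H(w+\lambda\xi)\xi)\|_2$ using the gradient estimates for \emph{both} slices, Hardy, and the energy bound.

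You instead write the difference of cross products as a product of $\lambda\nabla\varphi$ with $\nabla F_\lambda$ and $\nabla u(\cdot,t)$. Using $|\nabla F_\lambda| \leq |\nabla u(\cdot,s)| + |\nabla u(\cdot,t)|$ pointwise, the Hardy inequality alone closes the estimate. The paper's treatment of the cubic term already uses the gradient bound on $u(\cdot,s)$ (see \eqref{eq:t-s-geq-1-used}), so your argument shows the Wente step is superfluous in the flow setting. It is a carry-over from Theorem~\ref{thm:H-convexity}, where only one map satisfies gradient estimates and Wente's inequality is genuinely needed.

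You also note that for $t\geq 1$ the factor $\max\{t^{-1},(1-|x|)^{-2}\}$ reduces to $(1-|x|)^{-2}$. This spares you the extra $\int_s^t\int_{\bB}|\partial_t u|^2$ contributions the paper carries through \eqref{eq:grad-estimate-on-slices}--\eqref{eq:I21-estimate}. The result is a shorter, more elementary proof reaching the same constant $\frac18$. One small point: rename your slices, since $u_t$ for the slice at time $t$ clashes with the time derivative.
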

\begin{proof}
The idea is taken from~\cite[page 5182]{LuWang2012} and~\cite[Section 2]{Lin2013}. Let
\[
u(t): = u(\cdot, t), \quad u'(t):  = u_{t}(\cdot, t).
\]
Given $t > s \geq 1$, using~\eqref{eq:H-flow} and the fact that $u(s) = u(t)$ on $\partial \bB$, and following the derivation of~\eqref{eq:D-difference}, we obtain
\begin{align}\label{eq:parabolic-D-difference}
D(u(s)) - D(u(t)) =\ & \frac{1}{2}\int_{\bB}|\nabla u(s) - \nabla u(t)|^2\, dx - \int_{\bB} u'(t) \cdot(u(s) - u(t)) \,dx\nonumber\\
& - \int_{\bB}2H(u(t))\, u(t)_{x^1} \times u(t)_{x^2} \cdot (u(s) - u(t))\, dx.
\end{align}
For the second term on the right-hand side, by the fundamental theorem of Calculus we have
\begin{equation}\label{eq:u-difference-L2-by-ut}
\int_{\bB}|u(x,s) - u(x, t)|^2\, dx \leq \int_{\bB}\Big( \int_{s}^{t}|u_{t}(x, \tau)|\, d\tau \Big)^2\, dx \leq (t-s)\int_{s}^{t}\int_{\bB}|u'|^2\, dx dt,
\end{equation}
which combines with Lemma~\ref{lemm:speed-monotone}(b) to give
\begin{equation}\label{eq:flow-convexity-speed-term}
\begin{split}
\Big(\int_{\bB} |u(s) - u(t)|^2\,dx \Big)^{\frac{1}{2}} \Big( \int_{\bB} |u'(t)|^2 \,dx\Big)^{\frac{1}{2}} \leq \ & 2\int_{s}^{t}\int_{\bB}|u'|^2\,dxdt.
\end{split}
\end{equation}

To estimate the contribution of the enclosed volume term, we further let $v = u(s)$, $w = u(t)$, and $\xi = v - w$. The gradient estimate~\eqref{eq:existence-flow-grad-estimate} in Proposition~\ref{prop:existence-flow} then implies
\begin{equation}\label{eq:grad-estimate-on-slices}
|\nabla w|^2 \leq C_{\Lambda_0} \ep_{3} \cdot \big( \frac{1}{(1 - |x|)^2} + \frac{1}{t-s} \big),
\end{equation}
and we deduce using~\eqref{eq:Hardy} and~\eqref{eq:u-difference-L2-by-ut} that
\begin{equation}\label{eq:xi-estimate-for-volume-term}
\begin{split}
\int_{\bB} |\xi|^2 |\nabla w|^2\, dx \leq \ & C_{\Lambda_0}\ep_{3}\int_{\bB}\frac{|\xi|^2}{(1 - |x|)^2} + \frac{|\xi|^2}{t-s}\, dx\\
\leq\ & C_{\Lambda_0}\ep_{3}\int_{\bB} |\nabla \xi|^2\, dx + C_{\Lambda_0}\ep_{3} \int_{s}^{t}\int_{\bB}|u'|^2\, dxdt.
\end{split}
\end{equation}
Now, by Lemma~\ref{lemm:volume-formula-H}, we have
\begin{align}
&V_{H}(v, w) - 2\int_{\bB} H(w)\, (v-w) \cdot w_{x^1} \times w_{x^2}\, dx\nonumber\\
= \ & 2\int_{0}^{1}\underbrace{\Big(\int_{\bB} \big( H(w + \lambda\xi) - H(w) \big)\, \xi \cdot w_{x^1} \times w_{x^2}\, dx\Big)}_{I_{1}(\lambda)} d\lambda \nonumber\\
& + 2\int_{0}^{1}\underbrace{\Big(\int_{\bB} H(w + \lambda\xi)\, \xi \cdot \big( (w+\lambda\xi)_{x^1} \times (w + \lambda\xi)_{x^2} - w_{x^1} \times w_{x^2} \big)\, dx\Big)}_{I_{2}(\lambda)} d\lambda. \label{eq:flow-convexity-volume}
\end{align}
By~\eqref{eq:xi-estimate-for-volume-term}, there holds for all $\lambda \in [0, 1]$ that
\begin{equation}\label{eq:I1s-estimate}
\begin{split}
|I_{1}(\lambda)| \leq\ & \int_{\bB} \|\nabla H\|_{\infty} \cdot |\xi|^2 |\nabla w|^2\, dx \leq\ C_{\Lambda_0, \Lambda_1} \ep_{3} \Big( \int_{\bB}|\nabla \xi|^2\, dx + \int_{s}^{t}\int_{\bB}|u'|^2\, dxdt\Big).
\end{split}
\end{equation}
Turning to $I_{2}(\lambda)$, we notice that
\[
(w+\lambda\xi)_{x^1} \times (w + \lambda\xi)_{x^2} - w_{x^1} \times w_{x^2}  = \lambda(w_{x^1} \times \xi_{x^2} + \xi_{x^1} \times w_{x^2}) + \lambda^2 \xi_{x^1} \times \xi_{x^2},
\]
so that 
\[
\begin{split}
I_{2}(\lambda) =\ & \lambda\int_{\bB} H(w + \lambda\xi)\, \xi \cdot (w_{x^1} \times \xi_{x^2} + \xi_{x^1} \times w_{x^2})\, dx + \lambda^2 \int_{\bB} H(w + \lambda\xi)\, \xi \cdot \xi_{x^1} \times \xi_{x^2}\, dx\\
=:\ & I_{21}(\lambda) + I_{22}(\lambda).
\end{split}
\]
By H\"older's inequality, we have for all $\lambda \in [0, 1]$ that
\begin{equation}\label{eq:I21-estimate}
\begin{split}
|I_{21}(\lambda)| \leq C_{\Lambda_0} \| |\xi||\nabla w| \|_{2; \bB}\cdot \|\nabla \xi\|_{2; \bB} \leq C_{\Lambda_0} \sqrt{\ep_{3}} \cdot \big( \|\nabla \xi\|_{2; \bB}^2 + \|u'\|_{2; \bB \times [s, t]}^2 \big).
\end{split}
\end{equation}
On the other hand, Wente's inequality (see~\cite[Lemma A.1]{BrezisCoron1984} or~\cite[Theorem 3.1.2]{Helein2002}) shows that the solution $\phi$ to the Dirichlet problem
\[
\left\{
\begin{array}{ll}
\Delta \phi = \xi_{x^1} \times \xi_{x^2},& \text{ in }\bB,\\
\phi = 0, & \text{ on }\partial \bB,
\end{array}
\right.
\]
satisfies for some universal constant $C$ that
\[
\|\nabla \phi\|_{2} \leq C\|\nabla \xi\|_{2}^2.
\]
Since $\xi = 0$ on $\partial \bB$, an integration by parts yields
\begin{equation}\label{eq:I22-estimate-1}
|I_{22}(\lambda)| \leq \Big| \int_{\bB} \bangle{\nabla (H(w + \lambda\xi)\, \xi), \nabla \phi}\, dx \Big| \leq C\|\nabla (H(w + \lambda\xi)\xi)\|_{2} \cdot \|\nabla \xi\|_{2}^2.
\end{equation}
Recalling that $w + \lambda\xi = \lambda v + (1-\lambda)w$, and using also the gradient estimate~\eqref{eq:existence-flow-grad-estimate} and the assumption $t , s \geq 1$, we have
\begin{equation}\label{eq:t-s-geq-1-used}
\begin{split}
|\nabla (H(w + \lambda\xi)\, \xi)| \leq \ &\Lambda_1(|\nabla v| + |\nabla w|)|\xi| + \Lambda_0 |\nabla \xi|\\
\leq\ & C_{\Lambda_0, \Lambda_1}\sqrt{\ep_{3}} \cdot \frac{|\xi|}{(1 - |x|)} + \Lambda_0 |\nabla \xi|.
\end{split}
\end{equation}
Thus, again using~\eqref{eq:Hardy} to bound the $L^2$-norm of $(1 - |x|)^{-1}|\xi|$, we infer that
\begin{align*}
\|\nabla (H(w + \lambda\xi)\, \xi) \|_{2} \leq C_{\Lambda_0, \Lambda_1}(\sqrt{\ep_{3}} + 1)\|\nabla \xi\|_{2} \leq\ & C_{\Lambda_0, \Lambda_1}(\sqrt{\ep_{3}} + 1)(\|\nabla v\|_{2} + \|\nabla w\|_{2})\\
\leq\ & C_{\Lambda_0, \Lambda_1}(\sqrt{\ep_{3}} + 1)\cdot 4\sqrt{\ep_{3}},
\end{align*}
where the last inequality follows from Lemma~\ref{lemm:energy-bound-from-isoperimetric}. Substituting this into~\eqref{eq:I22-estimate-1} gives
\[
|I_{22}(\lambda)| \leq C_{\Lambda_0, \Lambda_1}\sqrt{\ep_{3}} \cdot \|\nabla \xi\|_{2}^2 \quad\text{for all }\lambda \in [0, 1], 
\]
which together with~\eqref{eq:I21-estimate},~\eqref{eq:I1s-estimate}, and~\eqref{eq:flow-convexity-volume} leads to
\[
\Big| V_{H}(v, w) - 2\int_{\bB} H(w)\, (v- w) \cdot w_{x^1} \times w_{x^2}\, dx \Big| \leq C_{\Lambda_0, \Lambda_1}\sqrt{\ep_{3}} \cdot (\|\nabla v - \nabla w\|_{2}^2 + \|u'\|_{2; \bB \times [s, t]}^2).
\]
Taking~\eqref{eq:parabolic-D-difference} and~\eqref{eq:flow-convexity-speed-term} into account, we arrive at
\begin{equation}\label{eq:parabolic-Ef-2}
\begin{split}
&E_H(u(s), u_0) - E_H(u(t), u_0)\\
\geq\ & \big( \frac{1}{2} - C_{\Lambda_0, \Lambda_1}  \sqrt{\ep_3}  \big) \int_{\bB}|\nabla u(s) - \nabla u(t)|^2\,dx - (2 + C_{\Lambda_0, \Lambda_1}\sqrt{\ep_3})\int_{s}^{t}\int_{\bB}|u'|^2\, dxdt.
\end{split}
\end{equation}
Recalling Lemma~\ref{lemm:E-H-monotone} and rearranging, we see that~\eqref{eq:parabolic-convexity} holds provided $\ep_3$ is sufficiently small depending only on $\Lambda_0$ and $\Lambda_1$.
\end{proof}

\section{Weak solutions to \texorpdfstring{$H$}{H}-surface flow with small energy data}\label{sec:existence-uniformity}

Throughout this section, we assume that $H \in C^1(\RR^3; \RR)$ and that $\|H\|_{\infty; \RR^3} + \|\nabla H\|_{\infty; \RR^3}$ is finite. Our notion of weak solutions for~\eqref{eq:H-flow} is explained in Definition~\ref{defi:weak-solution} below. Some preparatory remarks are in order. Given $0 < T < \infty$ and $u \in W^{1, 2}(\bB \times (0, T); \RR^3)$, we can always find a sequence $(u_n)$ in $C^{\infty}(\overline{\bB} \times [0, T];\RR^3)$ such that
\begin{equation}\label{eq:smooth-approximation}
\lim_{n \to \infty}\int_{0}^{T}\int_{\bB} |u_n - u|^2 + |\nabla u_n - \nabla u|^2 + |(u_n)_{t} - u_{t}|^2\, dxdt = 0.
\end{equation}
For instance, extending $u$ by even reflection, first across $\bB \times \{T\}$ and then across $\bB \times \{0\}$, we obtain a map $\overline{u}$ that lies in $W^{1, 2}(\bB \times (-2T, 2T); \RR^3)$. The required sequence can then be produced by mollifying dilated maps of the form $(x, t) \mapsto \overline{u}(\frac{x}{1 + \delta},\, t)$.

At any rate, using the approximations of $u$ and $u_{t}$ in~\eqref{eq:smooth-approximation}, and following the proof of~\cite[Chapter 5.9.2, Theorem 3]{Ev}, we see that there exists $u^* \in C^0([0, T]; L^2(\bB;\RR^3))$, necessarily unique, such that
\begin{equation}\label{eq:embedding-into-C0-L2}
u^*(t) = u(\cdot, t)\text{ in }L^2(\bB; \RR^3),\quad\text{for a.e. }t \in [0, T].
\end{equation}
Moreover, the function $t \mapsto \int_{\bB}|u^*(t)|^2\, dx$ is absolutely continuous, and there holds for all $s, t \in [0, T]$ that
\begin{equation}\label{eq:slice-L2-norm-AC}
\int_{\bB}|u^*(t)|^2\, dx = \int_{\bB}|u^*(s)|^2\, dx + 2\int_{s}^{t}\int_{\bB} u_{t} \cdot u\, dx dt.
\end{equation}
Averaging with respect to $s$ over $[0, T]$ gives the following standard estimate:
\begin{equation}\label{eq:C0-L2-estimate}
\sup_{0 \leq t \leq T} \|u^*(t)\|_{2; \bB}^2 \leq T^{-1} \|u\|_{2; \bB\times [0, T]}^2 + 2\|u_{t}\|_{2; \bB \times [0, T]} \cdot \|u\|_{2; \bB \times [0, T]}.
\end{equation}

For use in the proof of Lemma~\ref{lemm:weak-solution-regular}, we mention that~\eqref{eq:smooth-approximation} can also be used to justify the following estimate. Given $u \in W^{1, 2}(\bB \times (0, T); \RR^3)$ and $[t_1, t_2] \subset (0, T)$, for all sufficiently small positive $h$, we have in the notation of~\eqref{eq:diff-quotient-in-t} that
\begin{equation}\label{eq:diff-quotient-convergence}
\begin{split}
&\int_{\bB \times [t_1, t_2]}|v_{h} - u_{t}|^2\, dxdt\leq  \frac{1}{h}\int_{-h}^{0} \Big(\int_{\bB \times [t_1, t_2]} |u_{t}(x, t + s) - u_{t}(x, t)|^2 \, dx dt\Big)\, ds,
\end{split}
\end{equation}
where the right-hand side converges to zero as $h \to 0$. 
\vskip 2mm
\begin{defi}\label{defi:weak-solution}
Given $0 < T < \infty$ and $u_0 \in W^{1, 2}(\bB; \RR^3)$, by a \emph{weak solution} to~\eqref{eq:H-flow} on $\bB \times [0, T]$, we mean a map $u \in W^{1, 2}(\bB \times (0, T); \RR^3)$ such that the following hold.
\vskip 1mm
\begin{enumerate}
\item[(i)] $u(\cdot, t) - u_0 \in W^{1, 2}_0(\bB; \RR^3)$ for almost every $t \in (0, T)$.
\vskip 1mm
\item[(ii)] $u^*(0) = u_0$ almost everywhere on $\bB$, where $u^*$ is the element of $C^0([0, T]; L^2(\bB; \RR^3))$ such that~\eqref{eq:embedding-into-C0-L2} holds.
\vskip 1mm
\item[(iii)] For all $\zeta \in C^{\infty}_{c}(\bB \times (0, T); \RR^3)$, we have
\begin{equation}\label{eq:H-flow-weak}
\int_{0}^{T}\int_{\bB} u_{t} \cdot \zeta + \bangle{\nabla u, \nabla \zeta}\, dxdt = -2\int_{0}^{T}\int_{\bB} H(u)\, u_{x^1} \times u_{x^2} \cdot \zeta\, dxdt.
\end{equation}
\end{enumerate}
\end{defi}
\begin{rmk}\label{rmk:weak-solution}
It is a standard fact that, for a.e. $t \in (0, T)$, the slices $u(\cdot, t)$, $\nabla u(\cdot, t)$, and $u_t(\cdot, t)$ all belong to $L^2(\bB)$, and $\nabla u(\cdot, t)$ coincides with the distributional derivative $\nabla (u(\cdot, t))$. More importantly, as pointed out in~\cite[page 5266]{LuWang2012}, item (iii) has the following useful consequence. By taking $\zeta$ in~\eqref{eq:H-flow-weak} to have the form 
\[
\zeta(x, t) = \varphi(x)\eta(t),
\]
where $\eta \in C^{\infty}_{c}((0, T); \RR)$ is arbitrary, and $\varphi$ varies over a countable subset of $C^{\infty}_{c}(\bB; \RR^3)$ whose span is dense in $C^{1}_{c}(\bB; \RR^3)$, we see that, for a.e. $t \in (0, T)$, there holds
\begin{equation}\label{eq:weak-solution-slices}
\int_{\bB} u_t(\cdot,t) \cdot \zeta + \bangle{\nabla u(\cdot, t), \nabla \zeta}\, dx = -2\int_{\bB}H(u(\cdot, t))\, u_{x^1}(\cdot, t) \times u_{x^2}(\cdot, t) \cdot \zeta \, dx,
\end{equation}
for all $\zeta \in C^1_{c}(\bB; \RR^3)$. At such a $t$, since every map in $W^{1,2}_0 \cap L^{\infty}(\bB;\RR^3)$ can be realized as the strong $W^{1, 2}$ and a.e. pointwise limit of a sequence in $C^1_c(\bB;\RR^3)$ that is also bounded in $L^{\infty}(\bB; \RR^3)$ (see the proof of~\cite[Chapter 5.5, Theorem 2]{Ev}), the dominated convergence theorem implies that~\eqref{eq:weak-solution-slices} holds more generally for all $\zeta \in W^{1, 2}_0 \cap L^{\infty}(\bB;\RR^3)$.
\end{rmk}

The next lemma, which addresses the regularity of weak solutions under a small-energy condition, is essentially contained as a step in the work of Wang \cite{Wang1999}. Nonetheless, for the convenience of the reader we include a proof, with ideas taken from~\cite{Lin2013},~\cite{Laurain-Riviere2014}, and~\cite{LuWang2012}.
\begin{lemm}\label{lemm:weak-solution-regular}
Suppose $H \in C^1(\RR^3; \RR)$, and that
\[
\|H\|_{\infty; \RR^3} \leq \Lambda_0, \quad \|\nabla H\|_{\infty; \RR^3} \leq \Lambda_1.
\]
Let $u \in W^{1, 2}(\bB \times (0, T); \RR^3)$ be a weak solution to the differential equation in~\eqref{eq:H-flow}, in the sense that item (iii) of Definition~\ref{defi:weak-solution} holds.
There exists $\ep_{\reg} \in (0, 1)$, depending only on $\Lambda_0$ and $\Lambda_1$, such that if 
\begin{equation}\label{eq:H-flow-regularity-smallness}
D(u(\cdot, t)) \leq \ep_{\reg},\quad \text{for a.e. }t \in (0, T),
\end{equation}
then $u \in C^{2 + \mu, 1 + \frac{\mu}{2}}_{\loc}(\bB \times (0, T); \RR^3)$, for all $\mu \in (0, 1)$.
\end{lemm}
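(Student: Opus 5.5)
The plan is to prove an interior $L^\infty$ bound on $\nabla u$, and then bootstrap with linear parabolic theory. The gradient bound itself comes from an $\ep$-regularity argument that combines Wente-type (compensated compactness) estimates with the slice-wise small energy.

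\textbf{Step 1 (Morrey/Campanato decay).} Fix a parabolic cylinder $P_r(z_0)=B_r(x_0)\times(t_0-r^2,t_0]\Subset\bB\times(0,T)$. I aim to prove that
\[
\Phi(\rho)=\rho^{-2}\int_{P_\rho}|\nabla u|^2
\]
satisfies $\Phi(\theta r)\le \tfrac12\Phi(r)+Cr^{2}$ for a fixed $\theta$, provided $\ep_{\reg}$ is small.

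\begin{itemize}
\item By the slice condition~\eqref{eq:H-flow-regularity-smallness}, $\Phi(r)\le 2\ep_{\reg}$ for all $r$.
\item On each good time slice (Remark~\ref{rmk:weak-solution}), split $u=w+h$ on $B_r$. Here $w$ solves
\[
\Delta w = 2H(u)\,u_{x^1}\times u_{x^2}
\]
with zero Dirichlet data, and I write $H(u)\,u_{x^1}\times u_{x^2}$ in Jacobian form, freezing $H(u)$ at its average up to a term controlled by $\Lambda_1$.
\item Then $h$ solves $\Delta h=u_t$ plus the remainder.
\item Wente's inequality (Dirichlet version, as used for $\phi$ in Proposition~\ref{prop:flow-convexity}) gives $\|\nabla w\|_{2;B_r}\le C\Lambda_0\sqrt{\ep_{\reg}}\,\|\nabla u\|_{2;B_r}$, up to the $\Lambda_1$ correction.
\item The harmonic-type part $h$ obeys the standard decay, with an $L^2$ error $r\|u_t\|_{2;B_r}$.
\item Integrating in time then yields the decay inequality. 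The $u_t$ contributions are controlled because $u_t\in L^2(\bB\times(0,T))$ and are absorbed as a higher-order term.
\end{itemize}

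Iterating gives $\nabla u\in L^{2,2-2\beta}$ in the parabolic Morrey sense. Together with $u_t\in L^2$, this yields Hölder continuity of $u$ in the spirit of~\cite{Wang1999,LuWang2012}.

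\textbf{Step 2 (gradient bound).} With $u$ Hölder continuous, I localize $v=\eta u$ and write
\[
v_t-\Delta v=f,\qquad |f|\le C|\nabla u|^2+\text{lower order}.
\]
I then use $L^p$ parabolic estimates (\cite{LSU}), exploiting the smallness of the Morrey norm of $|\nabla u|^2$ to absorb the quadratic term. This reaches $\nabla u\in L^q_{\loc}$ for some $q>4$, and then Sobolev embedding gives $\nabla u\in C^{\alpha,\alpha/2}_{\loc}$. Alternatively, once the Morrey decay is known, one can invoke the a priori interior estimates of Lemma~\ref{lemm:gradient-holder-interior} applied to mollified approximations.

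\textbf{Step 3 (bootstrap).} Once $\nabla u$ is Hölder, the right-hand side $2H(u)\,u_{x^1}\times u_{x^2}$ lies in $C^{\mu,\mu/2}_{\loc}$ for every $\mu<1$, since $H\in C^1$. Interior Schauder theory (\cite[Theorem IV.10.1]{LSU}) then gives $u\in C^{2+\mu,1+\mu/2}_{\loc}$ for all $\mu\in(0,1)$.

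The main obstacle is Step 1, and specifically making Wente's inequality work in a parabolic setting where only slice-wise energy smallness is assumed. Wente's estimate is applied on each time slice, where it sees only spatial Jacobians, while the $u_t$ term must be handled as a perturbation. My first attempt would follow the slice-wise approach of~\cite{LuWang2012,Lin2013}: test with $\eta^2(u-\bar u)$ on slices, use the small slice energy for the nonlinear term via Wente/Hardy-type bounds, and integrate in time.
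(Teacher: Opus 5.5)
Your Step 1 has a genuine gap, and it sits exactly where the lemma is hard: the non-constant $H$. Freezing $H$ does not work. Write
\[
H(u)\,u_{x^1}\times u_{x^2}=H(\bar u)\,u_{x^1}\times u_{x^2}+\big(H(u)-H(\bar u)\big)\,u_{x^1}\times u_{x^2}.
\]
Only the first piece is a Jacobian. The second is bounded pointwise only by a multiple of $\min\{2\Lambda_0,\Lambda_1|u-\bar u|\}\,|\nabla u|^2$. At this stage a slice $u(\cdot,t)$ is merely in $W^{1,2}$ of a two-dimensional disk, so $u-\bar u$ need not be bounded on $\bB_r$, let alone small. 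The remainder is therefore just an $L^1$ function with no compensation structure. Nothing you have gives $\|\nabla w\|_{2;\bB_r}\le C\sqrt{\ep_{\reg}}\,\|\nabla u\|_{2;\bB_r}$ for the corresponding part of $w$. Getting such a bound would require small oscillation of $u$, which is precisely what Step 1 is supposed to deliver, so the argument is circular.

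The paper avoids the Jacobian structure altogether. On a.e.\ slice it writes $-\Delta v^i=\langle\Omega_{ij},dv^j\rangle-w^i$, where $\Omega_{ij}=H(v)\,\vol_{\RR^3}(\be_i,\be_j,*dv)$ is antisymmetric with $\|\Omega\|_2\le C\Lambda_0\sqrt{\ep_{\reg}}$. It then uses Rivi\`ere's gauge $(A,B)$ to obtain the conservation law~\eqref{eq:almost-conservation}. Wente is applied to $\langle\nabla A,\nabla^\perp v\rangle$ and $\langle\nabla B,\nabla^\perp v\rangle$, which are genuine Jacobians, and only $\|H\|_\infty$ enters.

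Your treatment of $u_t$ is also not justified. Under the parabolic rescaling $u(x_0+rx,t_0+r^2t)$, the quantity $\int_{P_r}|u_t|^2$ is invariant, exactly as $\Phi(r)=r^{-2}\int_{P_r}|\nabla u|^2$ is. So $u_t\in L^2(\bB\times(0,T))$ only makes it $o(1)$ as $r\to0$. It is not a higher-order $Cr^2$ term, and your iteration produces no H\"older exponent. This is the usual criticality: $u_t-\Delta u\in L^2$ gives only $\nabla u\in L^4$, so again only $|\nabla u|^2\in L^2$.

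The paper breaks this criticality in two stages.
\begin{enumerate}
\item It works slice by slice. For a fixed good $t_0$, the error $\rho^2\|u_t(\cdot,t_0)\|_{2;\bB}^2$ does decay in $\rho$. This yields $W^{2,2}_{\loc}$ slices via Adams' Riesz-potential bound, and then the space-time bounds~\eqref{eq:weak-solution-speed-L2}--\eqref{eq:weak-solution-W22}.
\item In Step 3, a second-order difference quotient in time gives the $L^\infty_tL^2_x$ bound~\eqref{eq:speed-slice} on $u_t$. Hence $\nabla^2u\in L^\infty_tL^2_x$ and $\nabla u\in L^q_{\loc}$ for every $q$, after which $L^p$ and Schauder theory finish as in your Step 3.
\end{enumerate}

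Your fallback of applying Lemma~\ref{lemm:gradient-holder-interior} to mollifications also fails. Mollified maps do not solve the equation, and that lemma presupposes a classical solution with bounded gradient.
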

\begin{proof}
We largely follow the argument in~\cite[Theorem 3.7]{Lin2013},~\cite[Theorem 12]{Laurain-Riviere2014}, and~\cite[Lemma 2.2]{LuWang2012}. 
\vskip 2mm
\noindent\textbf{Step 1: $u(\cdot, t) \in W^{2, 2}_{\loc}(\bB; \RR^3)$ for a.e. $t \in (0, T)$.} 
\vskip 2mm
Fix one of the almost every $t_0 \in (0, T)$ such that 
\[
u(\cdot, t_0),\ \nabla u(\cdot, t_0),\ u_{t}(\cdot, t_0) \in L^2(\bB), 
\]
that $\nabla u(\cdot, t_0)$ coincides with the distributional derivative $\nabla(u(\cdot, t_0))$, that $D(u(\cdot, t_0)) \leq \ep_{\reg}$, and that~\eqref{eq:weak-solution-slices} holds for all $\zeta \in W^{1, 2}_0 \cap L^{\infty}(\bB; \RR^3)$. Introduce the abbreviations
\begin{equation}\label{eq:slice-abbreviation}
v: = u(\cdot, t_0), \quad w : = u_{t}(\cdot, t_0),
\end{equation}
and also define $\Omega: \bB \to \mathfrak{so}(3) \otimes \Lambda^{1}\RR^2$ by 
\[
\Omega_{ij} = H(v)\, \vol_{\RR^3}(\be_i, \be_j, *dv),
\]
where $\be_1, \be_2, \be_3$ are the standard basis vectors on $\RR^3$, and $*$ is the Hodge-star operator on $\RR^2$. Then, for all $\zeta \in  W^{1, 2}_0 \cap L^{\infty}(\bB;\RR^3)$, we have by~\eqref{eq:weak-solution-slices} that
\begin{equation}\label{eq:flow-equation-with-Omega}
\int_{\bB} \bangle{\nabla v^{i}, \nabla \zeta^{i}} \, dx = \int_{\bB}\big(\bangle{\Omega_{ij}, dv^{j}} - w^{i}\big)\,\zeta^{i}\, dx. 
\end{equation}
Since $\|\Omega\|_{2} \leq C\Lambda_0\cdot \sqrt{\ep_{\reg}}$, it follows from the seminal work of Rivi\`ere~\cite[Theorem I.4]{Riviere2007} that, provided $\ep_{\reg}$ is smaller than a threshold depending only on $\Lambda_0$, there exist $A \in W^{1, 2} \cap L^{\infty}(\bB; \GL(3, \RR))$ and $B \in W^{1, 2}(\bB; \RR^{3 \times 3})$ such that 
\begin{equation}\label{eq:A-B-bounds}
\|\dist(A(\cdot), \text{SO}(3)) \|_{\infty} + \|\nabla A\|_{2} + \|\nabla A^{-1}\|_{2} + \|\nabla B\|_{2} \leq C \Lambda_0 \sqrt{\ep_{\reg}},
\end{equation}
and that, with  $\nabla^{\perp}$ standing for $(-\paop{x^{2}}, \paop{x^1})$ , there holds
\begin{equation}\label{eq:almost-conservation}
\int_{\bB} \bangle{A\nabla v + B\nabla^{\perp}v, \nabla \zeta}\, dx = -\int_{\bB} Aw\cdot \zeta\, dx,\quad\text{for all }\zeta \in C^1_{c}(\bB; \RR^3).
\end{equation}
Given $\bB_{\rho}(x_0) \Subset \bB$, by~\eqref{eq:A-B-bounds} and the Wente inequality~\cite[Theorem 3.1.2]{Helein2002}, there exist $a_1, b \in W^{1, 2}_0(\bB_{\rho}(x_0); \RR^3) \cap C^0(\overline{\bB_{\rho}(x_0)}; \RR^3)$ solving respectively
\begin{equation}\label{eq:a1-b-PDE}
\Delta a_1 =  -\bangle{\nabla B, \nabla^{\perp}v},\quad \Delta b = -\bangle{\nabla A, \nabla^{\perp}v},
\end{equation}
and satisfying the estimates
\begin{equation}\label{eq:a1-b-Wente-estimate}
\|\nabla a_1\|_{2; \bB_{\rho}(x_0)} + \|\nabla b\|_{2; \bB_{\rho}(x_0)} \leq C\Lambda_0\sqrt{\ep_{\reg}} \cdot \|\nabla v\|_{2; \bB_{\rho}(x_0)}.
\end{equation}
Also, with $a_2 \in W^{1, 2}_0(\bB_{\rho}(x_0); \RR^3)$ being the unique weak solution to
\begin{equation}\label{eq:a2-PDE}
\Delta a_2 = Aw,
\end{equation}
the Poincar\'e inequality together with the $L^{\infty}$-estimate in~\eqref{eq:A-B-bounds} gives
\begin{equation}\label{eq:a2-standard-estimate}
\|\nabla a_2\|_{2; \bB_{\rho}(x_0)} \leq C \rho \|w\|_{2; \bB_{\rho}(x_0)}.
\end{equation}
Letting $a : = a_1 + a_2$, by a direct computation using~\eqref{eq:almost-conservation} and the equations satisfied by $a_1, a_2$, we have for all $\zeta \in C^1_{c}(\bB_{\rho}(x_0); \RR^3)$ that
\begin{equation}\label{eq:a-standard-PDE}
\int_{\bB_{\rho}(x_0)} \bangle{\nabla a, \nabla \zeta}\, dx = \int_{\bB_{\rho}(x_0)} \bangle{A \nabla v, \nabla \zeta}\, dx.
\end{equation}
Since $A \in L^{\infty}$, by approximation the above holds for all $\zeta \in W^{1, 2}_0(\bB_{\rho}(x_0); \RR^3)$, and in particular we can take $\zeta = a$. Upon using~\eqref{eq:A-B-bounds} to bound $\|A\|_{\infty}$, we obtain
\begin{equation}\label{eq:a-standard-estimate}
\|\nabla a\|_{2; \bB_{\rho}(x_0)} \leq C \| \nabla v \|_{2; \bB_{\rho}(x_0)}.
\end{equation}
A similar argument using the second equation in~\eqref{eq:a1-b-PDE} shows that
\begin{equation}\label{eq:b-standard-PDE}
\int_{\bB_{\rho}(x_0)}\bangle{\nabla^{\perp} b, \nabla^{\perp}\phi}\, dx = \int_{\bB_{\rho}(x_0)}\bangle{A\nabla v, \nabla^{\perp}\phi}\, dx,
\end{equation}
for all $\phi \in W^{1, 2}_0(\bB_{\rho}(x_0); \RR^3)$, and thus 
\begin{equation}\label{eq:b-standard-estimate}
\|\nabla b\|_{2; \bB_{\rho}(x_0)} \leq C \| \nabla v \|_{2; \bB_{\rho}(x_0)}.
\end{equation}
To continue, let 
\[
h: = A\nabla v - \nabla a - \nabla^{\perp}b,
\]
which is a smooth harmonic $1$-form on $\bB_{\rho}(x_0)$ thanks to~\eqref{eq:a-standard-PDE},~\eqref{eq:b-standard-PDE}, and elliptic regularity. In view of~\eqref{eq:a-standard-estimate},~\eqref{eq:b-standard-estimate}, and the subharmonicity of $|h|^2$, we get for all $\sigma \in (0, \rho)$ that
\[
\int_{\bB_{\sigma}(x_0)} |h|^2 \leq C\Big( \frac{\sigma}{\rho} \Big)^2\int_{\bB_{\rho}(x_0)} |\nabla v|^2\, dx.
\]
Combining this with~\eqref{eq:a1-b-Wente-estimate} and~\eqref{eq:a2-standard-estimate}, and using~\eqref{eq:A-B-bounds} to bound $\|A^{-1}\|_{\infty}$, we arrive at 
\[
\int_{\bB_{\sigma}(x_0)}|\nabla v|^2 \, dx \leq C\Big[ \Big( \frac{\sigma}{\rho} \Big)^2 + \Lambda_0^2\cdot\ep_{\reg} \Big]\int_{\bB_{\rho}(x_0)}|\nabla v|^2 \, dx + C\rho^2\int_{\bB_{\rho}(x_0)}|w|^2\, dx.
\]
With $\lambda \in (0, 1)$ being a universal constant to be determined later, we deduce from~\cite[Lemma 3.4]{Han-Lin} that, provided $\ep_{\reg}$ is sufficiently small depending only on $\Lambda_0$ and $\lambda$, there holds for all $\tau \in (\frac{1}{2}, 1)$, $x_0 \in \bB_{\tau}$, and $\sigma \in (0, 1 - \tau)$ that
\begin{equation}\label{eq:u-Morrey}
\int_{\bB_{\sigma}(x_0)} |\nabla v|^2\, dx \leq C_{\lambda, \tau}\cdot \sigma^{2\lambda}\Big[\int_{\bB}|\nabla v|^2\, dx + \int_{\bB}|w|^2\, dx \Big].
\end{equation}

Next, for any $\zeta \in C^{\infty}_{c}(\bB; [0, 1])$ and $\xi \in \RR^3$, note that 
\begin{equation}\label{eq:H-flow-with-cutoff-zeta}
\Delta(\underbrace{\zeta (v - \xi)}_{U}) = \underbrace{\zeta \cdot \big( 2H(v)v_{x^1} \times v_{x^2} + w \big) + 2\bangle{\nabla\zeta, \nabla v} +  (v - \xi)\Delta\zeta}_{F},
\end{equation}
in the sense of distributions on $\RR^2$. Thus, letting $\Gamma:\RR^2\setminus \{0\} \to \RR$ denote the fundamental solution for $\Delta$ on $\RR^2$, we have for all $\varphi \in C^{\infty}_{c}(\RR^2)$ that
\[
\begin{split}
-\int_{\RR^2} (\partial_{i}\Gamma * F) \Delta \varphi =\ & \int_{\RR^2} F \partial_{i}\varphi\\
=\ & \int_{\RR^2} U \Delta \partial_{i}\varphi = -\int_{\RR^2}(\zeta \partial_{i} v + (v - \xi)\partial_{i}\zeta) \Delta \varphi.
\end{split}
\]
Thus the difference $\zeta \partial_{i} v + (v - \xi)\partial_{i}\zeta - \partial_{i}\Gamma * F$, which tends to zero at infinity, is harmonic on $\RR^2$, and hence must vanish identically. Fixing $\tau \in (\frac{1}{2}, 1)$ and choosing $\zeta$ to equal $1$ on $\bB_{\frac{1+\tau}{2}}$, while also taking $\xi = \fint_{\bB}v(x)\, dx$, we infer that, for a.e. $x \in \bB_{\tau}$, 
\begin{equation}\label{eq:grad-v-riesz}
\begin{split}
|\nabla v(x)| \leq \frac{1}{2\pi}\int_{\bB}|x - y|^{-1} \big( \Lambda_0 |\nabla v(y)|^2 + |w(y)| \big) \, dy+ C_{\tau}\|\nabla v\|_{2}.
\end{split}
\end{equation}
To estimate the integral on the right-hand side, we let
\[
g(y) =\left\{ 
\begin{array}{ll}
\Lambda_0|\nabla v(y)|^2 + |w(y)|, &\text{ if }y \in \bB,\\
0, & \text{ if }y \not\in \bB,
\end{array}
\right.
\]
and also define
\[
(N_{\lambda}g)(x) = \sup_{\sigma > 0} \frac{1}{\sigma^{\lambda}}\int_{\bB_{\sigma}(x)}g(y)\, dy, \quad\quad I_1 g(x) = \int_{\RR^2} |x - y|^{-1}g(y)\, dy.
\]
It is a standard fact (see the proof of~\cite[Proposition 3.1]{Adams1975}) that 
\begin{equation}\label{eq:maximal-bound-riesz}
I_1 g(x) \leq C_{\lambda}\big(N_{\lambda}g(x)  \big)^{\frac{1}{2 - \lambda}} \cdot \big( Mg(x) \big)^{\frac{1 - \lambda}{2 - \lambda}},
\end{equation}
where $Mg$ denotes the maximal function of $g$. Now, by~\eqref{eq:H-flow-regularity-smallness} and H\"older's inequality, we have
\[
\|g\|_{1;\RR^2} \leq C\Lambda_0 \sqrt{\ep_{\reg}} \|\nabla v\|_{2} + C\|w\|_{2}.
\]
On the other hand, taking also~\eqref{eq:u-Morrey} into account, we get for all $x \in \bB_{\tau}$ that
\begin{equation}\label{eq:u-Morrey-for-g}
(N_{\lambda}g)(x) \leq C_{\lambda,\tau, \Lambda_0}\cdot \big(\|\nabla v\|_{2} +\|w\|_{2} \big),
\end{equation}
which combines with~\eqref{eq:maximal-bound-riesz} and the standard weak $L^1$-estimate for $Mg$ to give
\begin{align*}
\big| \big\{ x \in \bB_{\tau}\ \big| \ I_1g(x) > t \big\} \big| \leq C_{\lambda, \tau, \Lambda_0} \cdot \big( \|\nabla v\|_{2} + \|w\|_{2} \big)^{\frac{2-\lambda}{1 - \lambda}} \cdot t^{-\frac{2-\lambda}{1-\lambda}},
\end{align*}
for all $t > 0$. At this point we choose $\lambda \in (0, 1)$ such that $\frac{2 - \lambda}{1 -\lambda} > 4$. Then the above estimate along with~\eqref{eq:grad-v-riesz} implies that
\begin{equation}\label{eq:a.e.-slice-in-L4}
\|\nabla v\|_{4; \bB_{\tau}} \leq \|I_1 g\|_{4; \bB_{\tau}} + C_{\tau}\|\nabla v\|_{2} \leq  C_{\tau, \Lambda_0} \cdot \big( \|\nabla v\|_{2} + \|w\|_{2} \big).
\end{equation}
As a result $\bangle{\Omega, dv} - w \in L^2(\bB_{\tau}; \RR^3)$, which permits the use in~\eqref{eq:flow-equation-with-Omega} of test functions that belong to $W^{1, 2}_{0}(\bB_{\tau}; \RR^3)$. Elliptic regularity then gives $v \in W_{\loc}^{2, 2}(\bB_{\tau}; \RR^3)$. Since $\tau \in (\frac{1}{2}, 1)$ is arbitrary, we conclude that $v \in W^{2, 2}_{\loc}(\bB; \RR^3)$.
\vskip 1em
\noindent\textbf{Step 2: Integral estimates on $|\nabla^2 u|^2$ and $|\nabla u|^4$.}
\vskip 2mm
By the previous step, for a.e. $t \in (0, T)$, the distributional derivatives $(\nabla u(\cdot, t))_{x^1}$ and $(\nabla u(\cdot, t))_{x^2}$ exist and belong to $L^2_{\loc}(\bB)$. For each such $t$, it is a standard fact~\cite[Theorem 2.1.4]{Ziemer1989} that, for a.e. $x^2 \in (-1,1)$, the function $s\mapsto \nabla u(s, x^2, t)$ is absolutely continuous up to redefinition on a measure-zero set, and has derivative given by $(\nabla u(\cdot, t))_{x^1}(s, x^2)$ for almost every $s$. A similar remark applies to restrictions of the form $s \mapsto \nabla u(x^1, s, t)$. Thus, by considering difference quotients, we infer that
\[
(x, t) \mapsto \nabla^2(u(\cdot, t))(x)
\]
is a measurable function on $\bB \times (0, T)$. Again by~\cite[Theorem 2.1.4]{Ziemer1989}, this function would coincide with the second-order distributional derivative of $u$ on $\bB \times (0, T)$ provided it lies in $L^2$ locally.

To establish this last integrability condition, we again fix $t_0$ having the properties listed at the start of Step 1, and adopt the abbreviations~\eqref{eq:slice-abbreviation}. In particular $\nabla v = \nabla u(\cdot, t_0)$. Given a disk $\bB_{\rho}(x_0) \Subset \bB$ and $\sigma \in (0, \rho)$, we let $\xi = \fint_{\bB_{\rho}(x_0)}v(x)\, dx$, and also take $\zeta$ to be a cut-off function such that
\begin{equation}\label{eq:zeta-for-step2}
\zeta = 1 \text{ on }\bB_{\sigma}(x_0),\quad \supp(\zeta) \subset \bB_{\rho}(x_0), \quad |\nabla^k\zeta| \leq C_k(\rho - \sigma)^{-k}.
\end{equation}
Then, by~\eqref{eq:H-flow-with-cutoff-zeta}, now an equality between elements of $L^2(\bB)$, along with standard $W^{2, 2}$-estimates, we get
\begin{equation}\label{eq:weak-solution-W22-by-W14}
\int_{\bB}|\nabla^2(\zeta (v - \xi))|^2  \leq\ C\int_{\bB} \zeta^2|w|^2  + |\nabla \zeta|^2|\nabla v|^2 + |\nabla^2\zeta|^2 |v - \xi|^2  + C\Lambda_0^2\int_{\bB} |\nabla v|^4 \zeta^2.
\end{equation}
To estimate the second integral on the right-hand side, we apply to $|\nabla v|^2\zeta$ the Sobolev inequality associated with the embedding $W^{1, 1}_0(\bB) \hookrightarrow L^2(\bB)$. After routine applications of H\"older's inequality, and recalling also~\eqref{eq:H-flow-regularity-smallness}, we get
\begin{equation}\label{eq:weak-solution-Sobolev}
\int_{\bB} \zeta^2 |\nabla v|^4\, dx \leq C\ep_{\reg}\int_{\bB} |\nabla \zeta|^2|\nabla v|^2 + \zeta^2 |\nabla^2 v|^2\, dx.
\end{equation}
Substituting this back into~\eqref{eq:weak-solution-W22-by-W14}, and noting the pointwise inequality
\[
|\zeta \nabla^2 v| \leq |\nabla^2(\zeta (v - \xi))| + 2 |\nabla \zeta||\nabla v| + |v - \xi||\nabla^2 \zeta|,
\]
we see that, 
\begin{align*}
\int_{\bB} \zeta^2 |\nabla^2 v|^2 \leq\ & C\int_{\bB}\zeta^2 |w|^2 + |\nabla \zeta|^2|\nabla v|^2 + |\nabla^2\zeta|^2|v - \xi|^2  \\
&+ C\Lambda_0^2 \cdot \ep_{\reg} \int_{\bB}|\nabla\zeta|^2 |\nabla v|^2 + \zeta^2 |\nabla^2 v|^2.
\end{align*}
Decreasing $\ep_{\reg}$ depending only on $\Lambda_0$ if needed, recalling our choice of $\zeta$ and $\xi$, and applying Poincar\'e's inequality to the integral of $|v - \xi|^2$, we obtain
\begin{equation}\label{eq:weak-solution-W22-slice}
\int_{\bB}\zeta^2 |\nabla^2(u(\cdot, t_0))|^2\, dx \leq C\int_{\bB \times \{t_0\}}\zeta^2 |u_{t}|^2 \, dx + C (\rho - \sigma)^{-4}\rho^2 \int_{\bB_{\rho}(x_0) \times \{t_0\}} |\nabla u|^2 \, dx.
\end{equation}
Combining this with~\eqref{eq:weak-solution-Sobolev} leads to
\begin{equation}\label{eq:weak-solution-W14-slice}
\int_{\bB \times \{t_0\}}\zeta^2 |\nabla u|^4\, dx \leq C\ep_{\reg}\big( \int_{\bB \times \{t_0\}}\zeta^2 |u_{t}|^2\, dx + (\rho - \sigma)^{-4}\rho^2\int_{\bB_{\rho}(x_0) \times \{t_0\}} |\nabla u|^2\, dx\big). 
\end{equation}
This pair of estimates have the following consequences.
\vskip 1mm
\begin{itemize}
\item From~\eqref{eq:weak-solution-W14-slice}, we get 
\begin{equation}\label{eq:regularity-u-in-W14}
\nabla u \in L^4_{\loc}(\bB \times (0, T)).
\end{equation}
Consequently, given any $\zeta \in W^{1, 2}(\bB \times (0, T); \RR^3)$ which vanishes outside of a compact subset of $\bB \times (0, T)$, by mollification we obtain an approximating sequence which justifies its use as a test function in~\eqref{eq:H-flow-weak}.
\vskip 1mm
\item  From~\eqref{eq:weak-solution-W22-slice}, we see that the function $(x,t) \mapsto \nabla^2 (u(\cdot, t))(x)$ lies in $L^2_{\loc}(\bB \times (0, T))$, and hence coincides with the second-order distributional derivative of $u$ on $\bB \times (0, T)$, as remarked at the start of this step. In particular, for a.e. $t_0 \in (0, T)$, we can write the left-hand side of~\eqref{eq:weak-solution-W22-slice} as $\int_{\bB \times \{t_0\}} \zeta^2 |\nabla^2 u|^2\, dx$. Also, having shown that $\nabla^2 u$, along with $u$, $\nabla u$, and $u_t$, lie in $L^2_{\loc}(\bB \times (0, T))$, we get from the Sobolev embedding in~\cite[Lemma II.3.3]{LSU} that
\begin{equation}\label{eq:regularity-u-in-Lp}
u \in L^{p}_{\loc}(\bB \times (0, T))\quad\text{for all }2 \leq p < \infty.
\end{equation}
\end{itemize}

We next want to control the integral of $\zeta^2|u_t|^2$ on the right-hand side of~\eqref{eq:weak-solution-W22-slice} and~\eqref{eq:weak-solution-W14-slice}. To that end, take any $[t_1, t_2] \subset (0, T)$. For sufficiently small $h > 0$, we consider, in the notation of~\eqref{eq:diff-quotient-in-t}, the map
\[
v_{h} : = \frac{u - u^{(-h)}}{h}.
\]
Then, with $\bB_{\sigma}(x_0) \Subset \bB_{\rho}(x_0) \Subset \bB$ as above, and with the same choice of $\zeta$ as in~\eqref{eq:weak-solution-W22-slice} and~\eqref{eq:weak-solution-W14-slice}, we have for all $\phi \in C^{\infty}_{c}((t_1, t_2); [0, 1])$ that 
\[
v_h \zeta^2 \phi \in W^{1, 2}(\bB \times (0, T); \RR^3).
\]
Since $v_h \zeta^2 \phi$ also vanishes outside of $\bB_{\rho}(x_0) \times (t_1, t_2)$, the observation made after~\eqref{eq:regularity-u-in-W14} shows that it is an admissible test function in~\eqref{eq:H-flow-weak}. Recalling the lower bound~\eqref{eq:H-flow-tested-lhs-observation}, and letting $\phi$ increase to the characteristic function $\chi_{(t_1, t_2)}$, we obtain with the help of the dominated convergence theorem that
\begin{align*}
&\int_{t_1}^{t_2}\int_{\bB} \zeta^2 u_{t}\cdot v_{h}\, dxdt + \frac{1}{2h}\int_{t_1}^{t_2}\int_{\bB}\zeta^2 (|\nabla u|^2 - |\nabla u^{(-h)}|^2)\, dxdt\\
\leq\ & \int_{t_1}^{t_2}\int_{\bB} 2|\nabla\zeta| \zeta |\nabla u| |v_{h}|\, dxdt + C\Lambda_0\int_{t_1}^{t_2}\int_{\bB} \zeta^2 |\nabla u|^2 |v_{h}|\, dxdt.
\end{align*}
Changing variables in the second integral on the left-hand side, and applying Young's inequality to each of the two integrals on the right-hand side, we get
\begin{align*}
&\int_{t_1}^{t_2}\int_{\bB} \zeta^2 u_{t}\cdot v_{h}\, dxdt + \frac{1}{2h}\int_{t_2-h}^{t_2}\int_{\bB}\zeta^2 |\nabla u|^2\, dxdt\\
\leq\ & \frac{1}{2h}\int_{t_1-h}^{t_1}\int_{\bB}\zeta^2 |\nabla u|^2\, dxdt + \frac{1}{4}\int_{t_1}^{t_2}\int_{\bB} \zeta^2 |v_{h}|^2\, dxdt \\
&+ C\int_{t_1}^{t_2}\int_{\bB} |\nabla\zeta|^2 |\nabla u|^2 + C\Lambda_0^2\int_{t_1}^{t_2}\int_{\bB} \zeta^2 |\nabla u|^4\, dxdt.
\end{align*}
Dropping the second term on the left-hand side, and also using~\eqref{eq:diff-quotient-convergence} and the assumption~\eqref{eq:H-flow-regularity-smallness}, we obtain
\begin{equation}\label{eq:speed-by-W14}
\begin{split}
\int_{t_1}^{t_2}\int_{\bB}\zeta^2|u_{t}|^2\leq\ & C\ep_{\reg} + C\int_{t_1}^{t_2}\int_{\bB}|\nabla\zeta|^2 |\nabla u|^2+ C\Lambda_0^2 \int_{t_1}^{t_2}\int_{\bB} \zeta^2 |\nabla u|^4 .
\end{split}
\end{equation}
Integrating~\eqref{eq:weak-solution-W14-slice} with respect to $t_0$ and substituting the result into the above, we get upon further decreasing $\ep_{\reg}$ depending on $\Lambda_0$ that
\begin{equation}\label{eq:weak-solution-speed-L2}
\int_{t_{1}}^{t_{2}}\int_{\bB}\zeta^2|u_{t}|^2\, dxdt \leq C\ep_{\reg} + C(\rho - \sigma)^{-4}\rho^2\int_{t_1}^{t_2}\int_{\bB_{\rho}(x_0)} |\nabla u|^2 \, dxdt.
\end{equation}
Combining this with the result of integrating~\eqref{eq:weak-solution-W22-slice} over $[t_1, t_2]$ yields
\begin{equation}\label{eq:weak-solution-W22}
\int_{t_1}^{t_2}\int_{\bB_{\sigma}(x_0)} |\nabla^2 u|^2\, dxdt\leq C\ep_{\reg} + C(\rho - \sigma)^{-4}\rho^2\int_{t_1}^{t_2}\int_{\bB_{\rho}(x_0)} |\nabla u|^2 \, dxdt.
\end{equation}
\vskip 1em
\noindent\textbf{Step 3: Improved estimate on $|u_{t}|^2$ and bootstrapping.}
\vskip 2mm
Given $(x_0, t_0) \in \bB \times (0, T)$ and $r >0$ such that 
\[
\bB_{3r}(x_0) \times (t_0 - 9r^2, t_0 + 9r^2) \subset \bB \times (0, T),
\]
we let $\eta$ be a cut-off function such that
\[
\eta = 1 \text{ on }\bB_{r}(x_0), \quad \supp(\eta) \subset \bB_{2r}(x_0), \quad
|\nabla^k\eta| \leq C_{k} r^{-k}.
\] 
For any $h \in (0, r^2)$, and any smooth function $\phi: \RR \to [0, 1]$ such that $\phi(t) = 0$ whenever $|t - t_0| \geq 4r^2$, we define $w_h: = -\frac{1}{h}((\eta^2 v_{h}\phi)^{(h)} - \eta^2 v_{h}\phi)$ in the notation of~\eqref{eq:diff-quotient-in-t}; that is,
\[
w_{h}(x, t) = -\big(\eta(x)\big)^2 \cdot \frac{v_{h}(x, t+h)\phi(t + h) - v_{h}(x, t) \phi(t)}{h}.
\]
By virtue of the cutting off, both $\eta^2 v_h \phi$ and $(\eta^2 v_h \phi)^{(h)}$ lie in $W^{1, 2}(\bB \times (0, T); \RR^3)$, and hence so does $w_h$. Since $w_h$ also vanishes outside of $\bB_{2r}(x_0) \times (t_0 - 5r^2, t_0 + 5r^2)$, we may use it as a test function in~\eqref{eq:H-flow-weak}. Following a computation similar to the one in the proof of Lemma~\ref{lemm:speed-monotone}, we get
\begin{align*}
&\int_{0}^{T}\int_{\bB} (v_{h})_{t} \cdot v_{h} \eta^2 \phi  + \int_{0}^{T}\int_{\bB} |\nabla v_{h}|^2 \eta^2 \phi - 2\eta |\nabla \eta| \phi |v_{h}| |\nabla v_{h}| \\
\leq\ & 2 \Lambda_1\int_{0}^{T}\int_{\bB}  |v_{h}|^2 |\nabla u|^2 \eta^2 \phi  + 2\Lambda_0\int_{0}^{T}\int_{\bB} |\nabla v_{h}| (|\nabla u| + |\nabla u^{(-h)}|) |v_{h}| \eta^2 \phi,
\end{align*}
the integrals on the right-hand side being finite due to~\eqref{eq:regularity-u-in-W14} and~\eqref{eq:regularity-u-in-Lp}. A couple of applications of Young's inequality leads to
\begin{align}
&\int_{0}^{T}\int_{\bB} (v_{h})_{t} \cdot v_{h} \eta^2 \phi + \frac{3}{4} \int_{0}^{T}\int_{\bB} |\nabla v_{h}|^2 \eta^2 \phi \nonumber\\
\leq\ & C(\Lambda_1 + \Lambda_0^2)\int_{0}^{T}\int_{\bB} (|\nabla u| + |\nabla u^{(-h)}|)^2|v_{h}|^2\eta^2 \phi +C\int_{0}^{T}\int_{\bB} \phi |\nabla\eta|^2 |v_{h}|^2.\label{eq:twice-difference-quotient-in-t}
\end{align}
For later use, we mention that, similar to~\eqref{eq:slice-L2-norm-AC}, for all $\tau_1, \tau_2$ in a subset of $[t_0 - 4r^2, t_0 + 4r^2]$ with full measure, there holds
\begin{equation}\label{eq:vh-L2-norm-AC}
\int_{\bB \times \{\tau_2\}}|v_{h}|^2 \eta^2\, dx = \int_{\bB \times \{\tau_1\}}|v_{h}|^2 \eta^2\, dx + 2\int_{\tau_1}^{\tau_2}\int_{\bB} (\eta v_{h})_{t} \cdot (\eta v_{h})\, dxdt.
\end{equation}
Turning to the first integral on the right-hand side of~\eqref{eq:twice-difference-quotient-in-t}, we have for a.e. $t \in [t_0 - 4r^2, t_0 + 4r^2]$ that 
\[
(|\nabla u(\cdot, t)| + |\nabla u^{(-h)}(\cdot, t)|)\, |v_{h}(\cdot, t)|\,\eta \in W^{1, 1}_{0}(\bB),
\]
so the Sobolev embedding $W^{1, 1}_{0}(\bB) \hookrightarrow L^2(\bB)$ gives
\begin{align*}
&\int_{\bB \times \{t\}}(|\nabla u|+ |\nabla u^{(-h)}|)^2  |v_{h}|^2 \eta^2 \, dx\\
\leq\ & C\Big( \int_{\bB \times \{t\}} (|\nabla u| + |\nabla u^{(-h)}|) \cdot (|v_{h}||\nabla \eta| + |\nabla v_{h}||\eta|) +  (|\nabla^2 u| + |\nabla^2 u^{(-h)}|)\cdot |v_{h}|\eta\, dx \Big)^2\\
\leq\ & C\ep_{\reg}\int_{\bB\times \{t\}} |v_{h}|^2 |\nabla \eta|^2 + \eta^2 |\nabla v_{h}|^2 \, dx\\
&+ C\Big( \int_{\bB_{2r}(x_0) \times \{t\}}|\nabla^2 u|^2 + |\nabla^2 u^{(-h)}|^2\, dx \Big) \Big(\int_{\bB \times \{t\}} |v_{h}|^2\eta^2\, dx\Big),
\end{align*}
where for the second inequality we used~\eqref{eq:H-flow-regularity-smallness}. Multiplying by $\phi(t)$ and integrating with respect to $t$ gives
\begin{align}
&\int_{0}^{T}\int_{\bB} \big( |\nabla u| + |\nabla u^{(-h)}| \big)^2 |v_{h}|^2 \eta^2\phi \, dxdt\nonumber\\
\leq\ & C\ep_{\reg}\int_{0}^{T}\int_{\bB} |v_{h}|^2 |\nabla \eta|^2\phi + \eta^2 |\nabla v_{h}|^2\phi\, dxdt\nonumber\\
&+ \Big(\esssup_{|t - t_0| \leq 4r^2}\int_{\bB \times \{t\}}|v_{h}|^2\eta^2\phi \, dx\Big)\Big( \int_{t_0 - 4r^2}^{t_0 + 4r^2}\int_{\bB_{2r}(x_0)}|\nabla^2 u|^2 + |\nabla^2 u^{(-h)}|^2\, dxdt \Big) \nonumber\\
\leq\ &  C\ep_{\reg}\int_{0}^{T}\int_{\bB} |v_{h}|^2 |\nabla \eta|^2\phi + \eta^2 |\nabla v_{h}|^2\phi\, dxdt + C \ep_{\reg}\cdot \esssup_{|t - t_0| \leq 4r^2}\int_{\bB \times \{t\}}|v_{h}|^2\eta^2\phi \, dx, \label{eq:twice-difference-quotient-rhs-bound}
\end{align}
where in getting the last line we used~\eqref{eq:weak-solution-W22} (with $\rho = 3r$ and $\sigma = 2r$) and~\eqref{eq:H-flow-regularity-smallness}. Putting~\eqref{eq:twice-difference-quotient-rhs-bound} back into~\eqref{eq:twice-difference-quotient-in-t}, and requiring that $\ep_{\reg}$ be smaller than a threshold depending only on $\Lambda_0$ and $\Lambda_1$, we obtain
\begin{equation}\label{eq:speed-estimate-ready-to-choose-phi}
\begin{split}
\int_{0}^{T}\int_{\bB} (\eta v_{h})_{t} \cdot (\eta v_{h})  \phi &\leq C\int_{0}^{T}\int_{\bB}|\nabla\eta|^2 |v_{h}|^2\phi + \frac{1}{8} \cdot \esssup_{|t - t_0| \leq 4r^2}\int_{\bB \times \{t\}}|v_{h}|^2\eta^2\phi \, dx.
\end{split}
\end{equation}
Given $\tau_1 \in [t_0 - 4r^2, t_0 - r^2]$ and $\tau_2 \in [\tau_1, t_0 + 4r^2]$ such that~\eqref{eq:vh-L2-norm-AC} holds, we apply~\eqref{eq:speed-estimate-ready-to-choose-phi} to a sequence $(\phi_n)$ in $C^{\infty}_{c}((\tau_1, \tau_2); [0, 1])$ that increases pointwise to the characteristic function $\chi_{(\tau_1, \tau_2)}$. Combining the result with~\eqref{eq:vh-L2-norm-AC}, we get
\begin{align*}
\int_{\bB \times \{\tau_2\}} |v_{h}|^2\eta^2\, dx \leq\ & \int_{\bB \times \{\tau_1\}}|v_{h}|^2\eta^2\, dx + C\int_{\tau_1}^{t_0 + 4r^2} \int_{\bB} |\nabla \eta|^2 |v_{h}|^2\, dxdt\\
&+ \frac{1}{4}\esssup_{t \in [\tau_1, t_0 + 4r^2]}\int_{\bB \times \{t\}} |v_{h}|^2\eta^2\, dx.
\end{align*}
This being true for almost every $\tau_2 \in [\tau_1, t_0 + 4r^2]$, we deduce that
\[
\begin{split} 
\frac{3}{4}\cdot \esssup_{t \in [\tau_1, t_0 + 4r^2]}\int_{\bB \times \{t\}}|v_{h}|^2\eta^2 \, dx \leq \int_{\bB \times \{\tau_1\}}|v_{h}|^2\eta^2 \, dx + C\int_{\tau_1}^{t_0 + 4r^2}\int_{\bB}|\nabla\eta|^2 |v_{h}|^2\, dxdt,
\end{split}
\]
for almost every $\tau_1 \in [t_0 - 4r^2, t_0 - r^2]$. Averaging with respect to $\tau_1$ and recalling our choice of $\eta$, we get
\[
\esssup_{t \in [t_0 - r^2, t_0 + r^2]}\int_{\bB_{r}(x_0) \times \{t\}}|v_{h}|^2 \, dx \leq Cr^{-2}\int_{t_0 - 4r^2}^{t_0 + 4r^2}\int_{\bB_{2r}(x_0)}|v_{h}|^2\, dxdt.
\]
Now let $(h_n)$ be a positive sequence tending to $0$. From the above estimate, along with~\eqref{eq:diff-quotient-convergence}, we obtain a measure-zero set $E$ such that for every $t \in [t_0 - r^2, t_0 + r^2]\setminus E$, we have
\[
\lim_{n \to \infty}\int_{\bB \times \{t\}}|v_{h_n} - u_{t}|^2\, dx = 0,
\]
and that
\[
\int_{\bB_{r}(x_0) \times \{t\}} |v_{h_n}|^2\, dx \leq Cr^{-2}\int_{t_0 - 4r^2}^{t_0 + 4r^2}\int_{\bB_{2r}(x_0)} |v_{h_n}|^2\, dxdt,\quad\text{for all }n.
\]
Passing to the limit as $n \to \infty$ and using~\eqref{eq:diff-quotient-convergence} again, we obtain
\begin{equation}\label{eq:speed-slice}
\int_{\bB_{r}(x_0) \times \{t\}} |u_{t}|^2\, dx \leq C r^{-2} \int_{t_0 - 4r^2}^{t_0 + 4r^2}\int_{\bB_{2r}(x_0)} |u_t|^2 \, dxdt \leq C r^{-2}\ep_{\reg},
\end{equation}
where the second inequality follows from~\eqref{eq:weak-solution-speed-L2}. Returning to~\eqref{eq:weak-solution-W22-slice}, we deduce using~\eqref{eq:speed-slice} and~\eqref{eq:H-flow-regularity-smallness} that
\[
\int_{\bB_{\frac{r}{2}}(x_0) \times \{t\}} |\nabla^2 u|^2\, dx \leq C\int_{\bB_{r}(x_0) \times \{t\}} |u_{t}|^2 + r^{-2}|\nabla u|^2 \, dx \leq Cr^{-2}\ep_{\reg},
\]
for almost every $t \in [t_0 - r^2, t_0 + r^2]$. Sobolev embedding then implies that, for all $q \in [2, \infty)$,
\[
\int_{t_0-r^2}^{t_0 + r^2}\int_{\bB_{\frac{r}{2}}(x_0)}|\nabla u|^{q}\, dxdt < \infty.
\]
Returning to the flow equation~\eqref{eq:H-flow}, we thus have $u_{t} - \Delta u \in L^{p}_{\loc}(\bB \times (0, T))$ for all $p < \infty$. Taking also~\eqref{eq:regularity-u-in-Lp} into account, we can follow the argument indicated in the proof of~\cite[Lemma 3.10]{Struwe1985}, using cut-off functions and~\cite[Theorem IV.9.1]{LSU}, followed by the embedding in the second conclusion of~\cite[Lemma II.3.3]{LSU}, to get
\[
u, \nabla u \in C^{\mu, \frac{\mu}{2}}_{\loc}(\bB \times (0, T)),\quad\text{for all }\mu \in (0, 1).
\]
Again using~\eqref{eq:H-flow}, this time combined with~\cite[Theorem IV.5.2]{LSU}, we conclude that $u \in C^{2 + \mu, 1 + \frac{\mu}{2}}_{\loc}(\bB \times (0, T))$, and the proof is complete.
\end{proof}

Proposition~\ref{prop:stability} below can be seen as a quantitative uniqueness result, and is obtained by following the proof of~\cite[Lemma 3.2]{LuWang2012}, with some modifications which involve computations similar to those appearing in the proof of Theorem~\ref{thm:uniqueness}. 
\begin{prop}\label{prop:stability}
Given $\Lambda_0, \Lambda_1 > 0$, let $\ep_{\reg}$ be as in Lemma~\ref{lemm:weak-solution-regular}. There exists $\ep_{\text{st}} < \ep_{\reg}$, depending only on $\Lambda_0, \Lambda_1$, with the following property. Suppose $H \in C^1(\RR^3; \RR)$ satisfies 
\[
\|H\|_{\infty; \RR^3} \leq \Lambda_0,\quad \|\nabla H\|_{\infty; \RR^3} \leq \Lambda_1,
\]
and let $u, v \in W^{1, 2}(\bB \times (0, T);\RR^3)$ be weak solutions to~\eqref{eq:H-flow} with Cauchy--Dirichlet data $u_0, v_0 \in W^{1, 2}(\bB;\RR^3) \cap C^{0}(\overline{\bB}; \RR^3)$, respectively. Assume further that 
\begin{equation}\label{eq:stability-small-energy-assumption}
D(u(\cdot,t))+ D(v(\cdot, t))\leq \ep_{\text{st}}\quad \text{for a.e. }t\in (0,T).
\end{equation}
Then we have
\begin{equation}\label{eq:stability-estimate-W12}
\begin{split}
&\int_0^{T} \int_{\bB} |\nabla (u-v)|^2 t^{-\frac{1}{2}}\,dxdt  \leq  10\cdot T^{\frac{1}{2}} \cdot\big( \|u_0 - v_0\|_{\infty}^2 + \|\nabla u_0 - \nabla v_0\|_{2}^2 \big),
\end{split}
\end{equation}
and that 
\begin{equation}\label{eq:stability-estimate-C0}
\begin{split}
&\esssup_{0 \leq t \leq T} \int_{\bB \times \{t\}}|u - v|^2\, dx \leq C(1 + T) \cdot \big(\|u_0 - v_0\|_{\infty}^2  + \|\nabla u_0 - \nabla v_0\|_{2}^2 \big),
\end{split}
\end{equation}
where $C$ is a universal constant.
\end{prop}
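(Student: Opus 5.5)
The plan is to reduce to a difference that vanishes on $\partial\bB$ and run an energy estimate with the weight $t^{-1/2}$, which is what absorbs the $t^{-1}$ singularity of the interior gradient bound near $t=0$.

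\textbf{Setup.} Let $\psi := u_0 - v_0 \in C^0(\overline{\bB})\cap W^{1,2}(\bB)$, extended to be independent of $t$. Set
\[
w := u - v - \psi \in W^{1,2}(\bB\times(0,T);\RR^3).
\]
By Definition~\ref{defi:weak-solution}(i), $w(\cdot,t)\in W^{1,2}_0(\bB)$ for a.e.\ $t$, and $w^*(0)=0$ by (ii). Since $\ep_{\text{st}}<\ep_{\reg}$, Lemma~\ref{lemm:weak-solution-regular} makes $u$ and $v$ classical in the interior. Applying the scaled interior gradient estimate (Corollary~\ref{coro:grad-estimate-scaled}, on parabolic cylinders of size comparable to $\min\{t^{1/2},1-|x|\}$) together with \eqref{eq:stability-small-energy-assumption}, I expect
\[
|\nabla u|^2+|\nabla v|^2 \le C_{\Lambda_0}\,\ep_{\text{st}}\bigl((1-|x|)^{-2}+t^{-1}\bigr)\quad\text{on }\bB\times(0,T).
\]
With the Hardy inequality \eqref{eq:Hardy} this gives, for a.e.\ $t$,
\[
\int_{\bB}|w|^2(|\nabla u|^2+|\nabla v|^2)\le C\ep_{\text{st}}\int_{\bB}|\nabla w|^2+C\ep_{\text{st}}\,t^{-1}\int_{\bB}|w|^2 .
\]
As in the proof of Theorem~\ref{thm:uniqueness}, this bound lets me use $w(\cdot,t)$ as a test function in the slice equations \eqref{eq:weak-solution-slices} for $u$ and $v$. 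I expect this justification, and the validity of the gradient bound uniformly down to $t\to0$ for merely weak solutions, to be the main technical obstacle.

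\textbf{Energy inequality.} Subtract the slice equations and test with $w$. Write $X(t)=\int_{\bB}|w^*(t)|^2$, which is absolutely continuous by \eqref{eq:slice-L2-norm-AC}. Use $\nabla(u-v)=\nabla w+\nabla\psi$, so that $\int\langle\nabla(u-v),\nabla w\rangle\ge \frac12\int|\nabla(u-v)|^2-\frac12\int|\nabla\psi|^2$. Bound the right-hand side as in \eqref{eq:uniqueness-computation}, using $|u-v|\le|w|+\|\psi\|_\infty$. The contributions are:
\begin{itemize}
\item $\Lambda_1|w|\,|u-v|\,|\nabla u|^2$, bounded by $\Lambda_1\bigl(|w|^2|\nabla u|^2+\|\psi\|_\infty^2|\nabla u|^2\bigr)$, whose second part integrates to at most $C\ep_{\text{st}}\|\psi\|_\infty^2$;
\item $\Lambda_0|w|\,|\nabla(u-v)|(|\nabla u|+|\nabla v|)$, handled by Young's inequality.
\end{itemize}
Inserting the Hardy-type bound above, with $\ep_{\text{st}}$ small and $\int|\nabla w|^2\le 2\int|\nabla(u-v)|^2+2\int|\nabla\psi|^2$, gives for a.e.\ $t$
\[
\tfrac12X'(t)+\tfrac14\int_{\bB}|\nabla(u-v)|^2\le \bigl(\tfrac12+C\ep_{\text{st}}\bigr)\|\nabla\psi\|_2^2+C\ep_{\text{st}}\|\psi\|_\infty^2+C\ep_{\text{st}}\,t^{-1}X(t).
\]

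\textbf{Weighted integration for \eqref{eq:stability-estimate-W12}.} Multiply by $t^{-1/2}$ and integrate over $(0,T)$. Since $X(0)=0$, integration by parts (with a small cutoff near $0$) gives
\[
\int_0^T t^{-1/2}X'\,dt = T^{-1/2}X(T)+\tfrac12\int_0^T t^{-3/2}X\,dt .
\]
The positive term $\tfrac14\int_0^T t^{-3/2}X$ absorbs $C\ep_{\text{st}}\int_0^T t^{-3/2}X$ once $\ep_{\text{st}}$ is small. Using $\int_0^T t^{-1/2}\,dt=2T^{1/2}$, I obtain
\[
T^{-1/2}X(T)+\int_0^T\!\!\int_{\bB}t^{-1/2}|\nabla(u-v)|^2\le 10\,T^{1/2}\bigl(\|\psi\|_\infty^2+\|\nabla\psi\|_2^2\bigr).
\]
The constant $10$ should come out after tracking the factors $\tfrac14$ and $\tfrac12+C\ep_{\text{st}}$, possibly by tightening the Young's inequality choices. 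This is \eqref{eq:stability-estimate-W12}.

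\textbf{Sup estimate \eqref{eq:stability-estimate-C0}.} Running the same weighted argument on $(0,t)$ for each $t\le T$ gives $X(t)\le C\,t\,\bigl(\|\psi\|_\infty^2+\|\nabla\psi\|_2^2\bigr)$. Then
\[
\int_{\bB}|u-v|^2\le 2X(t)+2\pi\|\psi\|_\infty^2\le C(1+T)\bigl(\|u_0-v_0\|_\infty^2+\|\nabla u_0-\nabla v_0\|_2^2\bigr)
\]
for a.e.\ $t$, which is \eqref{eq:stability-estimate-C0}.
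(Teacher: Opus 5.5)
Your proposal is correct and follows the paper's proof essentially step for step: your $w=u-v-\psi$ and $X(t)$ are the paper's $w-w_0$ and $\int_{\bB}|w-w_0|^2$, and the $t^{-1/2}$-weighted energy identity, the favorable $\frac14\int t^{-3/2}X$ term that absorbs the $t^{-1}$ part of the gradient bound via Hardy, and the subinterval argument for \eqref{eq:stability-estimate-C0} are exactly what the paper does. The one point to tighten is the endpoint $t\to0$: $X(0)=0$ alone does not give $\delta^{-1/2}X(\delta)\to0$, but $X(\delta)\le\delta\int_0^\delta\int_{\bB}|w_t|^2$ does, and this is what the paper takes from \cite{LuWang2012}, (3.9)--(3.10).
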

\begin{proof}
We require first that $\ep_{\text{st}}$ be less than the constant $\ep_{\reg}$ given by Lemma~\ref{lemm:weak-solution-regular}, so that, fixing a particular $q > 4$ and letting $\alpha = 1 - \frac{4}{q}$, we have 
\[
u, v \in C_{\loc}^{2 + \alpha, 1 + \frac{\alpha}{2}}(\bB \times (0, T)).
\]
It follows that the energy bound~\eqref{eq:stability-small-energy-assumption} holds for \emph{all} $t \in (0, T)$, so by Corollary~\ref{coro:grad-estimate-scaled}, provided $\ep_{\text{st}}$ is below a threshold depending only on $\Lambda_0$, we have for all \( (x, t) \in \bB \times (0, T) \) that
\begin{equation}\label{eq:gradient-estimate-for-stability}
|\nabla u(x, t)|^2 + |\nabla v(x, t)|^2 \leq C_{\Lambda_0}\cdot \ep_{\text{st}} \left( t^{-1} + (1 - |x|)^{-2} \right).
\end{equation}
The gradient estimate has the following implication when combined with the Hardy inequality~\eqref{eq:Hardy}. Given  a sequence $(\zeta_n)$ in $C^{1}_{c}(\bB; \RR^3)$ that converges strongly in $W^{1, 2}$ on $\bB$ to some $\zeta \in W^{1, 2}_0(\bB; \RR^3)$, we have by~\eqref{eq:Hardy} and~\eqref{eq:gradient-estimate-for-stability} that
\begin{align*}
\int_{\bB}|\nabla u(\cdot, t)|^2 |\zeta_n - \zeta|^2\, dx \leq\ & C_{\Lambda_0}\cdot\ep_{\text{st}}\int_{\bB}\frac{|\zeta_n - \zeta|^2}{t} + \frac{|\zeta_n - \zeta|^2}{(1 - |x|)^2} \, dx\\
\leq\ & C_{\Lambda_0}\cdot \ep_{\text{st}} \big( t^{-1} \|\zeta_n - \zeta\|_{2;\bB}^2 + \|\nabla \zeta_n - \nabla\zeta\|_{2; \bB}^2\big).
\end{align*}
It is then fairly straightforward to see that, for any of the almost every $t \in (0, T)$ at which the conclusion of Remark~\ref{rmk:weak-solution} holds, we can actually use in~\eqref{eq:weak-solution-slices} test functions that belong to $W^{1, 2}_0(\bB; \RR^3)$.

Now, following~\cite[Lemma 3.2]{LuWang2012}, we define $w := u - v$ and $w_0: = u_0  - v_0$. By the remarks before Definition~\ref{defi:weak-solution}, we see that $t \mapsto \int_{\bB \times \{t\}} |w - w_0|^2\, dx$ is absolutely continuous on $[0, T]$ up to redefinition on a measure-zero set, and that
\[
\frac{d}{dt}\int_{\bB \times \{t\}} |w - w_0|^2\, dx = 2\int_{\bB \times \{t\}} w_{t} \cdot (w - w_0)\, dx,\quad\text{for a.e. }t \in [0, T].
\]
Using also item (ii) in Definition~\ref{defi:weak-solution}, we infer that 
\[
\lim_{t \to 0}\int_{\bB \times \{t\}} |w - w_0|^2 = 0.
\]
As a result, the statements~\cite[page 5274, (3.9) and (3.10)]{LuWang2012} concerning the function $t \mapsto \int_{\bB \times \{t\}} |w - w_0|^2\, dx$ remain valid in our context, since the argument involves only the properties we just discussed, and makes no use of the differential equation satisfied by $u$ and $v$. Separately, we note for later use that, for a.e. $t \in [0, T]$,
\begin{equation}\label{eq:w-slice}
\int_{\bB \times \{t\}} |w|^2 \, dx \leq 2\int_{\bB \times \{t\}} |w - w_0|^2\, dx + 2\pi \cdot\|w_0\|_{\infty}^2.
\end{equation}

Moving towards the actual proof of the asserted estimates, from item (i) in Definition~\ref{defi:weak-solution}, we see that $w(\cdot, t) - w_0 \in W^{1, 2}_0(\bB; \RR^3)$ for a.e. $t \in (0, T)$. Using this as a test function in~\eqref{eq:weak-solution-slices}, permitted by the observation made after~\eqref{eq:gradient-estimate-for-stability}, we obtain after some rearrangements that, for a.e. $t \in (0, T)$,
\begin{align}\label{eq:stability-estimate-break-down}
\int_{\bB \times \{t\}} |\nabla w|^2 dx =\ & -\int_{\bB \times \{t\}} w_t\cdot( w - w_0)\, dx  \nonumber\\
&-2 \int_{\bB \times \{t\}}\big( H(u)\,u_{x^1} \times u_{x^2} - H(v)\,v_{x^1} \times v_{x^2}\big)\cdot (w - w_0)\,  dx  \nonumber\\
& + \int_{\bB \times \{t\}}\bangle{\nabla w, \nabla w_0}\, dx  \nonumber\\
=:\ & I_{1}(t) + I_{2}(t) + I_{3}(t).
\end{align}
Given $\delta \in (0, \frac{1}{4}\min\{1, T\})$, as in the proof of \cite[Lemma 3.2]{LuWang2012}, we integrate by parts to get
\begin{equation}\label{eq:estimate-I}
\begin{split}
\int_{\delta}^{T} t^{-\frac{1}{2}} I_1(t)\, dt =\ & \delta^{-\frac{1}{2}} \int_{\bB \times \{\delta\}} \frac{|w - w_0|^2}{2} dx -T^{-\frac{1}{2}} \int_{\bB \times \{T\}} \frac{|w - w_0|^2}{2} dx\\
&- \frac{1}{4}\int_{\delta}^{T}\int_{\bB}|w - w_0|^2 t^{-\frac{3}{2}} dx dt.
\end{split}
\end{equation}
Coming to $I_{2}(t)$, we start with the pointwise bound
\begin{align*}
& 2 \big| \big(H(u)\,u_{x^1} \times u_{x^2} - H(v)\,v_{x^1} \times v_{x^2}\big) \cdot (w - w_0)\big|\\
\leq\ & 2|w - w_0|\cdot \big(\Lambda_1 |w||\nabla u|^2+ \Lambda_0 |\nabla w|(|\nabla u| + |\nabla v|)\big)\\
\leq\ & 2\Lambda_1 |w - w_0|^2 |\nabla u|^2 + 2\Lambda_1|w - w_0| |w_0| |\nabla u|^2\\
& + 2\Lambda_0 |\nabla w||w - w_0|(|\nabla u| + |\nabla v|).
\end{align*}
By Young's inequality and the gradient estimate~\eqref{eq:gradient-estimate-for-stability}, we get
\begin{align}
&2 \big|\big(H(u)\,u_{x^1} \times u_{x^2} - H(v)\,v_{x^1} \times v_{x^2}\big) \cdot (w - w_0)\big|\cdot t^{-\frac{1}{2}}\nonumber\\
\leq\ & \frac{1}{8}|\nabla w|^2 t^{-\frac{1}{2}} + \Lambda_1 |w_0|^2 |\nabla u|^2 t^{-\frac{1}{2}} + (8\Lambda_0^2 + 3\Lambda_1)|w - w_0|^2 (|\nabla u| + |\nabla v|)^2 t^{-\frac{1}{2}} \label{eq:stability-II-integrand}\\
\leq\ & \frac{1}{8}|\nabla w|^2t^{-\frac{1}{2}} + \Lambda_1 |w_0|^2 |\nabla u|^2t^{-\frac{1}{2}} + C_{\Lambda_0, \Lambda_1}\cdot\ep_{\text{st}}\big( |w - w_0|^2 t^{-\frac{3}{2}} + \frac{|w - w_0|^2}{(1 - |x|)^2}t^{-\frac{1}{2}} \big).\nonumber
\end{align}
For the second term on the last line, we notice from~\eqref{eq:stability-small-energy-assumption} that
\[
\int_{\delta}^{T}\int_{\bB} |w_0|^2 |\nabla u|^2 t^{-\frac{1}{2}} \, dxdt \leq \|w_0\|_{\infty}^2 \cdot 2\ep_{\text{st}} \cdot 2T^{\frac{1}{2}}.
\]
On the other hand, since $w(\cdot, t) - w_0 \in W^{1, 2}_0(\bB; \RR^3)$ for almost every $t \in (0, T)$, we have by~\eqref{eq:Hardy} that
\begin{align*}
\int_{\delta}^{T}\int_{\bB}\frac{|w - w_0|^2}{(1 - |x|)^2}\, t^{-\frac{1}{2}} \, dxdt \leq \ & C\int_{\delta}^{T} \int_{\bB} |\nabla w - \nabla w_0|^2\,t^{-\frac{1}{2}}\, dxdt\\
\leq\ & C\int_{\delta}^{T} \int_{\bB} |\nabla w|^2\,t^{-\frac{1}{2}}\, dxdt + C T^{\frac{1}{2}}\|\nabla w_0\|_{2}^2.
\end{align*}
Combining the two previous estimates with~\eqref{eq:stability-II-integrand} yields
\begin{align}
\int_{\delta}^{T} t^{-\frac{1}{2}} |I_{2}(t)|\, dt \leq\ & \big( \frac{1}{8} + C \ep_{\text{st}} \big)  \int_{\delta}^{T}\int_{\bB} |\nabla w|^2 t^{-\frac{1}{2}}\, dxdt + C \ep_{\text{st}} \cdot T^{\frac{1}{2}} \|\nabla w_0\|_{2}^2 \nonumber\\
& + C\ep_{\text{st}}\int_{\delta}^{T}\int_{\bB} |w - w_0|^2 \, t^{-\frac{3}{2}}\, dxdt + C \ep_{\text{st}}\cdot T^{\frac{1}{2}} \|w_0\|_{\infty}^2, \label{eq:estimate-II}
\end{align}
where the constants $C$ depend only on $\Lambda_0$ and $\Lambda_1$. For $I_3(t)$, we simply use Young's inequality to get 
\begin{equation}\label{eq:estimate-III}
\begin{split}
\int_{\delta}^{T} t^{-\frac{1}{2}}|I_3(t)|\, dt \leq\ & \int_{\delta}^{T}\int_{\bB} t^{-\frac{1}{2}} |\nabla w||\nabla w_0|\, dxdt\\
\leq\ & \frac{1}{8}\int_{\delta}^{T}\int_{\bB}t^{-\frac{1}{2}} |\nabla w|^2\, dxdt + 4T^{\frac{1}{2}} \|\nabla w_0\|_{2}^2.
\end{split}
\end{equation}
By~\eqref{eq:estimate-I},~\eqref{eq:estimate-II}, and~\eqref{eq:estimate-III}, upon multiplying~\eqref{eq:stability-estimate-break-down} by $t^{-\frac{1}{2}}$ and integrating over $[\delta, T]$, we obtain
\[
\begin{split}
&\big( \frac{3}{4} - C\cdot \ep_{\text{st}} \big)\int_{\delta}^{T}\int_{\bB} |\nabla w|^2 t^{-\frac{1}{2}} dx dt + \frac{1}{2\sqrt{T}}\int_{\bB \times \{T\}}|w - w_0|^2 dx\\
\leq\ &  \frac{1}{2\sqrt{\delta}}\int_{\bB \times \{\delta\}}|w - w_0|^2 dx - \big( \frac{1}{4} - C\ep_{\text{st}}\big) \int_{\delta}^{T}\int_{\bB} |w - w_0|^2 t^{-\frac{3}{2}} dx dt\\
& + (4 + C\ep_{\text{st}}) \sqrt{T} \cdot \big(\|w_0\|_{\infty}^2 + \|\nabla w_0\|_{2}^2 \big),
\end{split}
\]
where the constants $C$ again all depend only on $\Lambda_0$ and $\Lambda_1$. Thus, provided that $\ep_{\text{st}}$ is sufficiently small, upon letting $\delta \to 0$ and using~\cite[page 5274, (3.10)]{LuWang2012}, we obtain
\[
\begin{split}
&T^{\frac{1}{2}}\int_{0}^{T}\int_{\bB} |\nabla w|^2 t^{-\frac{1}{2}} dx dt + \int_{\bB \times \{T\}}|w - w_0|^2 dx \leq  10 T \cdot \big(\|w_0\|_{\infty}^2 + \|\nabla w_0\|_{2}^2 \big),
\end{split}
\]
which gives~\eqref{eq:stability-estimate-W12}. Applying the above estimate to sub-intervals $[0, \tau] \subset [0, T]$, and recalling~\eqref{eq:w-slice}, we get~\eqref{eq:stability-estimate-C0}.
\end{proof}

We finally come to the proof of Theorem~\ref{thm:uniformity}, restated as the following result.
\begin{thm}[Theorem~\ref{thm:uniformity} restated]
\label{thm:uniformity-restated}
Given $\Lambda_0, \Lambda_1> 0$, let $\ep_{3}$ and $\ep_{\text{st}}$ be the thresholds given by Propositions~\ref{prop:flow-convexity} and~\ref{prop:stability}, respectively, and let $H(\cdot)$ be as in the latter result. Suppose $u_0 \in C^0(\overline{\bB}; \RR^3) \cap W^{1, 2}(\bB; \RR^{3})$ is such that
\begin{equation}\label{eq:uniformity-small-energy}
D(u_0) < \min\{\ep_3, \frac{\ep_{\text{st}}}{4}\}.
\end{equation}
Then there exists a unique weak solution in $\cap_{T>0} W^{1,2} (\bB \times (0,T); \RR^3)$ to the Cauchy-Dirichlet problem~\eqref{eq:H-flow} subject to the requirement that
\begin{equation}\label{eq:small-energy-main-thm}
D(u(\cdot, t))\leq \min\{2\ep_3, \frac{\ep_{\text{st}}}{2}\},\quad\text{for a.e. }t > 0.
\end{equation}
This solution in fact belongs to $C^{2 + \mu, 1 + \frac{\mu}{2}}_{\loc}(\bB \times (0, \infty) ; \RR^3)$ for all $\mu \in (0, 1)$. Moreover, the following additional properties hold.
\vskip 1mm
\begin{enumerate}
\item[(a)] The map $t \mapsto u(\cdot, t)$ is continuous from $(0, \infty)$ to $W^{1, 2}(\bB; \RR^3)$, and we have $u(\cdot, t) - u_0 \in W^{1, 2}_0(\bB; \RR^3)$ for \emph{all} $t > 0$.
\vskip 1mm
\item[(b)] $u$ extends continuously to $\overline{\bB} \times (0, \infty)$.
\vskip 1mm
\item[(c)] There holds for all $t \geq s \geq 1$ that
\begin{equation}\label{eq:convexity-main-thm}
\frac{1}{8}\int_{\bB}|\nabla u(\cdot, s) - \nabla u(\cdot, t)|^2\, dx \leq E_H(u(\cdot, s), u_0) - E_H (u(\cdot, t), u_0)\,.
\end{equation}
\vskip 1mm
\item[(d)] There exists $v\in \big(u_0 + W_0^{1, 2}(\bB; \RR^3)\big) \cap C^{2}_{\loc} (\bB; \RR^3)$ such that, as $t \to \infty$, we have
\[
u(\cdot, t) \to v\quad\text{in }C^{2}_{\loc}(\bB), \text{ and strongly in }W^{1, 2}(\bB).
\]
The map $v$ satisfies the energy bound $D(v) \leq 2D(u_0)$, and solves the equation
\begin{equation}\label{eq:uniformity-limit-PDE}
\Delta v = 2H(v)\,v_{x^1} \times v_{x^2} \quad \text{in }\bB.
\end{equation}
In addition, $v$ extends continuously to $\overline{\bB}$, and the convergence $u(\cdot, t) \to v$ holds also in $C^0(\overline{\bB}; \RR^3)$.
\end{enumerate}
\end{thm}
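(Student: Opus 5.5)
Uniqueness is the quickest part. Any two weak solutions satisfying \eqref{eq:small-energy-main-thm} have $D(u(\cdot,t))+D(\tilde u(\cdot,t))\le \ep_{\text{st}}$ for a.e. $t$. Proposition~\ref{prop:stability} with $u_0=v_0$ then gives $u=\tilde u$ on every $\bB\times(0,T)$.

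For existence I approximate $u_0$. I pick $u_{0,n}\in C^{2,\alpha}(\overline{\bB};\RR^3)$ with $u_{0,n}\to u_0$ both uniformly on $\overline{\bB}$ and in $W^{1,2}$, for instance by harmonic extension of a smoothed boundary trace plus a mollified interior correction. Then $D(u_{0,n})<\min\{\ep_3,\ep_{\text{st}}/4\}$ for large $n$. Proposition~\ref{prop:existence-flow} gives global regular solutions $u_n$. Lemma~\ref{lemm:energy-bound-from-isoperimetric} gives $D(u_n(\cdot,t))+\int_0^t\int_{\bB}|\partial_tu_n|^2\le 2D(u_{0,n})$, so in particular $D(u_n(\cdot,t))\le \ep_{\text{st}}/2$. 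The estimate \eqref{eq:existence-flow-sup-estimate} controls $\|u_n\|_{L^2}$. Hence $(u_n)$ is bounded in $W^{1,2}(\bB\times(0,T))$ for every $T$. Applying Proposition~\ref{prop:stability} to the pair $u_n,u_m$, the estimates \eqref{eq:stability-estimate-W12}--\eqref{eq:stability-estimate-C0} show that $(u_n)$ is Cauchy in $L^\infty_tL^2_x$ and in $L^2(t^{-1/2}dt;W^{1,2})$.

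I then pass to the limit. On compact subsets of $\bB\times(0,\infty)$, the small-energy interior estimates from the proof of Lemma~\ref{lemm:weak-solution-regular} (equivalently Corollary~\ref{coro:grad-estimate-scaled} and the a priori estimates of Appendix~\ref{sec:further-a-priori}) bound $u_n$ in $C^{2+\mu,1+\mu/2}_{\loc}$ uniformly in $n$. So $u_n\to u$ in $C^2_{\loc}$, weakly in $W^{1,2}(\bB\times(0,T))$, and strongly in $L^2_tW^{1,2}_x$ locally in time away from $0$. Each of Definition~\ref{defi:weak-solution}(i)--(iii) survives this limit, with (ii) coming from \eqref{eq:stability-estimate-C0} and the uniform convergence of the data. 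The energy bound passes by lower semicontinuity, which gives \eqref{eq:small-energy-main-thm}, and Lemma~\ref{lemm:weak-solution-regular} gives the interior regularity. For (a), \eqref{eq:stability-estimate-W12} applied on time intervals $[t_1,t_2]$ with the time origin shifted (legitimate since $u_n(\cdot,t_1)$ is $C^0\cap W^{1,2}$ and small), together with the gradient-estimate and Hardy arguments of Proposition~\ref{prop:flow-convexity}, yields continuity into $W^{1,2}$. More simply, the convexity estimate \eqref{eq:convexity-main-thm} applied to $u_n$ controls $\|\nabla u_n(s)-\nabla u_n(t)\|_2^2$ by an energy difference that is small when $|t-s|$ is small. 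For (c), I pass \eqref{eq:parabolic-convexity} to the limit. This requires $u_n(\cdot,t)\to u(\cdot,t)$ strongly in $W^{1,2}$ for each $t\ge1$, which follows from the Cauchy property combined with the convexity estimate, and $E_H$ continuity, which follows from Lemma~\ref{lemm:volume-convergence} with the uniform sup bound \eqref{eq:existence-flow-sup-estimate}. Part (b) comes from a Qing-type boundary argument uniform in $n$, together with the equicontinuity provided by \eqref{eq:stability-estimate-C0} and the uniform convergence of the data.

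For (d), $t\mapsto E_H(u(t),u_0)$ is nonincreasing. It is also bounded below by $-\frac18\cdot 2\ep$ through the isoperimetric bound of Lemma~\ref{lemm:energy-bound-from-isoperimetric}, so it converges. By \eqref{eq:convexity-main-thm}, $\nabla u(t)$ is then Cauchy in $L^2$, and with the fixed boundary values and Poincar\'e this gives strong $W^{1,2}$ convergence to some $v$. Since $\int_1^\infty\int|u_t|^2<\infty$, Lemma~\ref{lemm:speed-monotone}(b) gives $\|u_t(t)\|_2\to0$. Interior estimates then upgrade the convergence to $C^2_{\loc}$ and show that $v$ solves \eqref{eq:uniformity-limit-PDE}, and $D(v)\le2D(u_0)$ follows. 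The main obstacle I expect is the uniform $C^0(\overline{\bB})$ convergence. My plan there is to adapt Qing's boundary argument: use the interior gradient bound $|\nabla u(x,t)|^2\le C\ep(1-|x|)^{-2}$ for $t\ge1$, the continuity of $u_0$ on $\partial\bB$, and small energy on boundary half-annuli, giving an oscillation estimate near $\partial\bB$ that is uniform in $t\ge1$. Combined with the interior $C^2_{\loc}$ convergence, this yields uniform convergence, and it also shows $v\in C^0(\overline{\bB})$.
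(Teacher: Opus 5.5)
Your overall architecture matches the paper's: uniqueness from Proposition~\ref{prop:stability}, smooth approximation, the Cauchy property from the stability estimates, interior estimates, passing the convexity estimate to the limit via Lemma~\ref{lemm:volume-convergence}, the isoperimetric lower bound plus Poincar\'e for (d), and a Qing-type argument at $\partial\bB$. The gap is in how you get convergence and continuity of \emph{individual time slices} in $W^{1,2}$, on which (a), (b) and (c) all rest.

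Your ``simpler'' route through the convexity estimate cannot reach $t\in(0,1)$. The estimate \eqref{eq:parabolic-convexity} holds only for $t\ge s\ge1$: its proof uses $s\ge1$ in \eqref{eq:t-s-geq-1-used} to get $|\nabla u(\cdot,s)|\le C\sqrt{\ep}\,(1-|x|)^{-1}$, and near $t=0$ the gradient bound degenerates like $t^{-1/2}$. Even for $t\ge1$, you need $\int_s^t\int_{\bB}|\partial_t u_n|^2$ to be small as $t-s\to0$ \emph{uniformly in $n$}, and you should justify that with Lemma~\ref{lemm:speed-monotone}(b). Your other route, the time-shifted stability estimate, controls only $L^\infty_tL^2_x$ and weighted $L^2_tW^{1,2}_x$ norms. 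It never controls a single slice in $W^{1,2}$.

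The missing ingredient is Lemma~\ref{lemm:speed-monotone}(a): for every $\tau\in(0,1)$, uniformly in $n$,
\[
\int_\tau^\infty\int_{\bB}|\nabla\partial_t u_n|^2\le 8\tau^{-2}D(u_{0,n}).
\]
Apply the interpolation inequality behind \eqref{eq:C0-L2-estimate} to $f=\nabla(u_n-u_m)$ on $[\tau,T]$:
\[
\sup_{\tau\le t\le T}\|f(\cdot,t)\|_{2;\bB}^2\le (T-\tau)^{-1}\|f\|_{2;\bB\times[\tau,T]}^2+2\|f\|_{2;\bB\times[\tau,T]}\,\|f_t\|_{2;\bB\times[\tau,T]}\longrightarrow 0.
\]
This gives slice convergence uniformly on every $[\tau,T]$. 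From it you get (a) on all of $(0,\infty)$ and, directly, the slice and $E_H$-convergence needed for (c). You also get $W^{1,2}$-precompactness of $\{u_n(\cdot,t)\colon n\in\NN,\ t\in[a,b]\}$ for all $0<a<b$.

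That precompactness is exactly what your sketch of (b) lacks. The estimate \eqref{eq:stability-estimate-C0} is an $L^2$ bound and gives no pointwise equicontinuity. Lemma~\ref{lemm:boundary-continuity} needs, besides precompact traces, two things: $W^{1,2}$-precompactness of the slices, and the \emph{uniform vanishing} of $\sup_{\bB\setminus\bB_r}(1-|x|)|\nabla u_n(x,t)|$ as $r\to1$. The global gradient estimate only gives $(1-|x|)|\nabla u_n|\le C\sqrt{\ep}$, which does not tend to zero.

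The paper obtains the vanishing from uniform smallness of the energy in a thin boundary strip, which is a consequence of precompactness. It applies Corollary~\ref{coro:grad-estimate-scaled} on the cylinders $\bB_r(x_0)\times(t_0-r^2,t_0]$ with $r=(1-|x_0|)/2$. This gives equicontinuity at $\partial\bB$, uniform in $n$ and $t\in[t_1,t_2]$. Combined with locally uniform interior convergence, it makes $(u_n)$ uniformly Cauchy on $\overline{\bB}\times[t_1,t_2]$.

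Your plan for the $C^0(\overline{\bB})$ convergence in (d) is essentially the paper's. It works once you note that $\{u(\cdot,t)\colon t\ge2\}$ is precompact in $W^{1,2}$, by continuity in $t$ plus convergence as $t\to\infty$. That is what makes the energy on boundary half-annuli small uniformly in $t$.
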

\begin{proof}
Uniqueness follows from Proposition~\ref{prop:stability}. For existence, let $(u_0^{m})$ be a sequence of maps in $C^{\infty}(\overline{\bB}; \RR^3)$ such that 
\begin{equation}\label{eq:approximate-C-D-data}
\|\nabla u_0^{m} - \nabla u_0\|_{2; \bB} + \sup_{\bB}|u_0^{m} - u_0| \to 0 \quad \text{as }m \to \infty.
\end{equation}
Dropping finitely many initial terms if necessary, we can also assume that
\begin{equation}\label{eq:u0m-small-energy}
D(u_0^{m}) < \min\{\ep_3, \frac{\ep_{\text{st}}}{4}\}, \quad \text{for all }m.
\end{equation}
Let $\alpha$ be the exponent fixed at the start of Section~\ref{sec:regular-solutions}. By Proposition~\ref{prop:existence-flow}, for each $m$, there exists a regular solution \( u^m(x,t) \in C^{2+\alpha, 1+\frac{\alpha}{2}} (\overline{\bB}\times [0,\infty); \RR^3) \) to the Cauchy–Dirichlet problem~\eqref{eq:H-flow} with $u^{m}_{0}$ in place of $u_0$. The results of Section~\ref{sec:regular-solutions} then yield the following estimates:
\vskip 1mm
\begin{enumerate}
\item[(e1)] By Lemma~\ref{lemm:energy-bound-from-isoperimetric} and~\eqref{eq:u0m-small-energy}, we have for all $m \in \NN$ that
\begin{subequations}
\begin{equation}\label{eq:approximation-energy-bound}
\sup_{t > 0}D(u^m(\cdot, t)) \leq 2D(u_0^m) < \min\{2\ep_{3}, \frac{\ep_{\text{st}}}{2}\},
\end{equation}
\begin{equation}\label{eq:approximation-ut-bound}
\int_{0}^{\infty}\int_{\bB}|u^{m}_{t}|^2 \, dxdt  \leq 2D(u_0^m) < \min\{2\ep_{3}, \frac{\ep_{\text{st}}}{2}\}.
\end{equation}
\end{subequations}
\vskip 1mm
\noindent In addition, there holds for all $t > 0$ that
\begin{equation}\label{eq:approximation-isoperimetric}
\big|V_H(u^m(\cdot, t), u_0^m)\big| \leq \frac{1}{8}\big( D(u^m(\cdot, t)) + D(u_0^m) \big).
\end{equation}
\vskip 2mm
\item[(e2)] Thanks to~\eqref{eq:approximation-energy-bound},  given $k, l \in \NN$, the small energy assumption~\eqref{eq:stability-small-energy-assumption} is fulfilled with $u, v$ replaced by $u^k$ and $u^{l}$, and we deduce from Proposition~\ref{prop:stability} and~\eqref{eq:approximate-C-D-data} that, for all $T \in (0, \infty)$, there holds
\begin{equation}\label{eq:approximation-convergence-in-W12}
\int_{0}^{T}\int_{\bB} |\nabla u^{k} - \nabla u^{l}|^2 \, dxdt + \sup_{0 \leq t \leq T} \int_{\bB}|u^{k}(\cdot, t) - u^{l}(\cdot, t)|^2\, dx \rightarrow 0,
\end{equation}
as $k, l \to \infty$.
\vskip 2mm
\item[(e3)] By Lemma~\ref{lemm:speed-monotone}(a), for all $m \in \NN$ and $\tau \in (0, 1)$, the distributional derivative $\nabla u^m_{t}$ belongs to $L^2(\bB \times (\tau, \infty))$, and satisfies by~\eqref{eq:nabla-u-t-L2} and~\eqref{eq:approximation-ut-bound} the estimate
\begin{equation}\label{eq:nabla-um-t-L2}
\int_{\tau}^{\infty}\int_{\bB} |\nabla u_{t}^{m}|^2\, dxdt \leq \frac{4}{\tau^2}\int_{\frac{\tau}{2}}^{\tau}\int_{\bB}|u^{m}_{t}|^2\, dxdt \leq \frac{8}{\tau^2} D(u^{m}_{0}).
\end{equation}
Thus, given $k,l \in \NN$ and $T > 1 > \tau > 0$, arguing as in the remarks leading up to~\eqref{eq:C0-L2-estimate} before Definition~\ref{defi:weak-solution}, we see that the function $t \mapsto \int_{\bB \times \{t\}} |\nabla u^{k} - \nabla u^{l}|^2\, dx$, continuous to begin with, is absolutely continuous on $[\tau, T]$, and there holds 
\[
\begin{split}
\sup_{\tau \leq t \leq T}\|\nabla u^{k}(\cdot, t) - \nabla u^{l}(\cdot, t)\|_{2; \bB}^2 \leq\ & (T - \tau)^{-1}\cdot\|\nabla u^{k} - \nabla u^{l}\|_{2; \bB \times [\tau, T]}^2 \\
& + 2\|\nabla u^{k} - \nabla u^{l}\|_{2; \bB \times [\tau, T]}\cdot \|\nabla u^{k}_t - \nabla u^{l}_t\|_{2; \bB \times [\tau, T]}.
\end{split}
\]
Using the first part of~\eqref{eq:approximation-convergence-in-W12} along with~\eqref{eq:nabla-um-t-L2}, we deduce
\begin{equation}\label{eq:approximation-convergence-in-W12-slice}
\sup_{\tau \leq t \leq T}\int_{\bB} |\nabla u^{k}(\cdot, t) - \nabla u^{l}(\cdot, t)|^2\, dx \to 0,\quad\text{as }k, l \to \infty.
\end{equation}
\vskip 2mm
\item[(e4)] By the estimates~\eqref{eq:existence-flow-grad-estimate} and~\eqref{eq:existence-flow-sup-estimate} in Proposition~\ref{prop:existence-flow}, and the uniform upper bounds on $D(u_0^m)$ and $\sup_{\bB}|u_0^m|$ resulting from~\eqref{eq:approximate-C-D-data}, along with the interior $C^{2 + \alpha, 1 + \frac{\alpha}{2}}$-estimate in Lemma~\ref{lemm:gradient-holder-interior}, we see that for all $\delta > 0$, there exists $C> 0$, depending only on $\delta$, $\Lambda_0$, $\Lambda_1$, $D(u_0)$, and $\sup_{\bB}|u_0|$, such that
\begin{equation}\label{eq:approximation-C2a-estimate}
\sup_{m \in \NN}|u^{m}|_{2 + \alpha, 1 + \frac{\alpha}{2};\, \bB_{1 - \delta} \times (\delta^2, \infty)} \leq C.
\end{equation}
\end{enumerate}

By~\eqref{eq:approximation-C2a-estimate}, there exists a limiting map $u \in C^{2 + \alpha, 1 + \frac{\alpha}{2}}_{\loc}(\bB \times (0, \infty); \RR^3)$, along with a subsequence of $(u^{m})$, which we do not relabel, such that 
\begin{equation}\label{eq:local-C21-convergence}
\begin{array}{l}
\nabla^i u^{m} \to \nabla^i u\ (i = 0, 1, 2) \\[2pt]
(u^{m})_{t} \to u_{t}
\end{array},\quad
\text{uniformly locally on }\bB \times (0, \infty).
\end{equation}
The estimates in Proposition~\ref{prop:existence-flow} carry over to the limit, so we have for all $(x, t) \in \bB \times (0, \infty)$ that
\begin{equation}\label{eq:gradient-estimate-passes-to-limit}
|\nabla u(x, t)|^2 \leq  C \max\{(1 - |x|)^{-2}, t^{-1}\} \cdot D(u_0),
\end{equation}
\begin{equation}\label{eq:C0-estimate-passes-to-limit}
|u(x, t)|^2 \leq C\max\{1, t^{-1}\} \cdot (\|u_0\|_{\infty}^2 + D(u_0)).
\end{equation}
Next, using the $L^2$-bounds provided by~\eqref{eq:approximation-ut-bound} and~\eqref{eq:nabla-um-t-L2}, and passing again to a subsequence if needed, we see that $(u^{m}_{t})$ converges weakly in $L^2(\bB \times (0, \infty))$, while $(\nabla u^m_{t})$ does so in $L^2(\bB \times (\tau, \infty))$ for all $\tau \in (0, 1)$. By the local uniform convergence $u^m \to u$ in~\eqref{eq:local-C21-convergence}, the weak limits coincide respectively with $u_{t}$ and $\nabla u_{t}$, which inherit from $(u^m_t)$ and $(\nabla u^m_t)$ the following bounds:
\begin{equation}\label{eq:speed-L2-passes-to-limit}
\int_{0}^{\infty}\int_{\bB}|u_{t}|^2\, dxdt \leq 2D(u_0),
\end{equation}
\begin{equation}\label{eq:nabla-um-t-L2-passes-to-limit}
\int_{\tau}^{\infty}\int_{\bB} |\nabla u_t|^2\, dxdt \leq \frac{8}{\tau^2}D(u_0), \quad\text{for all }\tau \in (0, 1).
\end{equation}
For all $T > \tau > 0$, by~\eqref{eq:approximation-convergence-in-W12} and~\eqref{eq:approximation-convergence-in-W12-slice}, along with~\eqref{eq:local-C21-convergence} and Fatou's lemma, we have further that
\begin{equation}\label{eq:L2-W12-convergence}
\int_{0}^{T}\int_{\bB} |u^{m} -u |^2 + |\nabla u^{m} - \nabla u|^2\, dxdt \longrightarrow 0,   
\end{equation}
\begin{equation}\label{eq:C0-W12-convergence}
\sup_{0 < t \leq T} \|u^{m}(\cdot, t) - u(\cdot, t)\|_{2; \bB}  + \sup_{\tau \leq t \leq T} \|\nabla u^{m}(\cdot, t) - \nabla u(\cdot, t)\|_{2; \bB} \longrightarrow 0.
\end{equation}

\vskip 1em

We now prove that $u$ is a weak solution of~\eqref{eq:H-flow} on $\bB \times (0, T)$ for all $T > 1$, and that it satisfies~\eqref{eq:small-energy-main-thm}. To start, the estimate~\eqref{eq:speed-L2-passes-to-limit}, together with~\eqref{eq:L2-W12-convergence}, implies that $u$ lies in the correct function space for Definition~\ref{defi:weak-solution}, namely $W^{1, 2}(\bB \times (0, T); \RR^3)$. To check item (i) in the definition, we note by~\eqref{eq:approximate-C-D-data} and~\eqref{eq:C0-W12-convergence} that $u^{m}(\cdot, t) - u_{0}^{m} \to u(\cdot, t) - u_0$ strongly in $W^{1, 2}(\bB)$ for all $t > 0$, and hence
\begin{equation}\label{eq:limit-boundary-condition}
u(\cdot, t) - u_0 \in W^{1, 2}_0(\bB)\quad\text{for all }t > 0.
\end{equation}
For item (ii) of Definition~\ref{defi:weak-solution}, we use the first part of~\eqref{eq:C0-W12-convergence}, as well as~\eqref{eq:approximate-C-D-data} and the condition $u^m(\cdot, 0) = u^m_0$, to see that $t \mapsto u(\cdot, t)$ is continuous from $(0, \infty)$ to $L^2(\bB; \RR^3)$, and that
\[
\lim_{t \to 0^+} \|u(\cdot, t) - u_0\|_{2; \bB} = 0.
\]
Finally, given $\zeta \in C^{\infty}_{c}(\bB \times (0, T); \RR^3)$, an integration by parts gives
\begin{equation}\label{eq:um-PDE}
\int_{0}^{T}\int_{\bB} u^m_{t} \cdot \zeta + \bangle{\nabla u^{m}, \nabla \zeta}\, dxdt = -2\int_{0}^{T}\int_{\bB} H(u^m)\, u^m_{x^1} \times u^m_{x^2}\cdot \zeta\, dxdt,
\end{equation}
and we deduce from~\eqref{eq:local-C21-convergence} that~\eqref{eq:H-flow-weak} holds. Thus we have verified that $u$ is a weak solution to~\eqref{eq:H-flow} on each $\bB \times (0, T)$. Furthermore, by~\eqref{eq:C0-W12-convergence} and~\eqref{eq:approximation-energy-bound}, we have for all $t > 0$ that
\begin{equation}\label{eq:energy-bound-passes-to-limit}
D(u(\cdot, t)) = \lim_{m \to \infty}D(u^{m}(\cdot, t)) \leq 2D(u_0) < \min\{2\ep_3, \frac{\ep_{\text{st}}}{2}\},
\end{equation}
which in particular gives~\eqref{eq:small-energy-main-thm}.

We proceed to establish the asserted additional properties of $u$. Recalling that $\ep_{\text{st}} < \ep_{\reg}$, we see from~\eqref{eq:energy-bound-passes-to-limit} and Lemma~\ref{lemm:weak-solution-regular} that $u \in C^{2 + \mu, 1 + \frac{\mu}{2}}_{\loc}(\bB \times (0, \infty))$ for all $\mu \in (0, 1)$. For property (a), the continuity of $t \mapsto u(\cdot, t)$ as a map from $(0, \infty)$ to $W^{1, 2}(\bB; \RR^3)$ follows from~\eqref{eq:C0-W12-convergence}, while the second part of (a) is~\eqref{eq:limit-boundary-condition}. For property (b), we use Lemma~\ref{lemm:boundary-continuity}. First we observe that, for all $0 < a < b < \infty$, the collection
\[
\mathscr{K}(a, b) : = \{u^{m}(\cdot, t)\ |\ m \in \NN,\ t \in [a, b]\}
\]
is a pre-compact subset of $W^{1, 2}(\bB; \RR^3)$ thanks to~\eqref{eq:C0-W12-convergence} and the continuity property we just noted. In particular, given $[t_1, t_2] \subset (0, \infty)$ and $\ep \in (0, \eta')$, with $\eta'$ being the threshold from Corollary~\ref{coro:grad-estimate-scaled}, there exists $ \delta \in (0, \frac{1}{4}\min\{\sqrt{t_1}, 1\})$ such that 
\[
\int_{\bB \setminus \bB_{1 - 2\delta}} |\nabla u^{m}(x, t)|^2\, dx < \ep, \quad\text{for all }m \in \NN,\ t \in [\frac{t_1}{2}, t_2].
\]
For all $t_0 \in [t_1, t_2]$ and $x_0 \in \bB \setminus \bB_{1 - \delta}$, we let $r = (1 - |x_0|)/2$, and define the rescaled maps
\[
\tilde{u}^m(x, t) = u^m(x_0 + rx, t_0 - r^2 + r^2 t),
\]
for $(x, t) \in \bB \times (0, 1]$. From the inclusions $\bB_{r}(x_0) \subset \bB \setminus \bB_{1 - 2\delta}$ and $(t_0 - r^2, t_0] \subset [\frac{t_1}{2}, t_2]$, we have
\[
\sup_{t \in (0, 1]}\int_{\bB}|\nabla\tilde{u}^m(x, t)|^2\, dx  = \sup_{t \in (t_0 - r^2, t_0]}\int_{\bB_{r}(x_0)} |\nabla u^m(x, t)|^2\, dx \leq \ep < \eta',
\]
in which case Corollary~\ref{coro:grad-estimate-scaled} gives
\[
(1 - |x_0|)^2 |\nabla u^m(x_0, t_0)|^2 = 4\cdot|\nabla \tilde{u}^m(0, 1)|^2 \leq C_{\Lambda_0}\cdot \ep.
\]
In other words, we have shown that, for all $m \in \NN$ and $t \in [t_1, t_2]$,
\begin{equation}\label{eq:uniform-Hardy-bound}
\sup_{x \in \bB \setminus \bB_{1 - \delta}} (1 - |x|)^2 |\nabla u^m(x, t)|^2 \leq C_{\Lambda_0}\cdot \ep,
\end{equation}
which implies that the convergence~\eqref{eq:Hardy-bound} takes place uniformly over the collection $\mathscr{K}(t_1, t_2)$. Since, by~\eqref{eq:approximate-C-D-data}, the corresponding set of boundary traces, namely $\{u_0^{m}\}_{m \in \NN}$, is pre-compact in $C^0(\partial \bB ;\RR^3)$, we have verified all the assumptions of Lemma~\ref{lemm:boundary-continuity}, so for all $\mu > 0$, there exists $r > 0$ with the property that
\[
|u^m(x, t) - u_0^m(p)| < \mu,
\]
for all $m\in \NN$, $t \in [t_1, t_2]$, $p \in \partial \bB$, and $x \in \bB \cap \bB_{2r}(p)$. From this and the triangle inequality, we deduce that, for all $k, l \in \NN$,
\[
\sup_{(\bB \setminus \bB_{1-r}) \times [t_1, t_2]} |u^{k} - u^{l}| \leq 2\mu + \sup_{\partial \bB}|u_0^k - u_0^l|.
\]
Each $u^m$ being continuous on $\overline{\bB} \times [t_1, t_2]$, we get further that
\begin{equation}\label{eq:uk-uniformly-Cauchy}
\sup_{\overline{\bB} \times [t_1, t_2]}|u^k - u^{l}| \leq \sup_{\bB_{1 - r} \times [t_1, t_2]}|u^k - u^l| + \big( 2\mu + \sup_{\partial \bB}|u_0^k - u_0^l|\big),
\end{equation}
which together with~\eqref{eq:local-C21-convergence},~\eqref{eq:approximate-C-D-data}, and the arbitrariness of $\mu$ shows that $(u^m)$ converges uniformly on $\overline{\bB} \times [t_1, t_2]$. Again by~\eqref{eq:local-C21-convergence}, the limiting map agrees with $u$ on $\bB \times [t_1, t_2]$. This proves property (b), since $[t_1, t_2] \subset (0, \infty)$ is also arbitrary.

For part (c), we first note that for all  $t > 0$, since $u(\cdot, t)$ lies in $L^{\infty}(\bB; \RR^3)$ by~\eqref{eq:C0-estimate-passes-to-limit}, and satisfies the boundary condition~\eqref{eq:limit-boundary-condition}, the enclosed volume $V_H(u(\cdot, t), u_0)$ indeed makes sense, and hence so does $E_H(u(\cdot, t), u_0)$. By the convergences~\eqref{eq:C0-W12-convergence} and~\eqref{eq:approximate-C-D-data}, along with the $L^{\infty}$-bound on $u^{m}(\cdot, t)$ given by~\eqref{eq:existence-flow-sup-estimate}, we may invoke Lemma~\ref{lemm:volume-convergence}, which together with~\eqref{eq:energy-bound-passes-to-limit} gives
\begin{equation}\label{eq:E-H-functional-passes-to-limit}
\lim_{m \to \infty} E_H(u^{m}(\cdot, t), u_0^{m}) = E_H(u(\cdot, t), u_0),\quad\text{for all }t > 0.
\end{equation}
Since each $u^{m}$ satisfies the convexity estimate in Proposition~\ref{prop:flow-convexity}, we conclude that~\eqref{eq:convexity-main-thm} holds for all $t \geq s \geq 1$, as asserted.

For part (d), by the isoperimetric estimate~\eqref{eq:approximation-isoperimetric}, along with~\eqref{eq:E-H-functional-passes-to-limit} and~\eqref{eq:approximate-C-D-data}, we have
\[
E_H(u(\cdot, t), u_0) \geq -\frac{1}{8}D(u_0),\quad\text{for all }t > 0.
\]
The function $t \mapsto E_{H}(u(\cdot, t), u_0)$ being non-increasing and bounded from below, we see that its limit exists as $t \to \infty$, in which case~\eqref{eq:convexity-main-thm} implies
\begin{equation}\label{eq:Cauchy-in-W12}
\lim_{s, t \to \infty}\int_{\bB}|\nabla u(\cdot, s) - \nabla u(\cdot, t)|^2\, dx = 0.
\end{equation}
Combining this with~\eqref{eq:limit-boundary-condition} and the Poincar\'e inequality, we obtain some $v \in u_0 + W^{1, 2}_0(\bB; \RR^3)$ such that
\begin{equation}\label{eq:flow-W12-limit}
u(\cdot, t) \to v\quad\text{strongly in }W^{1, 2}(\bB; \RR^3),\quad\text{as }t \to \infty.
\end{equation}
Noting that $u$ in fact satisfies the differential equation in~\eqref{eq:H-flow} classically on $\bB \times (0, \infty)$, we get from~\eqref{eq:gradient-estimate-passes-to-limit},~\eqref{eq:C0-estimate-passes-to-limit}, and Lemma~\ref{lemm:gradient-holder-interior} that
\begin{equation}\label{eq:weak-solution-C2a-bound-slices}
\limsup_{t \to \infty}\big( |u(\cdot, t)|_{2, \alpha; \bB_{r}} +|u_{t}(\cdot, t)|_{0, \alpha; \bB_{r}} \big) < \infty,\quad\text{for all }r \in (0, 1).
\end{equation}
It follows that the limit $v$ in~\eqref{eq:flow-W12-limit} lies in $C^{2, \alpha}_{\loc}(\bB;\RR^3)$, and that 
\begin{equation}\label{eq:slices-converge-in-C2}
u(\cdot, t) \to v\quad\text{in }C^{2}_{\loc}(\bB;\RR^3),\quad\text{as }t \to \infty.
\end{equation}
In addition, from~\eqref{eq:speed-L2-passes-to-limit} we obtain a sequence $t_k \to \infty$ such that $u_t(\cdot, t_k) \in L^2(\bB;\RR^3)$ for all $k$, and that 
\[
\int_{\bB}|u_{t}(\cdot, t_k)|^2\, dx \to 0,
\]
which together with~\eqref{eq:weak-solution-C2a-bound-slices} shows that $u_{t}(\cdot, t_k) \to 0$ uniformly locally on $\bB$. Combining this with~\eqref{eq:slices-converge-in-C2}, we conclude that $v$ solves~\eqref{eq:uniformity-limit-PDE}. The bound $D(v) \leq 2D(u_0)$ follows from~\eqref{eq:energy-bound-passes-to-limit} and~\eqref{eq:flow-W12-limit}. 

For the remaining assertions of (d), we again use Lemma~\ref{lemm:boundary-continuity}. By the continuity of $t \mapsto u(\cdot, t)$ from $(0, \infty)$ to $W^{1, 2}(\bB; \RR^3)$, together with the convergence~\eqref{eq:flow-W12-limit}, we have for all $\tau > 0$ that $\{u(\cdot, t)\ |\ t \geq \tau\}$ is a pre-compact subset of $W^{1,2}(\bB)$. Then, following the argument leading up to~\eqref{eq:uniform-Hardy-bound}, we see that~\eqref{eq:Hardy-bound} holds uniformly on $\{u(\cdot, t)\ |\ t \geq 2\}$. Since each member of this collection has trace $u_0$ on $\partial \bB$ by~\eqref{eq:limit-boundary-condition}, we have again shown that Lemma~\ref{lemm:boundary-continuity} is applicable. Thus, for all $\ep > 0$, there is $\delta > 0$ such that
\[
|u(x, t) - u_0(p)| < \ep,\quad\text{for all }t \geq 2,\, p \in \partial \bB,\, x \in \bB \cap \bB_{2\delta}(p).
\]
Repeating the argument leading to~\eqref{eq:uk-uniformly-Cauchy}, we get for all $t \geq s \geq 2$ that
\[
\sup_{\overline{\bB}}|u(\cdot, s) - u(\cdot, t)| \leq \sup_{\bB_{1 - \delta}}|u(\cdot, s) - u(\cdot, t)| + 2\ep.
\]
From this and~\eqref{eq:slices-converge-in-C2}, we conclude that $u(\cdot, t)$ converges uniformly on $\overline{\bB}$ as $t\to \infty$, and the limit agrees with $v$ on $\bB$. This finishes the proof of property (d), and of the theorem.
\end{proof}

\appendix 

\section{Standard estimates assuming bounded gradient}\label{sec:further-a-priori}
For use primarily in the proof of Proposition~\ref{prop:existence-flow} and Theorem~\ref{thm:uniformity-restated}, in this appendix we recall some standard a priori estimates for solutions to the $H$-surface flow~\eqref{eq:H-flow}. Given $(x, t) \in \RR^2 \times \RR$ and $r > 0$, we define
\[
\bP_{r}(x, t) = \bB_{r}(x) \times (t-r^2, t],
\]
and write $\bP_{r}$ for $\bP_{r}(0, 0)$. With $\RR^{2}_{+}$ being the open upper half-plane, we write $\bB_r^+(x)$ and $\bT_{r}(x)$ for the intersections of $\bB_{r}(x)$ with $\RR^2_{+}$ and $\partial \RR^2_{+}$, respectively, and set
\[
\bP_{r}^+(x, t) = \bB_{r}^+(x) \times (t - r^2, t], \quad \bL_{r}(x, t) = \bT_{r}(x) \times (t - r^2, t].
\]
Next, for $R \geq 1$, we define
\[
\Omega_{R} = \bB_R({\rm i}R) \subset \CC.
\]
By a scaling argument, there is a universal constant $\theta_0 > 0$ such that
\begin{equation}\label{eq:measure-lower-bound}
|\bB_{r}(x_0) \cap \Omega_{R}| \geq \theta_0 r^2, \quad\text{for all }x_0 \in \overline{\Omega_R} \text{ and }r \leq \frac{R}{4}.
\end{equation}
Also, for all $\delta \in (0, 1)$, there exists $R_{\delta} > 1$ such that whenever $R \in [R_{\delta}, \infty)$, the conformal map
\begin{equation}\label{eq:flattening-conformal-map}
\begin{array}{cccc}
F_{R}:& \RR^2_+&  \longrightarrow\ & \Omega_{R}\\[2pt]
& z  & \longmapsto\ &  \frac{2{\rm i}R z}{z+ 2{\rm i}R},
\end{array}
\end{equation}
which we often denote simply by $F$, satisfies that
\begin{equation}\label{eq:F-close-to-id}
\|F - \id\|_{C^3(\bB^+)} + \|F^{-1} - \id\|_{C^3(\bB \cap \Omega_{R})} < \delta,
\end{equation}
and that
\begin{equation}\label{eq:F-bilipschitz}
\bB_{(1-\delta)r} \cap \Omega_{R} \subset F(\bB^+_{r}) \subset \bB_{(1 + \delta)r} \cap \Omega_{R}, \text{ for all }r \in (0, 1].
\end{equation}

Given $\alpha \in (0, 1)$ and a continuous function $f$ on a bounded domain of the form $E = U \times (a, b)$ in $\RR^2 \times \RR$, similar to~\cite[pages 7-8]{LSU}, we define
\begin{align*}
[f]^{(x)}_{\alpha; E} :=\ & \sup_{\substack{(x_1, t), (x_2, t) \in E\\x_1 \neq x_2}}\frac{|f(x_1, t) - f(x_2, t)|}{|x_1 - x_2|^{\alpha}},\nonumber\\
[f]^{(t)}_{\alpha; E} :=\ & \sup_{\substack{(x, t_1), (x, t_2) \in E\\t_1 \neq t_2}}\frac{|f(x, t_1) - f(x, t_2)|}{|t_1 - t_2|^{\alpha}},\nonumber\\
[f]_{\alpha, \frac{\alpha}{2}; E} :=\ & [f]^{(x)}_{\alpha; E} + [f]^{(t)}_{\frac{\alpha}{2}; E}.
\end{align*}
The space $C^{2 + \alpha, 1 + \frac{\alpha}{2}}(\overline{E})$ then consists of functions such that $f$, $\nabla f$, $\nabla^2 f$, and $f_{t}$ are continuous on $E$, and that the following norm is finite:
\[
\begin{split}
|f|_{2 + \alpha, 1 + \frac{\alpha}{2}; E} :=\ & \|f\|_{\infty; E} + \|\nabla f\|_{\infty; E} + \|\nabla^2 f\|_{\infty; E} + \|f_{t}\|_{\infty; E}\\
&+ [\nabla f]^{(t)}_{\frac{1 + \alpha}{2}; E} + [\nabla^2 f]_{\alpha, \frac{\alpha}{2}; E} + [f_{t}]_{\alpha, \frac{\alpha}{2}; E},
\end{split}
\]
in which case the functions appearing on the right-hand side all extend continuously to $\overline{E}$.

The lemma below is a Poincar\'e-type inequality adapted from~\cite{Krylov-book}. A proof is included for the reader's convenience.
\begin{lemm}[\cite{Krylov-book}, Chapter 4.2, Lemmas 1 and 2]
\label{lemm:parabolic-Poincare}
Given $x_0 \in \overline{\RR^{2}_{+}}$ and $ r > 0$, let $u$ be a function in $C^{2 + \alpha, 1 + \frac{\alpha}{2}}(\overline{\bP^{+}_{r}(x_0, t_0)})$, and assume, in the case $\bL_{r}(x_0, t_0) \neq \emptyset$, that
\begin{equation}\label{eq:u-t-vanish-on-flat-part}
u_{t} = 0 \quad \text{on }\bL_{r}(x_0, t_0).
\end{equation}
Then we have
\begin{equation}\label{eq:parabolic-Poincare}
\int_{\bP_{r}^+(x_0, t_0)}|\nabla u - (\nabla u)_{\bP_r^+(x_0, t_0)}|^2dx dt \leq Cr^2\int_{\bP_{r}^+(x_0, t_0)} \big(|\nabla^2 u |^2 + |u_t|^2\big)dxdt,
\end{equation}
where $C$ is a universal constant.
\end{lemm}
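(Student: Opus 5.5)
Write $P := \bP_r^+(x_0,t_0)$ and $g := \nabla u$ on $P$. The plan is to reduce the parabolic Poincaré inequality to a spatial Poincaré inequality on each time slice plus a control of the time oscillation of spatial averages, in the spirit of Krylov. By scaling $(x,t)\mapsto (x_0+rx, t_0+r^2 t)$ we may assume $r=1$, and both sides scale the same way. Since $|g-g_P|^2$ integrated over $P$ is minimized by the mean, it suffices to bound $\int_P |g - c|^2$ for any constant vector $c$ of our choosing. Let $\bar g(t)$ be the spatial average of $g(\cdot,t)$ over $\bB_1^+(x_0)$, which is a Lipschitz domain uniformly in $x_0\in\overline{\RR^2_+}$, being the intersection of a unit disk with a half-plane containing its center's projection. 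We split
\[
\int_P|g-c|^2\le 2\int_P|g-\bar g(t)|^2+2\int_P|\bar g(t)-c|^2 .
\]
The first term is bounded by $C\int_P|\nabla^2u|^2$ via the slicewise Poincaré inequality on $\bB_1^+(x_0)$, with a constant uniform in $x_0$ by the geometry noted above.

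The main step is the second term, i.e.\ controlling $\frac{d}{dt}\bar g$. Testing against $\nabla u$ directly is problematic because boundary terms appear. Instead we use a cutoff $\eta\in C_c^\infty(\bB_1(x_0))$ with $\int\eta=1$ (not vanishing on the flat part) and define $\tilde g(t)=\int g(\cdot,t)\eta\,dx$. The slicewise Poincaré inequality also holds with the weighted mean $\tilde g$. Then
\[
\frac{d}{dt}\tilde g=\int \nabla u_t\,\eta\,dx=-\int u_t\nabla\eta\,dx+\int_{\bT_1(x_0)}u_t\,\eta\,\nu\,d\sigma .
\]
The boundary integral vanishes by \eqref{eq:u-t-vanish-on-flat-part}, and the curved part of $\partial\bB_1^+(x_0)$ does not contribute because $\eta$ is compactly supported in $\bB_1(x_0)$. (The regularity $u\in C^{2+\alpha,1+\alpha/2}$ justifies differentiating $\nabla u$ in $t$ in the integrated form, or one can argue by mollification in $t$.) Hence $|\tilde g'(t)|\le C\|u_t(\cdot,t)\|_{L^2(\bB_1^+)}$. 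Choosing $c=\tilde g(t_0)$ and integrating over a time interval of length $1$, we get $\int_{t_0-1}^{t_0}|\tilde g(t)-c|^2dt\le C\int_P|u_t|^2$ by Cauchy--Schwarz. Combining this with the slicewise bound for $g-\tilde g(t)$ gives \eqref{eq:parabolic-Poincare}.

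When $\bL_1(x_0,t_0)=\emptyset$, the disk $\bB_1(x_0)$ lies inside $\RR^2_+$ and no boundary term arises at all. The only delicate points are the uniformity of the weighted Poincaré constant on the family of domains $\bB_1^+(x_0)$, and the requirement that $\eta$ have unit integral over the half-disk with a bounded gradient. Both are handled by taking $\eta$ supported in a fixed disk $\bB_{1/4}(y)\subset\bB_1^+(x_0)$ with $y$ chosen at distance $\ge\tfrac12$ from the boundary. This is possible because some point of $\bB_1^+(x_0)$ always lies at distance $\ge 1/2$ from its boundary. With such a choice, no boundary terms appear in either case.
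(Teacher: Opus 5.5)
Your proof is correct, and its skeleton is the same Krylov-type argument the paper uses. Both proofs rest on a cutoff-weighted slice mean $\tilde g(t)$ of $\nabla u$, on the slicewise Poincar\'e inequality on the convex domains $\bB_1^+(x_0)$ with a constant uniform in $x_0$, and on a bound for $\tilde g'(t)$ by $\|u_t(\cdot,t)\|_{2}$ obtained by moving the spatial derivative onto the cutoff.

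The genuine difference is where the cutoff lives. The paper uses $\zeta\big(\frac{y-x_0}{r}\big)$ centred at $x_0$, whose support meets the flat part $\bT_r(x_0)$. This requires the lower bound \eqref{eq:Poincare-cutoff-integral-bounds} on the normalising integral. Integration by parts then produces the term $\int_{\bT_r(x_0)}\zeta\, u\,dy^1$, and its time derivative is exactly what \eqref{eq:u-t-vanish-on-flat-part} kills.

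Your final choice is $\eta$ supported in $\bB_{1/4}(y)$ with $y=x_0+(0,\tfrac12)$, so that $\bB_{1/2}(y)\subset\bB_1^+(x_0)$. This produces no boundary term at all: $\tilde g(t)=-\int u(\cdot,t)\nabla\eta\,dx$ is $C^1$ with $\tilde g'(t)=-\int u_t\nabla\eta\,dx$. Writing $\tilde g$ this way also settles differentiability more cleanly than differentiating $\nabla u$ in $t$, which need not be possible for $u\in C^{2+\alpha,1+\frac{\alpha}{2}}$.

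Consequently your argument never uses \eqref{eq:u-t-vanish-on-flat-part}, and it proves the inequality for arbitrary $u$, in the same spirit as Remark~\ref{rmk:parabolic-Poincare}. You should say so explicitly. As written, you first invoke the hypothesis for a cutoff ``not vanishing on the flat part'' and only afterwards switch to the interior cutoff, so the two versions of $\eta$ are mixed.

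The paper's route is a direct transcription of Krylov's centred-cutoff proof, and the hypothesis costs nothing in its application, where $v_t=0$ on $\bL_{\frac{8}{9}}$. Your route gives a hypothesis-free statement at the price of one elementary geometric observation.
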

\begin{proof}
The proof is a straightforward adaptation of that of~\cite[page 98, Lemma 1]{Krylov-book}. Fix a cutoff function $\zeta \in C^{\infty}_{c}(\bB; [0, 1])$ that equals $1$ on $\bB_{\frac{1}{2}}$ and notice that for all $r > 0$, because $x_0^{2} \geq 0$, we have by a change of variables that
\begin{equation}\label{eq:Poincare-cutoff-integral-bounds}
r^{-2}\int_{\bB_{r}^+(x_0)} \zeta\big(\frac{x - x_0}{r}\big) dx = \int_{\bB \cap \{y^2 \geq -\frac{x_0^2}{r}\}} \zeta(y) dy \geq \int_{\bB^+} \zeta =: c_0  > 0.
\end{equation}
Next, for $t \in (t_0 - r^2, t_0]$, we define
\[
g(t) =\frac{\int_{\bB_{r}^+(x_0)} \zeta\big(\frac{y - x_0}{r}\big)\nabla u(y, t) dy}{\int_{\bB_{r}^+(x_0)}\zeta\big(\frac{y - x_0}{r} \big)dy}.
\]
For all $x \in \bB_{r}^+(x_0)$, a standard application of H\"older's inequality gives
\[
|\nabla u(x, t) - g(t)|^2 \leq \frac{\int_{\bB_{r}^+(x_0)} \zeta\big(\frac{y - x_0}{r}\big) \big| \nabla u(x, t) - \nabla u(y, t) \big|^2 dy}{\int_{\bB_{r}^+(x_0)}\zeta\big(\frac{y - x_0}{r} \big)dy},
\]
from which we get, after integrating with respect to $x$ over $\bB_{r}^+(x_0)$ and using~\eqref{eq:Poincare-cutoff-integral-bounds}, that
\[
\begin{split}
\int_{\bB_{r}^{+}(x_0)} |\nabla u(x, t) - g(t)|^2 dx \leq \ & c_{0}^{-1}r^{-2} \int_{\bB_{r}^{+}(x_0)} \int_{\bB_{r}^{+}(x_0)} |\nabla u(x, t) - \nabla u(y, t)|^2 dy dx\\
\leq\ & 2c_0^{-1}r^{-2}\int_{\bB_{r}^{+}(x_0)} \Big( \int_{\bB_{r}^{+}(x_0)} |\nabla u(x, t) - (\nabla u(\cdot, t))_{\bB_{r}^{+}(x_0)}|^2 dx\Big) dy\\
& + 2c_0^{-1}r^{-2}\int_{\bB_{r}^{+}(x_0)} \Big(\int_{\bB_{r}^{+}(x_0)} |\nabla u(y, t) - (\nabla u(\cdot, t))_{\bB_{r}^{+}(x_0)}|^2 dy\Big) dx\\
\leq \ & C \int_{\bB_{r}^{+}(x_0)} |\nabla u(x, t) - (\nabla u(\cdot, t))_{\bB_{r}^{+}(x_0)}|^2 dx.
\end{split}
\]
By the Poincar\'e inequality for $\bB \cap \{y^2 \geq -\frac{x_0^2}{r}\}$, in which the constant can be chosen to be independent of $x_0 \in \overline{\RR^2_+}$ and $ r > 0$, we infer from the above estimate that
\begin{equation}\label{eq:poincare-in-space}
\int_{\bB_{r}^+(x_0)} |\nabla u(x, t) - g(t)|^2 dx \leq Cr^2\int_{\bB_{r}^+(x_0)} |\nabla^2 u(x, t)|^2 dx.
\end{equation}

To continue, we define
\[
\overline{g} : = \frac{1}{r^2}\int_{t_0-r^2}^{t_0} g(t) dt.
\]
Noting that $(\nabla u)_{\bP_r^+(x_0, t_0)}$ is the choice of constant that minimizes the left-hand side of~\eqref{eq:parabolic-Poincare}, we find that
\begin{equation}\label{eq:parabolic-poincare-almost}
\begin{split}
&\int_{\bP_{r}^+(x_0, t_0)}|\nabla u - (\nabla u)_{\bP_r^+(x_0, t_0)}|^2dx dt\leq\int_{\bP_{r}^{+}(x_0, t_0)} |\nabla u - \overline{g}|^2 dx dt \\
\leq\ & 2\int_{\bP_{r}^+(x_0, t_0)} |\nabla u(x, t) - g(t)|^2 + |g(t) - \overline{g}|^2 dx dt\\
\leq\ & Cr^2 \int_{\bP_{r}^+(x_0, t_0)} |\nabla^2 u|^2 dx dt + Cr^2 \int_{t_0 - r^2}^{t_0} |g(t) - \overline{g}|^2 dt,
\end{split}
\end{equation}
where we integrated~\eqref{eq:poincare-in-space} with respect to $t$ in getting the last line. For the very last integral, we first return to the definition of $g$ and integrate by parts to get
\[
g(t) = -\frac{\big(0, \int_{\bT_{r}(x_0)}\zeta\big(\frac{y - x_0}{r} \big) u(y, t) dy^{1}\big) + \int_{\bB_{r}^+(x_0)}\nabla\big( \zeta\big(\frac{y - x_0}{r}\big)\big) u(y, t) dy}{\int_{\bB_{r}^+(x_0)}\zeta\big(\frac{y - x_0}{r} \big)dy}.
\]
Differentiating with respect to $t$ and using the assumption~\eqref{eq:u-t-vanish-on-flat-part}, along with the lower bound~\eqref{eq:Poincare-cutoff-integral-bounds}, we see that $g'$ can be estimated as follows:
\begin{equation}\label{eq:g-prime-estimate}
|g'(t)|^2 \leq c_0^{-2}r^{-6}\|\nabla \zeta\|_{\infty}^2\Big( \int_{\bB_{r}^+(x_0)}  |u_{t}(x, t)| dx \Big)^2 \leq Cr^{-4}\int_{\bB_{r}^+(x_0)}|u_t|^2 dx,
\end{equation}
where the last step is H\"older's inequality. With this, and the fundamental theorem of Calculus, we estimate, for $t \in (t_0 - r^2, t_0]$, 
\begin{equation}\label{eq:poincare-in-time-1}
\begin{split}
|g(t) - \overline{g}|^2  =\ & \Big| \frac{1}{r^2}\int_{t_0 - r^2}^{t_0}g(t) - g(s) ds \Big|^2 \leq \frac{1}{r^2}\int_{t_0 - r^2}^{t_0}|g(t) - g(s)|^2 ds\\
\leq\ & \Big( \int_{t_0 - r^2}^{t_0} |g'(t)|dt \Big)^2 \leq r^2 \int_{t_0 - r^2}^{t_0} \big| g'(t) \big|^2 dt  \leq Cr^{-2} \int_{\bP_{r}^{+}(x_0, t_0)}|u_{t}|^2 dx dt,
\end{split}
\end{equation}
where~\eqref{eq:g-prime-estimate} is used in the last step. Recalling~\eqref{eq:parabolic-poincare-almost}, we get~\eqref{eq:parabolic-Poincare} as asserted.
\end{proof}

\begin{rmk}\label{rmk:parabolic-Poincare}
A similar argument shows that with $x_0 \in \overline{\RR^2_{+}}$, $r > 0$, and $u$ as in Lemma~\ref{lemm:parabolic-Poincare}, but without assuming~\eqref{eq:u-t-vanish-on-flat-part}, we have 
\begin{equation}\label{eq:parabolic-Poincare-easy}
\int_{\bP_{r}^+(x_0, t_0)}|u - (u)_{\bP_r^+(x_0, t_0)}|^2dx dt \leq Cr^2\int_{\bP_{r}^+(x_0, t_0)} \big(|\nabla u |^2 + r^2|u_t|^2\big)dxdt,
\end{equation}
where $C$ is again a universal constant. Indeed, notice that, for all $t \in (t_0 - r^2, t_0]$, there holds
\[
\begin{split}
|(u)_{\bP_{r}^+(x_0, t_0)} - (u(\cdot, t))_{\bB_{r}^{+}(x_0)}|^2 =\ & \Big| \frac{1}{r^2}\int_{t_0 - r^2}^{t_0}\fint_{\bB_{r}^+(x_0)} u(y, s) - u(y, t) \, dyds \Big|^2\\
\leq\ & \frac{1}{r^2}\int_{t_0 - r^2}^{t_0}\fint_{\bB_{r}^+(x_0)}|u(y, s) - u(y, t)|^2\, dyds.
\end{split}
\]
Estimating the last integrand by
\[
|u(y, s) - u(y, t)|^2 \leq \Big( \int_{t_0 - r^2}^{t_0}|u_{t}(y, \tau)|\, d\tau \Big)^2 \leq r^2 \int_{t_0 - r^2}^{t_0} |u_{t}(y, \tau)|^2\, d\tau,
\]
we see that
\begin{equation}\label{eq:slice-average-deviation}
|(u)_{\bP_{r}^+(x_0, t_0)} - (u(\cdot, t))_{\bB_{r}^{+}(x_0)}|^2 \leq r^{2}\int_{t_0 - r^2}^{t_0}\fint_{\bB_{r}^{+}(x_0)} |u_{t}|^2,
\end{equation}
and hence, for all $t \in (t_0 - r^2, t_0]$, 
\[
\begin{split}
\int_{\bB_{r}^{+}(x_0)}|u(\cdot, t) - (u)_{\bP_{r}^+(x_0, t_0)}|^2 \leq\ & 2 \int_{\bB_{r}^+(x_0)} |u(\cdot, t) - (u(\cdot, t))_{\bB_{r}^{+}(x_0)}|^2 + 2r^2\int_{\bP_{r}^+(x_0, t_0)}|u_t|^2\\
\leq\ & Cr^2\int_{\bB_r^+(x_0)}|\nabla u(\cdot, t)|^2 + 2r^2\int_{\bP_{r}^+(x_0, t_0)}|u_t|^2,
\end{split}
\]
where the second step uses the Poincar\'e inequality for $\bB \cap \{y^2 \geq -\frac{x_0^2}{r}\}$, as in the derivation of~\eqref{eq:poincare-in-space}. Integrating with respect to $t$ yields~\eqref{eq:parabolic-Poincare-easy} as claimed.
\end{rmk}
The next lemma is the main result of this appendix. 

\begin{lemm}\label{lemm:gradient-holder}
Given $q > 4$, and letting $\alpha = 1 - \frac{4}{q}$, there exists $R_0 = R_0(q) > 1$ with the following property. Suppose $R \in [R_0, \infty)$ and let $u \in C^{2 + \alpha, 1 + \frac{\alpha}{2}}((\overline{\bB \cap \Omega_R}) \times [-1, 0])$ be a solution of 
\begin{equation}\label{eq:H-flow-at-boundary}
\left\{
\begin{array}{ll}
u_{t} - \Delta u = -2H(u)u_{x^1} \times u_{x^2},& \text{ in }(\bB \cap \Omega_R) \times (-1, 0], \\
u(x, t) = u_0(x), & \text{ on } (\bB \cap \partial \Omega_R) \times (-1, 0],
\end{array}
\right.
\end{equation}
where $u_0 \in C^{2, \alpha}(\overline{\bB \cap \Omega_{R}})$ and $H \in C^{1}(\RR^3; \RR)$. Assume also that $\|H\|_{\infty; \RR^3} \leq \Lambda_0$, and that
\begin{equation}\label{eq:unit-gradient}
\sup_{(\bB \cap \Omega_R) \times (-1, 0]} |\nabla u|\leq K.
\end{equation}
Then we have
\begin{equation}\label{eq:C1a-estimate}
\|\nabla u\|_{\infty} + [\nabla u]_{\alpha, \frac{\alpha}{2}}  \leq C_{q, K, \Lambda_0}\cdot  \big( \|\nabla u\|_{2; (\bB \cap \Omega_R) \times (-1, 0]} +   \|u_0\|_{2, q; \bB \cap \Omega_R}\big).
\end{equation}
If in addition $\|\nabla H\|_{\infty; \RR^3}\leq \Lambda_1$, then 
\begin{equation}\label{eq:C2a-estimate} 
|u|_{2 + \alpha, 1 + \frac{\alpha}{2}} \leq C_{q, K, \Lambda_0, \Lambda_1} \cdot \big(  \|\nabla u\|_{2; (\bB \cap \Omega_R) \times (-1, 0]} +   |u_0|_{2+\alpha; \bB \cap \Omega_R}\big).
\end{equation}
In both estimates, the norms and semi-norms on the left-hand side are taken over the set $(\bB_\frac{1}{4} \cap \Omega_R) \times (-\frac{1}{16}, 0]$.
\end{lemm}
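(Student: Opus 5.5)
We will treat the equation as a linear heat equation with a bounded right-hand side. By \eqref{eq:unit-gradient}, $f := -2H(u)u_{x^1}\times u_{x^2}$ satisfies $|f| \leq 2\Lambda_0 K|\nabla u|$ (using one factor of $K$ to keep linearity in $\nabla u$), so $f \in L^\infty$. The plan is to flatten the boundary with the conformal map $F = F_R$ of \eqref{eq:flattening-conformal-map}, with $R_0$ chosen so that \eqref{eq:F-close-to-id} and \eqref{eq:F-bilipschitz} hold with a small $\delta$. For $\tilde u = u\circ F$ we get
\[
\tilde u_t - \lambda^{-2}\Delta \tilde u = \tilde f,
\]
where $\lambda = |F_z|$ is $C^3$-close to $1$. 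This holds on $\bB^+_{1/2}\times(-1,0]$, with $\tilde u = u_0\circ F$ on the flat part. We then subtract $\tilde u_0 := u_0\circ F$, which is time-independent, so that the difference $\tilde w = \tilde u - \tilde u_0$ has zero Dirichlet data on $\bL$. Its right-hand side is $\tilde f + \lambda^{-2}\Delta\tilde u_0$, which lies in $L^q$ with norm bounded by $C(K\Lambda_0\|\nabla u\|_{L^q} + \|u_0\|_{2,q})$. Here $\|\nabla u\|_{L^q}$ is controlled by $K^{(q-2)/q}\|\nabla u\|_{L^2}^{2/q}$; to get linear dependence on the right of \eqref{eq:C1a-estimate}, I would instead use an interpolation/absorption argument in which the constant is allowed to depend on $K$.

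Next comes the localized boundary $W^{2,1}_q$ estimate for the heat operator with $C^{2}$-close-to-constant coefficients, \cite[Theorem IV.9.1]{LSU}, applied with a cutoff in $(x,t)$ on $\bP^+_{1/2}$ versus $\bP^+_{1/3}$. It gives $\|\tilde w\|_{W^{2,1}_q(\bP^+_{1/3})} \leq C(\|\text{rhs}\|_q + \|\tilde w\|_{L^q(\bP^+_{1/2})})$. The lower-order term $\|\tilde w\|_q$ is controlled by combining the $L^\infty$ bound on $\nabla u$ with the parabolic Poincaré inequality (Remark~\ref{rmk:parabolic-Poincare} after subtracting averages, and Lemma~\ref{lemm:parabolic-Poincare}, which is applicable since $\tilde w_t = 0$ on the flat part). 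Since $\tilde w$ vanishes on $\bL$, its mean can be dropped, which reduces everything to $\|\nabla u\|_2$, $\|u_t\|_2$ and data. The term $\|u_t\|_{2}$ itself is handled by a standard energy/Caccioppoli estimate obtained by testing with $\eta^2 \tilde w_t$; this is where I expect some care. The parabolic embedding \cite[Lemma II.3.3]{LSU} with $q>4$ then yields $\nabla\tilde w \in C^{\alpha,\alpha/2}$ with $\alpha = 1-4/q$. Returning through $F^{-1}$ and using \eqref{eq:F-bilipschitz}, the covering of $\bB_{1/4}\cap\Omega_R$ by half-cylinders (or interior cylinders away from the boundary, handled by the same interior estimate) gives \eqref{eq:C1a-estimate}.

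For \eqref{eq:C2a-estimate}, once $\nabla u \in C^{\alpha,\alpha/2}$ and $\Lambda_1$ is available, $f = -2H(u)u_{x^1}\times u_{x^2}$ is Hölder: $[H(u)]_{\alpha,\alpha/2}\leq \Lambda_1[u]$, and $u$ is Lipschitz in $x$ and $C^{1/2}$-Hölder in $t$ via $u_t\in L^q$ bounds. Hence $f\in C^{\alpha,\alpha/2}$ with norm controlled by the previous step, and the boundary Schauder estimate \cite[Theorem IV.5.2]{LSU} on a slightly smaller cylinder gives the $C^{2+\alpha,1+\alpha/2}$ bound, with $|u_0|_{2+\alpha}$ replacing $\|u_0\|_{2,q}$.

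The main obstacle is keeping the estimate linear in $\|\nabla u\|_2 + $ data while absorbing the lower-order terms ($L^q$ norms of $\nabla u$ and $u_t$). I would first try to use $|f|\le 2\Lambda_0K|\nabla u|$ together with an interpolation $\|\nabla u\|_q \le \epsilon\|\nabla^2 u\|_q + C_\epsilon\|\nabla u\|_2$ on nested cylinders, followed by a standard iteration lemma to absorb the $\epsilon$-terms.
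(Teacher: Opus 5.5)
Your outline is viable and shares the paper's skeleton: flattening by $F_R$, subtracting the time-independent data, the linear bound $|v_{x^1}\times v_{x^2}|\le CK|\nabla v|$, a boundary $W^{2,1}_q$ estimate with $q>4$, and then Schauder for \eqref{eq:C2a-estimate}. It differs from the paper at the two delicate points.

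\textbf{Keeping the bound linear.} The paper does not interpolate. It applies the half-space estimate \cite[page 343, (9.7)]{LSU} to $\zeta(v-v_0)$, absorbing only $C_q\delta\|\Delta w\|_q$. It does this at the exponents $q$, $4$, $2$ on fixed nested half-cylinders, alternating with the parabolic Sobolev inequalities \eqref{eq:Sobolev-from-LSU}. This reduces everything to $\|v\|_2+\|\nabla v\|_2$ on $\bP^+_{6/7}$, and then to $\|\nabla v\|_2$ plus data via the slice-wise Poincar\'e inequality \eqref{eq:v-v0-estimate}. No iteration lemma is needed.

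\textbf{H\"older continuity.} The paper uses a Campanato argument: Lemma~\ref{lemm:parabolic-Poincare}, where $v_t=0$ on $\bL$ enters, together with \cite{Schlag-1996}. Your direct use of the embedding in \cite[Lemma II.3.3]{LSU}, giving $\nabla v\in C^{\alpha,\frac{\alpha}{2}}$, is an equally good substitute.

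\textbf{A correction your route needs.} The inequality $\|\nabla u\|_q\le\varepsilon\|\nabla^2u\|_q+C_\varepsilon\|\nabla u\|_2$ is false on space-time cylinders. Take $u=a(t)x^1$ with $a$ concentrating in time. The interpolation must therefore be the parabolic one, with $\|u_t\|_q$ on the right; your $W^{2,1}_q$ estimate controls that term anyway. You also need $C_\varepsilon$ to grow only polynomially in $\varepsilon^{-1}$, so that the iteration lemma over nested cylinders closes.

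\textbf{Two further points.} Bounding the lower-order term $\|\tilde w\|_q$ through the $L^\infty$ bound on $\nabla u$ would destroy linearity, so it must go through the same interpolation. The Caccioppoli estimate for $\|u_t\|_2$ is unnecessary. Since $\tilde w(\cdot,t)$ vanishes on the flat part, Poincar\'e in $x$ at each time already gives $\|\tilde w\|_2\le C\|\nabla\tilde w\|_2$.

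\textbf{Comparison.} The paper's finite bootstrap is iteration-free and uses only a global half-space estimate for compactly cut-off functions. Your version is shorter to state, but it needs a localized boundary $W^{2,1}_q$ estimate plus a quantitative parabolic interpolation inequality.
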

\begin{proof}
We shall take $R_0$ to be the radius $R_{\delta}$ from the remarks following~\eqref{eq:measure-lower-bound}, for some $\delta \in (0, \frac{1}{8})$ to be chosen depending only on $q$. With $R$ and $u$ as in the statement, let $F:\RR^2_{+} \to \Omega_R$ be the conformal map given by~\eqref{eq:flattening-conformal-map}. In particular, with $\lambda := |F_{z}|$, we have from~\eqref{eq:F-close-to-id} that 
\begin{equation}\label{eq:lambda-close-to-1}
\|\lambda^{-2} - 1\|_{\infty; \bB^+} < C\delta,
\end{equation}
where $C$ is a universal constant. By~\eqref{eq:F-bilipschitz} it makes sense to define
\begin{align}
v(x, t) =\ & u(F(x), t), \quad \text{for }(x, t ) \in \overline{\bP_{\frac{8}{9}}^{+}}, \label{eq:v-is-u-F}\\
v_0(x) =\ & u_0(F(x)), \quad \text{for }x \in \overline{\bB_{\frac{8}{9}}^{+}}.
\end{align}
The assumption~\eqref{eq:unit-gradient} and the bounds~\eqref{eq:F-close-to-id} then gives some universal constant such that
\begin{equation}\label{eq:unit-gradient-v}
\sup_{\bP_{\frac{8}{9}}^+}|\nabla v| \leq CK.
\end{equation}
Also, a direct computation shows that
\begin{equation}\label{eq:v-PDE}
\left\{
\begin{array}{ll}
v_{t} - \lambda^{-2}\Delta v  = -2\lambda^{-2} H(v) v_{x^1} \times v_{x^2}, &  \text{ in }\bP_{\frac{8}{9}}^+,\\
v = v_0, &  \text{ on }\bL_{\frac{8}{9}}.
\end{array}
\right.
\end{equation}
For all $r \in (0, \frac{8}{9}]$, by the Poincar\'e inequality applied to $v(\cdot, t) - v_0$ on $\bB_r^+$ for each $t$, we see that
\begin{equation}\label{eq:v-v0-estimate}
\|v - v_0\|_{2; \bP_{r}^{+}} \leq Cr\cdot\|\nabla v - \nabla v_0\|_{2; \bP_{r}^{+}},
\end{equation}
where $C$ is a universal constant. By the Sobolev inequality in~\cite[Lemma II.3.3]{LSU} (with $l = 1$, $r = 0$, $s = 0, 1$, and $n = 2$), we have
\begin{equation}\label{eq:Sobolev-from-LSU}
\begin{split}
\|v \|_{q; \bP_{\frac{4}{5}}^+}  + \|\nabla v\|_{q; \bP_{\frac{4}{5}}^{+}} \leq\ & C_{q}\big( \|v\|_{4; \bP_{\frac{4}{5}}^{+}} + \|\nabla^2 v\|_{4; \bP_{\frac{4}{5}}^{+}}  + \|v_{t}\|_{4; \bP_{\frac{4}{5}}^{+}}\big),\\
\|v\|_{4; \bP_{\frac{5}{6}}^{+}} + \|\nabla v\|_{4; \bP_{\frac{5}{6}}^{+}} \leq\ & C\big( \|v\|_{2; \bP_{\frac{5}{6}}^{+}}  + \|\nabla^2 v\|_{2; \bP_{\frac{5}{6}}^{+}} + \|v_{t}\|_{2; \bP_{\frac{5}{6}}^{+}} \big).
\end{split}
\end{equation}
Now take a cut-off function $\zeta \in C^{\infty}_{c}(\bP_{\frac{4}{5}})$ that equals $1$ on $\bP_{\frac{3}{4}}$, and let $w = \zeta(v - v_0)$. With the help of~\eqref{eq:v-PDE}, we have on $\RR^{2}_{+} \times (-1, 0)$ that
\begin{equation}\label{eq:w-PDE}
\begin{split}
w_t - \Delta w =\ & (\lambda^{-2} - 1)\Delta w +  \zeta_t (v - v_0) -2\lambda^{-2}\zeta H(v)v_{x^1} \times v_{x^2} \\
&+ \lambda^{-2}\zeta \Delta v_0 - 2\lambda^{-2}\nabla\zeta \cdot \nabla(v-  v_0) - \lambda^{-2} (v - v_0)\Delta \zeta\\
=:\ & f.
\end{split}
\end{equation}
Since $w$ vanishes on $\RR^2_{+} \times \{-1\}$, and also on $\partial \RR^2_+ \times (-1, 0)$, by~\cite[page 343, (9.7)]{LSU} (see also~\cite[pages 5-7, 273, and 301]{LSU} for some of the relevant notation),  we get that
\begin{align*}
&\|w_{t}\|_{q; \bP_{\frac{4}{5}}^+} + \|\nabla^2 w\|_{q; \bP_{\frac{4}{5}}^{+}} \leq C_{q} \|f\|_{q; \bP_{\frac{4}{5}}^+}\\
&\leq  C_{q} \cdot \delta \|\Delta w\|_{q; \bP_{\frac{4}{5}}^+} + C_{q}\cdot (1 + K\Lambda_0 )\big(\|v\|_{q; \bP_{\frac{4}{5}}^+}+ \|\nabla v\|_{q; \bP_{\frac{4}{5}}^+} + \|v_0\|_{2, q; \bB_{\frac{4}{5}}^+} \big),
\end{align*}
where to get the second inequality we used~\eqref{eq:lambda-close-to-1} to bound $\lambda^{-2} - 1$, and estimated the quadratic term $v_{x^1} \times v_{x^2}$ with~\eqref{eq:unit-gradient-v}. With a small enough choice of $\delta$ depending only on $q$, and noting that $\|\nabla^2 v\|_{q; \bP_{\frac{3}{4}}^+} \leq \|\nabla^2 v - \nabla^2 v_0\|_{q; \bP_{\frac{3}{4}}^+} + C_{q}\|\nabla^2 v_{0}\|_{q; \bB_{\frac{3}{4}}^+}$, we get
\begin{equation}\label{eq:W2q-estimate}
\|v_t\|_{q; \bP_{\frac{3}{4}}^+} + \|\nabla^2 v\|_{q; \bP_{\frac{3}{4}}^{+}} \leq C_{q}\cdot (1 + K\Lambda_0) \big( \|v\|_{q; \bP_{\frac{4}{5}}^+} +  \|\nabla v\|_{q; \bP_{\frac{4}{5}}^+} + \|v_0\|_{2,q; \bB_{\frac{4}{5}}^{+}} \big).
\end{equation}
Repeating the above argument and decreasing $\delta$ if necessary, we obtain two more estimates of the above type, with $(q, \frac{3}{4}, \frac{4}{5})$ replaced by $(4, \frac{4}{5}, \frac{5}{6})$ and by $(2, \frac{5}{6}, \frac{6}{7})$. The two Sobolev inequalities in~\eqref{eq:Sobolev-from-LSU}, used alternately with the two estimates just mentioned, lead to
\begin{equation}\label{eq:Lq-of-grad-v}
\begin{split}
\|v\|_{q; \bP_{\frac{4}{5}}^{+}} + \|\nabla v\|_{q; \bP_{\frac{4}{5}}^{+}} \leq\ & C \big( \|v\|_{2; \bP_{\frac{6}{7}}^{+}} + \|\nabla v\|_{2; \bP_{\frac{6}{7}}^{+}} + \|v_0\|_{2, 4; \bB_{\frac{6}{7}}^{+}} \big)\\
\leq\ & C\big( \|\nabla v\|_{2; \bP_{\frac{6}{7}}^{+}} + \|v_0\|_{2, 4; \bB_{\frac{6}{7}}^{+}} \big),
\end{split}
\end{equation}
where again $C = C(q, K, \Lambda_0)$, and the second inequality follows from~\eqref{eq:v-v0-estimate}. Next, since $v_t = 0$ on $\bL_{\frac{8}{9}}$, we may use Lemma~\ref{lemm:parabolic-Poincare}, followed by H\"older's inequality and the estimates~\eqref{eq:W2q-estimate} and~\eqref{eq:Lq-of-grad-v}, to see that for all $(x_0, t_0) \in \bP_{\frac{5}{8}}^+$ and $r \leq \frac{1}{8}$, there holds
\[
\begin{split}
\int_{\bP_{r}^+(x_0,t_0)} |\nabla v - (\nabla v)_{\bP_{r}^+(x_0, t_0)}|^2 \leq\ &  Cr^2 \big( r^{4}\big)^{\frac{q-2}{q}} (\|v_t\|_{q; \bP_{\frac{3}{4}}^+}^2 + \|\nabla^2 v\|_{q; \bP_{\frac{3}{4}}^+}^2)\\
\leq\ & Cr^{4 + 2(1 - \frac{4}{q})} \cdot \big( \|\nabla v\|_{2; \bP_{\frac{6}{7}}^+}^2 + \|v_0\|_{2, q; \bB_{\frac{6}{7}}^+}^2\big),
\end{split}
\]
where the second constant $C$ depends only on $q, K$, and $\Lambda_0$. Since $q > 4$, by the Morrey--Campanato characterization of H\"older continuity~\cite[Theorem 1]{Schlag-1996}, for all $(x,t), (y, s) \in \bP_{\frac{5}{8}}^+$ satisfying $\max\{|x - y|, |t - s|^{\frac{1}{2}}\} \leq \frac{1}{16}$, we have
\begin{equation}\label{eq:grad-v-holder}
|\nabla v(x, t) - \nabla v(y, s)| \leq C_{q, K,\Lambda_0}\cdot \max\{|x - y|^{\alpha}, |t - s|^{\frac{\alpha}{2}}\} \big(\|\nabla v\|_{2; \bP_{\frac{6}{7}}^+} + \|v_0\|_{2, q; \bB_{\frac{6}{7}}^+}\big).
\end{equation}
A standard argument involving integration over $\bP_{\frac{1}{16}}^{+}(x, t)$ for each $(x, t) \in \bP_{\frac{1}{2}}^{+}$ then yields a bound on $\|\nabla v\|_{\infty; \bP_{\frac{1}{2}}^{+}}$, which combines with~\eqref{eq:grad-v-holder} to give
\begin{equation}\label{eq:grad-v-estimate}
\|\nabla v\|_{\infty; \bP_{\frac{1}{2}}^{+}} + [\nabla v]_{\alpha, \frac{\alpha}{2}; \bP_{\frac{1}{2}}^{+}} \leq C_{q, K, \Lambda_0}\big( \|\nabla v\|_{2; \bP_{\frac{6}{7}}^{+}} + \|v_0\|_{2, q; \bB_{\frac{6}{7}}^{+}} \big).
\end{equation}
Recalling the relation~\eqref{eq:v-is-u-F} between $u$ and $v$, and using also~\eqref{eq:F-close-to-id} and~\eqref{eq:F-bilipschitz}, we get the asserted estimate~\eqref{eq:C1a-estimate}. For later use we need a similar estimate for $v$ itself. To that end, we use Remark~\ref{rmk:parabolic-Poincare} in place of Lemma~\ref{lemm:parabolic-Poincare} to get, again for all $(x_0, t_0) \in \bP_{\frac{5}{8}}^{+}$ and $r \leq \frac{1}{8}$, that
\[
\begin{split}
\int_{\bP_{r}^{+}(x_0, t_0)} |v - (v)_{\bP_{r}^{+}(x_0, t_0)}|^2  \leq\ & Cr^2\int_{\bP_{r}^{+}(x_0, t_0)} |\nabla v|^2 + r^2|v_t|^2 dxdt\\
\leq\ & C_{q, K, \Lambda_0} \cdot r^{4 + 2(1 - \frac{4}{q})} \cdot \big(\|\nabla v\|_{2; \bP_{\frac{6}{7}}^+}^2 + \|v_0\|_{2, q; \bB_{\frac{6}{7}}^+}^2\big),
\end{split}
\]
where the second step follows, as before, from H\"older's inequality along with~\eqref{eq:W2q-estimate} and~\eqref{eq:Lq-of-grad-v}. Following the argument leading to~\eqref{eq:grad-v-estimate}, and using again~\eqref{eq:v-v0-estimate}, we get
\begin{equation}\label{eq:v-holder}
\begin{split}
\|v\|_{\infty; \bP_{\frac{1}{2}}^{+}} + [v]_{\alpha, \frac{\alpha}{2}; \bP_{\frac{1}{2}}^{+}}  \leq\ & C_{q, K, \Lambda_0} \cdot \big( \|v\|_{2; \bP_{\frac{6}{7}}^+}  + \|\nabla v\|_{2; \bP_{\frac{6}{7}}^+} + \|v_0\|_{2, q; \bB_{\frac{6}{7}}^+}\big) \\
\leq\ & C_{q, K, \Lambda_0} \cdot \big(\|\nabla v\|_{2; \bP_{\frac{6}{7}}^+} + \|v_0\|_{2, q; \bB_{\frac{6}{7}}^+}\big).
\end{split}
\end{equation}

To continue, we return to~\eqref{eq:w-PDE} and choose $\zeta$ so that it lies in $C^{\infty}_{c}(\bP_{\frac{1}{2}})$, and equals $1$ on $\bP_{\frac{1}{3}}$. Then again $w$ vanishes on both $\partial\RR^2_+ \times (-1, 0)$ and $\RR^2_+ \times \{-1\}$. Noting also that $f = 0$ on the latter set, we obtain from~\cite[page 324, (6.5)]{LSU} that
\begin{equation}\label{eq:better-holder-with-f}
\begin{split}
[\nabla w]^{(t)}_{\frac{1 + \alpha}{2}; \bP_{\frac{1}{2}}^{+}}  + [\nabla^2 w]_{\alpha, \frac{\alpha}{2}; \bP_{\frac{1}{2}}^+} + [w_t]_{\alpha, \frac{\alpha}{2}; \bP_{\frac{1}{2}}^+} \leq\ &  C_{q}[f]_{\alpha, \frac{\alpha}{2}; \bP_{\frac{1}{2}}^+}.
\end{split}
\end{equation}
To bound $[f]_{\alpha, \frac{\alpha}{2}; \bP_{\frac{1}{2}}^+}$, we observe that by~\eqref{eq:F-close-to-id} and~\eqref{eq:lambda-close-to-1},
\begin{equation}\label{eq:Ca-estimate-on-f-1}
[(\lambda^{-2} - 1)\Delta w]_{\alpha, \frac{\alpha}{2}; \bP_{\frac{1}{2}}^{+}} \leq C\delta [\Delta w]_{\alpha, \frac{\alpha}{2}; \bP_{\frac{1}{2}}^{+}} + C\|\Delta w\|_{\infty; \bP_{\frac{1}{2}}^{+}}.
\end{equation}
Next, by~\eqref{eq:unit-gradient-v},~\eqref{eq:grad-v-estimate} and~\eqref{eq:v-holder}, we have
\[
\begin{split}
&\|H(v) v_{x^1} \times v_{x^2}\|_{\infty; \bP_{\frac{1}{2}}^{+}} +  [H(v) v_{x^1} \times v_{x^2}]_{\alpha,\frac{\alpha}{2}; \bP_{\frac{1}{2}}^{+}}\\
\leq\ & C\|H\|_{\infty} K\cdot  \big(\|\nabla v\|_{\infty; \bP_{\frac{1}{2}}^{+}} + [\nabla v]_{\alpha, \frac{\alpha}{2}; \bP_{\frac{1}{2}}^{+}}\big) + C\|\nabla H\|_{\infty} \cdot K^2 \cdot [v]_{\alpha, \frac{\alpha}{2}; \bP_{\frac{1}{2}}^{+}}\\
\leq\ & C_{q, K, \Lambda_0, \Lambda_1}\cdot  \big( \|\nabla v\|_{2; \bP_{\frac{6}{7}}^{+}} + \|v_0\|_{2, q; \bB_{\frac{6}{7}}^{+}} \big).
\end{split}
\]
Combining this with~\eqref{eq:Ca-estimate-on-f-1}, and estimating the remaining terms comprising $f$ in the straightforward manner using~\eqref{eq:grad-v-estimate} and~\eqref{eq:v-holder}, we get upon recalling~\eqref{eq:better-holder-with-f} that
\[
\begin{split}
[\nabla w]^{(t)}_{\frac{1 + \alpha}{2}; \bP_{\frac{1}{2}}^{+}} + [\nabla^2 w]_{\alpha, \frac{\alpha}{2}; \bP_{\frac{1}{2}}^+} + [w_t]_{\alpha, \frac{\alpha}{2}; \bP_{\frac{1}{2}}^+}  \leq \ & C_{q} \cdot \big(\delta[\nabla^2 w]_{\alpha, \frac{\alpha}{2}; \bP_{\frac{1}{2}}^{+}} +  \|\nabla^2 w\|_{\infty; \bP_{\frac{1}{2}}^{+}}) \\
&+ C_{q, K, \Lambda_0, \Lambda_1}\cdot \big(  \|\nabla v\|_{2; \bP_{\frac{6}{7}}^{+}} + |v_0|_{2 + \alpha; \bB_{\frac{6}{7}}^{+}} \big).
\end{split}
\]
Applying the interpolation inequality in~\cite[Lemma II.3.2]{LSU} to the term $\|\nabla^2 w\|_{\infty; \bP_{\frac{1}{2}}^{+}}$, and then using~\eqref{eq:v-holder} to bound $\|w\|_{\infty; \bP_{\frac{1}{2}}^{+}}$, we obtain, after decreasing $\delta$ further if needed,
\begin{equation}\label{eq:v-C2a-estimate}
[\nabla v]^{(t)}_{\frac{1 + \alpha}{2}; \bP_{\frac{1}{3}}^{+}} + [v_t]_{\alpha, \frac{\alpha}{2}; \bP_{\frac{1}{3}}^+} + [\nabla^2 v]_{\alpha, \frac{\alpha}{2}; \bP_{\frac{1}{3}}^+}  \leq C_{q, K, \Lambda_0, \Lambda_1}\cdot \big(  \|\nabla v\|_{2; \bP_{\frac{6}{7}}^{+}} + |v_0|_{2 + \alpha; \bB_{\frac{6}{7}}^{+}} \big).
\end{equation}
By~\eqref{eq:v-holder} and~\cite[Lemma II.3.2]{LSU} again, we deduce that
\[
|v|_{2 + \alpha, 1 + \frac{\alpha}{2}; \bP_{\frac{1}{3}}^+}  \leq C_{q, K, \Lambda_0, \Lambda_1}\cdot \big(  \|\nabla v\|_{2; \bP_{\frac{6}{7}}^{+}} + |v_0|_{2 + \alpha; \bB_{\frac{6}{7}}^{+}} \big).
\]
Recalling the definition~\eqref{eq:v-is-u-F} as well as the properties~\eqref{eq:F-close-to-id} and~\eqref{eq:F-bilipschitz} of $F$, we get the second asserted estimate,~\eqref{eq:C2a-estimate}. The proof is complete.
\end{proof}

Next we turn to the interior version of the estimates in Lemma~\ref{lemm:gradient-holder}. Thus, suppose $u \in C^{2+\alpha, 1 + \frac{\alpha}{2}}(\overline{\bP_{1}})$ satisfies
\begin{equation}\label{eq:H-flow-appendix}
u_{t} - \Delta u = -2H(u)u_{x^1} \times u_{x^2} \quad \text{in } \bP_{1},
\end{equation}
where $H \in C^{1} (\mathbb{R}^3, \mathbb{R})$, and assume also that
\begin{equation}\label{eq:bounds-for-interior-estimate}
\|H\|_{\infty; \RR^3} \leq \Lambda_0,\quad \|\nabla u\|_{\infty; \bP_{1}} \leq K.
\end{equation}
The next result provides an analogue of~\eqref{eq:v-v0-estimate} when combined with the Poincar\'e inequality.
\begin{lemm}\label{lemm:u-t-estimate}
Let $u$ be as above. Then for all $r \in (0, 1)$ we have
\begin{equation}\label{eq:u-t-estimate}
\int_{\bP_{r}}|u_{t}|^2 dxdt \leq C(1 - r)^{-2}\int_{\bP_{1}} |\nabla u|^2 dxdt,
\end{equation}
where $C$ depends only on $\Lambda_{0}$ and $K$.
\end{lemm}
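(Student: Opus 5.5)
We will prove~\eqref{eq:u-t-estimate} by a localized energy (Caccioppoli-type) argument in time, testing the equation~\eqref{eq:H-flow-appendix} with $\zeta^2 u_t$, where $\zeta \in C^{\infty}_{c}(\bB_1 \times (-1, 1])$ is a cut-off function equal to $1$ on $\bP_r$, with $|\nabla \zeta| + |\zeta_t|^{1/2} \leq C(1-r)^{-1}$, and vanishing near the parabolic boundary of $\bP_1$ (that is, near $\partial \bB_1 \times (-1,0]$ and near $t = -1$). Since $u \in C^{2+\alpha, 1+\frac{\alpha}{2}}(\overline{\bP_1})$, all manipulations below are classical, so no difference quotients are needed.

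Multiplying~\eqref{eq:H-flow-appendix} by $\zeta^2 u_t$ and integrating over $\bB_1 \times (-1, \tau]$ for $\tau \in (-r^2, 0]$, we integrate by parts in space in the Laplacian term to get
\[
\int \zeta^2 |u_t|^2 + \int \zeta^2 \bangle{\nabla u, \nabla u_t} + 2\int \zeta \bangle{\nabla \zeta\, u_t, \nabla u} = -2\int \zeta^2 H(u)\, u_{x^1}\times u_{x^2}\cdot u_t .
\]
The second term equals $\frac{1}{2}\frac{d}{dt}\int \zeta^2|\nabla u|^2 - \int \zeta\zeta_t |\nabla u|^2$. After integrating in time, the boundary term at $t=\tau$ is $\frac12\int_{\bB\times\{\tau\}}\zeta^2|\nabla u|^2 \geq 0$, and the term at $t=-1$ vanishes. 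The third term is handled by Young's inequality: $\frac{1}{4}\zeta^2|u_t|^2 + 4|\nabla\zeta|^2|\nabla u|^2$.

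The key point, and the only place where the nonlinearity enters, is the right-hand side. Here we use the bound $\|\nabla u\|_\infty \leq K$ from~\eqref{eq:bounds-for-interior-estimate}, so that $|H(u)\, u_{x^1}\times u_{x^2}| \leq \Lambda_0 K |\nabla u|$. Young's inequality then gives the estimate $\frac14 \zeta^2|u_t|^2 + C\Lambda_0^2K^2\zeta^2|\nabla u|^2$. This is why the constant depends on $\Lambda_0$ and $K$; since the quadratic term becomes linear in $|\nabla u|$, no smallness of the energy is required.

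Absorbing the $|u_t|^2$ terms, we arrive at
\[
\frac12\int_{\bP_r}|u_t|^2 \leq C\int_{\bP_1}\big(|\nabla\zeta|^2 + |\zeta_t| + \Lambda_0^2K^2\big)|\nabla u|^2 \leq C_{\Lambda_0,K}(1-r)^{-2}\int_{\bP_1}|\nabla u|^2,
\]
using $(1-r)^{-2}\geq 1$ for the last step. We do not expect a real obstacle here. The only care needed is in arranging the cut-off so that the time boundary terms have the right sign, namely by taking $\zeta$ vanishing near $t=-1$ but not near $t=0$.
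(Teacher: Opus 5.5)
Your argument is essentially the paper's proof: the same Caccioppoli-type test against $\zeta^2 u_t$ with a cut-off vanishing near $\partial\bB\times(-1,0]$ and near $t=-1$. It uses the same Young's inequality steps and the same use of $\|\nabla u\|_{\infty}\le K$ to make the nonlinear term linear in $|\nabla u|$.

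One claim is wrong, though: the manipulations are not all classical. Membership in $C^{2+\alpha,1+\frac{\alpha}{2}}(\overline{\bP_1})$ gives only $u_t\in C^{\alpha,\frac{\alpha}{2}}$, with $\nabla u$ Hölder of order $\frac{1+\alpha}{2}$ in $t$. So $\nabla u_t$ need not exist. Two steps therefore need justification:
\begin{itemize}
\item the spatial integration by parts that moves a derivative onto $\zeta^2 u_t$;
\item the identity $\int\zeta^2\langle\nabla u,\nabla u_t\rangle=\frac12\frac{d}{dt}\int\zeta^2|\nabla u|^2-\int\zeta\zeta_t|\nabla u|^2$.
\end{itemize}
The identity you need, after integrating in time, is still true. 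The paper obtains the relevant inequality by testing with the backward difference quotient $v_h=(u-u^{(-h)})/h$ instead of $u_t$. It then uses the one-sided bound $\langle\nabla v_h,\nabla u\rangle\ge\frac{|\nabla u|^2-|\nabla u^{(-h)}|^2}{2h}$ and takes $\liminf_{h\to0^+}$, which only ever requires the inequality in the direction you use.

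Alternatively, since $\zeta$ has compact support in $x$, you could mollify $u$ in the spatial variables only. For the mollified map, $\nabla u^\epsilon$ is $C^1$ in $t$, so your computation is genuinely classical. You then pass to the limit using the uniform continuity of $\nabla u$, $\Delta u$ and $u_t$ on $\operatorname{supp}\zeta$. With either fix, the rest of your proof goes through as written.
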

\begin{proof}
Fix a smooth function $\varphi:\RR \to [0, 1]$ satisfying
\[
\varphi(t) = 1 \text{ if }t \leq \frac{1}{4},\quad \varphi(t) = 0 \text{ if }t \geq \frac{3}{4}, 
\]
Given $r \in (0, 1)$, define
\[
\zeta(x, t) = \varphi(\frac{|x| - r}{1 - r}) \cdot \varphi(\frac{|t| - r^2}{1 - r^2}),\quad\text{for }(x, t) \in \bP_{1}.
\]
We will use a difference quotient argument to derive~\eqref{eq:u-t-estimate}. Thus, for all $h \in (0, \frac{1-r^2}{4})$ and $(x, t) \in \bB \times (-1 + h, 0]$, we let
\[
u^{(-h)}(x, t): = u(x, t - h),\quad v_{h} = \frac{u - u^{(-h)}}{h}.
\]
Taking the inner product of both sides of~\eqref{eq:H-flow-appendix} with $\zeta^2 v_{h}$ and integrating over $\bP_{1}$ gives
\begin{equation}\label{eq:u-t-estimate-tested}
\begin{split}
\int_{\bP_{1}} \zeta^2 v_{h} \cdot u_t + \zeta^2\bangle{\nabla v_{h}, \nabla u} + 2\zeta \bangle{v_{h}\nabla \zeta, \nabla u} =\ & -2\int_{\bP_1}\zeta^2 H(u) u_{x^1} \times u_{x^2} \cdot v_{h}. 
\end{split}
\end{equation}
Using the expression $\nabla u = \frac{1}{2}( \nabla u + \nabla u^{(-h)}) +  \frac{1}{2}( \nabla u - \nabla u^{(-h)})$, we find that
\begin{equation}\label{eq:t-step-bound}
\begin{split}
\bangle{\nabla v_{h}, \nabla u} \geq\ & \frac{|\nabla u|^2 - |\nabla u^{(-h)}|^2}{2h}.
\end{split}
\end{equation}
Noting also that $\zeta$ vanishes on $\bB \times (-1, -1+h]$ by our choice of $h$, we get
\[
\begin{split}
&\int_{\bP_{1}} \zeta^2 \bangle{\nabla v_{h}, \nabla u}\\
\geq\ & \frac{1}{2h}\int_{-1}^{0}\int_{\bB}\zeta^2  |\nabla u|^2 dxdt- \frac{1}{2h}\int_{-1 + h}^{0}\int_{\bB}\zeta^2  |\nabla u^{(-h)}|^2 dxdt\\
=\ & \frac{1}{2h}\int_{-h}^{0}\int_{\bB}\zeta^2  |\nabla u|^2 dxdt + \int_{-1}^{-h}\int_{\bB}\frac{1}{2h}\big(\zeta(x, t)^2 - \zeta(x, t+h)^2 \big)  |\nabla u(x, t)|^2 dxdt,
\end{split}
\]
and hence
\begin{equation}\label{eq:tested-2nd-term-on-lhs}
\liminf_{h \to 0^+}\int_{\bP_{1}} \zeta^2 \bangle{\nabla v_{h}, \nabla u} \geq \int_{\bB \times \{0\}} \frac{\zeta^2|\nabla u|^2}{2} dx - \int_{\bP_{1}} \zeta \zeta_t |\nabla u|^2 dxdt.
\end{equation}
Taking the limit inferior as $h \to 0^+$ in~\eqref{eq:u-t-estimate-tested} and using~\eqref{eq:tested-2nd-term-on-lhs} yields
\[
\begin{split}
\int_{\bP_{1}} \zeta^2 |u_{t}|^2  + \int_{\bB \times \{0\}}\frac{\zeta^2 |\nabla u|^2}{2} \leq\ & \int_{\bP_{1}}\zeta |\zeta_t| |\nabla u|^2 + 2\zeta |\nabla \zeta||u_t||\nabla u| + 2K \|H\|_{\infty} \zeta^2 |\nabla u||u_t|.
\end{split}
\]
Dropping the second term on the left-hand side and applying Young's inequality then gives
\[
\int_{\bP_{1}}\zeta^2 |u_t|^2 \leq C\int_{\bP_1}(\zeta|\zeta_t| + |\nabla\zeta|^2 + K^2 \|H\|_{\infty}^2 \zeta^2)|\nabla u|^2.
\]
We get the desired estimate upon recalling the definition of $\zeta$. The proof is complete.
\end{proof}

\begin{lemm}\label{lemm:gradient-holder-interior}
Suppose $q > 4$, and define $\alpha = 1 - \frac{4}{q}$. Let $u\in C^{2 + \alpha, 1 + \frac{\alpha}{2}}(\overline{\bP_1})$ be a solution to~\eqref{eq:H-flow-appendix} with $H \in C^{1}(\RR^{3};\RR)$, and assume that~\eqref{eq:bounds-for-interior-estimate} holds. Then we have 
\begin{equation}\label{eq:C1a-interior}
\|\nabla u\|_{\infty; \bP_{\frac{1}{2}}} + [\nabla u]_{\alpha, \frac{\alpha}{2}; \bP_{\frac{1}{2}}} \leq C_{q, K, \Lambda_0} \cdot \|\nabla u\|_{2; \bP_{1}}.
\end{equation}
If in addition $\|\nabla H\|_{\infty; \RR^3} \leq \Lambda_1$, then 
\begin{equation}\label{eq:C2a-interior}
|u|_{2+\alpha, 1+\frac{\alpha}{2}; \bP_{\frac{1}{4}}} \leq  C_{q, K, \Lambda_0, \Lambda_1} \cdot \big( \|u\|_{2; \bP_{1}} + \|\nabla u\|_{2; \bP_{1}} \big).
\end{equation}
\end{lemm}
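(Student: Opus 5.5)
We follow the proof of Lemma~\ref{lemm:gradient-holder}, with the boundary flattening removed and the boundary datum $v_0$ replaced by a suitable constant. The role of~\eqref{eq:v-v0-estimate} will be played by Lemma~\ref{lemm:u-t-estimate} together with the Poincar\'e inequality. Interior cutoffs make every step simpler.

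\textbf{$W^{2,1}_q$ bounds.} Take a cutoff $\zeta \in C^\infty_c(\bP_{r_2})$ equal to $1$ on $\bP_{r_1}$, and set $w = \zeta(u - \xi)$, where $\xi$ is the space-time average of $u$ over $\bP_1$. Then
\[
w_t - \Delta w = -2\zeta H(u)\, u_{x^1}\times u_{x^2} + (\zeta_t - \Delta\zeta)(u-\xi) - 2\nabla\zeta\cdot\nabla u .
\]
Here $w$ vanishes on the parabolic boundary. By~\eqref{eq:bounds-for-interior-estimate}, the quadratic term is bounded by $\Lambda_0 K |\nabla u|$. The $L^p$ theory of~\cite[Theorem IV.9.1]{LSU} then gives
\[
\|u_t\|_{p;\bP_{r_1}} + \|\nabla^2 u\|_{p;\bP_{r_1}} \leq C_{p,K,\Lambda_0}\bigl(\|u-\xi\|_{p;\bP_{r_2}} + \|\nabla u\|_{p;\bP_{r_2}}\bigr).
\]
We alternate this with the parabolic Sobolev embedding~\cite[Lemma II.3.3]{LSU}, for $p = 2, 4, q$ on nested cylinders, exactly as in~\eqref{eq:Sobolev-from-LSU}--\eqref{eq:Lq-of-grad-v}. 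This bounds $\|u-\xi\|_{q} + \|\nabla u\|_q + \|u_t\|_q + \|\nabla^2 u\|_q$ on $\bP_{3/4}$ by $\|u-\xi\|_{2;\bP_{1}} + \|\nabla u\|_{2;\bP_{1}}$.

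\textbf{Removing $\|u-\xi\|_2$.} The term $\|u - \xi\|_{2}$ is controlled by Remark~\ref{rmk:parabolic-Poincare}, which bounds it by $\|\nabla u\|_2$ plus $\|u_t\|_2$. The $u_t$ term is in turn controlled by $\|\nabla u\|_{2;\bP_1}$ via Lemma~\ref{lemm:u-t-estimate}. One must work on slightly smaller cylinders to apply these, which is harmless after shrinking radii.

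\textbf{Proof of~\eqref{eq:C1a-interior}.} Lemma~\ref{lemm:parabolic-Poincare} applies in the interior with no boundary condition on $u_t$. Combined with H\"older's inequality and the $L^q$ bounds above, it gives a Campanato decay for $\nabla u$ of order $r^{4+2\alpha}$ on small cylinders centered in $\bP_{5/8}$. The Campanato characterization~\cite{Schlag-1996} then yields the H\"older bound. Averaging gives the sup bound, and this proves~\eqref{eq:C1a-interior}.

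\textbf{Proof of~\eqref{eq:C2a-interior}.} Using $\Lambda_1$, we get $[H(u)\,u_{x^1}\times u_{x^2}]_{\alpha,\frac{\alpha}{2}}$ bounded in terms of $\|\nabla u\|_\infty$, $[\nabla u]_{\alpha,\frac{\alpha}{2}}$ and $[u]_{\alpha,\frac{\alpha}{2}}$. The last seminorm is obtained from Remark~\ref{rmk:parabolic-Poincare} as in~\eqref{eq:v-holder}. The Schauder estimate~\cite[(6.5), Chapter IV]{LSU}, applied to $w$ with a cutoff supported in $\bP_{1/2}$, together with interpolation~\cite[Lemma II.3.2]{LSU}, gives the H\"older seminorms of $\nabla^2 u$, $u_t$ and $\nabla u$ (in $t$) on $\bP_{1/4}$.

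The sup norm of $u$ must be added separately. It is bounded via the H\"older estimate for $u$ plus its $L^2$ average, which is why $\|u\|_{2;\bP_1}$ appears on the right-hand side of~\eqref{eq:C2a-interior}. The main, though mild, obstacle is the bookkeeping of radii and constants, which must depend only on $q, K, \Lambda_0, \Lambda_1$.
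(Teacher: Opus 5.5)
Your proposal is correct and follows essentially the same route as the paper. You cut off $u-\xi$ and alternate the interior $L^p$ theory of LSU with parabolic Sobolev embedding, then remove $\|u-\xi\|_2$ via Poincar\'e and Lemma~\ref{lemm:u-t-estimate}. After that come the Campanato/Schlag argument for $\nabla u$, and for the second estimate the argument of~\eqref{eq:v-holder} followed by Schauder and interpolation. The only adjustment is the one you already flag: take $\xi$ to be the average of $u$ over a smaller cylinder such as $\bP_{\frac{6}{7}}$, as the paper does, rather than over $\bP_1$, because Lemma~\ref{lemm:u-t-estimate} only controls $u_t$ on cylinders strictly inside $\bP_1$. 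This choice is legitimate because $\xi$ is an arbitrary constant in the $L^p$ estimate.
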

\begin{proof}
Taking $w$ to have the form $(u - \xi)\zeta$, where $\xi \in \RR^3$ is any constant vector and $\zeta$ a suitable cut-off function, and following the argument leading to~\eqref{eq:W2q-estimate} and the first inequality in~\eqref{eq:Lq-of-grad-v}, replacing the estimate~\cite[page 343, (9.7)]{LSU} with~\cite[page 345, (9.12)]{LSU}, and applying the Sobolev inequality in~\cite[Lemma II.3.3]{LSU} instead to $u - \xi$, we get
\[
\begin{split}
\|u_t\|_{q; \bP_{\frac{3}{4}}} + \|\nabla^2 u\|_{q; \bP_{\frac{3}{4}}} \leq\ & C_{q, K, \Lambda_0}(\|u - \xi\|_{q; \bP_{\frac{4}{5}}} + \|\nabla u\|_{q; \bP_{\frac{4}{5}}})\\
\leq\ & C_{q, K, \Lambda_0}(\|u - \xi\|_{2; \bP_{\frac{6}{7}}} + \|\nabla u\|_{2; \bP_{\frac{6}{7}}}).
\end{split}
\]
Choosing $\xi = (u)_{\bP_{\frac{6}{7}}}$, we see by Poincar\'e's inequality and Lemma~\ref{lemm:u-t-estimate} that 
\[
\int_{\bP_{\frac{6}{7}}} |u - (u)_{\bP_{\frac{6}{7}}}|^2 \leq C\int_{\bP_{\frac{6}{7}}}|u_t|^2 +|\nabla u|^2 \leq C_{\Lambda_0, K} \int_{\bP_{1}} |\nabla u|^2.
\]
Repeating the derivation of~\eqref{eq:grad-v-estimate}, we get 
\[
 \|\nabla u\|_{\infty; \bP_{\frac{1}{2}}} + [\nabla u]_{\alpha, \frac{\alpha}{2}; \bP_{\frac{1}{2}}} \leq C_{q, K, \Lambda_0}\cdot \|\nabla u\|_{2; \bP_{1}}.
\]
This gives~\eqref{eq:C1a-interior}. The argument leading to~\eqref{eq:v-holder}, on the other hand, gives
\[
\|u\|_{\infty; \bP_{\frac{1}{2}}} + [u]_{\alpha, \frac{\alpha}{2}; \bP_{\frac{1}{2}}} \leq C_{q, K, \Lambda_0}\cdot\big( \|u\|_{2; \bP_{1}} +  \|\nabla u\|_{2; \bP_{1}}\big).
\]
We omit the proof of~\eqref{eq:C2a-interior}, since the modification involved is more straightforward. 
\end{proof}

\section{Gradient estimates assuming small energy}\label{sec:gradient-est-app}
We adopt the notation of Appendix~\ref{sec:further-a-priori}. In this appendix, we reproduce the argument leading to the standard gradient estimate~\eqref{eq:grad-estimate}. In fact, for use in Sections~\ref{sec:regular-solutions} and~\ref{sec:existence-uniformity}, we shall consider the parabolic version, from which~\eqref{eq:grad-estimate} follows. To save space, given $R \in [1, \infty)$ and $r \in (0, 1]$, we write
\[
\begin{split}
\bP_{r, R}(x_0, t_0) :=\ & (\bB_{r}(x_0) \cap \overline{\Omega_R}) \times (t_0-r^2, t_0] = \bP_{r}(x_0, t_0) \cap (\overline{\Omega_R} \times \RR),
\end{split}
\]
and omit $(x_0, t_0)$ from the notation when it equals $(0, 0)$. Also, we write $C^{2 + \alpha, 1 + \frac{\alpha}{2}}(\bP_{1, R})$ for the intersection of the spaces $C^{2 + \alpha, 1 + \frac{\alpha}{2}}(\overline{\bP_{r, R}})$ as $r$ ranges over $(0, 1)$. The meaning of $C^{2 + \alpha, 1 + \frac{\alpha}{2}}(\bP_{1})$ is similar.
\begin{lemm}\label{lemm:eta-to-gradient-bound}
Given $q > 4$ and  $\Lambda_0, \Lambda_{\partial} > 0$, define $\alpha = 1 - \frac{4}{q}$ as before, and let $R_0 = R_0(q)$ be as given by Lemma~\ref{lemm:gradient-holder}. There exists $\eta = \eta(q, \Lambda_0, \Lambda_{\partial}) \in (0, 1)$ with the following property. Suppose $R \in [R_0, \infty)$, and that $u \in C^{2+\alpha, 1+\frac{\alpha}{2}}(\bP_{1, R})$ is a solution to~\eqref{eq:H-flow-at-boundary}, where $u_0 \in C^{2, \alpha}(\bB \cap \overline{\Omega_{R}})$, $H \in C^{1}(\RR^3; \RR)$, and 
\begin{equation}\label{eq:bounds-on-data}
\|u_0\|_{2, \infty; \bB \cap \Omega_{R}} \leq \Lambda_{\partial}, \quad \|H\|_{\infty} \leq \Lambda_0.
\end{equation}
Assume in addition that $u$ satisfies 
\begin{equation}\label{eq:interior-gradient-small-energy}
\sup_{-1<  t \leq 0}\int_{\bB \cap \Omega_{R}}|\nabla u(\cdot, t)|^2 \leq \eta.
\end{equation}
Then for all $r \in (0, 1)$ there holds 
\begin{equation}\label{eq:eta-to-grad-bound}
(1-r) \cdot \sup_{\bP_{r, R}}|\nabla u| \leq 4.
\end{equation}
\end{lemm}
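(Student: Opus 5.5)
We argue by contradiction, using a Schoen-type point-picking and blow-up argument, in the same spirit as the rescaling argument in the proof of Proposition~\ref{prop:existence-flow}. Suppose no such $\eta$ exists. Then there are sequences $\eta_k \to 0$, radii $R_k \ge R_0$, data $u_{0,k}$ and $H_k$ satisfying~\eqref{eq:bounds-on-data}, and solutions $u_k$ satisfying~\eqref{eq:interior-gradient-small-energy} with $\eta_k$, for which~\eqref{eq:eta-to-grad-bound} fails for some $r_k$. For each $k$ we consider the function
\[
(x,t) \mapsto (1-\max\{|x|, |t|^{1/2}\})\,|\nabla u_k(x,t)| \quad \text{on } \bP_{1,R_k}.
\]
By regularity it is continuous, and it vanishes on the parabolic boundary, at least after restricting to $\bP_{r,R_k}$ with $r$ close to $1$ and letting $r\to1$. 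It therefore attains its maximum at some point $(x_k,t_k)$, and the maximum value exceeds $4$. Set $\sigma_k = \frac12(1-\max\{|x_k|,|t_k|^{1/2}\})$ and $e_k = |\nabla u_k(x_k,t_k)|$, so that $\sigma_k e_k > 2$. The standard doubling argument then gives $|\nabla u_k| \le 2e_k$ on $\bP_{\sigma_k,R_k}(x_k,t_k)$.

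Next we rescale by $\lambda_k = e_k^{-1}$, setting $\tilde u_k(y,s) = u_k(x_k+\lambda_k y, t_k+\lambda_k^2 s)$. This map is defined on a parabolic cylinder of radius $\sigma_k/\lambda_k > 2$, intersected with a rescaled copy of $\Omega_{R_k}$. It satisfies $|\nabla \tilde u_k| \le 2$ and $|\nabla\tilde u_k(0,0)| = 1$. The scale-invariant energy bound gives $\sup_s \int |\nabla \tilde u_k(\cdot,s)|^2 \le \eta_k$. The equation is scale invariant, while the boundary data $\tilde u_{0,k}(y) = u_{0,k}(x_k+\lambda_k y)$ satisfies $\|\nabla \tilde u_{0,k}\| \le \lambda_k\Lambda_\partial$ and $\|\nabla^2\tilde u_{0,k}\| \le \lambda_k^2\Lambda_\partial$. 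Since $\lambda_k \le \sigma_k/2 \le 1/2$, these are bounded, and they become small in the regime where $\lambda_k \to 0$.

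We now split into cases according to the distance from $x_k$ to $\partial\Omega_{R_k}$ relative to $\lambda_k$.

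\emph{Interior case.} If $\bB_{\lambda_k}(x_k)\subset\Omega_{R_k}$, then $\tilde u_k$ solves the flow on $\bP_1$ with $|\nabla\tilde u_k|\le2$. Lemma~\ref{lemm:gradient-holder-interior}, specifically~\eqref{eq:C1a-interior}, gives
\[
1 = |\nabla \tilde u_k(0,0)| \le C_{q,2,\Lambda_0}\,\|\nabla\tilde u_k\|_{2;\bP_1} \le C\sqrt{\eta_k} \to 0,
\]
which is a contradiction.

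\emph{Boundary case.} Otherwise there is a boundary point within distance $\lambda_k$ of $x_k$. We recenter at the nearest boundary point and rotate, so that the domain becomes $\Omega_{R_k/\lambda_k}$ with $R_k/\lambda_k \ge R_0$. The point $(0,0)$ then moves to a point within unit distance, and after rescaling by a further factor of $2$ it lies in $\bB_{1/4}$. We then apply~\eqref{eq:C1a-estimate} of Lemma~\ref{lemm:gradient-holder} with $K=2$, which yields
\[
1 \le C\bigl(\sqrt{\eta_k}+\|\tilde u_{0,k} - \tilde u_{0,k}(p_k)\|_{2,q}\bigr).
\]
Here we subtract a constant from the data; this is legitimate because the flow equation only involves $H(u)$, and we may replace $H$ by its translate $H(\cdot+c)$, which has the same bounds.

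The main obstacle is in this boundary case: the data term is small only when $\lambda_k\to0$, with size $O(\lambda_k)$ on a bounded region. If instead $\lambda_k$ stays bounded below, then $e_k$ is bounded and the data term is not small. To handle this, the first thing to try is to note that in this regime the gradient is bounded by an absolute constant $K$, namely $|\nabla u_k| \le 2e_k \lesssim 1/\sigma_k$ on the relevant cylinder. We can then apply~\eqref{eq:C1a-estimate} directly at the unrescaled scale $\sigma_k$, where the data contributes at most $\sigma_k\Lambda_\partial$. Alternatively, we arrange the threshold $4$ so that it forces $\sigma_k e_k > 2$ only when $\lambda_k \le c(\Lambda_\partial)$ is small; this is exactly where the dependence of $\eta$ on $\Lambda_\partial$ enters. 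Choosing these constants carefully, so that the contradiction closes in both regimes, is the delicate step.
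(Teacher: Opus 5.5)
\textbf{There is a genuine gap in the boundary case.} Your point-picking, doubling and rescaling steps are sound. Your interior case is also correct, and slightly more direct than the paper's treatment of that case, because \eqref{eq:C1a-interior} has no data term.

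The boundary case is left open, and the mechanism you use there cannot close it. You try to bound $|\nabla\tilde u_k(0,0)|=1$ by the right-hand side of \eqref{eq:C1a-estimate}, which only works if the data term is small. After rescaling, that term has size about $\lambda_k\Lambda_\partial$, and nothing forces $\lambda_k\to 0$: the failure of \eqref{eq:eta-to-grad-bound} only gives $\sigma_k e_k>2$, hence $\lambda_k<\frac14$.

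Neither of your remedies works. At scale $\sigma_k$ you get at best $2<\sigma_k e_k\le C_K(\sqrt{\eta_k}+C\sigma_k\Lambda_\partial)$. In the bad regime $\sigma_k>2\lambda_k$ is bounded below, so for large $\Lambda_\partial$ there is no contradiction. Adjusting the threshold $4$ is not allowed: it is an absolute constant in the statement, and only $\eta$ may depend on $\Lambda_\partial$. So the smallness must come from $\eta$ alone, with the boundary data contributing only a bounded, not small, constant.

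\textbf{The missing idea} is to use the $C^{1,\alpha}$ estimate for equicontinuity rather than as a sup bound. Since $|\nabla\tilde u_k|\le 2$ and $\lambda_k<1$, \eqref{eq:C1a-estimate}, recentered at a boundary point, bounds $[\nabla\tilde u_k]_{\alpha,\frac{\alpha}{2}}$ near $(0,0)$ by some $M=M(q,\Lambda_0,\Lambda_\partial)$, whatever the size of $\lambda_k$. Hence $|\nabla\tilde u_k|\ge\frac12$ on $(\bB_{r_*}\cap\widetilde\Omega_k)\times(-r_*^2,0]$ for some $r_*=r_*(q,\Lambda_0,\Lambda_\partial)>0$. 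Then \eqref{eq:measure-lower-bound} forces the time-$0$ slice energy to be at least $\frac14\theta_0 r_*^2$, contradicting $\eta_k\to 0$ in both regimes at once.

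This is the paper's argument in non-compactness form:
\begin{itemize}
\item Set $\rho_1=|\nabla u(x_0,t_0)|^{-1}$ at the maximizing point.
\item The energy bound on $\bP_{\eta^{1/3}\rho_1,R}(x_0,t_0)$ together with \eqref{eq:measure-lower-bound} produces a point $(x_1,t_1)$ there with $|\nabla u(x_1,t_1)|^2<\frac18\rho_1^{-2}$.
\item So the H\"older quotient of $|\nabla u|^2$ between the two points is at least $\frac12\eta^{-\alpha/3}\rho_1^{-2-\alpha}$.
\item On the other hand, \eqref{eq:C1a-interior} and \eqref{eq:C1a-estimate} at scale $\rho_1$ bound it by $K_i\rho_1^{-2-\alpha}$, with $K_2=K_2(q,\Lambda_0,\Lambda_\partial)$.
\item Choosing $\eta$ small relative to $K_1+K_2$ closes the argument, and this is exactly where $\Lambda_\partial$ enters.
\end{itemize}

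\textbf{A smaller bookkeeping point.} Splitting cases at $\bB_{\lambda_k}(x_k)$ is too coarse. Placing $(0,0)$ in $\bB_{\frac14}$ around a boundary point at distance up to $1$ requires the gradient bound on a cylinder of radius about $5$, but doubling only gives it on radius $\sigma_k/\lambda_k>2$. Split at $\bB_{\lambda_k/8}(x_k)$ instead, as the paper does.
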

\begin{proof}
The idea of proof comes from the work of Fraser~\cite[Lemma 1.6]{Fraser2000}. We first prove the lemma assuming in addition that $u \in C^{2 + \alpha, 1 + \frac{\alpha}{2}}(\overline{\bP_{1, R}})$. In this case there exist $\rho \in [0, 1]$ and $(x_0, t_0) \in \overline{\bP_{\rho, R}}$ such that 
\begin{equation}\label{eq:point-picking}
(1 - \rho)^2 |\nabla u(x_0, t_0)|^2 = (1 - \rho)^2\sup_{\bP_{\rho, R}}|\nabla u|^2 = \sup_{0 \leq r \leq 1}\big[(1 - r)^2 \sup_{\bP_{r, R}}|\nabla u|^2\big].
\end{equation}
If the left-most term vanishes, then we have $\nabla u \equiv 0$ on $\bP_{1, R}$ and there is nothing to prove. Thus below we assume that 
\[
\rho_0:= \frac{1}{2}(1 - \rho) > 0, \quad e_0: = |\nabla u(x_0, t_0)|^2 > 0.
\]
Then, from the inclusions
\[
\bP_{\rho_0}(x_0, t_0) \subset \bB_{\frac{1 + \rho}{2}} \times (-\rho^2-\rho_0^2, 0] \subset \bP_{\frac{1 + \rho}{2}},
\]
we infer that for all $(x, t) \in \bP_{\rho_0, R}(x_0, t_0)$ there holds $\big( \frac{1 - \rho}{2} \big)^2 |\nabla u(x, t)|^2 \leq (1 - \rho)^2e_0$, and hence
\begin{equation}\label{eq:grad-bound-from-point-picking}
\|\nabla u\|_{\infty; \bP_{\rho_0, R}(x_0, t_0)} \leq 2e_0^{\frac{1}{2}}.
\end{equation}
Next, suppose towards a contradiction that 
\begin{equation}\label{eq:interior-gradient-contradiction}
\rho_1:= e_0^{-\frac{1}{2}} \leq \frac{\rho_0}{2}.
\end{equation}
Then, with the help of~\eqref{eq:interior-gradient-small-energy}, we have
\begin{equation}\label{eq:infimum-upper-bound}
\begin{split}
\big( \eta^{\frac{1}{3}}\rho_1\big)^2 \cdot \eta \geq \ &\int_{\bP_{\eta^{\frac{1}{3}}\rho_1, R}(x_0, t_0)}|\nabla u|^2 \\
\geq\ & \theta_0\big( \eta^{\frac{1}{3}}\rho_1\big)^4 \cdot \inf\big\{ |\nabla u(x, t)|^2\  \big|\ (x, t) \in  \bP_{\eta^{\frac{1}{3}}\rho_1, R}(x_0, t_0) \big\},
\end{split}
\end{equation}
where we used~\eqref{eq:measure-lower-bound} for the second inequality. Imposing the requirement 
\begin{equation}\label{eq:eta-threshold-1}
\eta <\Big( \frac{\min\{1, \theta_0\}}{16}\Big)^3,
\end{equation}
we get from~\eqref{eq:infimum-upper-bound} some $(x_1, t_1) \in \bP_{\eta^{\frac{1}{3}}\rho_1, R}(x_0, t_0)$ satisfying
\[
|\nabla u(x_1, t_1)|^2 \leq \frac{2}{\theta_0}\cdot \eta^{\frac{1}{3}}\rho_1^{-2} < \frac{1}{8}\rho_1^{-2}.
\]
Recalling $|\nabla u(x_0, t_0)|^2 = e_0 = \rho_1^{-2}$, we deduce that
\begin{equation}\label{eq:difference-quotient-bound}
\frac{|\nabla u(x_0, t_0)|^2 - |\nabla u(x_1, t_1)|^2}{\max\{|x_0 - x_1|^{\alpha}, |t_0 - t_1|^{\frac{\alpha}{2}}\}} \geq \frac{1}{2}\eta^{-\frac{\alpha}{3}}\rho_1^{-2-\alpha}.
\end{equation}

To continue, we distinguish two cases. In the case where $\bB_{\frac{\rho_1}{8}}(x_0) \subset \Omega_R$, we have $\bP_{\frac{\rho_1}{8}}(x_0, t_0) = \bP_{\frac{\rho_1}{8}, R}(x_0, t_0)$, and hence~\eqref{eq:grad-bound-from-point-picking} implies that 
\begin{equation}\label{eq:point-picking-consequence-1}
\frac{\rho_1}{8}\|\nabla u\|_{\infty; \bP_{\frac{\rho_1}{8}}(x_0, t_0)} \leq \frac{1}{4}.
\end{equation}
This, and the scaling invariance of the differential equation~\eqref{eq:H-flow-appendix}, allows us to apply Lemma~\ref{lemm:gradient-holder-interior} to the map
\[
(x, t)\mapsto u(x_0 + \frac{\rho_1}{8}x, t_0 + \big(\frac{\rho_1}{8}\big)^2 t),\quad (x, t) \in \bP_{1}.
\]
In particular, scaling back the estimate~\eqref{eq:C1a-interior} and using~\eqref{eq:point-picking-consequence-1}, we get
\begin{equation}\label{eq:C1a-scaled}
\begin{split}
\rho_1^{1 + \alpha}[\nabla u]_{\alpha, \frac{\alpha}{2}; \bP_{\frac{\rho_1}{16}}(x_0, t_0)}\leq\ & C\Big( \rho_1^{-2}\int_{\bP_{\frac{\rho_1}{8}}(x_0, t_0)}|\nabla u|^2 dxdt \Big)^{\frac{1}{2}} \leq C',
\end{split}
\end{equation}
where $C'$ depends only on $q$ and $\Lambda_0$. Noting from~\eqref{eq:eta-threshold-1} that $(x_1, t_1)  \in \bP_{\frac{\rho_1}{16}}(x_0, t_0)$, and using~\eqref{eq:point-picking-consequence-1} and~\eqref{eq:C1a-scaled}, we get
\begin{align}
|\nabla u(x_0, t_0)|^2 - |\nabla u(x_1, t_1)|^2 \leq\ & 4\rho_1^{-1}|\nabla u(x_0, t_0) - \nabla u(x_1, t_1)|\nonumber\\
\leq\ & 4\rho_1^{-1}\cdot[\nabla u]_{\alpha, \frac{\alpha}{2}; \bP_{\frac{\rho_1}{16}}(x_0, t_0)} \cdot \max\{|x_0 - x_1|^{\alpha}, |t_0 - t_1|^{\frac{\alpha}{2}}\}\nonumber\\
\leq\ & K_1\rho_1^{-2-\alpha}  \cdot \max\{|x_0 - x_1|^{\alpha}, |t_0 - t_1|^{\frac{\alpha}{2}}\},\label{eq:difference-upper-bound-case-1}
\end{align}
where $K_1$ depends only on $q$ and $\Lambda_0$. 

On the other hand, in the case where $\bB_{\frac{\rho_1}{8}}(x_0) \not\subset \Omega_R$, we choose any $x_2 \in \bB_{\frac{\rho_1}{8}}(x_0) \cap \partial \Omega_R$, and observe that
\begin{equation}\label{eq:eta-to-bound-inclusions-case-2}
\bP_{\frac{\rho_1}{16}, R}(x_0, t_0) \subset \bP_{\frac{3\rho_1}{16}, R}(x_2, t_0),\quad \bP_{\frac{3\rho_1}{4}, R} (x_2, t_0)\subset \bP_{\rho_1, R}(x_0, t_0).
\end{equation}
The second of these inclusions together with~\eqref{eq:grad-bound-from-point-picking} gives 
\begin{equation}\label{eq:point-picking-consequence-2}
\frac{3\rho_1}{4} \|\nabla u\|_{\infty; \bP_{\frac{3\rho_1}{4}, R}(x_2, t_0)} \leq \frac{3}{2},
\end{equation}
so that Lemma~\ref{lemm:gradient-holder} is applicable, with $R$ replaced by $\frac{4R}{3\rho_1}$, to the maps 
\[
\begin{split}
(x, t) &\mapsto u(x_2 + \frac{3\rho_1}{4}Ax, t_0 + \big(\frac{3\rho_1}{4}\big)^2 t),\quad (x, t) \in \bP_{1, \frac{4R}{3\rho_1}},\\
x & \mapsto u_0(x_2 + \frac{3\rho_1}{4}Ax),\quad x \in \bB \cap \overline{\Omega_{\frac{4R}{3\rho_1}}},
\end{split}
\]
where $A:\RR^2 \to \RR^2$ is a suitable rotation. From the estimate~\eqref{eq:C1a-estimate} we get 
\[
\begin{split}
\rho_1^{1 + \alpha}[\nabla u]_{\alpha, \frac{\alpha}{2}; \bP_{\frac{3\rho_1}{16}, R}(x_2, t_0)} \leq\ & C \cdot \big( \rho_{1}^{-1}\|\nabla u\|_{2; \bP_{\frac{3\rho_1}{4}, R}(x_2, t_0)} +  \sum_{i=0}^{2}(\rho_1)^{i}\|\nabla^i u_0\|_{\infty; \bB_{\frac{3\rho_1}{4}}(x_2)\cap \Omega_R} \big)\\
\leq\ & C\cdot(1 + \|u_0\|_{2, \infty; \bB_{\frac{3\rho_1}{4}}(x_2)\cap \Omega_R} ),
\end{split}
\]
where both constants $C$ depend only on $q$ and $\Lambda_0$, and we used~\eqref{eq:point-picking-consequence-2} and the inequality $\rho_{1} < 1$, respectively, to bound the two terms in parentheses on the first line. Recalling that $(x_1, t_1) \in \bP_{\frac{\rho_1}{16}, R}(x_0, t_0)$, we infer from the previous estimate, together with the inclusions~\eqref{eq:eta-to-bound-inclusions-case-2} and the gradient bound~\eqref{eq:point-picking-consequence-2}, that
\begin{equation}\label{eq:difference-upper-bound-case-2}
|\nabla u(x_0, t_0)|^2 - |\nabla u(x_1, t_1)|^2 \leq K_2\rho_1^{-2-\alpha}  \cdot \max\{|x_0 - x_1|^{\alpha}, |t_0 - t_1|^{\frac{\alpha}{2}}\},
\end{equation}
where $K_2$ depends only on $q$, $\Lambda_0$, and $\Lambda_\partial$. Requiring further that $\eta$ satisfy 
\[
\eta < \big( \frac{1}{2(K_1 + K_2 + 1)} \big)^{\frac{3}{\alpha}},
\]
we get a contradiction upon combining~\eqref{eq:difference-upper-bound-case-1} and~\eqref{eq:difference-upper-bound-case-2} with~\eqref{eq:difference-quotient-bound}. Thus~\eqref{eq:interior-gradient-contradiction} cannot hold, which implies by~\eqref{eq:point-picking} that the desired inequality~\eqref{eq:eta-to-grad-bound} holds.

To remove the assumption $u \in C^{2 + \alpha, 1 + \frac{\alpha}{2}}(\overline{\bP_{1, R}})$, we consider, for $\lambda \in (0, 1)$ and $(x, t) \in \bP_{\lambda^{-1}, \lambda^{-1}R}$, the maps 
\[
\widetilde{u}(x, t) = u(\lambda x, \lambda^2 t),\quad \widetilde{u}_0(x) = u_0(\lambda x).
\] 
Then $\widetilde{u}$ lies in $C^{2 + \alpha, 1 + \frac{\alpha}{2}}(\overline{\bP_{1, \lambda^{-1}R}})$, and satisfies
\[
\left\{
\begin{array}{ll}
\widetilde{u}_{t} - \Delta \widetilde{u} = -2H(\widetilde{u})\widetilde{u}_{x^1} \times \widetilde{u}_{x^2},& \text{ in }(\bB \cap \Omega_{\lambda^{-1}R}) \times (-1, 0], \\
\widetilde{u}(x, t) = \widetilde{u}_0(x), & \text{ on } (\bB \cap \partial \Omega_{\lambda^{-1}R}) \times (-1, 0].
\end{array}
\right.
\]
Observe also that
\[
\| \widetilde{u}_0 \|_{2, \infty; \bB \cap \Omega_{\lambda^{-1}R}} \leq \|u_0\|_{2, \infty; \bB_{\lambda} \cap \Omega_{R}} \leq \Lambda_{\partial},
\]
and that
\[
\sup_{-1 < t \leq 0}\int_{\bB\cap \Omega_{\lambda^{-1}R}} |\nabla\widetilde{u}(\cdot, t)|^2 = \sup_{-\lambda^2 < t \leq 0}\int_{\bB_{\lambda} \cap \Omega_R} |\nabla u(\cdot, t)|^2 \leq \eta.
\]
Thus, for all $r \in (0, 1)$ and $(x, t) \in \bP_{r, R}$, noting that $\bP_{r, R} \subset \bP_{r, \lambda^{-1}R}$, and applying to $\widetilde{u}$ what we just proved, we get
\[
(1 - r)^2 \lambda^2 |\nabla u(\lambda x, \lambda^2 t)|^2\leq 16.
\]
Upon letting $\lambda \to 1^{-}$, we get~\eqref{eq:eta-to-grad-bound} since $r$ and $(x, t)$ are arbitrary.
\end{proof}
Lemma~\ref{lemm:eta-to-gradient-bound} has an interior version:
\begin{lemm}\label{lemm:eta-to-gradient-bound-interior}
Given $q > 4$ and $\Lambda_{0} > 0$, define $\alpha = 1 - \frac{4}{q}$. There exists $\eta' = \eta'(q, \Lambda_{0}) \in (0, 1)$ with the following property. Suppose $u \in C^{2 + \alpha, 1 + \frac{\alpha}{2}}(\bP_{1})$ is a solution to~\eqref{eq:H-flow-appendix}, where $H \in C^{1}(\RR^3, \RR)$ and $\|H\|_{\infty} \leq \Lambda_{0}$. Assume furthermore that 
\[
\sup_{-1 < t \leq 0}\int_{\bB}|\nabla u(\cdot, t)|^2 \leq \eta'.
\]
Then for all $r \in (0, 1)$ we have $(1-r)\cdot \sup_{\bP_{r}}|\nabla u| \leq 4$.
\end{lemm}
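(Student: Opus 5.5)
The plan is to follow the proof of Lemma~\ref{lemm:eta-to-gradient-bound} essentially verbatim, dropping the boundary case. First I would assume in addition that $u \in C^{2+\alpha, 1+\frac{\alpha}{2}}(\overline{\bP_1})$ and apply Fraser's point-picking. Choose $\rho \in [0,1]$ and $(x_0,t_0) \in \overline{\bP_\rho}$ realizing
\[
\sup_{0\le r\le 1}(1-r)^2\sup_{\bP_r}|\nabla u|^2 .
\]
Set $\rho_0=\frac{1}{2}(1-\rho)$ and $e_0=|\nabla u(x_0,t_0)|^2>0$; if $e_0=0$ there is nothing to prove. The inclusion $\bP_{\rho_0}(x_0,t_0)\subset\bP_{\frac{1+\rho}{2}}$ together with the extremality then gives $\|\nabla u\|_{\infty;\bP_{\rho_0}(x_0,t_0)}\le 2e_0^{1/2}$. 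Suppose for contradiction that $\rho_1:=e_0^{-1/2}\le\rho_0/2$.

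The small energy bound comes in next. Since the parabolic cylinder $\bP_{\eta'^{1/3}\rho_1}(x_0,t_0)$ lies in $\bP_1$ and has measure comparable to $(\eta'^{1/3}\rho_1)^4$ (here the full disk is available, so $\theta_0$ can be replaced by $\pi$), the bound $\sup_t\int_{\bB}|\nabla u(\cdot,t)|^2\le\eta'$ yields a point $(x_1,t_1)$ in this cylinder with $|\nabla u(x_1,t_1)|^2\le C\eta'^{1/3}\rho_1^{-2}<\frac{1}{8}\rho_1^{-2}$ once $\eta'$ is small. As in~\eqref{eq:difference-quotient-bound}, this forces
\[
\frac{|\nabla u(x_0,t_0)|^2-|\nabla u(x_1,t_1)|^2}{\max\{|x_0-x_1|^\alpha,|t_0-t_1|^{\alpha/2}\}}\ge\tfrac{1}{2}\eta'^{-\alpha/3}\rho_1^{-2-\alpha}.
\]

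For the matching upper bound only the interior case occurs. Rescale $u$ on $\bP_{\rho_1/8}(x_0,t_0)$, which lies in $\bP_{\rho_0}(x_0,t_0)$, so that the rescaled gradient is bounded by $1/4$. Then apply~\eqref{eq:C1a-interior} of Lemma~\ref{lemm:gradient-holder-interior} with $K=1/4$, and scale back, exactly as in~\eqref{eq:C1a-scaled} and~\eqref{eq:difference-upper-bound-case-1}. This gives $|\nabla u(x_0,t_0)|^2-|\nabla u(x_1,t_1)|^2\le K_1\rho_1^{-2-\alpha}\max\{\cdots\}$ with $K_1=K_1(q,\Lambda_0)$. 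Choosing $\eta'<(2(K_1+1))^{-3/\alpha}$ contradicts the lower bound, so $\rho_1>\rho_0/2$. That inequality is $(1-\rho)\sup_{\bP_\rho}|\nabla u|\le 4$, and by extremality the same bound holds for every $r$.

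Finally, the extra regularity up to the closure is removed as before. Apply the above to $\widetilde u(x,t)=u(\lambda x,\lambda^2t)$ with $\lambda<1$: this map is regular on $\overline{\bP_1}$, still solves~\eqref{eq:H-flow-appendix}, and has energy at most $\eta'$ on each slice. Letting $\lambda\to1^-$ gives the claim. I expect no real obstacle, since the only delicate point is checking that $(x_1,t_1)\in\bP_{\rho_1/16}(x_0,t_0)$, which follows from $\eta'^{1/3}<1/16$.
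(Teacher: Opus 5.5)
Your proposal is correct and is the argument the paper intends. The paper omits this proof, saying only that it is similar to that of Lemma~\ref{lemm:eta-to-gradient-bound}, and you have carried out that adaptation faithfully: Fraser-type point-picking, the measure lower bound with $\pi$ in place of $\theta_0$, only the interior case via \eqref{eq:C1a-interior} with $K=\frac{1}{4}$, and the same dilation $u(\lambda x,\lambda^2 t)$ to remove the regularity-up-to-the-closure assumption.
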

\begin{proof}
The proof is similar to that of Lemma~\ref{lemm:eta-to-gradient-bound}. We omit the details.
\end{proof}

To continue, we fix $q > 4$ and $\Lambda_0, \Lambda_{\partial} > 0$, set $\alpha = 1 - \frac{4}{q}$, and let $R_0 = R_0(q)$, $\eta = \eta(q, \Lambda_0, \Lambda_{\partial})$, and $\eta' = \eta'(q, \Lambda_0)$ be the constants produced respectively by Lemma~\ref{lemm:gradient-holder}, Lemma~\ref{lemm:eta-to-gradient-bound} and Lemma~\ref{lemm:eta-to-gradient-bound-interior}. 

\begin{coro}\label{coro:grad-estimate-scaled-boundary}
Given $R \in [R_0, \infty)$, $u_0 \in C^{2, \alpha}(\bB \cap \overline{\Omega_{R}})$, and $H \in C^{1}(\RR^3; \RR)$ such that the bounds~\eqref{eq:bounds-on-data} hold, suppose we have for some $T > 0$ a solution $u \in C^{2+ \alpha, 1+ \frac{\alpha}{2}}((\bB \cap \overline{\Omega_R}) \times (0, T])$ of
\[
\left\{
\begin{array}{ll}
u_{t} - \Delta u = -2H(u)u_{x^1} \times u_{x^2},& \text{ in }(\bB \cap \Omega_{R}) \times (0, T], \\
u(x, t) = u_0(x), & \text{ on } (\bB \cap \partial \Omega_{R}) \times (0, T],
\end{array}
\right.
\]
satisfying in addition that
\begin{equation}\label{eq:boundary-grad-estimate-smallness}
\sup_{0 < t \leq T}\int_{\bB \cap \Omega_R} |\nabla u(\cdot, t)|^2 \leq \min\{\eta, \eta'\}.
\end{equation}
Then for all $(x, t) \in (\bB \cap \overline{\Omega_{R}}) \times (0, T]$, we have
\begin{equation}\label{eq:grad-estimate-scaled-boundary}
\min\{t, (1 - |x|)^2\} \cdot |\nabla u(x,t)|^2 \leq C_{q, \Lambda_0} \cdot\big( \Lambda_{\partial}^2 + \sup_{0 < t \leq T} \int_{\bB \cap \Omega_R}|\nabla u(\cdot, t)|^2\big).
\end{equation}
\end{coro}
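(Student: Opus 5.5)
Fix $(x,t) \in (\bB \cap \overline{\Omega_R}) \times (0,T]$ and set $\rho := \min\{\sqrt{t}, 1-|x|\}$. We reduce to Lemma~\ref{lemm:eta-to-gradient-bound} or Lemma~\ref{lemm:eta-to-gradient-bound-interior} by parabolic rescaling on a cylinder of size comparable to $\rho$, centred at $(x,t)$. Write $E := \sup_{0<s\leq T}\int_{\bB\cap\Omega_R}|\nabla u(\cdot,s)|^2$.

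\textbf{Rescaling.} Set $r_0 := \rho/2$. Then $\bB_{r_0}(x) \subset \bB$ and $(t - r_0^2, t] \subset (0,T]$. There are two cases, according to whether the disk $\bB_{r_0}(x)$ meets $\partial\Omega_R$.

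\textbf{Interior case.} Suppose $\bB_{r_0}(x)\subset\Omega_R$. Consider
\[
\tilde u(y,s) = u(x + r_0 y,\, t + r_0^2 s) \quad\text{on } \bP_1.
\]
This is a $C^{2+\alpha,1+\frac{\alpha}{2}}$ solution of~\eqref{eq:H-flow-appendix}. By conformal invariance of the Dirichlet integral, its slice energies are at most $E \leq \eta'$. Lemma~\ref{lemm:eta-to-gradient-bound-interior} with $r = 1/2$ gives $|\nabla\tilde u(0,0)|\leq 8$, hence $\rho^2|\nabla u(x,t)|^2 \leq 256$.

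This is a bound by a constant, not by $\Lambda_\partial^2 + E$. To obtain the linear dependence on $E$, we combine it with Lemma~\ref{lemm:gradient-holder-interior}. On $\bP_{1/2}$ we now have $|\nabla\tilde u| \leq 8$, so $K$ is universal. Estimate~\eqref{eq:C1a-interior}, applied to $\tilde u$ rescaled from $\bP_{1/2}$, gives
\[
|\nabla\tilde u(0,0)|^2 \leq C_{q,\Lambda_0}\int_{\bP_{1/2}}|\nabla\tilde u|^2 \leq C_{q,\Lambda_0}\,E.
\]
Scaling back yields $\rho^2|\nabla u(x,t)|^2 \leq C E$.

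\textbf{Boundary case.} Suppose $\bB_{r_0}(x)$ meets $\partial\Omega_R$. Choose $x_2\in\partial\Omega_R\cap\bB_{r_0}(x)$ and centre the rescaling at $x_2$ with radius $2r_0$, shrinking $\rho$ by a fixed factor if necessary so that $\bB_{2r_0}(x_2)\subset\bB$ and the time interval still lies in $(0,T]$. After a rotation, the rescaled domain is $\bB\cap\Omega_{R/(2r_0)}$ with $R/(2r_0)\geq R\geq R_0$. The rescaled boundary datum $\tilde u_0(y) = u_0(x_2 + 2r_0Ay)$ satisfies $\|\tilde u_0\|_{2,\infty}\leq\Lambda_\partial$, since $r_0\leq 1$ and the derivatives pick up powers of $r_0$. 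Lemma~\ref{lemm:eta-to-gradient-bound} therefore gives a bound $|\nabla\tilde u|\leq 8$ on half the cylinder. Estimate~\eqref{eq:C1a-estimate} of Lemma~\ref{lemm:gradient-holder} then gives
\[
|\nabla\tilde u|^2 \leq C\bigl(E + \|\tilde u_0\|_{2,q}^2\bigr).
\]
This needs one more inclusion step, as in the proof of Lemma~\ref{lemm:eta-to-gradient-bound}, to guarantee that the rescaled point lies in the quarter-cylinder. Moreover $\|\tilde u_0\|_{2,q}$ involves $\nabla\tilde u_0$ and $\nabla^2\tilde u_0$, which carry factors $r_0$ and $r_0^2$. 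The only problematic term is the zeroth-order piece $\|\tilde u_0\|_q$, which is not controlled by $\Lambda_\partial$ after subtraction unless handled. We handle it by subtracting the constant $u_0(x_2)$: this leaves the equation invariant because only $H(u)$ changes, and $H$ is still bounded by $\Lambda_0$. Then $|\tilde u_0 - u_0(x_2)|\leq 2r_0\Lambda_\partial$, so $\|\tilde u_0 - u_0(x_2)\|_{2,q}\leq C\Lambda_\partial$. Scaling back gives $\rho^2|\nabla u(x,t)|^2\leq C(\Lambda_\partial^2+E)$.

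\textbf{Main obstacle.} The delicate point is the geometric bookkeeping in the boundary case. We must choose radii so that three things hold at once: the rescaled cylinder sits inside $(\bB\cap\Omega_R)\times(0,T]$, the point $(x,t)$ lands in the region $\bB_{1/4}\times(-\frac{1}{16},0]$ where~\eqref{eq:C1a-estimate} applies, and the rescaled $\Omega$-radius stays at least $R_0$. Our plan is to take $r_0 = \rho/8$ and radius $4r_0$ about $x_2$, mirroring the inclusions~\eqref{eq:eta-to-bound-inclusions-case-2}. Finally, $\min\{t,(1-|x|)^2\} = \rho^2$, which gives~\eqref{eq:grad-estimate-scaled-boundary}.
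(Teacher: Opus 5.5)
Your overall scheme is the paper's: split according to whether a small disk around $x$ stays inside $\Omega_R$, rescale, use Lemma~\ref{lemm:eta-to-gradient-bound-interior} (resp.\ Lemma~\ref{lemm:eta-to-gradient-bound}) to get a gradient bound on half of the rescaled cylinder, and then feed that bound into \eqref{eq:C1a-interior} (resp.\ \eqref{eq:C1a-estimate}) to get linear dependence on the energy. Your interior case is fine.

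\textbf{The boundary case fails with the radii you commit to.} Suppose $|x-x_2|<r_0$ and you rescale about $x_2$ at radius $4r_0$. Lemma~\ref{lemm:eta-to-gradient-bound} then controls $|\nabla u|$ uniformly only on $\bB_{2r_0}(x_2)\times(t-4r_0^2,t]$. You cannot instead take $r$ close to $1$ there, because the bound $4/(1-r)$, and with it the constant $K$, degenerates. So \eqref{eq:C1a-estimate} has to be applied to the map rescaled from that half-cylinder. It then gives information only on $(\bB_{r_0/2}(x_2)\cap\Omega_R)\times(t-r_0^2/4,t]$, and this set need not contain $(x,t)$.

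\textbf{The fix.} The case-split radius is lost twice: a factor $2$ in Lemma~\ref{lemm:eta-to-gradient-bound} and a factor $4$ in Lemma~\ref{lemm:gradient-holder}. So the rescaling radius must be at least $8$ times the radius used for the case split, and the split itself must be made at that small radius. The paper does exactly this:
\begin{itemize}
\item It sets $\rho_0=\frac{1}{16}\min\{\sqrt t,1-|x|\}$ and splits on whether $\bB_{\rho_0}(x)\subset\Omega_R$.
\item It picks $x_1\in\bB_{\rho_0}(x)\cap\partial\Omega_R$ and rescales at radius $8\rho_0$. This is admissible because $\bB_{8\rho_0}(x_1)\times(t-64\rho_0^2,t]\subset\bB_{9\rho_0}(x)\times(t-81\rho_0^2,t]\subset\bB\times(0,T]$.
\item It obtains $|\nabla u|\leq\rho_0^{-1}$ on $\bB_{4\rho_0}(x_1)\times(t-16\rho_0^2,t]$.
\item It applies \eqref{eq:C1a-estimate} there, which yields the estimate on $(\bB_{\rho_0}(x_1)\cap\Omega_R)\times(t-\rho_0^2,t]$, a set containing $(x,t)$.
\end{itemize}

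\textbf{The subtraction of $u_0(x_2)$ is unnecessary.} The hypothesis \eqref{eq:bounds-on-data} bounds the full $W^{2,\infty}$-norm of $u_0$, including $\|u_0\|_{\infty}$. Hence the zeroth-order part of $\|\widehat u_0\|_{2,q}$ is at most $C\Lambda_\partial$ directly, which is how the paper handles it. Your translation is harmless, since $H(\cdot+c)$ satisfies the same bounds, but it solves a non-issue.
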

\begin{proof}
Given $(x_0, t_0) \in (\bB \cap \overline{\Omega_R}) \times (0, T]$, define
\[
\rho_0 : = \frac{1}{16}\min\{t_0^{\frac{1}{2}}, 1 - |x_0|\} \leq \frac{1}{16}.
\]
In the case where $\bB_{\rho_0}(x_0) \subset \Omega_R$, we have $\bP_{\rho_0}(x_0, t_0) \subset (\bB \cap\Omega_R) \times (0, T]$, so it makes sense to define
\[
\widetilde{u}(x, t) = u(x_0 + \rho_0 x, t_0 + \rho_0^2 t),\quad \text{for }(x, t)\in \bP_{1},
\]
which satisfies
\[
\widetilde{u}_{t} - \Delta \widetilde{u} = -2H(\widetilde{u}) \widetilde{u}_{x^1} \times \widetilde{u}_{x^2} \quad \text{on }\bP_1.
\]
By~\eqref{eq:boundary-grad-estimate-smallness}, we have
\[
\sup_{-1 < t \leq 0}\int_{\bB}|\nabla \widetilde{u}(\cdot, t)|^2 = \sup_{t_0 - \rho_0^2 < t \leq t_0}\int_{\bB_{\rho_0}(x_0)}|\nabla u(\cdot, t)|^2 \leq \eta'.
\]
Hence we get from Lemma~\ref{lemm:eta-to-gradient-bound-interior} that 
\[
\|\nabla \widetilde{u}\|_{\infty; \bP_{\frac{1}{2}}} \leq 8.
\]
This allows us to apply the estimate~\eqref{eq:C1a-interior}, which leads to
\begin{equation}\label{eq:boundary-grad-estimate-case-1}
\begin{split}
\rho_0^2 |\nabla u(x_0, t_0)|^2 \leq\ & C_{q, \Lambda_0} \cdot \rho_0^{-2}\int_{t_0 - \frac{\rho_0^2}{4}}^{t_0}\int_{\bB_{\frac{\rho_0}{2}}(x_0)}|\nabla u|^2\\
\leq\ & C_{q, \Lambda_0}\cdot \sup_{0 < t \leq T}\int_{\bB\cap \Omega_R}|\nabla u(\cdot, t)|^2.
\end{split}
\end{equation}
On the other hand, in the case where $\bB_{\rho_0}(x_0) \not\subset \Omega_R$, as in the proof of Lemma~\ref{lemm:eta-to-gradient-bound} we choose any $x_1 \in \bB_{\rho_0}(x_0) \cap \partial\Omega_R$ and observe that
\begin{equation}\label{eq:boundary-grad-case-2-inclusions}
\bB_{8\rho_0}(x_1) \times (t_0 - (8\rho_0)^2, t_0]  \subset \bB_{9\rho_0}(x_0) \times (t_0 - (9\rho_0)^2, t_0] \subset \bB \times (0, T].
\end{equation}
Since $\partial\Omega_R - x_1$ passes through the origin, there is a rotation $A:\RR^2 \to \RR^2$ such that
\[
A(\Omega_R) = -x_1 + \Omega_R.
\]
Defining, for $x \in \bB \cap \overline{\Omega_{\frac{R}{8\rho_0}}}$ and $t \in (-1, 0]$, 
\begin{align*}
\widehat{u}(x, t)  :=\ & u(x_1 + 8\rho_0  Ax, t_0 + (8\rho_0)^2 t),\\
\widehat{u}_0(x) :=\ & u_0(x_1 + 8\rho_0 Ax), 
\end{align*}
we find that 
\[
\left\{
\begin{array}{ll}
\widehat{u}_{t} - \Delta \widehat{u} = -2H(\widehat{u})\widehat{u}_{x^1} \times \widehat{u}_{x^2},& \text{ in }(\bB \cap \Omega_{\frac{R}{8\rho_0}}) \times (-1, 0], \\
\widehat{u}(x, t) = \widehat{u}_0(x), & \text{ on }(\bB \cap \partial\Omega_{\frac{R}{8\rho_0}}) \times (-1, 0].
\end{array}
\right.
\]
Since $\rho_0 < \frac{1}{8}$, we have by~\eqref{eq:bounds-on-data} that
\[
\|\widehat{u}_0\|_{2, \infty; \bB \cap \Omega_{\frac{R}{8\rho_0}}} \leq \|u_0\|_{2, \infty; \bB_{8\rho_0}(x_1) \cap \Omega_R} \leq \Lambda_{\partial}.
\]
Also, by~\eqref{eq:boundary-grad-estimate-smallness} we have
\[
\sup_{-1 < t \leq 0}\int_{\bB \cap \Omega_{\frac{R}{8\rho_0}}} |\nabla \widehat{u}(\cdot, t)|^2 =  \sup_{t_0 - (8\rho_0)^2 < t \leq t_0}\int_{\bB_{8\rho_0}(x_1) \cap \Omega_R}|\nabla u(\cdot, t)|^2 \leq \eta.
\]
Lemma~\ref{lemm:eta-to-gradient-bound} can then be used to get 
\[
\|\nabla \widehat{u}\|_{\infty; (\bB_{\frac{1}{2}} \cap \Omega_{\frac{R}{8\rho_0}})\times (-\frac{1}{4}, 0]} \leq 8,
\]
which allows us to apply the estimate~\eqref{eq:C1a-estimate} from Lemma~\ref{lemm:gradient-holder}. As a result we have
\[
\begin{split}
\rho_0^2 \|\nabla u\|_{\infty; (\bB_{\rho_0}(x_1) \cap \Omega_R) \times (t_0 - \rho_0^2, t_0]}^2 \leq\ & C_{q, \Lambda_0} \cdot \rho_0^{-2}\int_{t_0 - (4\rho_0)^2}^{t_0}\int_{\bB_{4\rho_0}(x_1) \cap \Omega_R} |\nabla u|^2\\
& + C_{q, \Lambda_0} \cdot \big(\|u_0\|_{2, \infty; \bB_{4\rho_0}(x_1) \cap \Omega_R}\big)^2.
\end{split}
\]
By the inclusions~\eqref{eq:boundary-grad-case-2-inclusions}, and the fact that $(x_0, t_0) \in (\bB_{\rho_0}(x_1) \cap \overline{\Omega_R}) \times (t_0 - \rho_0^2, t_0]$, we deduce from the above that
\begin{equation}\label{eq:boundary-grad-estimate-case-2}
\rho_0^2 |\nabla u(x_0, t_0)|^2 \leq C_{q, \Lambda_0}\big( \Lambda_{\partial}^2 + \sup_{0 < t \leq T}\int_{\bB \cap \Omega_R}|\nabla u(\cdot, t)|^2 \big)
\end{equation}
The asserted estimate~\eqref{eq:grad-estimate-scaled-boundary} now follows from~\eqref{eq:boundary-grad-estimate-case-2} and~\eqref{eq:boundary-grad-estimate-case-1}. 
\end{proof}
Like Lemma~\ref{lemm:eta-to-gradient-bound}, there is an interior version of Corollary~\ref{coro:grad-estimate-scaled-boundary}. Once again we record the statement only.
\begin{coro}\label{coro:grad-estimate-scaled}
Given $q > 4$ and $\Lambda_{0} > 0$, define $\alpha = 1 - \frac{4}{q}$ as before and let $\eta' = \eta'(q, \Lambda_{0})$ be the threshold given by Lemma~\ref{lemm:eta-to-gradient-bound-interior}. Suppose for some $T > 0$ that $u \in C^{2+\alpha, 1 + \frac{\alpha}{2}}(\bB \times (0, T])$ is solution of
\[
u_t - \Delta u = -2H(u) u_{x^1} \times u_{x^2} ,\quad \text{on }\bB \times (0, T],
\]
where $H \in C^{1}(\RR^3, \RR)$ and $\|H\|_{\infty} \leq \Lambda_{0}$. Assume in addition that
\begin{equation}\label{eq:grad-estimate-scaled-smallness}
\sup_{0 < t \leq T} \int_{\bB}|\nabla u(\cdot, t)|^2 \leq \eta'.
\end{equation}
Then we have for all $(x, t) \in \bB \times (0, T]$ that 
\begin{equation}\label{eq:grad-estimate-scaled}
\min\{t, (1 - |x|)^2\} \cdot |\nabla u(x, t)|^2 \leq C_{q, \Lambda_{0}}\cdot \sup_{0 < t \leq T}\int_{\bB}|\nabla u(\cdot, t)|^2.
\end{equation}
\end{coro}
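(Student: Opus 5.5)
The plan is to mirror the proof of Corollary~\ref{coro:grad-estimate-scaled-boundary}, keeping only its first (interior) case, so that no boundary data or $\Lambda_\partial$ enters. Fix $(x_0, t_0) \in \bB \times (0, T]$ and set
\[
\rho_0 := \tfrac{1}{16}\min\{t_0^{\frac12},\, 1 - |x_0|\}.
\]
Then $\bP_{\rho_0}(x_0, t_0) \subset \bB \times (0, T]$, and indeed $\overline{\bB_{\rho_0}(x_0)} \times [t_0 - \rho_0^2, t_0] \subset \bB \times (0, T]$. Hence the rescaled map
\[
\widetilde{u}(x, t) := u(x_0 + \rho_0 x,\, t_0 + \rho_0^2 t), \qquad (x, t) \in \bP_1,
\]
lies in $C^{2 + \alpha, 1 + \frac{\alpha}{2}}(\bP_1)$. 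By the scaling invariance of the $H$-surface flow it solves~\eqref{eq:H-flow-appendix} on $\bP_1$, with the same $H$ and the same bound $\|H\|_\infty \le \Lambda_0$.

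Since the Dirichlet energy is conformally invariant in two dimensions, the smallness hypothesis~\eqref{eq:grad-estimate-scaled-smallness} gives
\[
\sup_{-1 < t \le 0} \int_{\bB} |\nabla \widetilde{u}(\cdot, t)|^2 \le \sup_{0 < t \le T} \int_{\bB} |\nabla u(\cdot, t)|^2 \le \eta'.
\]
Lemma~\ref{lemm:eta-to-gradient-bound-interior} with $r = \frac12$ then yields $\|\nabla \widetilde{u}\|_{\infty; \bP_{1/2}} \le 8$.

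Next I would apply the interior estimate~\eqref{eq:C1a-interior} of Lemma~\ref{lemm:gradient-holder-interior} to the map $(x,t) \mapsto \widetilde{u}(\frac{x}{2}, \frac{t}{4})$ on $\bP_1$, with $K = 4$; after rescaling this map back, its gradient is bounded by $4$. The result is
\[
|\nabla \widetilde{u}(0,0)|^2 \le C_{q, \Lambda_0} \int_{\bP_{1/2}} |\nabla \widetilde{u}|^2 \le C_{q, \Lambda_0} \sup_{-1 < t \le 0} \int_{\bB} |\nabla \widetilde{u}(\cdot, t)|^2 .
\]
Undoing the scaling, $\rho_0^2 |\nabla u(x_0, t_0)|^2 = |\nabla \widetilde{u}(0,0)|^2$, and the right-hand side is at most $C_{q,\Lambda_0} \sup_{0 < t \le T} \int_{\bB} |\nabla u(\cdot, t)|^2$. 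Since $\min\{t_0, (1 - |x_0|)^2\} = 256\,\rho_0^2$, this gives~\eqref{eq:grad-estimate-scaled}.

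The only point requiring care is regularity near the time slice $t = 0$ and the lateral boundary. The hypothesis $u \in C^{2 + \alpha, 1 + \frac{\alpha}{2}}(\bB \times (0, T])$ is only local, so I need to check that the factor $\frac1{16}$ keeps the closed cylinder $\overline{\bP_{\rho_0}(x_0, t_0)}$ strictly inside $\bB \times (0, T]$. This holds because $t_0 - \rho_0^2 \ge t_0 - t_0/256 > 0$ and $|x_0| + \rho_0 < 1$. With that verified, $\widetilde{u}$ is regular enough to apply Lemma~\ref{lemm:eta-to-gradient-bound-interior} and Lemma~\ref{lemm:gradient-holder-interior}, and the rest is routine bookkeeping of constants, which depend only on $q$ and $\Lambda_0$.
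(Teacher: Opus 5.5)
Your proposal is correct and is essentially the paper's argument. The paper omits the proof, saying only to follow the interior case leading to \eqref{eq:boundary-grad-estimate-case-1} in the proof of Corollary~\ref{coro:grad-estimate-scaled-boundary}, which is exactly your sequence: choose $\rho_0=\frac{1}{16}\min\{t_0^{1/2},1-|x_0|\}$, rescale, use Lemma~\ref{lemm:eta-to-gradient-bound-interior} to get $\|\nabla\widetilde u\|_{\infty;\bP_{1/2}}\le 8$, then apply \eqref{eq:C1a-interior}. Your explicit extra rescaling $(x,t)\mapsto\widetilde u(\frac{x}{2},\frac{t}{4})$ with $K=4$ just spells out a step the paper leaves implicit.
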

\begin{proof}
We simply follow the argument leading up to~\eqref{eq:boundary-grad-estimate-case-1} in the previous proof. The details are omitted.
\end{proof}
\section{Boundary continuity and a uniqueness result}\label{sec:boundary-continuous-and-uniqueness}
The two results we recall in this appendix are essentially contained in~\cite{Qing1993} and~\cite{WangGuoFang1992}, respectively. The first, when used in conjunction with Corollary~\ref{coro:grad-estimate-scaled}, addresses the boundary behavior of solutions to~\eqref{eq:H-system} and~\eqref{eq:H-flow}, while the second is a uniqueness result for~\eqref{eq:H-system} subject to constant Dirichlet data, which generalizes the work of Wente~\cite{Wente1975}.

For the first result, we consider the class $\mathscr{C}$ of maps $u \in W^{1, 2} \cap C^{1}_{\loc}(\bB; \RR^3)$ such that
\begin{equation}\label{eq:Hardy-bound}
\sup_{x \in \bB \setminus \bB_{r}}(1 - |x|)|\nabla u(x)| \longrightarrow 0\quad\text{as }r \to 1^{-},
\end{equation}
and that the trace $u|_{\partial\bB}$ is continuous on $\partial\bB$. 
\begin{lemm}[\cite{Qing1993}, Proposition 1]
\label{lemm:boundary-continuity}
Let $\mathscr{K}$ be a subset of $\mathscr{C}$. Assume further that $\mathscr{K}$ is pre-compact in $W^{1, 2}(\bB; \RR^3)$, that the convergence~\eqref{eq:Hardy-bound} is uniform with respect to $u \in \mathscr{K}$, and that $\{u|_{\partial\bB}\}_{u \in \mathscr{K}}$ is pre-compact in $C^0(\partial \bB; \RR^3)$. Then for all $\ep > 0$, there exists $\delta > 0$ such that 
\[
\big|u(x) - (u|_{\partial\bB})(p)\big| < \ep,
\]
for all $u \in \mathscr{K}$, and for all $p \in \partial \bB$ and $x\in \bB$ satisfying $|x - p| < \delta$.
\end{lemm}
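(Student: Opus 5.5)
We will prove Lemma~\ref{lemm:boundary-continuity} by contradiction combined with compactness, following the structure of Qing's argument. Suppose the conclusion fails. Then there are $\ep_0 > 0$, maps $u_k \in \mathscr{K}$, points $p_k \in \partial\bB$ and $x_k \in \bB$ with $|x_k - p_k| \to 0$ and $|u_k(x_k) - u_k|_{\partial\bB}(p_k)| \geq \ep_0$. By the pre-compactness assumptions we may pass to a subsequence along which $u_k \to u$ in $W^{1,2}(\bB)$, $u_k|_{\partial\bB} \to g$ uniformly on $\partial\bB$, and $p_k \to p_0$. The trace of $u$ is $g$, and by equicontinuity of the traces, $u_k|_{\partial \bB}(p_k) \to g(p_0)$.

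The core of the proof is a uniform Courant--Lebesgue-type estimate near $p_0$. Fix a small $\rho$. By the strong $W^{1,2}$ convergence, the energies $\int_{\bB\cap \bB_{\rho}(p_0)}|\nabla u_k|^2$ are uniformly small, once $\rho$ is small and $k$ is large; the finitely many remaining $k$ are handled individually, since each $u\in\mathscr{C}$ is continuous up to the boundary by the same argument. By Fubini and the Courant--Lebesgue lemma, we choose $r_k \in (\rho^2, \rho)$ such that the arc $\partial \bB_{r_k}(p_0) \cap \bB$ has oscillation of $u_k$ bounded by $C(\int_{\bB_\rho(p_0)\cap\bB}|\nabla u_k|^2/\log(1/\rho))^{1/2}$, which is small. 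Its endpoints lie on $\partial\bB$, where the values are close to $g(p_0)$ by uniform convergence and continuity of $g$.

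It remains to control $u_k$ inside the region $U_k=\bB\cap\bB_{r_k}(p_0)$, and we expect this step to be the main obstacle, since there is no maximum principle. Here we use the uniform Hardy-type bound~\eqref{eq:Hardy-bound}. A point $x \in U_k$ at distance $d = 1-|x|$ from $\partial\bB$ can be joined to a boundary point in two ways. First, we follow the radial segment out to distance $\delta'$; along it, the gradient bound $|\nabla u_k| \leq \sigma(r)/(1-|y|)$, with $\sigma$ uniformly small, controls the oscillation only by $\sigma\log(\delta'/d)$, which is insufficient on its own. Following Qing, we therefore combine this with the Courant--Lebesgue step at the dyadic scales: on each annular arc $\partial\bB_{s}(q)\cap\bB$ centered at the nearest boundary point $q$ of $x$, with $s\in(d,2d)$ chosen well, the oscillation is small. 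On the other hand, the gradient bound shows that the oscillation along the arc of $\partial\bB_s(q)$ near the boundary, up to distance comparable to $d$, is $O(\sigma)$. Comparing the value at $x$ with the values on a well-chosen arc through $x$'s scale, whose endpoints sit on $\partial\bB$ within distance $2d$ of $q$, gives $|u_k(x)-g_k(q)| \le C\sigma + o(1)$, using the equicontinuity of $g_k$.

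Finally, this yields $|u_k(x_k) - g_k(p_k)| < \ep_0$ for large $k$, a contradiction. The same argument with the sequence replaced by the whole family gives the uniform $\delta$.
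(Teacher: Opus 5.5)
Your proposal is correct in substance and takes a genuinely different, simpler route than the paper, but one sentence in your key step is false as written and needs replacing. The claim that ``the gradient bound shows that the oscillation along the arc of $\partial\bB_s(q)$ near the boundary \dots is $O(\sigma)$'' fails. Along $\partial\bB_s(q)\cap\bB$ one has $1-|y|\to 0$ at the endpoints, so integrating $\sigma/(1-|y|)$ along the arc diverges.

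The oscillation over the \emph{whole} arc must come from the Courant--Lebesgue choice of $s$ alone. Set $\eta(r):=\sup_{u\in\mathscr K,\,q\in\partial\bB}\int_{\bB\cap\bB_r(q)}|\nabla u|^2$, which tends to $0$ as $r\to 0$ by uniform integrability from $W^{1,2}$-precompactness. Set also $\sigma(r):=\sup_{u\in\mathscr K}\sup_{1-|y|<r}(1-|y|)|\nabla u(y)|$. Fubini on $\bB\cap(\bB_{2d}(q)\setminus\bB_d(q))$ gives $s\in(d,2d)$ with $\int_{\partial\bB_s(q)\cap\bB}|\partial_\tau u|^2\le d^{-1}\eta(2d)$. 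Since the arc has length $<\pi s$, its oscillation is at most $(2\pi\eta(2d))^{1/2}$.

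The Hardy bound is needed only to connect $x=(1-d)q$, which does \emph{not} lie on the arc, to the arc point $(1-s)q$. The radial segment between them stays at distance $\ge d$ from $\partial\bB$ and has length $s-d<d$, so $|u(x)-u((1-s)q)|\le\sigma(2d)\log 2$. For $|x-p|<\delta$ we have $d\le\delta$ and $|q-p|\le 2\delta$, so this gives directly
\[
\big|u(x)-(u|_{\partial\bB})(p)\big|\le \sigma(2\delta)\log 2+\big(2\pi\eta(2\delta)\big)^{\frac12}+2\omega(2\delta),
\]
with $\omega$ a common modulus of continuity of the traces (Arzel\`a--Ascoli). Neither the contradiction setup nor the preliminary arc around $p_0$ with $r_k\in(\rho^2,\rho)$ is needed. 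You should also state explicitly the fact you use implicitly: for a.e.\ $s$, the restriction of $u$ to $\partial\bB_s(q)\cap\bB$ extends continuously to the endpoints with values equal to the trace there.

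The paper, following Qing, instead argues by contradiction after mapping to the half-plane and reflecting. It uses circles centered at the interior point $z_0$: an inner one of radius $r_1\in(\rho/4,\rho/2)$, tied to $v(z_0)$ by the Hardy bound, and an outer one of radius $r_2\in(\rho,2\rho)$ crossing $\partial\RR^2_+$, whose values are close to $\psi(0)$. It then applies Qing's capacity estimate on the annulus between them, via the logarithmic interpolant $h$, to contradict the gap $|\psi(0)-v(r_1,\theta_0)|>(1-2\lambda_0)\ep$.

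Your version replaces the annulus/capacity step with a direct path from $x$ to a single Courant--Lebesgue arc centered at the boundary point. It therefore needs no conformal change of variables, no reflection, and no analogue of Qing's Lemma~1, and it yields an explicit uniform modulus. The paper's version buys close fidelity to the cited source. Both rely on the same a.e.\ matching of restrictions to circles with the boundary trace (item (iii) in the paper's proof).
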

\begin{proof}
Suppose not. Then we get some $\ep > 0$ along with sequences $u_n \in \mathscr{K}$, $x_n \in \bB$, and $p_n \in \partial \bB$, such that $\lim_{n \to \infty}|x_n - p_n| = 0$, and that, with $\varphi_n : = u_n|_{\partial \bB}$
\begin{equation}\label{eq:not-equicontinuous-at-bdy}
|u_n(x_n) - \varphi_n(p_n)| \geq \ep, \quad\text{for all }n.
\end{equation}
Up to taking a subsequence, we obtain some $p_0 \in \partial \bB$ such that
\[
\lim_{n \to \infty}x_n = p_0 = \lim_{n \to \infty} p_n,
\]
and can also assume that $u_n$ converges strongly in $W^{1,2}(\bB)$, while $\varphi_n$ converges uniformly on $\partial\bB$. Thus, with $\lambda_0 \in (0, \frac{1}{20})$ being a universal constant to be determined, there exists $\delta \in (0, \frac{1}{4})$ such that, for all $n$,
\begin{equation}\label{eq:consequence-of-compactness}
\int_{\bB \setminus \bB_{1 - \delta}}|\nabla u_n|^2\, dx + \sup_{p \in  \bB_{\delta}(p_0) \cap \partial \bB} |\varphi_n(p) - \varphi_n(p_0)|^2 < (\lambda_0 \ep)^2.
\end{equation}
Since~\eqref{eq:Hardy-bound} is assumed to hold uniformly over $\mathscr{K}$, upon choosing a smaller $\delta$ if necessary, we can further arrange that
\begin{equation}\label{eq:consequence-of-uniform-Hardy-bound}
\sup_{x \in \bB \setminus \bB_{1 - \delta}} (1 -  |x|)|\nabla u_n(x)| < \lambda_0 \ep,\quad\text{for all }n.
\end{equation}
By~\eqref{eq:consequence-of-compactness} and~\eqref{eq:not-equicontinuous-at-bdy}, as soon as $n$ is large enough so that $|p_n - p| < \delta$, we have
\begin{equation}\label{eq:not-equicontinuous-at-bdy-1}
|u_n(x_n) - \varphi_n(p_0)| \geq (1 - \lambda_0)\ep.
\end{equation}

Next, we regard $\RR^2$ as the complex plane, and assume without loss of generality that $p_0 = -1$. Consider the map $F:(\RR^2_+, \partial \RR^2_+) \to (\bB, \partial \bB\setminus \{1\})$ given by 
\[
F(z) = \frac{z-i}{z+i}\,,
\]
which takes $\RR^2_+$ biholomorphically onto $\bB$, and define
\begin{align*}
v_n(z) = u_n(F(z)), \quad \psi_n(z) = \varphi_n(F(z)),
\end{align*}
for $z \in \RR^2_+$ and $z \in \partial \RR^2_+$, respectively. Then there is $r_0 > 0$ such that
\begin{equation}\label{eq:F-r0-inclusions}
|F(z) + 1| < \frac{\delta}{2}\quad\text{whenever }|z| < r_0,
\end{equation}
so the integral bound in~\eqref{eq:consequence-of-compactness} and the conformality of $F$ implies that
\begin{equation}\label{eq:integral-bound-on-Br-+}
\int_{\bB_{r_0}^+} |\nabla v_n|^2\, dx  = \int_{F(\bB_{r_0}^+)} |\nabla u_n|^2\, dx < (\lambda_0\ep)^2.
\end{equation}
Moreover, recalling that $F$ is an isometry from $(\RR^2_+, \frac{|dz|^2}{(\im z)^2})$ to $(\bB, \frac{4|dz|^2}{(1 - |z|^2)^2})$, we infer from~\eqref{eq:consequence-of-uniform-Hardy-bound} and~\eqref{eq:F-r0-inclusions} that, for all $z\in \bB_{r_0}^+$,
\begin{equation}\label{eq:Hardy-bound-on-H}
\begin{split}
(\im z) |\nabla v_n(z)|=\ & \frac{(1 - |F(z)|^2)}{2} \cdot|\nabla u_n (F(z))|\\
\leq\ & (1 - |F(z)|)\cdot |\nabla u_n(F(z))|<  \lambda_0 \ep.
\end{split}
\end{equation}

To continue, we fix a sufficiently large $N \in \NN$ such that~\eqref{eq:not-equicontinuous-at-bdy-1} holds, and that 
\[
F^{-1}(x_N) \in \bB_{\frac{r_0}{4}}^+.
\]
Writing $z_0$ for $F^{-1}(x_N)$, and dropping the subscripts from $v_N$, $u_N$, $\varphi_N$, and $\psi_N$, we see that~\eqref{eq:not-equicontinuous-at-bdy-1} translates into
\begin{equation}\label{eq:not-equicontinuous-at-bdy-2}
|v(z_0) - \psi(0)| \geq (1 - \lambda_0)\ep.
\end{equation}
Also, noting that $v$ lies in $W^{1,2}(\bB_{R}^+)$ for all $R > 0$, we obtain an element of $W^{1, 2}_{\loc}(\RR^2)$ upon extending it by even reflection across $\partial\RR^2_{+}$ as in the proof of~\cite[Proposition 1]{Qing1993}. Denoting the extended map still by $v$, we have by~\eqref{eq:integral-bound-on-Br-+} that
\begin{equation}\label{eq:energy-bound-after-reflecting}
\int_{\bB_{r_0}}|\nabla v|^2 = 2\int_{\bB_{r_0}^{+}}|\nabla v|^2 < 2(\lambda_0\ep)^2.
\end{equation}
With $(r, \theta)$ denoting polar coordinates centered at $z_0$, there is a subset $E \subset (0, \frac{r_0}{2})$ with zero measure such that for all $r \in (0, \frac{r_0}{2}) \setminus E$, the following are true:
\begin{enumerate}
\item[(i)] $v(r, \cdot)$ and $v_{\theta}(r, \cdot)$ both lie in $L^2(S^1; \RR^3)$.
\vskip 1mm
\item[(ii)]  $v(r, \cdot)$ is absolutely continuous up to redefinition on a measure-zero subset of $S^1$, and 
\begin{equation}\label{eq:oscillation-by-W12}
\osc\limits_{S^1}v(r, \cdot) \leq \Big(2\pi \int_{S^1}|v_{\theta}(r, \theta)|^2\, d\theta\Big)^{\frac{1}{2}}.
\end{equation}
\vskip 1mm
\item[(iii)] If $r \in (\im z_0, \frac{r_0}{2})$, then at the points of intersection of $\partial \bB_{r}(z_0)$ with $\partial\RR^{2}_{+}$, the maps $v$ and $\psi$ agree. Note from~\eqref{eq:F-r0-inclusions} that
\[
F(\partial \bB_{r}(z_0) \cap \partial\RR^2_+) \subset \bB_{\frac{\delta}{2}}(-1) \cap \partial\bB,
\]
so~\eqref{eq:consequence-of-compactness} gives
\begin{equation}\label{eq:boundary-value-close}
|\psi(z) - \psi(0)| < \lambda_0\ep, \quad\text{whenever }z \in \partial\bB_{r}(z_0) \cap \partial \RR^2_+.
\end{equation}
\end{enumerate}
Letting $\rho = \im z_0$, then again as in the proof of~\cite[Proposition 1]{Qing1993}, by Fubini's theorem and the energy bound~\eqref{eq:energy-bound-after-reflecting}, we obtain $r_1 \in (\frac{\rho}{4}, \frac{\rho}{2}) \setminus E$ and $r_2 \in (\rho, 2\rho) \setminus E$ such that
\begin{equation}\label{eq:compactness-bound-from-Fubini}
\int_{S^1}|v_{\theta}(r_{1}, \theta)|^2 \, d\theta + \int_{S^1}|v_{\theta}(r_{2}, \theta)|^2 \, d\theta < 32\cdot (\lambda_0\ep)^2.
\end{equation}
Fixing an arbitrary $\theta_0 \in S^1$, then from~\eqref{eq:oscillation-by-W12} and~\eqref{eq:compactness-bound-from-Fubini} we get
\begin{equation}\label{eq:oscillation-for-r1}
\sup_{\theta \in S^{1}}|v(r_1, \theta) - v(r_1, \theta_0)| < C \cdot \lambda_0 \ep,
\end{equation}
where $C$ is a universal constant. Similarly, by item (iii) above, along with~\eqref{eq:oscillation-by-W12} and~\eqref{eq:compactness-bound-from-Fubini}, we get for some other universal constant $C$ such that
\begin{equation}\label{eq:sup-for-r2}
\sup_{\theta \in S^{1}}|v(r_2, \theta) - \psi(0)| \leq \lambda_0\ep + \osc_{S^1}v(r_2, \cdot) < C\cdot \lambda_0\ep.
\end{equation}
Also, noting that $\im z \geq \frac{\rho}{2}$ for all $z \in \bB_{\frac{\rho}{2}}(z_0) \subset \bB_{r_0}^+$, we deduce from~\eqref{eq:Hardy-bound-on-H} and~\eqref{eq:not-equicontinuous-at-bdy-2} that
\begin{equation}\label{eq:lower-bound-captured}
\begin{split}
|\psi(0) - v(r_1, \theta_0)| \geq\ & |\psi(0) - v(z_0)| - |v(r_1, \theta_0) - v(z_0)|\\
>\ & (1 - 2\lambda_0)\ep.
\end{split}
\end{equation}
Now define $h:\bB_{r_2}(z_0) \setminus \bB_{r_1}(z_0) \to \RR^3$ by
\[
h(r, \theta) =  \frac{\log (r/r_1)}{\log(r_2/r_1)} \cdot (v(r_2, \theta) -  \psi(0))  +  \frac{\log (r_2/r)}{\log(r_2/r_1)}\cdot (v(r_1, \theta) - v(r_1, \theta_0)).
\]
Then by~\cite[Lemma 1]{Qing1993} along with~\eqref{eq:lower-bound-captured} we have
\[
(1 - 2\lambda_0)\ep \leq C\| \nabla (v-h) \|_{2; \bB_{r_2}(z_0) \setminus \bB_{r_1}(z_0)}.
\]
Estimating the energy of $h$ with the help of~\eqref{eq:compactness-bound-from-Fubini} through~\eqref{eq:sup-for-r2}, and recalling~\eqref{eq:energy-bound-after-reflecting}, we arrive at the following analogue of the first displayed equation on~\cite[page 464]{Qing1993}:
\[
(1 - 2\lambda_0) \ep \leq C\cdot \lambda_0\ep.
\]
A contradiction results provided $\lambda_0$ is sufficiently small, and we are done.
\end{proof}
\begin{lemm}[\cite{WangGuoFang1992}, Lemma 4.2]
\label{lemm:Wente-constant}
Suppose $H \in C^1(\RR^3; \RR)$. Given any $\xi \in \RR^3$, if $v \in C^{2}(\overline{\RR^2_+})$ is a solution to
\begin{equation}\label{eq:CMC-constant-boundary}
\left\{
\begin{array}{ll}
\Delta v = 2H(v) v_{x^1} \times v_{x^2}, &  \text{ on }\RR^2_+ ,\\
v = \xi, & \text{ on }\partial\RR^2_{+},
\end{array}
\right.
\end{equation}
satisfying in addition that
\begin{equation}\label{eq:CMC-finite-energy}
\int_{\RR^2_+}|\nabla v|^2 < \infty,
\end{equation}
then $v$ must be constant.
\end{lemm}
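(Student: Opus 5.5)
The plan is to combine the Hopf differential with a Bers--Vekua (similarity principle) unique continuation argument. Set $w := v - \xi$, so that $w \in C^{2}(\overline{\RR^2_+})$, $w = 0$ on $\partial \RR^2_+$, and $\Delta w = 2H(v)\, w_{x^1}\times w_{x^2}$. The argument first shows that $v$ is weakly conformal, which forces the full gradient to vanish on $\partial\RR^2_+$, and then propagates this vanishing into the interior.

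\textbf{Step 1: the Hopf differential vanishes.} Writing $z = x^1 + {\rm i}x^2$, consider
\[
\varphi := 4\, v_z \cdot v_z = |v_{x^1}|^2 - |v_{x^2}|^2 - 2{\rm i}\, v_{x^1}\cdot v_{x^2}.
\]
Since $\Delta v = 2H(v)\,v_{x^1}\times v_{x^2}$ is orthogonal to both $v_{x^1}$ and $v_{x^2}$, we get $\partial_{\bar z}\varphi = 2\, v_z\cdot \Delta v = 0$. Hence $\varphi$ is holomorphic on $\RR^2_+$ and continuous up to the boundary. On $\partial\RR^2_+$ we have $v_{x^1} = 0$, because $v$ is constant there, so $\im \varphi = 0$ on the real axis. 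By Schwarz reflection, $\varphi$ extends to an entire function $\Phi$. By \eqref{eq:CMC-finite-energy} and $|\varphi| \le |\nabla v|^2$, we have $\Phi \in L^1(\CC)$. The mean value property over disks of radius $R \to \infty$ then gives $\Phi \equiv 0$. Thus $v$ is weakly conformal, $|v_{x^1}| = |v_{x^2}|$, and since $v_{x^1} = 0$ on $\partial\RR^2_+$ we conclude $\nabla w = \nabla v = 0$ on $\partial \RR^2_+$.

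\textbf{Step 2: odd reflection.} Define $\widetilde w(x^1,-x^2) := -w(x^1,x^2)$. Because $w$ and $\nabla w$ vanish on the axis, $\widetilde w$ is $C^1$. For the second derivatives: $w_{x^1x^1}=0$ on the axis, and $w_{x^2x^2} = \Delta w - w_{x^1x^1} = 0$ there too, since $\Delta w$ is quadratic in $\nabla w$. Consequently all second derivatives of the reflected map match across the axis, and $\widetilde w \in C^2(\RR^2)$. On any bounded set $\nabla \widetilde w$ is bounded, so
\[
|\partial_{\bar z}\widetilde w_z| = \tfrac14|\Delta \widetilde w| \le C\,|\nabla\widetilde w|^2 \le C_K\,|\widetilde w_z|,
\]
where we use that $\widetilde w$ is real-valued, so $|\nabla \widetilde w| = 2|\widetilde w_z|$. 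This inequality holds on each disk $\bB_K$, in both halves of the plane.

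\textbf{Step 3: unique continuation.} This is the step I expect to require the most care. Apply the similarity principle (Bers--Vekua, or Hartman--Wintner) componentwise to the vector $f := \widetilde w_z$. Since $|\partial_{\bar z} f| \le C|f|$, on any disk we can write $\partial_{\bar z} f = a f$ with $a$ an $L^\infty$ matrix. I would first try to avoid the matrix-valued Vekua theory by arguing through the zero set. Let $Z$ be the interior of $\{f = 0\}$; it is open by definition. It is also closed in $\RR^2$: at a boundary point $p$ of $Z$, a local representation $f = e^{s}g$ with $g$ holomorphic and $s$ continuous (available in the scalar case, and for systems via the Carleman-type estimate / Aronszajn unique continuation for $|\Delta \widetilde w| \le C|\nabla \widetilde w|$ in two dimensions) shows that $f$ cannot vanish on an open set accumulating at $p$ unless $f \equiv 0$ near $p$. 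The zero set of $f$ contains the real axis, and the same representation shows that $f$ vanishes on a neighbourhood of each point of the axis, so $Z$ is nonempty. By connectedness $Z = \RR^2$, so $\nabla w \equiv 0$ and $w \equiv w|_{\partial\RR^2_+} = 0$. That is, $v \equiv \xi$.

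The main obstacle is making the similarity principle rigorous for the $\RR^3$-valued (system) case. If the direct Vekua argument is awkward, the fallback is to cite the Carleman unique continuation theorem for $|\Delta \widetilde w| \le C|\nabla \widetilde w|$, applied to $\widetilde w$, which vanishes on the open lower half-disk near the axis after noting that the Cauchy data are zero.
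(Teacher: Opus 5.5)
Your Steps 1 and 2 are correct and follow the paper exactly. The paper also kills the Hopf differential $f\cdot f$, $f=v_z$, by Schwarz reflection and integrability, and concludes $f=0$ on $\partial\RR^2_+$. It then extends $f$ by $F(z)=-\overline{f(\bar z)}$ for $\im z<0$, which is precisely $\widetilde w_z$ for your odd reflection.

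The gap is in Step 3, which as written does not prove that $f=\widetilde w_z$ vanishes. The similarity principle cannot be applied componentwise, since $\partial_{\bar z}f_j$ involves all three components of $f$. A factorization $f=e^{s}g$ with scalar $s$ and holomorphic $g$ is not available for $\CC^3$-valued $f$. Carleman/Aronszajn-type theorems give unique continuation, not such a factorization. So neither the vanishing of $f$ near the axis nor the closedness of $Z$ is justified. Your fallback, as stated, applies unique continuation to $\widetilde w$. But in the lower half-plane $\widetilde w(x^1,x^2)=-w(x^1,-x^2)$, which does not vanish there; what vanishes there is the zero extension (see (b) below).

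Either of two standard arguments closes the gap.

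(a) Use a \emph{matrix} similarity principle on small disks. Write $\partial_{\bar z}f=af$ with $\|a\|_{\infty;\bB_\rho(p)}\le M$. Let $T$ be the Cauchy transform on $\bB_\rho(p)$, so that $\partial_{\bar z}T\phi=\phi$ and $\|T\phi\|_\infty\le 4\rho\|\phi\|_\infty$. For $8\rho M<1$, the map $G\mapsto I-T(Ga)$ is a contraction on $L^\infty$. Its fixed point $G$ lies in $W^{1,q}$ for all $q<\infty$, satisfies $\|G-I\|_\infty<1$ (so it is invertible), and has $G_{\bar z}=-Ga$. Hence $(Gf)_{\bar z}=0$, so $Gf$ is holomorphic on the disk. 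The identity theorem then gives both the vanishing of $f$ near each point of the axis and the closedness of $Z$.

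(b) More simply, skip the reflection and extend $w$ by zero to the lower half-plane. Your own Step 2 computation shows that $w$, $\nabla w$ and $\nabla^2 w$ all vanish on the axis. So the zero extension $\hat w$ is $C^2$ on $\RR^2$, satisfies $|\Delta\hat w|\le C_K|\nabla\hat w|$ on $\bB_K$, and vanishes on an open set. Aronszajn's unique continuation theorem holds for $\RR^3$-valued maps because $\Delta$ acts diagonally, so it gives $\hat w\equiv 0$, that is, $v\equiv\xi$.

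The obstacle you flagged is genuine. The paper's own device $h=e^{-g}F$ with $g_{\bar z}=\Omega$ needs $\partial_{\bar z}e^{-g}=-e^{-g}g_{\bar z}$. That identity holds only if $g$ commutes with $g_{\bar z}$, which fails in general for $3\times 3$ matrices. Construction (a) is the standard way to build the gauge correctly.
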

\begin{proof} 
With $\partial_{z} = \frac{1}{2}(\partial_{x^1} - {\rm i}\partial_{x^2})$ and $\partial_{\overline{z}} = \frac{1}{2}(\partial_{x^1} + {\rm i}\partial_{x^2})$ as usual, we let
\[
f: = v_{z} = \frac{1}{2}(v_{x^1} - {\rm i} v_{x^2}),
\]
and also define $\omega:\RR^2_{+} \to \CC^{3 \times 3}$ by
\[
\omega_{jk} = -{\rm i} H(v) \vol_{\RR^3}(\be_{j}, \be_{k}, v_{\overline{z}}),\quad\text{for }j, k \in \{1, 2, 3\}.
\]
Then, a direct computation using~\eqref{eq:CMC-constant-boundary} shows that
\begin{equation}\label{eq:CMC-complex}
\left\{
\begin{array}{ll}
f_{\overline{z}} = \omega  f, & \text{ on }\RR^2_+,\\
\re f = 0, & \text{ on }\partial \RR^2_+,
\end{array}
\right.
\end{equation}
where by $\re f$ we mean taking the real part component-wise. Since $\omega_{jk} = -\omega_{kj}$, we infer that $f \cdot f$ is holomorphic on $\RR^2_{+}$. Noting also that 
\[
\im(f\cdot f) = 2 (\re f) \cdot (\im f) = 0 \quad\text{on }\partial\RR^2_+, 
\]
we deduce from the Schwarz reflection principle, and the finiteness of $\int_{\RR^2_+}|f \cdot f|$ coming from~\eqref{eq:CMC-finite-energy}, that $f\cdot f$ vanishes identically on $\RR^2_{+}$, which together with the boundary condition in~\eqref{eq:CMC-complex} gives
\begin{equation}\label{eq:v-weakly-conformal}
f = 0 \quad\text{ on }\partial \RR^2_+.
\end{equation}

To continue, we extend $f$ and $\omega$ to $\RR^2$ by letting
\[
F(z) = \left\{
\begin{array}{ll}
f(z), & \text{ if }\im z \geq 0\\[2pt]
-\overline{f(\overline{z})} & \text{ if }\im z < 0
\end{array}
\right.; \quad\quad \Omega(z) = \left\{
\begin{array}{ll}
\omega(z), & \text{ if }\im z \geq 0\\[2pt]
\overline{\omega(\overline{z})}, & \text{ if }\im z < 0
\end{array}
\right..
\]
Note that $\Omega$ is locally bounded while $F$ is locally Lipschitz on $\RR^2$, and that
\[
F_{\overline{z}} = \Omega F,\quad\text{in distribution sense.}
\]
Now, fix any $p > 2$. For all $R >0$, since $\Omega \in L^{p}(\bB_{R}; \CC^{3 \times 3})$, we can find some $g \in W^{1, p}(\bB_{R}; \CC^{3 \times 3})$ such that
\[
g_{\overline{z}} = \Omega \quad\text{on }\bB_{R}.
\]
Then the product $h: = e^{-g}F$ is again of class $W^{1, p}$ on $\bB_{R}$, and satisfies the Cauchy-Riemann equations distributionally. It follows that $h$ is holomorphic on $\bB_{R}$ in the classical sense, so the condition~\eqref{eq:v-weakly-conformal} forces it, and consequently $F$, to vanish identically on $\bB_{R}$. The radius $R$ being arbitrary, we conclude that $f \equiv 0$ on $\RR^2_+$, so that $v$ is constant, as asserted.
\end{proof}

\section{The Wente inequality under Neumann condition}\label{sec:Neumann-Wente}
The following result, needed in the proof of Theorem~\ref{thm:H-convexity}, is a special case of~\cite[Lemma A.6]{DaLioPalmurellaRiviere2020}. A related result is~\cite[Theorem A.4]{Schikorra2018}.

\begin{lemm}[{\cite[Lemma A.6]{DaLioPalmurellaRiviere2020}}]
\label{lemm:Neumann-Wente}
Suppose $a \in W^{1, 2}_{0}(\bB)$ and $b \in W^{1, 2}(\bB)$. Then there exists a unique $\psi \in C^0(\overline{\bB})\cap W^{1, 2}(\bB)$ such that $\int_{\bB}\psi\, dx = 0$ and that
\begin{equation}\label{eq:Neumann-Wente-PDE-weak-form}
\int_{\bB} \bangle{\nabla\psi, \nabla\zeta} dx = -\int_{\bB}(a_{x^1}b_{x^2} - a_{x^2}b_{x^1})\zeta dx,\quad\text{for all }\zeta \in C^1(\overline{\bB}).
\end{equation}
Moreover, this $\psi$ satisfies the estimate
\begin{equation}\label{eq:Neumann-Wente}
\|\psi\|_{\infty; \bB} + \|\nabla \psi\|_{2; \bB} \leq C\|\nabla a\|_{2;\bB} \|\nabla b\|_{2; \bB},
\end{equation}
where $C$ is a universal constant.
\end{lemm}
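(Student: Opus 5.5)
We need to prove the Neumann Wente lemma: for $a \in W^{1,2}_0(\bB)$ and $b \in W^{1,2}(\bB)$, find $\psi$ with zero mean solving the Neumann problem weakly, with $\|\psi\|_\infty+\|\nabla\psi\|_2\le C\|\nabla a\|_2\|\nabla b\|_2$.

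The approach is to reduce to the classical Dirichlet/whole-plane Wente inequality by extending $a$ and $b$, and to use a duality argument for the Neumann problem. By density, and since the estimate is bilinear, we may assume $a\in C^\infty_c(\bB)$ and $b\in C^\infty(\overline{\bB})$; the general case then follows by approximation. The estimate gives Cauchy sequences in $C^0\cap W^{1,2}$, and the right side of \eqref{eq:Neumann-Wente-PDE-weak-form} is continuous in $L^1$, where the Jacobian converges.

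First, existence and uniqueness in $W^{1,2}$. The right-hand side $f:=a_{x^1}b_{x^2}-a_{x^2}b_{x^1}=\Div(a\nabla^\perp b)$ up to sign, with $a\nabla^\perp b\cdot\nu=0$ on $\partial\bB$ because $a=0$ there. Hence $\int_{\bB} f=0$, so the Neumann compatibility condition holds and Lax--Milgram on the zero-mean subspace gives a unique $\psi$. Writing the weak form as $\int\langle\nabla\psi,\nabla\zeta\rangle=\int a\,\langle\nabla^\perp b,\nabla\zeta\rangle$ (up to sign) shows the formulation makes sense for all $\zeta\in C^1(\overline\bB)$.

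Second, the key reduction. Extend $b$ to $\tilde b\in W^{1,2}(\RR^2)$ with $\|\nabla\tilde b\|_{2;\RR^2}\le C\|\nabla b\|_2$, after subtracting its mean, by reflection across the circle and a cutoff. Extend $a$ by zero. Then $f$ is supported in $\overline\bB$ and equals $\det(\nabla a,\nabla\tilde b)$ on $\RR^2$. Let $\phi=\Gamma*f$. By the Coifman--Lions--Meyer--Semmes Hardy-space estimate, or the classical Wente argument, $\|\phi\|_\infty+\|\nabla\phi\|_{2}\le C\|\nabla a\|_2\|\nabla\tilde b\|_2$ on $\bB_2$, and in fact $\nabla^2\phi\in L^1$ with the same bound. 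Then $\psi-\phi$ is harmonic in $\bB$ with Neumann data $-\partial_\nu\phi$, up to an additive constant.

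The main obstacle is controlling this harmonic correction in $L^\infty$: the trace $\partial_\nu\phi$ is not obviously bounded. I would handle it using $\nabla^2\phi\in\mathcal H^1\subset L^1$. Since $\nabla\phi\in W^{1,1}(\bB_2)$, its trace lies in $L^1(\partial\bB)$, and more precisely in the Hardy space $h^1$ of the circle, which should follow from the Hardy-space membership of $\nabla^2\phi$. The Neumann problem with $h^1$ data produces a harmonic function whose conjugate-type boundary behavior gives $L^\infty$ and $W^{1,2}$ control; the key fact is that the Neumann-to-Dirichlet map takes $h^1(\partial\bB)$ into $\mathrm{BMO}\cap C^0$-type spaces via the Hilbert transform.

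If this Hardy-trace route is too delicate, the fallback is a direct duality computation. For $\|\nabla\psi\|_2$, test with $\zeta=\psi$ and bound $\int a\langle\nabla^\perp b,\nabla\psi\rangle$ via the $\mathcal H^1$–BMO duality of the div-free/curl-free pairing $\nabla^\perp b\cdot\nabla\psi$, after extending $\psi$ evenly by inversion; this pairs against $a\in W^{1,2}_0\subset$ BMO. For $\|\psi\|_\infty$, use the Neumann Green's function $G(x,\cdot)$. It satisfies $\nabla G(x,\cdot)\in L^{2,\infty}$ uniformly, so the Lorentz-space Wente estimate $\|\nabla a\cdot\nabla^\perp b\|$ paired with $G\in$ BMO (via $L^{2,1}$–$L^{2,\infty}$ duality and the vanishing of $a$ on the boundary, which kills the boundary term on integrating by parts) yields $|\psi(x)|\le C\|\nabla a\|_2\|\nabla b\|_2$. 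Continuity then follows from the approximation.
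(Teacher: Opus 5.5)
\textbf{The gap is the step you flag as the main obstacle, and the reason you give for it is wrong.} It is not true that $\nabla^2\phi\in\mathcal{H}^1$ forces the trace of $\nabla\phi$ into $h^1$ of the boundary.

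Take $g,h\in C^\infty_c(\RR)$ with $\int g\neq 0$, $h(0)=0$, $h'(0)=1$, and set $\phi_\epsilon(x)=g(x^1/\epsilon)\,h(x^2/\epsilon)$. Every entry of $\nabla^2\phi_\epsilon$ is an $L^1$-normalized dilate of a fixed smooth, compactly supported, mean-zero function ($g''h$, $g'h'$, $gh''$). Hence $\|\nabla^2\phi_\epsilon\|_{\mathcal{H}^1(\RR^2)}$ is independent of $\epsilon$. On the other hand, $\partial_{x^2}\phi_\epsilon(\cdot,0)=\epsilon^{-1}g(\cdot/\epsilon)$ is an approximate identity of nonzero mass, whose local $h^1$-norm grows like $\log(1/\epsilon)$. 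The same scaling works near any point of $\partial\bB$. So, as written, nothing controls the harmonic correction $\psi-\phi$ in $L^\infty$, and that is the whole content of the lemma.

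What is missing is to use \emph{where} $f$ lives. Since $f$ is supported in $\overline{\bB}$ and $\int f=0$, the potential $\phi=\Gamma*f$ is harmonic on $\RR^2\setminus\overline{\bB}$ and is $O(|x|^{-1})$ at infinity. With $I(x)=x/|x|^2$:
\begin{itemize}
\item $\phi\circ I$ is harmonic in $\bB$, since the singularity at $0$ is removable;
\item $\partial_\nu(\phi\circ I)=-\partial_\nu\phi$ on $\partial\bB$.
\end{itemize}
Therefore $\psi=\phi+\phi\circ I-c$ for some constant $c$. This gives $\|\psi\|_{\infty;\bB}\le 4\|\phi\|_{\infty;\RR^2}$ and, by conformal invariance, $\|\nabla\psi\|_{2;\bB}\le 2\|\nabla\phi\|_{2;\RR^2}$. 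Both are bounded by the whole-plane Wente inequality, with no trace theory needed. The same observation shows that $\partial_\nu\phi$ is, up to sign, the Hilbert transform of $\partial_\theta\phi$ on $\partial\bB$. So your $h^1$ claim does hold here, but only because of this exterior harmonicity, not because of $\nabla^2\phi\in\mathcal{H}^1$.

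\textbf{Your fallback is closer to a complete argument.} The $W^{1,2}$ bound via $\mathcal{H}^1$--$\mathrm{BMO}$ duality is fine. For the $L^\infty$ bound, what works is plain Coifman--Lions--Meyer--Semmes duality. The function $\det(\nabla\tilde a,\nabla\tilde b)\in\mathcal{H}^1(\RR^2)$ is supported in $\overline{\bB}$ and equals $f$ there. The Neumann function $N(x,\cdot)$ of the disk, explicitly two logarithms plus a quadratic, has a $\mathrm{BMO}$ extension to $\RR^2$ with norm bounded independently of $x$. Pairing the two gives the bound.

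The $L^{2,1}$--$L^{2,\infty}$ mechanism you describe does not work, however:
\begin{itemize}
\item Integrating $\int N\,\Div(a\nabla^\perp b)$ by parts, where $a=0$ does kill the boundary term, leaves $a\nabla^\perp b$. This is not in $L^{2,1}$ when $a$ is unbounded.
\item Integrating by parts against $\nabla\phi\in L^{2,1}$ instead leaves $\int_{\partial\bB}N\,\partial_\nu\phi$. The condition $a=0$ does not kill this term; it is the same trace problem again.
\end{itemize}

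\textbf{Comparison with the paper.} The paper uses only the Dirichlet Wente inequality. It reflects $\psi$ and $b$ evenly and $a$ oddly across $\partial\bB$ by inversion, so that $\overline{\psi}$ solves $\Delta\overline{\psi}=\det(\nabla\overline{a},\nabla\overline{b})$ on $\RR^2$; the Neumann condition removes the interface term. It then applies Dirichlet Wente on $\bB_{r_0}$, with $r_0\in(1,2)$ chosen by Fubini. The harmonic remainder is bounded by $C\|\nabla\psi\|_2$ via the maximum principle, and the argument closes by testing with $\psi$ and absorbing. The image-charge repair above is the same reflection idea applied to the whole-plane potential instead of to $\psi$, and it avoids the absorption step.
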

\begin{rmk}\label{rmk:Neumann-Wente}
We note the following concerning the statement of Lemma~\ref{lemm:Neumann-Wente}.
\vskip 1mm
\begin{enumerate}
\item[(1)] The class of admissible test functions in~\eqref{eq:Neumann-Wente-PDE-weak-form} can be extended to $W^{1, 2} \cap L^{\infty}(\bB)$, since for any $\zeta$ in this intersection, there is a sequence $(\zeta_n)$ in $C^{1}(\overline{\bB})$ that converges strongly in $W^{1, 2}$ and pointwise a.e. to $\zeta$, and satisfies $\|\zeta_n\|_{\infty; \bB} \leq \|\zeta\|_{\infty; \bB}$ for all $n$. (See the proof of~\cite[Chapter 5.3.3, Theorem 3]{Ev}.)
\vskip 1mm
\item[(2)] The assumption that $a = 0$ on $\partial \bB$ is essential, not only for the solvability of the Neumann problem~\eqref{eq:Neumann-Wente-PDE-weak-form}, but also for the estimate~\eqref{eq:Neumann-Wente}. That is, the Wente inequality does not generally hold under Neumann boundary conditions. Counterexamples have been constructed by Hirsch \cite{Hirsch2019} and Da Lio--Palmurella \cite{DaLioPalmurella2017}.
\vskip 1mm
\item[(3)] We appear to be claiming in~\eqref{eq:Neumann-Wente} a stronger $L^{\infty}$-estimate than in~\cite[equation (A.5)]{DaLioPalmurellaRiviere2020}. Note however that, under the condition $\int_{\bB}\psi\, dx = 0$, there holds
\begin{equation}\label{eq:adjust-by-constant}
\|\psi\|_{\infty; \bB} \leq C\cdot\big( \inf_{l \in \RR}\|\psi - l\|_{\infty; \bB} + \|\nabla\psi\|_{2; \bB} \big),
\end{equation}
where again $C$ is a universal constant. Thus the statement of Lemma~\ref{lemm:Neumann-Wente} is consistent with~\cite[Lemma A.6]{DaLioPalmurellaRiviere2020}. To see~\eqref{eq:adjust-by-constant}, let $\ep >0$ and choose $l_0$ such that 
\[
\|\psi - l_0\|_{\infty; \bB} < \ep + \inf_{l\in \RR}\|\psi - l\|_{\infty; \bB}.
\]
Then we have
\[
\begin{split}
|l_0|^2 \leq\ & 2\fint_{\bB}|l_0 - \psi|^2 + 2\fint_{\bB}|\psi|^2\leq 2 \big( \ep + \inf_{l \in \RR}\|\psi -l\|_{\infty; \bB} \big)^2 + C\int_{\bB}|\nabla \psi|^2,
\end{split}
\]
where we used $\int_{\bB}\psi\, dx = 0$ and the Poincar\'e inequality for the second step. Consequently
\[
\|\psi\|_{\infty; \bB} \leq |l_0| + \ep + \inf_{l \in \RR}\|\psi - l\|_{\infty; \bB} \leq C(\ep + \inf_{l \in \RR}\|\psi - l\|_{\infty; \bB} + \|\nabla \psi\|_{2; \bB}),
\]
and we get~\eqref{eq:adjust-by-constant} upon letting $\ep \to 0$.
\end{enumerate}
\end{rmk}
\begin{proof}[Proof of Lemma~\ref{lemm:Neumann-Wente}]
We include a proof for the reader's convenience. The idea of extending $a$ by odd reflection across $\partial \bB$, and $b$ by even reflection, is taken from~\cite[page 1498]{DaLioPalmurella2017}. 

Uniqueness is standard. To establish existence and the estimate~\eqref{eq:Neumann-Wente}, assume first that $a \in C^{\infty}_{c}(\bB)$ and that $b \in C^{\infty}(\overline{\bB})$. Integration by parts shows that $a_{x^1}b_{x^2} - a_{x^2}b_{x^1}$ averages to zero on $\bB$, and thus standard theory gives a unique $\psi \in C^{\infty}(\overline{\bB})$ satisfying 
\begin{equation}\label{eq:Wente-Neumann-problem}
\left\{
\begin{array}{ll}
\Delta \psi = a_{x^1}b_{x^2} - a_{x^2}b_{x^1}, & \text{ in }\bB,\\
\partial_{\nu}\psi = 0 , & \text{ on }\partial \bB,\\
\int_{\bB}\psi\, dx = 0.
\end{array}
\right.
\end{equation}
Letting ${I}: \RR^2\setminus\{0\} \to \RR^2\setminus\{0\}$ denote the inversion map, that is,
\[
{I}(x) = |x|^{-2}x,
\]
we introduce 
\[
\overline{\psi}(x) = \left\{
\begin{array}{ll}
\psi(x), & \text{ if }|x| \leq 1,\\
(\psi\circ{I})(x),& \text{ if }|x| > 1,
\end{array}
\right., \quad
\overline{a}(x) = \left\{
\begin{array}{ll}
a(x), & \text{ if }|x| \leq 1,\\
-(a\circ{I})(x),& \text{ if }|x| > 1,
\end{array}
\right.,
\]
and also define $\overline{b}$ in a way similar to $\overline{a}$, but without the minus sign in the case $|x| > 1$. Then $\overline{\psi}$, $\overline{a}$ and $\overline{b}$ are all continuous on $\RR^2$, and have distributional derivatives in $L^2_{\loc}(\RR^2)$. (In fact, $\overline{a}$ is smooth, $\overline{\psi}$ is $C^1$, and $\overline{b}$ is Lipschitz.) Further, by~\cite[Remark 3.1.1]{Helein2002}, or by direct computation, we have 
\begin{equation}\label{eq:PDE-for-psi-bar}
\Delta\overline{\psi} = \overline{a}_{x^1}\overline{b}_{x^2} - \overline{a}_{x^2}\overline{b}_{x^1}, \quad\text{ in the classical sense on }\bB \cup (\RR^2 \setminus \overline{\bB}). 
\end{equation}
Then, using the boundary condition in~\eqref{eq:Wente-Neumann-problem}, it is not hard to verify that the above holds weakly on all of $\RR^2$; that is,
\begin{equation}\label{eq:distributional-solution-on-R2}
\int_{\RR^2} \bangle{\nabla \overline{\psi}, \nabla\zeta} = -\int_{\RR^2} (\overline{a}_{x^1}\overline{b}_{x^2} - \overline{a}_{x^2}\overline{b}_{x^1})\zeta,\quad\text{for all }\zeta \in C^1_{c}(\RR^2).
\end{equation}

Next, by Fubini's theorem, we find $r_0 \in (1, 2)$ such that 
\begin{equation}\label{eq:W12-bound-from-Fubini}
\int_{\partial \bB_{r_0}}|\nabla \overline{\psi}|^2 \leq 3\int_{\bB_{2}\setminus \bB}|\nabla\overline{\psi}|^2 = 3\int_{\bB \setminus \bB_{\frac{1}{2}}} |\nabla \psi|^2,
\end{equation}
\begin{equation}\label{eq:L2-bound-from-Fubini}
\begin{split}
\int_{\partial \bB_{r_0}}|\overline{\psi}|^2 \leq 3\int_{\bB_{2}\setminus \bB}|\overline{\psi}|^2 = 3\int_{\bB\setminus \bB_{\frac{1}{2}}} |x|^{-4} |\psi|^2.
\end{split}
\end{equation}
As $\overline{\psi}$ is smooth near $\partial\bB_{r_0}$, there is a unique smooth harmonic function $h$ on $\bB_{r_0}$ such that
\[
h = \overline{\psi} \quad \text{on }\partial \bB_{r_0}.
\]
With the help of~\eqref{eq:distributional-solution-on-R2}, we can apply the Wente inequality for Dirichlet problems (see~\cite[Lemma A.1]{BrezisCoron1984} or~\cite[Theorem 3.1.2]{Helein2002}) to the difference $\overline{\psi} - h$, thereby getting
\begin{equation}\label{eq:Wente-estimate-h-psi}
\|\overline{\psi} - h\|_{\infty; \bB_{r_0}} \leq C\|\nabla\overline{a}\|_{2; \bB_{r_0}} \|\nabla \overline{b}\|_{2; \bB_{r_0}} \leq 2C\|\nabla a\|_{2; \bB} \|\nabla b\|_{2; \bB}.
\end{equation}
On the other hand, with 
\[
l := \fint_{\partial \bB_{r_0}}\overline{\psi},
\]
we observe that, first of all, by~\eqref{eq:L2-bound-from-Fubini}, the integral condition in~\eqref{eq:Wente-Neumann-problem}, and Poincar\'e's inequality, we have
\begin{equation}\label{eq:bound-on-l}
\begin{split}
|l|^2 \leq \fint_{\partial \bB_{r_0}} |\overline{\psi}|^2 \leq\ & \frac{24}{\pi r_0}\int_{\bB}|\psi|^2 \leq C\int_{\bB}|\nabla \psi|^2.
\end{split}
\end{equation}
Secondly, by the maximum principle followed by the fundamental theorem of Calculus and H\"older's inequality, we have
\begin{equation}\label{eq:max-prin-estimate}
\begin{split}
\|h - l\|_{\infty; \bB_{r_0}} = \|\overline{\psi} - l\|_{\infty; \partial\bB_{r_0}} \leq\ & Cr_0^{\frac{1}{2}}\Big(\int_{\partial \bB_{r_0}}|\nabla \overline{\psi}|^2\Big)^{\frac{1}{2}}\\
\leq\ & C\|\nabla\psi\|_{2; \bB},
\end{split}
\end{equation}
where in the last step we used~\eqref{eq:W12-bound-from-Fubini} and that $r_0 < 2$. Combining~\eqref{eq:max-prin-estimate},~\eqref{eq:bound-on-l}, and~\eqref{eq:Wente-estimate-h-psi}, we get
\begin{equation}\label{eq:Neumann-Wente-infty-bound}
\|\psi\|_{\infty;\bB} \leq C\|\nabla a\|_{2; \bB} \|\nabla b\|_{2;\bB} + C\|\nabla \psi\|_{2; \bB}.
\end{equation}
Testing the differential equation in~\eqref{eq:Wente-Neumann-problem} against $\psi $ and using the above estimate leads to
\[
\begin{split}
\|\nabla\psi\|_{2; \bB}^2 \leq\ & C\|\psi\|_{\infty; \bB}\|\nabla a\|_{2; \bB} \|\nabla b\|_{2;\bB}\\
\leq\ & C\|\nabla a\|_{2; \bB}^2 \|\nabla b\|_{2;\bB}^2 + C\|\nabla \psi\|_{2; \bB}\|\nabla a\|_{2; \bB} \|\nabla b\|_{2;\bB}.
\end{split}
\]
An application of Young's inequality gives
\[
\|\nabla\psi\|_{2; \bB} \leq C\|\nabla a\|_{2; \bB} \|\nabla b\|_{2;\bB},
\]
which is the $L^2$-bound on $\nabla\psi$ asserted in~\eqref{eq:Neumann-Wente}, and the $L^{\infty}$-bound on $\psi$ follows upon substituting the above back into~\eqref{eq:Neumann-Wente-infty-bound}. This finishes the proof assuming that $a \in C^{\infty}_{c}(\bB)$ and $b \in C^{\infty}(\overline{\bB})$. 

To remove these smoothness assumptions, we let $(a_n)$ and $(b_n)$ be sequences in $C^{\infty}_{c}(\bB)$ and $C^{\infty}(\overline{\bB})$, respectively, such that
\begin{equation}\label{eq:a-b-approximated}
\lim_{n \to \infty}\|a_n  - a\|_{1, 2; \bB}  = \lim_{n \to \infty}\|b_n - b\|_{1, 2; \bB} = 0.
\end{equation}
Given $n, m \in \NN$, we denote by $\psi_n$, $\varphi_{n, m}^1$, and $\varphi_{n, m}^{2}$ the unique solution to~\eqref{eq:Wente-Neumann-problem} with the pair $(a, b)$ replaced by $(a_n, b_n)$, $(a_n - a_m, b_n)$, and $(a_m, b_n - b_m)$, respectively. Then we have
\begin{equation}\label{eq:psi-n-psi-m}
\psi_{n} - \psi_{m} = \varphi_{n, m}^{1} + \varphi_{n,m}^{2}.
\end{equation}
Since $a_n- a_m$ and $a_m$ lie in $C^{\infty}_c(\bB)$, what we just proved can be applied to $\varphi_{n, m}^{1}$ and $\varphi_{n, m}^{2}$, which combines with~\eqref{eq:psi-n-psi-m} to give
\[
\begin{split}
\|\psi_n - \psi_m\|_{\infty; \bB} + \|\nabla \psi_n - \nabla \psi_m\|_{2; \bB} \leq\ & C\|\nabla a_n - \nabla a_m\|_{2; \bB} \|\nabla b_n\|_{2;\bB}\\
&+ C\|\nabla a_m\|_{2; \bB}\|\nabla b_n - \nabla b_m\|_{2; \bB}.
\end{split}
\]
From this and~\eqref{eq:a-b-approximated}, we obtain some $\psi \in C^0(\overline{\bB}) \cap W^{1, 2}(\bB)$ such that 
\begin{equation}\label{eq:limit-appears}
\lim_{n \to \infty}\big( \|\psi_n - \psi\|_{\infty; \bB} + \|\nabla \psi_n - \nabla \psi\|_{2; \bB}  \big) = 0
\end{equation}
Since $\psi_n$ integrates to zero on $\bB$ and satisfies the estimate~\eqref{eq:Neumann-Wente}, it follows from~\eqref{eq:limit-appears} and~\eqref{eq:a-b-approximated} that the same is true of $\psi$. Finally, using~\eqref{eq:limit-appears} and~\eqref{eq:a-b-approximated} to pass to the limit in the differential equation satisfied by $\psi_n$, we get~\eqref{eq:Neumann-Wente-PDE-weak-form}. The proof is complete.
\end{proof}

\bibliographystyle{amsplain}
\bibliography{H-Surface-Flow}
\end{document}